\newtheorem{theorem}{Theorem}
\newtheorem{lemma}[theorem]{Lemma}
\theoremstyle{definition}
\newtheorem{remark}[theorem]{Remark}
\newtheorem{definition}[theorem]{Definition}
\newcommand{\eref}[1]{(\ref{e.#1})}
\newcommand{\tref}[1]{Theorem \ref{t.#1}}
\newcommand{\lref}[1]{Lemma \ref{l.#1}}
\newcommand{\fref}[1]{Figure \ref{f.#1}}
\newcommand{\sref}[1]{Section \ref{s.#1}}
\numberwithin{theorem}{section}
\numberwithin{equation}{section}
\newcommand{\Z}{\mathbb{Z}}
\newcommand{\R}{\mathbb{R}}
\newcommand{\C}{\mathbb{C}}
\newcommand{\ep}{\varepsilon}
\newcommand{\id}{\operatorname{1}}
\newcommand{\dist}{\operatorname{dist}}
\begin{document}

\title{A free boundary problem with facets}

\author{William M Feldman}

\author{Charles K Smart}

\begin{abstract}
  We study a free boundary problem on the lattice whose scaling limit is a harmonic free boundary problem with a discontinuous Hamiltonian.  We find an explicit formula for the Hamiltonian, prove the solutions are unique, and prove that the limiting free boundary has a facets in every rational direction.  Our choice of problem presents difficulties that require the development of a new uniqueness proof for certain free boundary problems.  The problem is motivated by physical experiments involving liquid drops on patterned solid surfaces.
\end{abstract}

\maketitle

\section{Introduction}

\subsection{Overview}

In this paper, we study a variational problem on the lattice $\Z^d$ whose scaling limit is a free boundary problem of the form
\begin{equation}\label{e.fb1}
  \begin{cases}
    L u = 0 & \mbox{in } \{ u > 0 \} \\
    H(\nabla u) = 1 & \mbox{on } \partial \{ u > 0 \},
  \end{cases}
\end{equation}
where $L$ is the Laplacian and $H$ is a lower semicontinuous Hamiltonian.  We study viscosity solutions of this problem for general $H$, and prove existence and uniqueness of solutions for certain boundary value problems.  We exactly compute our limiting Hamiltonian for the lattice problem, prove that it is not continuous, and show that the scaling limit has facets.

The main motivation for our study is to explain the appearance of facets in the contact line of liquid droplets wetting rough surfaces or spreading in a porous medium.  This phenomenon has been observed in physical experiments \cite{Raj-Adera-Enright-Wang,Kim-Zheng-Stone,pattern}.  While it is easy enough to construct a problem of the form \eref{fb1} with facets in the free boundary, we are able to derive such a problem as a scaling limit of a simple microscopic model for the liquid droplet problem.  Furthermore we find solutions which can be reliably obtained by a natural flow at the level of the microscopic problem, advancing the contact line from a small initial wetted set as was done in the experiments \cite{Raj-Adera-Enright-Wang}.

\subsection{A discrete free boundary problem}

Consider the following familiar variational problem.  Given an open set $U \subseteq \R^d$ whose boundary $\partial U$ is smooth and compact, compute the (local) minimizers of the energy
\begin{equation}
  \label{e.introJ}
  J[u] = \int_U \id_{\{ u > 0 \}}(x) +  |\nabla u(x)|^2 \,dx
\end{equation}
among the functions $u \in H^1_{loc}(\R^d)$ satisfying $u = 1$ on $\R^d \setminus U$.  This has a well-developed theory, see for example Caffarelli-Salsa \cite{Caffarelli-Salsa}, that leads to the free boundary problem
\begin{equation}
  \label{e.intropde}
  \begin{cases}
    L u = 0 & \mbox{in } U \cap \{ u > 0 \} \\
    |\nabla u| = 1 & \mbox{on } U \cap \partial \{ u > 0 \} \\
    u = 1 & \mbox{on } \R^d \setminus U \\
    u \geq 0 & \mbox{in } U,
  \end{cases}
\end{equation}
where $L$ denotes the Laplacian on $\R^d$.  While \eref{intropde} generally does not have a unique solution, there is always a least supersolution.

\begin{figure}
  \begin{tabular}{ll}
  \begin{tabular}{ll}
  \includegraphics[width=.24\textwidth]{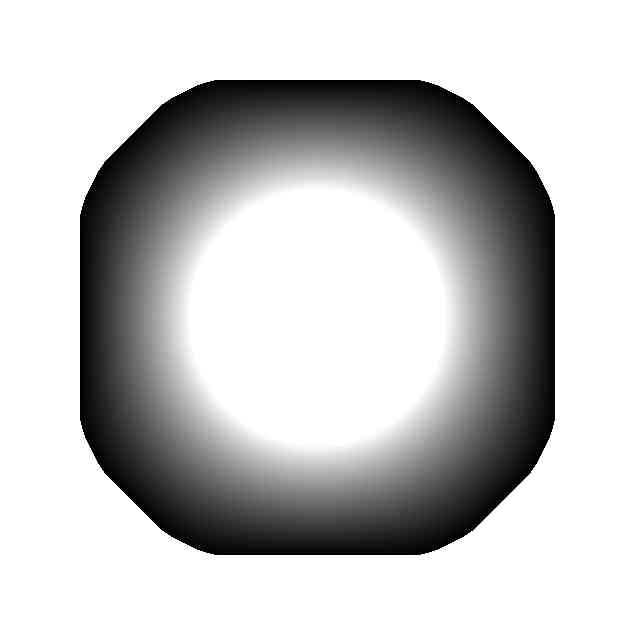} &
  \includegraphics[width=.24\textwidth]{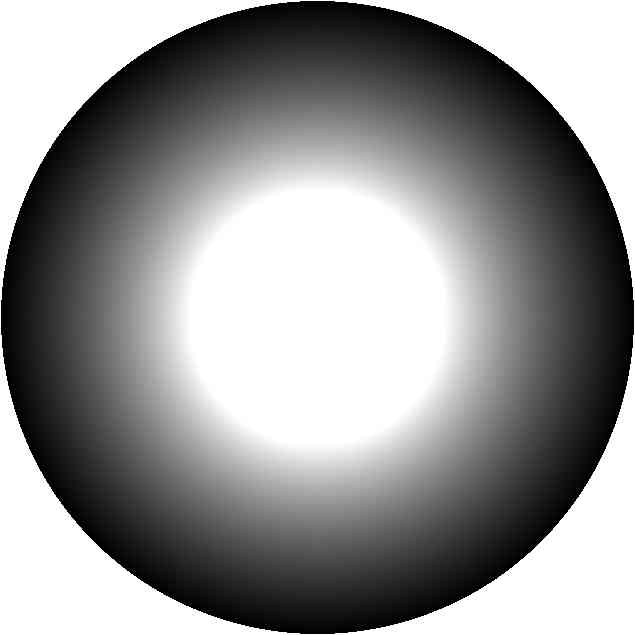} \\
  \includegraphics[width=.24\textwidth]{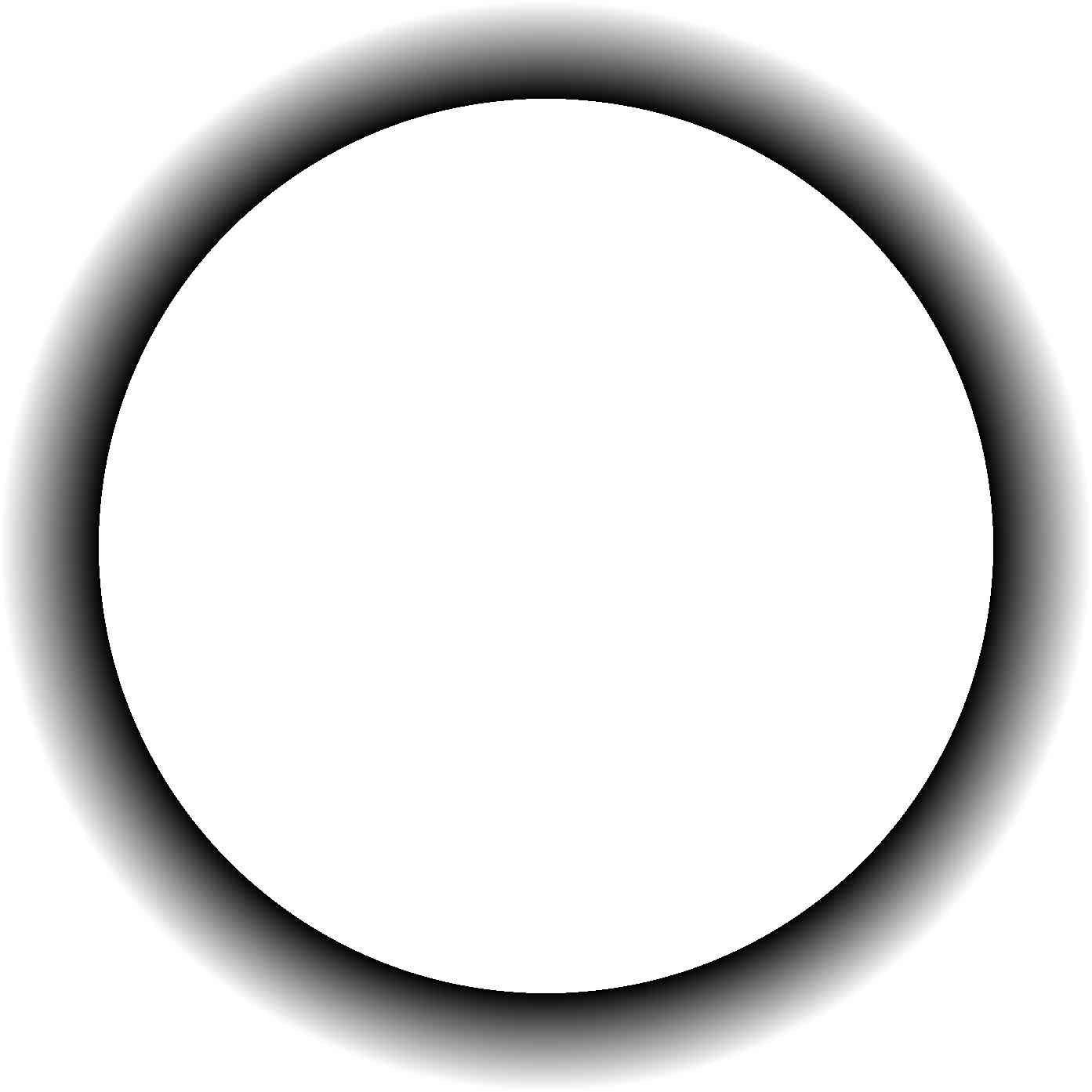} &
  \includegraphics[width=.24\textwidth]{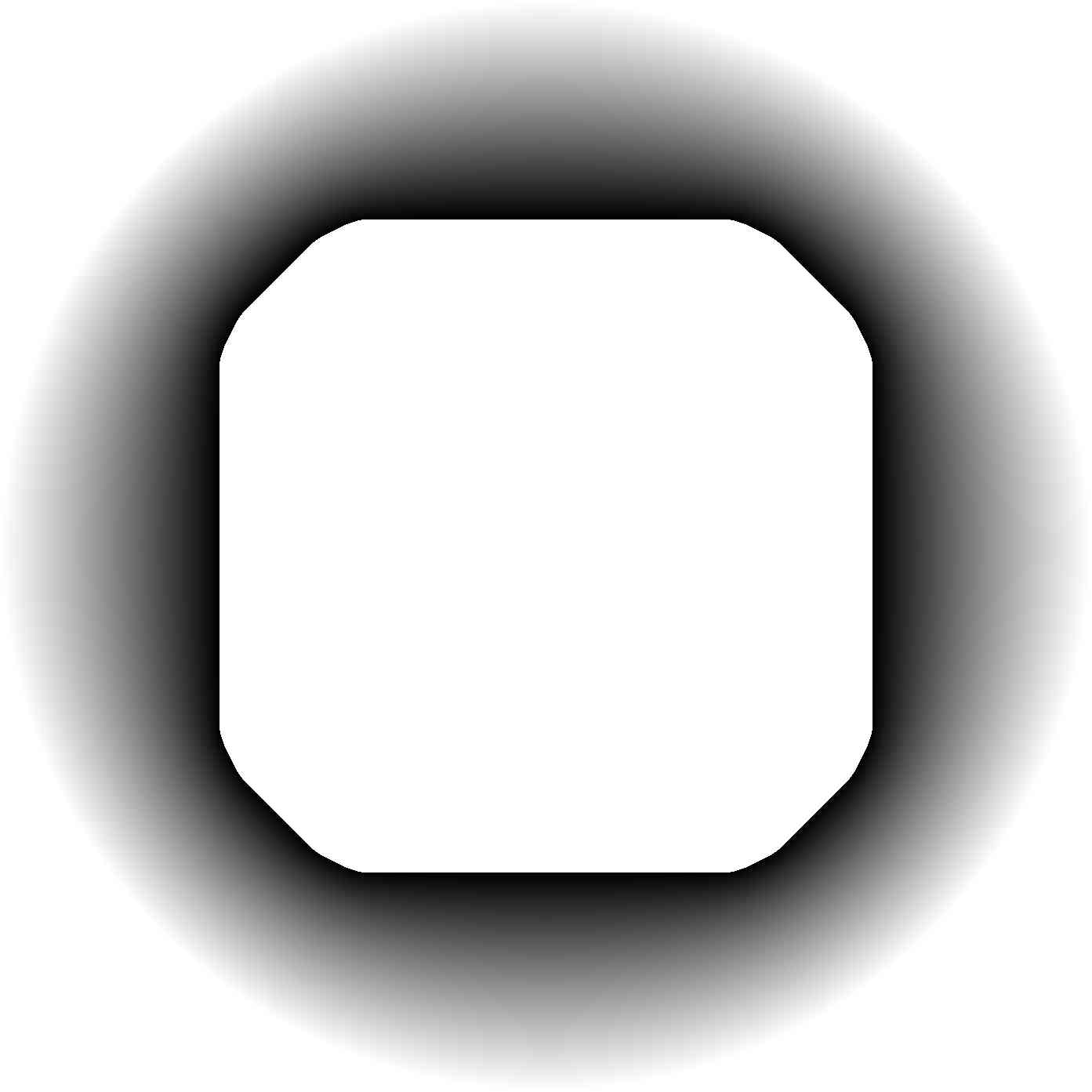} \\
  \end{tabular} &
  \begin{tabular}{l}
    \includegraphics[width=.4\textwidth]{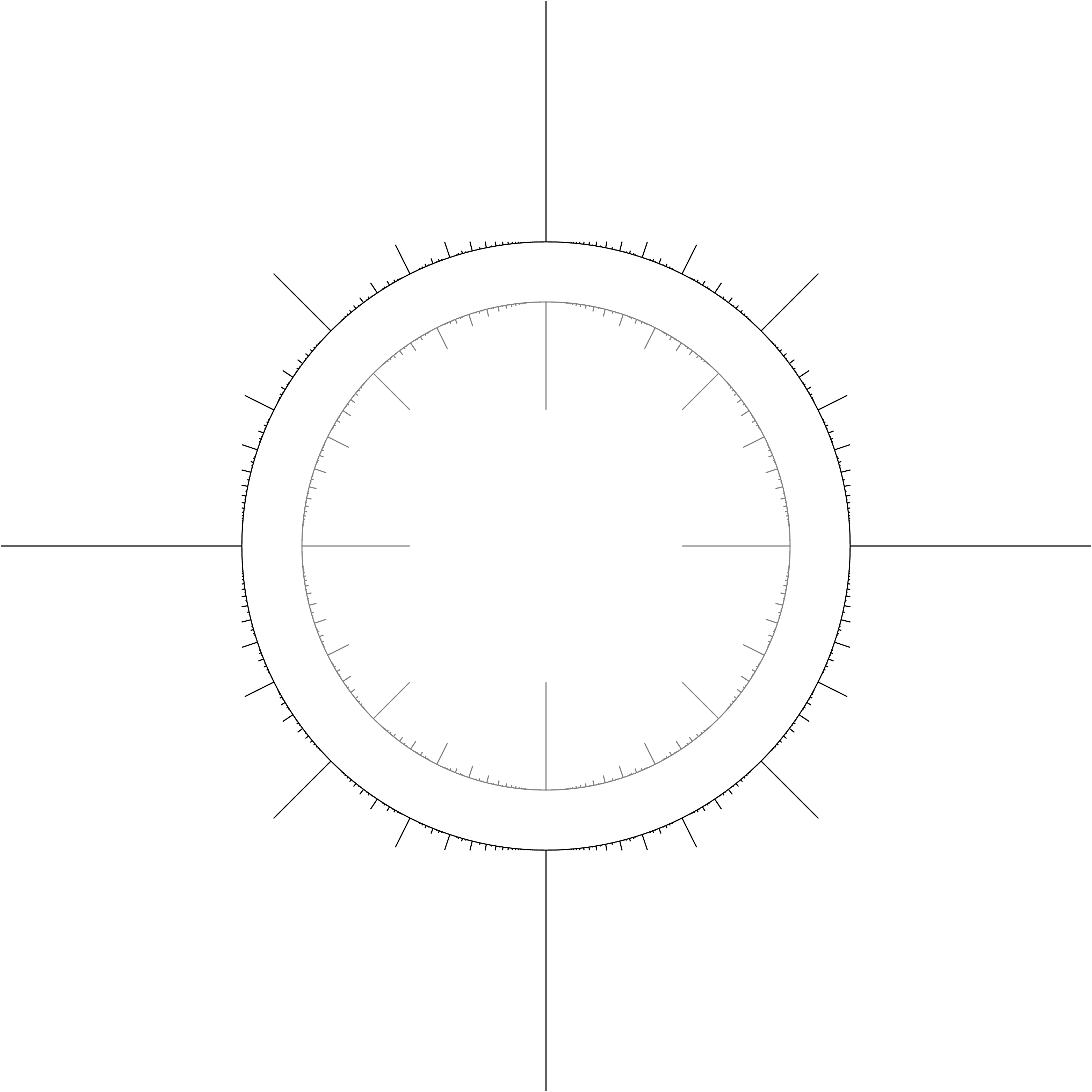}
  \end{tabular}
  \end{tabular}
  \caption{Top: $u_h$ and $u^h$ for $d = 2$, $U = B_1$, and $h = 2^7$.  Bottom: $u_h$ and $u^h$ for $d = 2$, $U = \R^d \setminus B_{4 e}$, and $h = 2^6$.  The free boundary is the black edge in all four images.  Right: the boundaries $\partial \{ H \leq 1 \}$ (black) and $\partial \{ \bar H \geq 1 \}$ (gray).}
  \label{f.intro2d}
\end{figure}

\begin{figure}
  \includegraphics[width=.48\textwidth]{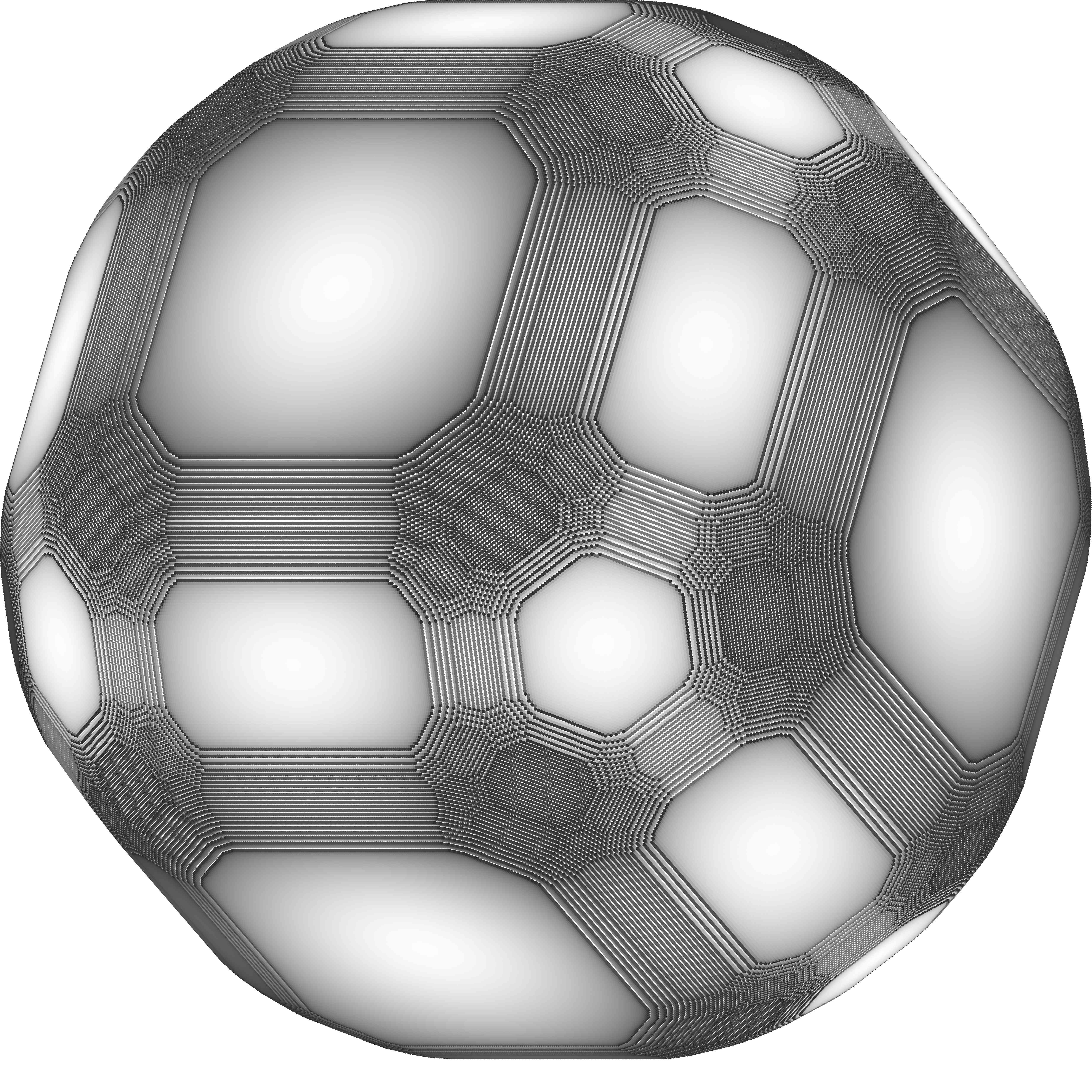}
  \hfill
  \includegraphics[width=.48\textwidth]{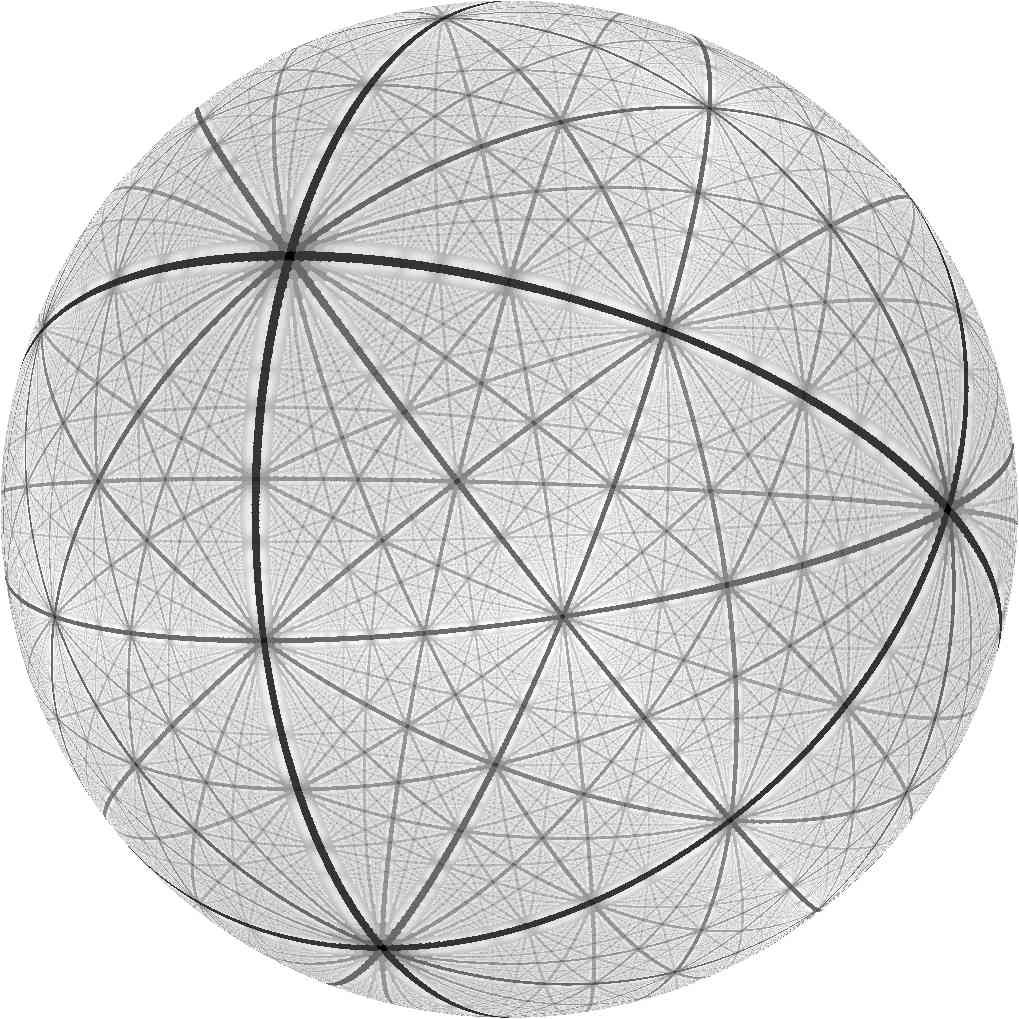}
  \caption{Left: the Laplacian $\Delta u_h$ on $\partial^+ \{ u_h > 0 \}$ for $d = 3$, $U = B_1$, and $h = 2^7$.  Right: the values of $H$ on the points $|p|^{-1} p$ for $p \in \Z^3$ with $\max_k |p_k| \leq 50$.}
  \label{f.intro3d}
\end{figure}

We consider a lattice discretization of the above variational problem.  Given a large scale $h > 1$, we study the critical points of the energy
\begin{equation}
  \label{e.introJh}
  J_h[u] = \sum_{x \in h U} \id_{\{ u > 0 \}}(x) + d \sum_{\substack{|x - y| = 1 \\ \{ x, y \} \cap h U \neq \emptyset}}  (u(x) - u(y))^2
\end{equation}
over functions $u : \Z^d \to \R$ that satisfy $u = h$ on $\Z^d \setminus h U$.  Note our choice of constants does not quite match with \eref{introJ}.  Computing the first variation, we see that local minimizers of $J_h$ should satisfy
\begin{equation}
  \label{e.introfde}
  \begin{cases}
    \Delta u = 0 & \mbox{on } h U \cap \{ u > 0 \} \\
    \Delta u \leq 1 & \mbox{on } h U \cap \partial^+ \{ u > 0 \} \\
    u \geq \tfrac{1}{2d} & \mbox{on } h U \cap \partial^- \{ u > 0 \} \\
    u = h & \mbox{on } \Z^d \setminus h U \\
    u \geq 0 & \mbox{on } \Z^d \cap h U,
  \end{cases}
\end{equation}
where $\Delta$ denotes the discrete Laplacian on $\Z^d$ and $\partial^+ X$ and $\partial^- X$ denote the outer and inner lattice boundary of a set $X \subseteq \Z^d$.

It is reasonable to expect that our discretization has a scaling limit described by the original continuum problem.  That is, if let $u_h : \Z^d \to \R$ denote the least supersolution of \eref{introfde}, then we might expect the rescalings
\begin{equation*}
  \bar u_h(x) = h^{-1} u_h(h x)
\end{equation*}
to converge to the least supersolution of \eref{intropde}.  The energy minimizers do converge to the energy minimizer of \eref{introJ} (with appropriate constants), this is a standard $\Gamma$-convergence argument.  The local minimizers, however, have a more complicated scaling limit.  Indeed, as we see in \fref{intro2d} and \fref{intro3d}, the least supersolutions $u_h$ are not radially symmetric for $U = B_1$.  Instead, we find ourselves in a situation analogous to that of Caffarelli-Lee \cite{Caffarelli-Lee} and Kim \cite{Kim}, who studied the local minimizers of
\begin{equation}\label{e.acenergy}
  \int_{U} Q(x/\ep)^2 \id_{u>0}(x) + |\nabla u(x)|^2 \,dx,
\end{equation}
where $Q : \R^d \to (0,\infty)$ is smooth and periodic and $\ep > 0$ is small.  The periodic structure creates preferred directions for the free boundary, and thus breaks radial symmetry in the homogenized limit.

For our stationary problem on the lattice, we are able to push a bit further than the past works.  In \tref{convergence} below, we identify the scaling limit of \eref{introfde} as
\begin{equation}
  \label{e.intropdeH}
  \begin{cases}
    L u = 0 & \mbox{in } U \cap \{ u > 0 \} \\
    H(\nabla u) = 1 & \mbox{on } U \cap \partial \{ u > 0 \} \\
    u = 1 & \mbox{on } \R^d \setminus U,
  \end{cases}
\end{equation}
where $H$ is a lower semicontinuous Hamiltonian that satisfies $H(tp) = t H(p)$ and $C^{-1} |p| \leq H(p) \leq C |p|$.  In \tref{ridges} we prove that the Hamiltonian $H$ is discontinuous at the slope $p$ whenever the slope $p$ satisfies one or more Diophantine relations.  Moreover, in \tref{facets}, we prove that the limiting free boundary has facets in every rational direction.

The case of the maximal non-trivial subsolution $u^h : \Z^d \to \R$ is dual to the minimal supersolution case.  Indeed, comparing the top and bottom rows of \fref{intro2d}, we see that $\partial \{ u_h > 0 \}$ has facets for the exterior while $\partial \{ u^h > 0 \}$ has facets for the interior.  The scaling limit of $u^h : \Z^d \to \R$ is captured by the upper semicontinuous Hamiltonian $\bar H(p) = 2d |p|^2 H(p)^{-1}$.  From the right of \fref{intro2d}, we see that there is a non-trivial pinning interval for all slopes.  Due to the duality, we only handle the case of the minimal supersolution in the paper below.

\subsection{Main results}

Our first result is an exact computation of $H(p)$.  
\begin{theorem}
  For $p \in \R^d$
  \begin{equation*}
    H(p)^2 = 2d \exp \left( \sum_{\substack{q \in \Z^d: \  p \cdot q = 0}} \hat S(q) \right) |p|^2,
  \end{equation*}
where $\hat{S}$ is the Fourier transform of $S(\theta) = \log(1+\frac{1}{d}\sum_{j=1}^d \cos \theta_j)$, a $2\pi \Z^d$-periodic function on $\R^d$.
\end{theorem}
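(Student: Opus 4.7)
The plan is to characterize $H(p)$ through a cell problem and then compute it by Fourier analysis. For a rational direction $p$, the sublattice $\Lambda := p^\perp \cap \Z^d$ has rank $d-1$, and one seeks a \emph{plane-like} solution $u_p : \Z^d \to \R$ of the discrete free boundary problem \eref{introfde} (taking $U$ to be a half-space) which is $\Lambda$-periodic modulo a linear term: vanishing on approximately $\{p \cdot x < 0\}$, harmonic on the other side, and with average slope $\lambda p$ for some $\lambda > 0$ determined by saturating the discrete outer-boundary condition $\Delta u \leq 1$ on $\partial^+\{u_p > 0\}$. By the convergence established in \tref{convergence}, the free boundary condition $H(\lambda p) = 1$ is then exactly the solvability condition for the cell problem, so homogeneity of $H$ gives $H(p) = \lambda^{-1}$, reducing the theorem to computing $\lambda$ in closed form.

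Writing $u_p(x) = \lambda p \cdot x + v(x)$ on the positive side with $v$ bounded and $\Lambda$-periodic, the corrector $v$ descends to a function on $\Z^d / \Lambda$ (isomorphic to $\Z$ for primitive $p$), and decomposing $v$ in Fourier modes on the transverse dual torus $\Lambda^* \cong T^{d-1}$ reduces $\Delta u_p = 0$ to a family of one-dimensional recurrences parameterized by $\xi \in \Lambda^*$. For each mode the decaying solution corresponds to a root of the characteristic polynomial $1 - \frac{1}{d}\sum_j \cos \theta_j = 0$, viewed as an equation in the variable dual to $p$ with the transverse frequency $\xi$ fixed. Matching Dirichlet data at the free boundary and computing the average slope produced by summing over these modes gives $\lambda$ as an explicit expression involving the logarithmic average over $\Lambda^*$ of a Wiener--Hopf factor of the Laplacian symbol along $p$.

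The concluding step is to identify this logarithmic average with $\sum_{q \in \Lambda} \hat S(q)$. A Wiener--Hopf factorization of $1 - \frac{1}{d}\sum_j \cos\theta_j$ in the $p$-direction decomposes the symbol into ``outgoing'' and ``incoming'' factors, and the sum of the two logarithmic averages equals the full average of $\log(1 + \frac{1}{d}\sum_j \cos \theta_j) = S(\theta)$ over the perpendicular subtorus (after accounting for the $2d$ and $|p|^2$ prefactors that arise from the normalization of the discrete Laplacian and the slope $\lambda$). Plancherel/Poisson summation on $\Lambda$ then converts this integral into $\sum_{q \in \Lambda} \hat S(q)$, yielding the stated formula.

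The main obstacle is executing the Wiener--Hopf step for a general rational direction $p$ that is not aligned with a coordinate axis: one must identify the correct ``decaying root'' as an analytic function of $\xi$ on $\Lambda^*$, prove that integrating its logarithm reconstructs half of the full symbol's log average, and check the correct constants. A secondary difficulty is translating the discrete saturation condition $\Delta u_p = 1$ at $\partial^+\{u_p > 0\}$ into a clean analytic relation for $\lambda$, since the outer lattice boundary can have a combinatorially complex structure for non-axis-aligned $p$. Finally, irrational directions $p$ (for which the cell problem has no discrete periodicity) should be handled either by continuity of the explicit right-hand side, which reduces to $2d \exp(\hat S(0)) |p|^2$, or by a direct continuous Fourier argument on $T^d$.
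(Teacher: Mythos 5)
Your overall route for rational $p$ (reduce to a discrete half-space cell problem, factor the symbol, turn a log-average into a lattice sum) is in the same spirit as the paper, but the middle step as written is dimensionally inconsistent. Once you quotient $\Z^d$ by $\Lambda_p=\{q\in\Z^d:p\cdot q=0\}$, the problem lives on $\Z^d/\Lambda_p\cong\Z$, so the corrector has a \emph{single} one-dimensional Fourier dual $T^1=[0,2\pi)$, not a $T^{d-1}$ family. Since the half-space solution is already $\Lambda_p$-periodic, the ``family of 1D recurrences parameterized by $\xi\in\Lambda^*$'' collapses to the trivial mode; there is no transverse Fourier variable to Wiener--Hopf over. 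In the paper the reduced operator is $\Delta_p g(k)=\sum_j\bigl(g(k+p_j)+g(k-p_j)-2g(k)\bigr)$, the relevant factorization is of the single one-variable characteristic polynomial $\lambda^nQ(\lambda)$, and the log-average is the one-dimensional contour integral $\frac{1}{2\pi}\int_0^{2\pi}\log\bigl(1-\tfrac1d\sum_j\cos(p_jt)\bigr)\,dt$ computed via the argument principle (a Jensen-type identity), after which the sum over $\Lambda_p$ drops out by plain orthogonality of characters — the ``main obstacle'' you flag (Wiener--Hopf for a non-axis-aligned $p$) is exactly what the quotienting makes disappear. You would need to reconcile the $\Z\leftrightarrow T^1$ duality with your $\Lambda^*\cong T^{d-1}$ picture before any of the subsequent analytic steps can be carried out.

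The treatment of irrational $p$ is a genuine error. Neither $H$ nor the right-hand side is continuous in $p$ — that is precisely the content of Theorem \ref{t.ridges} — so one cannot pass from rational $p$ ``by continuity of the explicit right-hand side.'' Moreover, it is false that the sum reduces to $\hat S(0)$ for irrational $p$: for example $p=(1,\sqrt2,0)$ in $d=3$ has $\Lambda_p=\{(0,0,k):k\in\Z\}\neq\{0\}$, so the sum includes a whole line of nonzero terms. The paper's argument (Theorem \ref{t.Hformula}) instead approximates $p$ by rational $q_n$ chosen so that $\Lambda_p\subseteq\Lambda_{q_n}$ (Lemma \ref{l.approximate}, which rests on a pigeonhole argument) and then uses the half-ball maximum principle (Lemma \ref{l.halfball}) to show $\Delta u_{q_n}(0)\to\Delta u_p(0)$; the point is that the \emph{direction of approach} matters, and only the approach along ``refining'' lattices gives convergence. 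Without this upper-semicontinuity structure the limiting argument breaks, so your irrational case needs to be replaced by an argument of that type rather than a generic continuity or direct $T^d$-Fourier claim.
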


In homogenization, it is rarely possible to find precise formulas like the above, and, when it is possible, it opens the possibility of a precise characterization of the shapes appearing in the scaling limit.

Our second result is the uniqueness of the scaling limit of the discrete problem for convex domains, which is consequence of \tref{facets} and \tref{convergence}.  Note that the terms ``least supersolution,'' ``facet,'' and ``rational direction'' are defined precisely in the body of the paper.

\begin{theorem}
  Let $U \subseteq \R^d$ be the complement of the closure of an open bounded and convex set.  There is a $u \in C^{0,1}_c(\R^d)$ such that, if $u_h : \Z^d \to \R$ denotes the least supersolution of \eref{introfde}, then $h^{-1} u_h(h x) \to u(x)$ uniformly in $x \in \R^d$ as $h \to \infty$.  Moreover, the support $\{ u > 0 \}$ is convex and has facets in every rational direction.
\end{theorem}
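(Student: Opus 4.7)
The plan is to combine \tref{convergence}, \tref{facets}, and a Bernoulli-type barrier argument to identify the scaling limit as the harmonic function on a convex domain determined by $H$ and $K$. By \tref{convergence}, the rescalings $\bar u_h(x) = h^{-1} u_h(hx)$ form a uniformly Lipschitz, uniformly bounded family, so Arzel\`a--Ascoli produces subsequential uniform limits, and the theorem identifies any such limit $u$ as the least supersolution of the continuum free-boundary problem \eref{intropdeH} with $u = 1$ on $\bar K$, where $U = \R^d \setminus \bar K$ and $K$ is the given bounded convex open set. Using the lower bound $H(p) \geq C^{-1} |p|$ together with a harmonic barrier on a thin convex annulus around $\bar K$, the support $\{u > 0\}$ is bounded, so $u \in C^{0,1}_c(\R^d)$.

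The main step is to construct a convex domain $\Omega^*$ solving the associated exterior Bernoulli problem with Hamiltonian $H$. For each bounded convex open $\Omega$ with $\bar K \subset \Omega$, let $v_\Omega$ be harmonic on $\Omega \setminus \bar K$ with $v_\Omega = 1$ on $\partial K$ and $v_\Omega = 0$ on $\partial \Omega$, extended by $0$ outside $\Omega$. The Gabriel--Lewis concavity theorem ensures that the superlevel sets of $v_\Omega$ are convex, and the normal gradient $|\nabla v_\Omega|$ on $\partial \Omega$ depends monotonically on $\Omega$: it blows up as $\Omega$ contracts toward $\bar K$ and tends to $0$ as $\Omega$ expands. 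A continuity argument in the space of convex sets then produces a convex $\Omega^*$ for which $H(\nabla v_{\Omega^*}) \geq 1$ holds on $\partial \Omega^*$ with equality in the lower-semicontinuous viscosity sense, so $v_{\Omega^*}$ (extended by $0$) is simultaneously a super- and subsolution of \eref{intropdeH}. Minimality of $u$ among supersolutions gives $u \leq v_{\Omega^*}$; the standard viscosity comparison of the subsolution $v_{\Omega^*}$ against the supersolution $u$ yields the reverse inequality, so $u = v_{\Omega^*}$ and $\{u > 0\} = \Omega^*$ is convex.

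With the limit uniquely identified, every subsequential limit equals $u$ and the full sequence $\bar u_h$ converges uniformly to $u$; \tref{facets} then immediately yields facets of $\partial \Omega^* = \partial \{u > 0\}$ in every rational direction. The hard part will be executing the Bernoulli-type construction of $\Omega^*$ together with the viscosity comparison in the presence of a Hamiltonian $H$ that is only lower semicontinuous: the continuity in $\Omega$ of the free-boundary condition $H(\nabla v_\Omega) \geq 1$ fails at rational directions, so one must set up the comparison using the semicontinuous envelopes of $H$ and exploit that any flat portion of $\partial \Omega^*$ perpendicular to a rational direction is precisely where a jump of $H$ is activated; this is exactly the facet structure produced by \tref{facets}, which is why convexity, facets, and uniqueness must be proved together rather than separately.
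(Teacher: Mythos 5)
There is a genuine gap, and it is a significant one: the middle part of your argument — the Bernoulli-type construction of $\Omega^*$ and the identification of $u$ with $v_{\Omega^*}$ — is not a proof but a sketch that, if made rigorous, would require precisely the comparison machinery you are trying to sidestep. The continuity-in-$\Omega$ argument fails at every rational direction because $H$ is discontinuous there (this is the whole point of \tref{ridges}), and you cannot invoke a ``standard viscosity comparison of the subsolution $v_{\Omega^*}$ against the supersolution $u$'' because no standard comparison theorem exists for this discontinuous $H$; establishing that comparison is exactly the content of \tref{uniqueness} and the convex comparison lemmas (\lref{convexcomparison1}, \lref{convexcomparison2}) that precede it. Likewise your appeal to ``minimality of $u$ among supersolutions'' silently uses the Perron construction of \tref{existence} and its identification with the discrete limit, which again rests on \tref{uniqueness}. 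So the circularity is: to prove your Bernoulli-type $\Omega^*$ coincides with the limit $u$ you need the comparison theorem, but the comparison theorem already delivers convexity of $\{u>0\}$ directly, making the Bernoulli detour both unnecessary and unexecutable without it.

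The paper's own proof is much shorter and does not take a Bernoulli route at all. \tref{convergence} already states that the rescalings $\bar u_h(x) = h^{-1}u_h(hx)$ converge uniformly (along the full family $h\to\infty$, not merely subsequentially — you misread it as a compactness lemma) to the unique $u\in C^{0,1}_c(\R^d)$ solving the exterior problem \eref{exterior}. \tref{uniqueness} states explicitly that $\{u>0\}$ is convex. \tref{facets} then gives the facets in every rational direction. Nothing more is needed; in particular there is no need to re-derive convexity of the support by a continuity argument over convex rings, and the concavity input (which the paper takes from Caffarelli--Spruck, not Gabriel--Lewis) is already absorbed into the proofs of \lref{convexcomparison1} and \tref{facets}.
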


Our results for viscosity solutions of \eref{intropdeH} are stated below in \sref{viscosity}.  For lower semicontinuous Hamiltonians the viscosity subsolution condition is delicate.  We utilize three a priori different notions of viscosity subsolution, listed in increasing order of strength: weakened subsolution, modified subsolution, and (standard) subsolution.  These are defined precisely in \sref{viscosity}.  The weakened subsolution condition is easy to prove for the scaling limit, but is only sufficient for uniqueness in the convex case.  The modified subsolution condition, which we believe to be equivalent to the subsolution condition, is relatively difficult to prove for the scaling limit, and is needed for uniqueness in the non-convex case in two dimensions.  We only obtain the full subsolution condition a posteriori using uniqueness and Perron's method.

  We are also able to prove the existence of a unique solution for \eref{intropdeH} and the scaling limit of the $u_h$ without convexity in $d=2$.  Some geometric assumptions, such as strong star-shapedness, on $U$ are still needed to avoid the typical degenerate non-uniqueness which can occur for this type of problem.  The following is a consequence of \tref{convergence}, \tref{planestrictcomparison}, and \tref{ptfm}.

\begin{theorem}
Let $U \subseteq \R^d$ a regular open set.
\begin{enumerate}
\item[$(i)$] Any subsequential uniform limit $u$ of the rescalings $h^{-1} u_h(h x)$ is a supersolution and modified (and weakened) subsolution of \eref{intropdeH}.
\item[$(ii)$] If $d=2$ and $U$ is strongly star-shaped or $d \geq 3$ and $U$ is convex, then the limit is the unique solution of \eref{intropdeH}.
\end{enumerate}
\end{theorem}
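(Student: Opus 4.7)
My plan is to assemble the three ingredients named in the statement --- \tref{convergence}, \tref{planestrictcomparison}, and \tref{ptfm} --- in the natural way, since the theorem is explicitly a consequence of those.

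Part $(i)$ follows immediately from \tref{convergence}: that theorem identifies every subsequential uniform limit $u$ of the rescalings $h^{-1} u_h(h\cdot)$ as simultaneously a supersolution and a modified subsolution of \eref{intropdeH}. Because the modified condition implies the weakened one (per the ordering noted in the introduction, modified subsolutions are a strengthening of weakened subsolutions), the ``and weakened'' part of $(i)$ is automatic.

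For part $(ii)$ the content is uniqueness of the limit, from which uniform convergence of the full family to a single $u$ follows in the standard way by precompactness. I would handle the two regimes separately. If $d=2$ and $U$ is strongly star-shaped, I would invoke \tref{planestrictcomparison}, the strict comparison principle adapted to this geometry: combined with $(i)$ it forces any two subsequential limits to coincide, by comparing each against the other as a modified subsolution versus a supersolution. If instead $d \geq 3$ and $U$ is convex, I would invoke \tref{ptfm}, which should supply uniqueness in the convex regime --- where, per the introduction, the weaker of the two subsolution notions already suffices. In both cases uniqueness of the subsequential limit upgrades to uniform convergence of the full family $\{h^{-1}u_h(h\cdot)\}$, because that family is precompact in the local uniform topology by the a priori regularity inputs to \tref{convergence}. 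Once the unique $u$ is identified, it is simultaneously a supersolution and a modified subsolution, hence the unique solution of \eref{intropdeH} in each of the two regimes, giving $(ii)$.

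The main obstacle at this level is essentially nil: all the real work lives inside the three supporting theorems --- the homogenization done by \tref{convergence}, the strict comparison for the lower semicontinuous Hamiltonian done by \tref{planestrictcomparison}, and the convex-case comparison done by \tref{ptfm}. The only delicate bookkeeping I see is verifying that the subsolution notion output by \tref{convergence} is exactly the input each comparison theorem requires: the modified notion in the strongly star-shaped two-dimensional case, and whichever of the modified or weakened notions \tref{ptfm} is actually stated with in the convex case. This matching is what forces the bifurcation of hypotheses in $(ii)$, and reflects the paper's overall theme that lower semicontinuity of $H$ makes the choice of subsolution notion genuinely consequential.
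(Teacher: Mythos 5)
Your high-level scaffolding for part $(ii)$ is sound, but you have the roles of \tref{convergence} and \tref{ptfm} essentially reversed, and this is not a cosmetic issue.

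\tref{convergence} (Theorem~\ref{t.convergence}) does \emph{not} identify the limit as a modified subsolution. Its proof proceeds through \lref{uhsupersolution} and \lref{uhweakenedsubsolution}, which give the supersolution and \emph{weakened} subsolution properties of any subsequential limit; it then closes the loop in the convex case by citing \tref{uniqueness}. The modified subsolution property is strictly stronger, and the paper must prove it separately: \tref{ptfm} is precisely the perturbed test function construction needed for this. It says that for any smooth strict supersolution $\varphi$ compatible with a subspace $\Sigma$, one can build a discrete family $u_n$ with $\Delta u_n \leq (1+o(1))\id_{\{u_n=0\}}$ shadowing $\max\{\varphi,0\}$. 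Combining that with the comparison structure of the discrete least supersolution (as in the proof of \lref{uhweakenedsubsolution}, but with the perturbed test functions in place of exact half-space solutions) is what verifies the modified subsolution condition for subsequential limits. That is the entire point of \tref{ptfm}; it has nothing to do with uniqueness.

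Conversely, you invoke \tref{ptfm} to ``supply uniqueness in the convex regime.'' It does not and cannot: it produces discrete approximations of test functions, not a comparison theorem. Uniqueness for $d\geq 3$ with $U$ convex is already fully contained in \tref{convergence}, which internally applies \tref{uniqueness} (Theorem~\ref{t.uniqueness}), the convex comparison theorem, to the supersolution and weakened subsolution obtained along the subsequence. For $d=2$ strongly star-shaped, \tref{planestrictcomparison} is the correct comparison input, but it requires the \emph{modified} subsolution property as its hypothesis --- which is exactly why \tref{ptfm} is indispensable for part $(ii)$ as well as $(i)$. So the dependency chain is: \tref{ptfm} $\Rightarrow$ modified subsolution; \tref{planestrictcomparison} (plus star-shaped strictification by dilation) $\Rightarrow$ uniqueness in $d=2$; \tref{convergence} $\Rightarrow$ uniqueness for convex $U$. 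Your proposed chain --- \tref{convergence} for modified subsolution, \tref{ptfm} for convex uniqueness --- would leave both gaps unclosed.
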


\subsection{Water droplets on a rough surface} 

As mentioned earlier in the introduction, the first motivation for our work was to explain the formation of facets in the effective contact line of liquid drops wetting a patterned solid surface.  In a series of physical experiments, Raj-Adera-Enright-Wang~\cite{Raj-Adera-Enright-Wang} control the shape of a liquid droplet by creating a surface patterned with periodic arrays of micro-pillars.  For an expanding droplet the contact line is pinned sooner when it is parallel to a lattice direction. This leads the formation of facets in the contact line.  Raj et al.~\cite{Raj-Adera-Enright-Wang} are able to create surfaces so that the steady state wetted set appears to be polygonal, squares, hexagons, octagons and others.  The goal of this engineering research is essentially the inverse version of the problem we consider.  That is, can one find a planar graph for which the limit of local minimizers of the analogue of \eref{introJh} has a particular convex shape?  The article \cite{Raj-Adera-Enright-Wang} suggests that this may be possible.

We were interested to explain the results of \cite{Raj-Adera-Enright-Wang} via a relatively simple mathematical model.  The key feature we were interested to capture was the existence of facets in the free boundary of either the minimal super-solution or the maximal sub-solution of the Euler-Lagrange equation.  Note that since the water droplets in \cite{Raj-Adera-Enright-Wang} were achieved by an advancing contact line they should arise as a minimal super-solution, or at least the minimal supersolution above a certain obstacle.  In general the minimal super-solution and maximal sub-solution are of physical importance since they are the only local minimizers which can be found reliably by the flow without prior knowledge of the solution, either advancing from a small initial contact set or receding from a large initial contact set.

To model the physics accurately, one would ideally study the full capillarity energy.  Global energy minimizers of the capillarity energy were studied by Caffarelli-Mellet~\cite{Caffarelli-Mellet0} for flat patterned surfaces, and for rough surfaces by Alberti-DeSimone~\cite{Alberti-DeSimone} and more recently quantified by the first author and Kim in \cite{Feldman-Kim}.  As with our discrete model, the global energy minimizers turn out to have axial symmetry in the limit as the length scale of the roughness $\to 0$.  Thus the formation of facets in the contact line is a property of local minimizers, or, possibly, the length scales in the physical experiments are not sufficiently separated for a homogenization type argument.  After their work \cite{Caffarelli-Mellet0}, Caffarelli-Mellet~\cite{Caffarelli-Mellet} also studied local minimizers, showing the existence of a sequence of local minimizers which converged to a non-axially-symmetric limit.  Characterizing the pinning interval and the limiting free boundary problem for minimal super-solutions (or maximal sub-solutions) of the full capillarity problem seems to be a difficult problem. 

One possible simplification of the full capillarity model is the Alt-Caffarelli type energy \eref{acenergy} with an oscillating coefficient.  This problem was studied by Caffarelli-Lee~\cite{Caffarelli-Lee}, who established the existence of facets in convex free boundaries for certain local minimizers.  The dynamic version of this problem was studied by Kim \cite{Kim}, who proved that the limiting normal velocity can have non-trivial pinning intervals, which can lead to facets formation for carefully chosen initial data.  While the scaling limit of our problem is of the same form as the homogenized limit of \eref{acenergy}, a significant part our our work is devoted to proving convexity and facet formation of the free boundary of the limiting minimal supersolution of the natural boundary value problem \eref{introfde}.  The difficulties with the continuum models, and the appealing similarity between the finite solutions in \fref{intro2d} and the droplets observed in \cite{Raj-Adera-Enright-Wang}, were one motivation for our study of \eref{introfde}.  

\subsection{The boundary sandpile}

There is a connection between our model and the boundary sandpile considered by Aleksanyan-Shahgholian \cite{Aleksanyan-Shahgholian,Aleksanyan-Shahgholian-2}.  Indeed, our discrete model first appears in this work.  The divisible sandpile (see for example Levine \cite{Levine}) is a deterministic diffusion process on the lattice in which configurations $\rho : \Z^d \to [0,\infty)$ evolve by toppling.  Each site has some fixed capacity and, at each time step, evenly divides and sends its excess to its neighbors.  When all sites have capacity $1$, the evolution is described by:
\begin{equation*}
  \rho_{k+1} = \rho_k + (2d)^{-1} \Delta \max \{ 0, \rho_k - 1 \}
\end{equation*}
In the boundary sandpile model, once a site topples, its capacity is set to zero.  That is, one also keeps track of the set of toppled vertices $D_k \subseteq \Z^d$ and evolves according to:
\begin{equation*}
  D_{k+1} = D_k \cup \{ \rho_k > 1 \} \quad \mbox{and} \quad \rho_{k+1} = \rho_k + (2d)^{-1} \Delta \max \{ 0, \rho_k - \id_{\{\Z^d \setminus D_k\}}\}
\end{equation*}
If $\rho_0 : \Z^d \to [0,\infty)$ has finite support, then the evolution stabilizes.  That is, $\rho_\infty = \lim_{k \to \infty} \rho_k$ exists.  Moreover, the limit can be computed as $\rho_\infty = \rho_0 + \Delta u$, where $u : \Z^d \to \R$ is the least function satisfying
\begin{equation*}
  u \geq 0 \quad \mbox{and} \quad \rho_0 + \Delta u \leq \id_{\{ u = 0 \}}.
\end{equation*}
This is quite similar to computing the least supersolution of \eref{introfde}.  Indeed, it is simply the Poisson version.  The connection between the dynamics described here and our discrete problem is made clear below in \lref{flow}.

\subsection{Outline}

In \sref{hamiltonian}, we study the Hamiltonian used to describe the boundary condition of the limit of local minimizers of \eref{introJh}.  We use a contour integration to find a nice formula for the Hamiltonian.  In \sref{viscosity}, we develop the viscosity solution theory of \eref{intropdeH} for general lower semicontinuous Hamiltonians.  Here we prove uniqueness of weak solutions and the formation of facets on the free boundary when the data is convex.  In \sref{scaling}, we prove that local minimizers of \eref{introJh} converge, under the appropriate scaling and with suitable data, to the unique solution of \eref{intropdeH}.  This is essentially standard given the work of the previous sections.  Finally, in \sref{nonconvex}, we explore removing the convexity hypothesis from our results in \sref{viscosity}.  We prove that this is possible in dimension two and lay groundwork for higher dimensions. 

\subsection{Source code}

The source code used to generate the figures in this article is included in the arXiv submission.

\subsection{Acknowledgments}

The first author was partially supported by the National Science Foundation.  The second author was partially supported by the National Science Foundation and the Alfred P Sloan foundation.  Both authors benefited from conversations with Hayk Aleksanyan and Henrik Shahgholian.

\section{The Hamiltonian}
\label{s.hamiltonian}

\subsection{Essential properties}

In this subsection, we define the limiting Hamiltonian $H$ and study its basic properties.  These properties suffice to capture the scaling limit of \eref{introJh}.  The later subsections study the fine structure of $H$, which is required to prove the limit has facets.

Our definition is inspired by the condition $\Delta u \leq \id_{\{ u = 0 \}}$ that is implied by \eref{introfde}.  If, in the blow up limit, we obtain a solution $\Z^d$ that looks roughly like $u(x) = \max \{ 0, p \cdot x \}$, then the discrete Laplacian on the boundary should be at most one.  To define the Hamiltonian, we reverse this thinking, and measure the Laplacian on the boundary of a half space solution.

\begin{definition}
  If $p \in \R^d \setminus \{0\}$, then $H(p) = \Delta u(0)$, where $u : \Z^d \to \R$ solves
  \begin{equation}
    \label{e.halfspace}
    \begin{cases}
      \Delta u(x) = 0 \quad \mbox{for } p \cdot x > 0 \\
      u(x) = 0 \quad \mbox{for } p \cdot x \leq 0 \\
      \sup_{p \cdot x > 0} |u(x) - p \cdot x| < \infty.
    \end{cases}
  \end{equation}
\end{definition}

First we state a maximum principle in half-spaces.

\begin{lemma}
  \label{l.halfball}
  There is a constant $C > 0$ such that, if $p \in \R^d \setminus \{0\}$, $R > r \geq 1$, and $u : \Z^d \to \R$ satisfies $\Delta u = 0$ in
  \begin{equation*}
    D = \{ q \in \Z^d : p \cdot q > 0 \mbox{ and } |q| < R \},
  \end{equation*}
  then
  \begin{equation*}
    \max_{B_r \cap D} |u| \leq \max_{B_{R/2} \cap \partial^+ D} |u| + C \frac{r}{R} \max_{\partial^+ D} |u|.
  \end{equation*}
\end{lemma}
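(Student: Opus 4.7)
The plan is to split $u = u_1 + u_2$ using linearity of the discrete Laplacian: let $u_1$ be the discrete-harmonic function in $D$ whose boundary values on $\partial^+ D$ coincide with $u$ on $A := B_{R/2} \cap \partial^+ D$ and vanish on $F := \partial^+ D \setminus A$, and let $u_2 := u - u_1$. The discrete maximum principle immediately gives $|u_1(x)| \leq M_1 := \max_A |u|$ for all $x \in D$. The content of the lemma is then the boundary-decay bound $|u_2(x)| \leq C(|x|/R) M_2$ with $M_2 := \max_{\partial^+ D} |u|$, since combining this with $|x| \leq r$ on $B_r \cap D$ yields the stated inequality.

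The function $u_2$ is discrete-harmonic in $D$, vanishes on the flat piece $A$, and is bounded by $M_2$ on the far boundary $F$, which lies at Euclidean distance at least $R/2$ from the origin. Decomposing $u_2 = u_2^+ - u_2^-$ according to the sign of the boundary data on $F$, each $u_2^\pm$ is a nonnegative discrete-harmonic function in $D$ vanishing on $A$ and bounded by $M_2$ on $\partial^+ D$. The natural lattice test function is the linear map $g(x) := (p \cdot x)/|p|$, which is discrete-harmonic in $D$, nonnegative there, vanishes on $\{p \cdot x = 0\}$, and satisfies $g(x) \leq |x|$. A discrete boundary-Harnack-type comparison with $g$, using a reference point $x_0 \sim (R/2)\hat p$ deep in $D$ where $g(x_0) \asymp R$ and $u_2^\pm(x_0) \leq M_2$, yields $u_2^\pm(x) \leq C g(x) M_2 / R \leq C'(|x|/R) M_2$, which gives the required bound on $|u_2|$.

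The main obstacle is supplying the boundary Harnack estimate on the lattice: the obvious radial candidate $2|x|/R$ is subharmonic rather than superharmonic (since $|x|$ is convex, $\Delta|x| \geq 0$), so it serves only as a lower barrier. To obtain the required upper bound I would use the random-walk representation $u_2(x) = \E^x[u_2(X_\tau)]$, where $X_t$ is the simple random walk on $\Z^d$ and $\tau$ its first exit time from $D$, reducing the estimate to $\mathrm{P}^x[X_\tau \in F] \leq C(|x|/R)$. This hitting-probability bound follows by iterating a one-step gambler's-ruin estimate in the half-annuli $D \cap (B_{2\rho} \setminus B_\rho)$ over a dyadic sequence of scales between $|x|$ and $R/2$, with the single-scale step coming from the submartingale property of $|X_t|$ and optional stopping.
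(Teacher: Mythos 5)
Your decomposition $u = u_1 + u_2$, the maximum principle bound $|u_1| \le M_1$, and the reduction of the lemma to the harmonic-measure estimate
\[
\omega_x(F) = \mathrm{P}^x\bigl[X_\tau \in F\bigr] \le C\,\frac{|x|}{R},\qquad F := \partial^+ D \setminus B_{R/2},
\]
are all correct, and this is in the same spirit as the paper's (unexpanded) ``standard barrier argument.'' The gap is in how you propose to prove this estimate.

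The concrete problem is the claim that the linear rate follows from ``iterating a one-step gambler's-ruin estimate \dots with the single-scale step coming from the submartingale property of $|X_t|$.'' The submartingale property of $|X_t|$ and optional stopping give a \emph{lower} bound on the probability that the walk reaches $\partial B_{2\rho}$ before hitting the flat boundary, which is the opposite direction from what you need. Even setting that aside, the one-scale escape probability is not a function of $\rho$ alone: a walk that arrives at $\partial B_\rho$ near the ``north pole'' (i.e.\ with $p\cdot z/|p|\asymp\rho$) escapes to $\partial B_{2\rho}$ with probability close to $1$, while one that arrives near the equator escapes with small probability. So the dyadic product is not controlled by a fixed factor per scale, and at best you would get a polynomial rate $(|x|/R)^\alpha$ with an $\alpha$ you cannot identify as $1$. (That would actually suffice for the paper's two uses of this lemma, since both only need the second term to vanish as $R\to\infty$, but it does not prove the lemma as stated.)

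To get the linear rate you need an argument that tracks $p\cdot X_t$ rather than $|X_t|$, but a naive optional-stopping bound on $p\cdot X_t$ also fails, because on the spherical part of $\partial^+D$ the value of $p\cdot X_\tau$ can be arbitrarily small. Two ways to close the gap: (i) prove (or cite) the scale-invariant boundary Harnack inequality for the discrete half-space $\{p\cdot q>0\}$, which says that a nonnegative discrete-harmonic function vanishing on the flat boundary is dominated by a multiple of $p\cdot x$ determined by its value at a single interior reference point; applied at scale $R/4$ (so that $u_2^\pm$ genuinely vanishes on the nearby flat boundary) this gives exactly $u_2^\pm(x)\le C M_2 (p\cdot x)/(|p|R)\le CM_2|x|/R$. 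Or (ii) use the Cauchy-tail estimate for the exit position $Y$ of the discrete half-space walk, $\mathrm{P}^x[|Y|>s]\le C(p\cdot x/|p|)/s$, together with the observation that from any point of $\partial B_R\cap D$ the continued half-space walk lands at $|Y|>R/2$ with probability bounded below by a dimensional constant; this converts the event $\{X_\tau\in F\}$ into a tail event for $Y$ and yields the linear bound. Either route is a genuine half-space estimate, not a radial gambler's-ruin argument, and this is where the paper's phrase ``barrier argument'' is doing its work.
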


\begin{proof}
  This is a standard barrier argument.
\end{proof}

The uniqueness of \eref{halfspace} follows from \lref{halfball}, and more generally it implies the following maximum principle: if $u$ is subharmonic and bounded in $\{x \cdot p >0\}$ then
\[ \sup_{p \cdot x >0} u(x) = \sup_{\partial^+\{p \cdot x >0\}} u(x).\]
  The maximum principle implies the following easy estimates.

\begin{lemma}
  \label{l.halfspace}
  For $p \in \R^d$, the solution $u : \Z^d \to \R$ of \eref{halfspace} satisfies
  \begin{equation}
    \label{e.barriers}
    \max \{ 0, p \cdot x \} \leq u(x) \leq \max \{ 0, p \cdot x \} + \| p \|_\infty \quad \mbox{for } x \in \Z^d.
  \end{equation}
  Moreover, $\Delta u \leq \Delta u(0) \id_{\{ u = 0 \}}$.
\end{lemma}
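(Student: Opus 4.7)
The plan is to establish all three inequalities by applying the half-space maximum principle from \lref{halfball} to carefully chosen comparison functions. The main obstacle is the final claim on $\Delta u$, which requires first proving a monotonicity property of $u$ under translations that shift the boundary.

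Both pointwise bounds will follow from direct comparison. For the lower bound $u(x) \geq \max\{0, p \cdot x\}$, I would test $u$ against $v(x) = \max\{0, p \cdot x\}$, which is discretely subharmonic as the restriction to $\Z^d$ of a convex function. Thus $u - v$ is superharmonic in $\{p \cdot x > 0\}$, vanishes on $\{p \cdot x \leq 0\}$, and is bounded by the third hypothesis of \eref{halfspace}, so \lref{halfball} gives $u \geq v$ everywhere. For the upper bound, the key geometric observation is that every $y \in \partial^+ \{p \cdot x > 0\}$ lies one lattice step from some point with $p \cdot x > 0$, whence $-\|p\|_\infty \leq p \cdot y \leq 0$. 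Setting $g(x) = u(x) - p \cdot x - \|p\|_\infty$, one checks that $g$ is harmonic and bounded in $\{p \cdot x > 0\}$ and satisfies $g(y) = -p \cdot y - \|p\|_\infty \leq 0$ on $\partial^+\{p \cdot x > 0\}$; the maximum principle then yields $g \leq 0$ in the half-space, and outside $u = 0 \leq \|p\|_\infty$ is trivial.

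For the Laplacian bound, first note that $\{u = 0\} = \{p \cdot x \leq 0\}$ by the lower bound, while $\Delta u = 0$ on $\{u > 0\}$ by definition. The claim therefore reduces to proving $\Delta u(a) \leq \Delta u(0)$ for every $a \in \Z^d$ with $p \cdot a \leq 0$. I would derive this from the following monotonicity: the translate $u^a(x) = u(x+a)$ solves the half-space problem on the shifted half-space $\{p \cdot x > -p \cdot a\}$, is globally nonnegative, and is globally subharmonic on $\Z^d$. Hence $u - u^a$ is superharmonic in $\{p \cdot x > 0\}$, vanishes on $\{p \cdot x \leq 0\}$ (where both are zero when $p \cdot a \leq 0$), and is bounded, so \lref{halfball} gives $u^a \leq u$ pointwise, i.e., $u(x+a) \leq u(x)$ for all $x \in \Z^d$. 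Specializing to $x = \pm e_i$, summing over $i$, and using $u(a) = u(0) = 0$, the definition of the discrete Laplacian immediately gives $\Delta u(a) \leq \Delta u(0)$, completing the proof.
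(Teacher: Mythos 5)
Your proof is correct. The two pointwise bounds are handled exactly as the paper does: comparison by barriers via the half-space maximum principle, with the $\|p\|_\infty$ shift precisely to absorb the boundary values of $p\cdot x$ on $\partial^+\{p\cdot x>0\}$. Where you genuinely diverge is the Laplacian bound. The paper defers this to the proof of Lemma~\ref{gform}: after quotienting by $\Lambda_p$ to reduce to the one-dimensional profile $f$ solving the $\Delta_p$-problem, it shows $f(k+1)-f(k)\geq 0$ by the maximum principle, concludes $f$ is increasing, and hence $\Delta_p f(k)$ is increasing on $\{k\leq 0\}$ with maximum at $k=0$. That argument requires $p\in\Z^d$ (the 1D reduction needs $\Lambda_p$ to have full rank $d-1$), so strictly speaking the paper's deferred proof only covers rational $p$. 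Your translation-monotonicity argument (showing $u(\cdot+a)\leq u$ whenever $p\cdot a\leq 0$, then unwinding the Laplacian at $a$ using $u(a)=u(0)=0$) is cleaner in this respect: it is intrinsically $d$-dimensional, avoids the reduction to the one-dimensional graph Laplacian entirely, and gives the statement directly for every $p\in\R^d$ as the lemma asserts. The only thing to verify is global subharmonicity of $u$ (so that $u-u^a$ is superharmonic in $\{p\cdot x>0\}$), which holds since $u\geq 0$ and $u=0$ on $\{p\cdot x\leq 0\}$; you assert this correctly. One could also view your argument as the natural lift of the paper's 1D monotonicity back to $\Z^d$, but stated this way it is both shorter and more general.
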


\begin{proof}
  The lower and upper bounds in \eref{halfspace} are, respectively, a subsolution and a supersolution of \eref{halfspace}.  The error term $\| p \|_\infty$ guarantees that the right-hand side is non-negative on the boundary $\partial^+ \{ q \in \Z^d : p \cdot q > 0 \}$.  
  
  It is more convenient to prove $\Delta u \leq \Delta u(0) \id_{\{ u = 0 \}}$ later in Lemma~\ref{gform}.
\end{proof}

When combined with a compactness argument, the easy estimates from \lref{halfspace} tell us all that we need to know about $H$ to capture the scaling limit of our discrete problem.

\begin{lemma}
  For $p \in \R^d$ and $t > 0$, we have
  \begin{equation}
    \label{e.Hhomogeneous}
    H(t p) = t H(p),
  \end{equation}
  \begin{equation}
    \label{e.Hbounded}
    C^{-1} |p| \leq H(p) \leq C |p|,
  \end{equation}
  and
  \begin{equation}
    \label{e.Hlowersemicontinuous}
    \liminf_{q \to p} H(q) \geq H(p).
  \end{equation}
\end{lemma}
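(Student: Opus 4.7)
The plan is to treat the three claims in order, with homogeneity and the two-sided bound following directly from \lref{halfspace}, and lower semicontinuity requiring more work. For \eref{Hhomogeneous}, I would observe that $\{tp \cdot x > 0\} = \{p \cdot x > 0\}$ for $t > 0$, so if $u$ solves \eref{halfspace} for $p$ then $tu$ solves it for $tp$, giving $H(tp) = \Delta(tu)(0) = tH(p)$. For \eref{Hbounded}, since $u(0) = 0$ we have $H(p) = \frac{1}{2d}\sum_{|e|=1} u(e)$, and the barriers of \lref{halfspace} applied at $e = \pm e_j$ give $\|p\|_1 \leq \sum_{|e|=1} u(e) \leq \|p\|_1 + 2d\|p\|_\infty$; the elementary norm inequalities $|p| \leq \|p\|_1 \leq \sqrt{d}\,|p|$ and $\|p\|_\infty \leq |p|$ then yield $C^{-1}|p| \leq H(p) \leq C|p|$.

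For lower semicontinuity, my plan is a compactness argument. Fix $p \neq 0$ and $p_k \to p$, and pass to a subsequence so that $H(p_k) \to \liminf_j H(p_j)$. Let $u_k$ solve \eref{halfspace} for $p_k$. The barriers bound $u_k$ uniformly on each finite subset of $\Z^d$, so a diagonal extraction produces a pointwise limit $v : \Z^d \to \R$, and pointwise convergence at the $2d$ unit neighbors of $0$ gives $\Delta u_k(0) \to \Delta v(0)$. The task thus reduces to showing $\Delta v(0) \geq H(p) = \Delta u(0)$, where $u$ is the solution of \eref{halfspace} for $p$.

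The main obstacle is that $v$ need not equal $u$: a lattice point $x$ with $p \cdot x = 0$ can satisfy $p_k \cdot x > 0$ along the subsequence, so that $v(x)$ is a limit of harmonic values and may be strictly positive, whereas $u(x) = 0$. The key observation is that it is still true that $v \geq u$, which is enough. Tracking the three cases $p \cdot x > 0$, $p \cdot x < 0$, and $p \cdot x = 0$ separately, I would identify $v$ as satisfying $\Delta v = 0$ on $\{p \cdot x > 0\}$, $v = 0$ on $\{p \cdot x < 0\}$, $v \geq 0$ on $\{p \cdot x = 0\}$ with $v(0) = 0$ (since $p_k \cdot 0 = 0$ for every $k$), and the growth bound of \eref{halfspace} passing to the limit. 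Then $u - v$ is bounded and harmonic in $\{p \cdot x > 0\}$ with $u - v \leq 0$ on $\partial^+\{p \cdot x > 0\}$ (where $u \equiv 0$ and $v \geq 0$); the half-space maximum principle stated immediately after \lref{halfball} gives $u \leq v$ globally, so evaluating Laplacians at $0$ yields $H(p) = \Delta u(0) \leq \Delta v(0) = \liminf_k H(p_k)$.
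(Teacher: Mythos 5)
Your argument is correct and follows essentially the same route as the paper's: homogeneity by rescaling the half-space solution, the two-sided bound from the barriers of \lref{halfspace}, and lower semicontinuity via a diagonal/compactness extraction of a limit $v$ from the $u_k$, the observation that $v$ dominates the true solution $u$ for slope $p$ (by the half-space maximum principle, since $v \geq 0$ on $\{p\cdot x \le 0\}$ while $u$ vanishes there), and then comparison of Laplacians at the origin. One small slip: in this paper $\Delta u(x) = \sum_{|y-x|=1}(u(y)-u(x))$ is the unnormalized lattice Laplacian, so $H(p) = \Delta u(0) = \sum_{|e|=1} u(e)$ rather than $\frac{1}{2d}\sum_{|e|=1}u(e)$; this changes the intermediate constants but not the conclusion $C^{-1}|p| \leq H(p) \leq C|p|$.
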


\begin{proof}
  The homogeneity \eref{Hhomogeneous} is immediate from the definitions.  The bounds \eref{Hbounded} follow from the bounds \eref{barriers}.  It remains to prove the lower semicontinuity \eref{Hlowersemicontinuous}.  It is enough to show that, if $p_n \to p$ and $H(p_n) \to s$ as $n \to \infty$, then $H(p) \leq s$.  Let $u_n : \Z^d \to \R$ solve \eref{halfspace} for $p_n$.  On account of the bounds \eref{barriers}, we may pass to a subsequence to assume that $u_n(x)$ has a limit $v(x)$ for all $x \in \Z^d$. Observe that $v : \Z^d \to \R$ satisfies
  \begin{equation*}
    \begin{cases}
      v(0) = 0 \\
      \Delta v(0) \leq s \\
      \Delta v(x) = 0 \mbox{ when } v(x) > 0 \\      
      \max\{ 0, p \cdot x \} \leq v(x) \leq \max\{ 0, p \cdot x \} + \| p \|_\infty.
    \end{cases}
  \end{equation*}
  The maximum principle gives $u \leq v$ for the solution $u : \Z^d \to \R$ of \eref{halfspace} for $p$.  Since $u(0) = v(0)$ and $u \leq v$, we have $\Delta u(0) \leq \Delta v(0) \leq s$ and $H(p) \leq s$.
\end{proof}

\subsection{Rational slopes}

In order to understand the fine structure of our Hamiltonian $H$, we compute its values for rational slopes.  We rewrite our definition of $H$ several times, making it easier to compute at each iteration.  We first change \eref{halfspace} into a one dimensional problem.

\begin{lemma}\label{gform}
  If $p \in \Z^d$ and $\gcd(p_1, ..., p_d) = 1$, then
  \begin{equation}
    \label{e.Hhitting}
    H(p) = |p|^2 \lim_{k \to \infty} g(k),
  \end{equation}
  where $g : \Z \to \R$ is the unique solution of
  \begin{equation}
    \label{e.hittingmeasure}
    \begin{cases}
      \Delta_p g(k) = 0 \quad \mbox{for } k > 0 \\
      g(0) = 1 \\
      g(k) = 0 \quad \mbox{for } k < 0 \\
      \sup_{k > 0} |g(k)| < \infty
    \end{cases}
  \end{equation}
  and
  \begin{equation*}
    \Delta_p g(k) = \sum_{j = 1}^d (g(k+p_j) + g(k-p_j) - 2 g(k)).
  \end{equation*}
\end{lemma}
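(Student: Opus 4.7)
The plan is to reduce the half-space problem to a one-dimensional problem via the gcd condition, and then to relate the 1D functions $f$ and $g$ through a discrete Green's identity that telescopes to a localized boundary term.

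First I would establish that $u(x) = f(p \cdot x)$ for some $f : \Z \to \R$. The problem \eref{halfspace} is invariant under lattice translations $x \mapsto x + q$ with $p \cdot q = 0$, and uniqueness (from the discussion following \lref{halfball}) forces $u$ to share this invariance. Since $\gcd(p_1, \ldots, p_d) = 1$, the map $x \mapsto p \cdot x$ from $\Z^d$ to $\Z$ is surjective by B\'ezout, so $u$ descends to $u(x) = f(p \cdot x)$. Substituting into $\Delta u$ yields $\Delta u(x) = \Delta_p f(p \cdot x)$, and translating the boundary data shows $f$ satisfies $\Delta_p f = 0$ on $\{k > 0\}$, $f = 0$ on $\{k \leq 0\}$, and $f(k) - k$ bounded. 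Existence and uniqueness of $g$ in \eref{hittingmeasure} and of the limit $c := \lim_{k \to \infty} g(k)$ follow from the same 1D maximum principle together with the fact that bounded solutions of $\Delta_p g = 0$ on $\{k > 0\}$ decompose as a constant plus exponentially decaying modes.

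The heart of the proof is the identity $H(p) = \Delta_p f(0) = |p|^2 c$. I would introduce the truncated sum
\begin{equation*}
  S_N := \sum_{k \leq N} \bigl[ g(k) \Delta_p f(k) - f(k) \Delta_p g(k) \bigr]
\end{equation*}
and evaluate it two ways. Since $\Delta_p f$ and $\Delta_p g$ vanish on $\{k \geq 1\}$, $f$ vanishes on $\{k \leq 0\}$, and $g$ vanishes on $\{k < 0\}$ with $g(0) = 1$, only the $k=0$ term survives and we get $S_N = \Delta_p f(0) = H(p)$ independently of $N$. On the other hand, assuming without loss of generality $p_j > 0$ for each $j$ (the operator $\Delta_p$ is invariant under $p_j \to -p_j$, and $p_j = 0$ contributes trivially), a reindexing argument shows that for each $j$ the contributions from the shifts $\pm p_j$ combine via the antisymmetric relation $g(k+p_j) f(k) - f(k+p_j) g(k) = -[g(k) f(k+p_j) - f(k) g(k+p_j)]$, producing a telescoping that leaves
\begin{equation*}
  S_N = \sum_j \sum_{k = N+1}^{N + |p_j|} \bigl[ g(k - |p_j|) f(k) - f(k - |p_j|) g(k) \bigr],
\end{equation*}
a sum localized to a window of width $\max_j |p_j|$ near $k = N$.

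Finally, letting $N \to \infty$ and using $g(k) \to c$ together with $f(k) = k + O(1)$, each summand converges to $c \, |p_j|$; since the inner sum has length $|p_j|$, it contributes $c\, p_j^2$ in the limit, and summing over $j$ yields $H(p) = c \sum_j p_j^2 = c |p|^2$, as desired. The main technical obstacle is the telescoping step in the second evaluation of $S_N$: carefully tracking the reindexing $k \to k \pm p_j$ for each of the two shifts and exploiting the cancellation between the resulting Wronskian-type terms is what collapses the sum to the clean boundary-flux expression above, which is in turn what produces precisely the factor $\sum_j p_j^2 = |p|^2$ in the limit.
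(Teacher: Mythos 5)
Your proposal is correct, and its skeleton matches the paper: both reduce \eref{halfspace} to a one-dimensional problem via $u(x) = f(p\cdot x)$ using $\gcd(p_1,\dots,p_d)=1$, and both then relate $\Delta_p f(0)$ to $\lim_{k\to\infty} g(k)$. Where you diverge is the key identity $\Delta_p f(0) = |p|^2 \lim g(k)$. The paper dispatches it in one line: it notes $\sum_{k\le 0}\Delta_p f(k)=|p|^2$ and asserts that $\Delta_p f(\cdot)$ is distributed on $\{k\le 0\}$ according to $\Delta_p$-harmonic measure from $+\infty$, for which $g$ is precisely the hitting probability of the site $0$. You instead make this duality explicit by a discrete Green (Wronskian) identity: the two evaluations of $S_N = \sum_{k\le N}[g\,\Delta_p f - f\,\Delta_p g]$ — one using the boundary data to isolate the $k=0$ term, the other telescoping to a moving window near $k=N$ — recover the same conclusion without invoking the probabilistic interpretation. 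This is a self-contained, more elementary derivation of a fact the paper only cites; it does buy you a fully written-out proof, at the modest cost of having to observe (as you do) that the nonconstant modes of $f-\mathrm{id}$ and $g$ decay geometrically so that the cross terms $f(k-|p_j|)[g(k-|p_j|)-g(k)]$ vanish as $N\to\infty$. Your telescoping bookkeeping and the final count (window of length $|p_j|$, each summand $\to c\,|p_j|$, total $c\,p_j^2$) are all correct.
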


\begin{proof}
  We begin by observing that the solution $u$ of \eref{halfspace} satisfies $u(x+y) = u(x)$ whenever $x, y \in \Z^d$ and $p \cdot y = 0$.  Thus, if we quotient by the lattice $\Lambda = \{ q \in \Z^d : p \cdot q = 0 \}$, then we obtain a one dimensional problem.  That is, there is a function $f : \Z \to \R$ such that
  \begin{equation*}
    u(x) = f(p \cdot x)
  \end{equation*}
  and
  \begin{equation*}
    \begin{aligned}
      \Delta u(x)
      & = \sum_{i = 1}^d (u(x+e_i) + u(x-e_i) - 2 u(x)) \\
      & = \sum_{i = 1}^d (f(p \cdot x + p_i) + f(p \cdot x - p_i) - f(p \cdot x)) \\
      & = \Delta_p f(p \cdot x).
    \end{aligned}
  \end{equation*}
  Since $p$ is irreducible, the operator $\Delta_p$ is the Laplacian of connected and translation invariant graph on $\Z$.  In particular, $f : \Z \to \R$ is the unique solution of
  \begin{equation*}
    \begin{cases}
      \Delta_p f(k) = 0 \quad \mbox{for } k > 0 \\
      f(k) = 0 \quad \mbox{for } k \leq 0 \\
      \sup_{k > 0} |f(k) - k| < \infty.
    \end{cases}
  \end{equation*}
  We observe that
  \begin{equation*}
    \sum_{k \leq 0} \Delta_p f(k) = |p|^2.
  \end{equation*}
  Since $\Delta_p f(k)$ is distributed on $k \leq 0$ according to $\Delta_p$-harmonic measure from $+\infty$, we obtain $H(p) = \Delta u(0) = \Delta_p f(0) = |p|^2 \lim_{k \to \infty} g(k)$.
  
  At this point we can also check that $\Delta u \leq \Delta u(0)$.  Consider $f(k+1) - f(k)$, bounded and $\Delta_p$-harmonic on $\Z_+$ and $f \geq 0$ on the $\Delta_p$-boundary of $\Z_+$.  Thus by maximum principle $f$ is increasing, and so also is $\Delta_p f(k)$ for $k \leq 0$.
\end{proof}

We study the roots of the characteristic polynomial $\lambda^n Q(\lambda)$ of $\Delta_p$.

\begin{lemma}
  If $p \in \Z^d$ and $\gcd(p_1, ..., p_d) = 1$, then
  \begin{equation*}
    Q(\lambda) = \sum_{k = 1}^d (\lambda^{p_k} + \lambda^{-p_k} - 2)
  \end{equation*}
  can be written
  \begin{equation}
    \label{e.qfactor}
    Q(\lambda) = m \lambda^{-n} \prod_{k=1}^n (\lambda - \lambda_k)(\lambda - \lambda_k^{-1}),
  \end{equation}
  where $n = \max_k |p_k|$, $m = \# \{ k : |p_k| = n \}$, and the roots $\lambda_k \in \C$ satisfy
  \begin{equation}
    \label{e.rootbox}
    1 = \lambda_1 < |\lambda_2| \leq \cdots \leq |\lambda_n|,
    \quad \mbox{and} \quad
    \lambda_2, ..., \lambda_n \notin [1,\infty).
  \end{equation}
\end{lemma}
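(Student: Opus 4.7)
The plan is to study $P(\lambda) := \lambda^n Q(\lambda)$ as a polynomial. Expanding gives $P(\lambda) = \sum_{k=1}^d (\lambda^{n+p_k} + \lambda^{n-p_k} - 2 \lambda^n)$; since $n = \max_k |p_k|$, every exponent lies in $[0, 2n]$. Counting contributions to $\lambda^{2n}$ and to $\lambda^0$ gives leading coefficient and constant term both equal to $m = \#\{k : |p_k| = n\}$, so $P$ has degree exactly $2n$ and does not vanish at $0$. The reciprocal symmetry $Q(\lambda^{-1}) = Q(\lambda)$ yields $P(\lambda) = \lambda^{2n} P(\lambda^{-1})$, forcing the $2n$ roots of $P$ to come in reciprocal pairs $\{\mu, \mu^{-1}\}$.

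Next I would identify $\lambda = 1$ as a self-reciprocal double root by computing $Q(1) = 0$, $Q'(1) = \sum_k (p_k - p_k) = 0$, and $Q''(1) = 2 \sum_k p_k^2 = 2 |p|^2 > 0$. The essential step is then to exclude any other unit-circle root using the coprimality hypothesis. For $\lambda = e^{i\theta}$,
\begin{equation*}
Q(e^{i\theta}) = 2 \sum_{k=1}^d (\cos(p_k \theta) - 1) \leq 0,
\end{equation*}
and equality forces $p_k \theta \in 2\pi \Z$ for all $k$. If $\theta/(2\pi) = a/b$ in lowest terms, this is equivalent to $b \mid p_k$ for every $k$, so $b$ divides $\gcd(p_1, \ldots, p_d) = 1$; hence $b = 1$, $\theta \in 2\pi \Z$, and $\lambda = 1$. (If $\theta/(2\pi)$ were irrational, $p_k\theta \in 2\pi\Z$ for all $k$ would force $p = 0$, which is excluded.)

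Finally I would exclude positive real roots other than $1$: writing $\lambda = e^t > 0$,
\begin{equation*}
Q(e^t) = 2 \sum_{k=1}^d (\cosh(p_k t) - 1) \geq 0,
\end{equation*}
with equality iff $t = 0$, since some $p_k \neq 0$. Combining these facts, the $2n$ roots of $P$ consist of the double root at $1$ and $n - 1$ reciprocal pairs $\{\lambda, \lambda^{-1}\}$ with $|\lambda| \neq 1$; picking the representative with $|\lambda_k| > 1$ from each pair, labeling so that $|\lambda_2| \leq \cdots \leq |\lambda_n|$, and setting $\lambda_1 = 1$ produces the factorization \eref{qfactor}. The bound $\lambda_k \notin [1, \infty)$ for $k \geq 2$ then follows from the strict positivity of $Q$ on $(1, \infty)$. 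The main obstacle is really just the Diophantine step using $\gcd(p_1, \ldots, p_d) = 1$, which rules out resonant unit-circle roots; the rest is routine polynomial bookkeeping.
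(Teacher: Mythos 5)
Your proof is correct and follows essentially the same path as the paper: recognize $\lambda^n Q(\lambda)$ as palindromic so roots pair reciprocally, use $Q(e^t)$ to rule out positive real roots besides $1$, and use $Q(e^{i\theta})$ together with $\gcd(p_1,\ldots,p_d)=1$ to rule out other unit-modulus roots. You simply spell out the polynomial bookkeeping and the Diophantine step in more detail than the paper's terse version.
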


\begin{proof}
  The existence of a factorization of the form \eref{qfactor} follows from the observation that $m^{-1} \lambda^n Q(\lambda)$ is a monic and palindromic polynomial.  Since
  \begin{equation*}
    Q(e^t) = \sum_{j=1}^d (2 \cosh(p_j t) - 2),
  \end{equation*}
  we see that $1$ is the only positive real root.  Since
  \begin{equation*}
    Q(e^{it}) = \sum_{j = 1}^d (2 \cos(p_j t) - 2)
  \end{equation*}
  and $\gcd(p_1, ..., p_d) = 1$, we see that $1$ is the only root of unit modulus.
\end{proof}

We write $H(p)$ in terms of the roots of $Q$.

\begin{lemma}
  If $p \in \Z^d$ and $\gcd(p_1, ..., p_d) = 1$, then
  \begin{equation}
    \label{e.Hproduct}
    H(p)^2 = |p|^2 m \prod_{k = 2}^n (-\lambda_k),
  \end{equation}
  where $\lambda_k \in \C$, $n$, and $m$ are as in \eref{qfactor} and \eref{rootbox}.
\end{lemma}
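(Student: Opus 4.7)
The plan is to convert the one-dimensional problem \eref{hittingmeasure} into an algebraic identity via generating functions. First I would note that $\rho := \Delta_p g$ is supported in $\{-n, \dots, 0\}$: it vanishes on $k \geq 1$ by hypothesis, and on $k \leq -n - 1$ every argument $k \pm p_j$ is strictly negative, so the corresponding values of $g$ vanish. Because $g$ is bounded, the power series $\hat g(z) := \sum_{k \geq 0} g(k)\, z^k$ converges on $|z| < 1$, and a shift of summation index yields
\[
Q(z)\,\hat g(z) = \hat \rho(z), \qquad \hat \rho(z) := \sum_{k=-n}^0 \rho(k)\, z^k,
\]
so $z^n \hat \rho(z)$ is a polynomial in $z$ of degree at most $n$.

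The next step is to pin down $\hat \rho$ by matching poles and zeros. A bounded solution on $k \geq 0$ of the homogeneous recurrence $\Delta_p g = 0$ (valid for $k \geq n$) is necessarily a linear combination of the constant mode and the decaying exponentials $\ell \mapsto \lambda_k^{-\ell}$ for $k = 2, \dots, n$ (since $|\lambda_k| > 1$); all of these tend to limits as $\ell \to \infty$, so $c_0 := \lim_{\ell \to \infty} g(\ell)$ exists and $\hat g$ is analytic on $|z| < 1$ with exactly a simple pole at $z = 1$ of residue $-c_0$. By \eref{qfactor}, the roots of $Q$ in $\{|z| \leq 1\}$ are $z = 1$ (with multiplicity two) together with $z = \lambda_k^{-1}$ for $k = 2, \dots, n$ (simple). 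For the identity $Q(z)\hat g(z) = \hat\rho(z)$ to be compatible with the analyticity and pole structure of $\hat g$, the polynomial $z^n \hat\rho(z)$ must vanish at each of these $n$ points, giving $n$ linear conditions on its $n+1$ coefficients; together with the normalization $\hat g(0) = g(0) = 1$ this fixes
\[
\hat g(z) = \frac{c}{m(z - 1) \prod_{k=2}^n (z - \lambda_k)}, \qquad c = -m \prod_{k=2}^n (-\lambda_k),
\]
and the residue at $z = 1$ is $c_0 = \prod_{k=2}^n (-\lambda_k)/\prod_{k=2}^n (1 - \lambda_k)$. By \eref{Hhitting}, $H(p) = |p|^2 c_0$.

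To reach \eref{Hproduct} I would square $c_0$ and apply the identity
\[
m(-1)^{n-1}\prod_{k=2}^n (1 - \lambda_k)^2 = |p|^2 \prod_{k=2}^n \lambda_k,
\]
which follows by computing $Q''(1)$ two ways: direct differentiation of $Q(\lambda) = \sum_j (\lambda^{p_j} + \lambda^{-p_j} - 2)$ gives $Q''(1) = 2|p|^2$, while \eref{qfactor} combined with $(1 - \lambda^{-1}) = -\lambda^{-1}(1 - \lambda)$ gives $Q''(1)/2 = m \prod_{k=2}^n (1 - \lambda_k)(1 - \lambda_k^{-1}) = m(-1)^{n-1}\prod_{k=2}^n (1 - \lambda_k)^2 / \prod_{k=2}^n \lambda_k$. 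Substituting collapses $c_0^2$ to $m\prod_{k=2}^n (-\lambda_k)/|p|^2$, yielding $H(p)^2 = |p|^4 c_0^2 = |p|^2 m \prod_{k=2}^n (-\lambda_k)$. I expect the main delicate step to be the pole-matching for $\hat\rho$: one has to rigorously justify that $\hat g$ has no hidden singularities inside the unit disk and only a simple pole on the unit circle at $z = 1$. Both rest on the boundedness of $g$ and on the preceding lemma's assertion that $z = 1$ is the only root of $Q$ of unit modulus.
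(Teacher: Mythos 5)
Your argument is correct and takes a genuinely different route from the paper's. Both proofs reduce to the same quantity $c_0 = \lim_{k\to\infty} g(k)$ and the same algebraic identity $|p|^2 = \tfrac12 Q''(1) = m\prod_{k\geq 2}(1-\lambda_k)(1-\lambda_k^{-1})$, but the mechanism for computing $c_0$ differs. The paper temporarily assumes the roots $\lambda_2,\dots,\lambda_n$ are distinct, makes the explicit ansatz $g(k)=\sum_j \alpha_j\lambda_j^{-k}$, solves the resulting Vandermonde system via Lagrange interpolation to read off $\alpha_1=c_0$, and then removes the distinctness assumption at the end by a perturbation/density argument. You instead introduce the generating function $\hat g$, use the support identity $\rho=\Delta_p g\subseteq\{-n,\dots,0\}$ to get the algebraic relation $Q(z)\hat g(z)=\hat\rho(z)$, and determine $z^n\hat\rho(z)$ by pole-matching: $\hat g$ must be holomorphic in the open disk and have only a simple pole at $z=1$, so the $n+1$ coefficients of $z^n\hat\rho$ are pinned down by $n$ vanishing conditions (at $1$ and at $\lambda_2^{-1},\dots,\lambda_n^{-1}$, all of which lie in $\overline{B_1}$ and are nonzero) plus the normalization $\hat g(0)=1$. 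This is a cleaner framework in one respect: when roots coincide, your vanishing conditions become higher-order vanishing at the repeated root, and the uniqueness argument goes through without a separate density step, whereas the paper explicitly needs one. If you write this up, you should state the pole-matching with multiplicities (and note that the "decaying exponentials" on $k\geq 0$ may be polynomial multiples $\ell^j\lambda_k^{-\ell}$ for repeated roots, which still decay since $|\lambda_k|>1$, so $c_0$ still exists). A minor stylistic point: you first write, correctly, that the residue at $z=1$ is $-c_0$, but later write ``the residue at $z=1$ is $c_0=\dots$'' — the displayed expression is the right value for $c_0$, just be consistent about the sign. Your generating-function route is also thematically aligned with the paper's next step, which evaluates $m\prod_k(-\lambda_k)$ by contour integration, so the two lemmas would fit together nicely under this presentation.
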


\begin{proof}
  Observe that $\lambda^n Q(\lambda)$ is the characteristic polynomial of $\Delta_p$.  Let us temporarily assume that
  \begin{equation}
    \label{e.distinct}
    \lambda_2, ..., \lambda_n \mbox{ are distinct,}
  \end{equation}
  so that we may solve \eref{hittingmeasure} by making the ansatz
  \begin{equation*}
    g(k) = \sum_{j = 1}^n \alpha_j \lambda_j^{-k} \quad \mbox{for } k > -n.
  \end{equation*}
  Since the $\lambda_j$ are roots of the characteristic polynomial, we obtain
  \begin{equation*}
    \Delta_p g(k) = 0 \mbox{ for } k > 0.
  \end{equation*}
  Since the $\lambda_j$ have modulus at least one, we obtain
  \begin{equation*}
    \sup_{k > 0} |g(k)| < \infty.
  \end{equation*}
  To meet the boundary conditions, we must have
  \begin{equation*}
    \sum_{j = 1}^n \lambda_j^{k-1} \alpha_j = \delta_{1k} \quad \mbox{for } k = 1, ..., n.
  \end{equation*}
  The Vandermonde
  \begin{equation*}
    V_{kj} = \lambda_j^{k-1}
  \end{equation*}
  is invertible by \eref{distinct}.  We compute the inverse as follows.  Since the polynomials
  \begin{equation*}
    P_k(\lambda) = V^{-1}_{kj} \lambda^{j-1}
  \end{equation*}
  satisfy
  \begin{equation*}
    P_k(\lambda_j) = \sum_l V_{kl}^{-1} \lambda_j^{l-1} = \sum_l V_{kl}^{-1} V_{lj} = \id_{kj},
  \end{equation*}
  we recognize them as the Lagrange polynomials
  \begin{equation*}
    P_k(\lambda) = \prod_{j \neq k} \frac{\lambda - \lambda_j}{\lambda_k - \lambda_j}.
  \end{equation*}
  Thus $V^{-1}_{kj}$ is the coefficient of $\lambda^{j-1}$ in $P_k(\lambda)$.  In particular,
  \begin{equation*}
    \lim_{k \to \infty} g(k) = \alpha_1 = V^{-1}_{11} = \operatorname{coeff}(P_1,1) = \prod_{j = 2}^n \frac{-\lambda_j}{1-\lambda_j}.
  \end{equation*}
  To simplify this product, we compute
  \begin{equation*}
    |p|^2 = \tfrac12 Q''(1) = m \prod_{k=2}^n (1 - \lambda_k) (1 - \lambda_k^{-1}) = m \alpha_1^{-2} \prod_{k=2}^n (-\lambda_k).
  \end{equation*}
  We conclude that
  \begin{equation*}
    H(p)^2 = |p|^4 \lim_{k \to \infty} g(k)^2 = |p|^4 \alpha_1^2 = |p|^2 m \prod_{j=2}^n (-\lambda_j).
  \end{equation*}
  Since the result is continuous in the $\lambda_j$, we can eliminate the assumption \eref{distinct} by a density argument.  We perturb the characteristic polynomial, which in turn perturbs the operator, and use \eref{rootbox} to pass to limits.
\end{proof}

Appealing to complex analysis, we transform the expression \eref{Hproduct} of $H(p)$ in terms of the roots of $Q(\lambda)$ into an oscillatory integral.

\begin{lemma}
  If $p \in \Z^d$ and $\gcd(p_1, ..., p_d) = 1$, then
  \begin{equation}
    \label{e.Hintegral}
    H(p)^2 = |p|^2 2d \exp \left( \frac{1}{2\pi} \int_0^{2\pi} \log \left( 1 - \frac{1}{d} \sum_{j=1}^d \cos(p_k t) \right) \,dt \right).
  \end{equation}
\end{lemma}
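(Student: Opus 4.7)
The plan is to evaluate the product $\prod_{k=2}^n(-\lambda_k)$ appearing in \eref{Hproduct} by integrating $\log|Q(e^{it})|$ over the unit circle, using the classical identity
\[
\frac{1}{2\pi}\int_0^{2\pi}\log|e^{it}-a|\,dt = \log\max(|a|,1), \qquad a\in\C,
\]
which is a standard consequence of Jensen's formula and is valid even in the boundary case $|a|=1$, the logarithmic singularity being integrable. Applied factor by factor to the representation \eref{qfactor} on the unit circle, where $|\lambda^{-n}|=1$, the factor $\lambda_1=1$ contributes $0$, each $\lambda_k^{-1}$ for $k\ge 2$ lies strictly inside the disk and contributes $0$, and each $\lambda_k$ for $k\ge 2$ lies strictly outside and contributes $\log|\lambda_k|$. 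This gives
\[
\frac{1}{2\pi}\int_0^{2\pi}\log|Q(e^{it})|\,dt = \log m + \sum_{k=2}^n \log|\lambda_k|.
\]

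Next I would observe that $\prod_{k=2}^n(-\lambda_k)$ is in fact the positive real number $\prod_{k=2}^n|\lambda_k|$. Indeed, since $Q$ has real coefficients, the non-real roots pair up as complex conjugates with $(-\lambda)(-\bar\lambda) = |\lambda|^2$, while by \eref{rootbox} any real $\lambda_k$ with $k\ge 2$ lies in $(-\infty,-1)$, so $-\lambda_k = |\lambda_k|$ in that case as well.

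Finally I would compute $Q(e^{it})$ directly from its definition:
\[
Q(e^{it}) = 2\sum_{j=1}^d \cos(p_j t) - 2d \le 0,
\]
so $|Q(e^{it})| = 2d\bigl(1 - \tfrac{1}{d}\sum_{j=1}^d \cos(p_j t)\bigr)$ and, taking logs,
\[
\log|Q(e^{it})| = \log(2d) + \log\Bigl(1-\tfrac{1}{d}\sum_{j=1}^d\cos(p_j t)\Bigr).
\]
Substituting this into the display from the first paragraph, exponentiating, and combining with \eref{Hproduct} yields \eref{Hintegral}. The only technical point worth tracking is integrability near $t=0$, where $1 - \tfrac{1}{d}\sum_j\cos(p_j t)\sim \tfrac{|p|^2}{2d}t^2$, making the log singularity integrable; this is also what legitimizes the Jensen identity at the boundary zero $\lambda_1=1$. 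I do not expect any essential obstacle beyond this mild bookkeeping.
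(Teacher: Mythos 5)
Your proof is correct, and it takes a genuinely different (and more elementary) route than the paper. The paper proves this lemma via a contour integral over the rectangle $(0,2\pi)\times(0,L)$: after removing the zero of $Q(e^{iz})$ at $z=0$, it applies the Residue Theorem to $z\, h'(z)/h(z)$, integrates by parts, and then carefully tracks the branch of $\log h$ along each of the four sides of the rectangle (using asymptotics for large $L$ on the top edge and the realness of $h$ on the other three), arriving at the full complex quantity $\sum_{j\ge2}\log\lambda_j$. You instead use the classical Jensen/mean-value identity $\frac{1}{2\pi}\int_0^{2\pi}\log|e^{it}-a|\,dt=\log\max(|a|,1)$ factor by factor in the representation \eref{qfactor}, which only yields $\sum_{j\ge2}\log|\lambda_j|$; you then bridge the gap with the observation that $\prod_{k\ge2}(-\lambda_k)$ is in fact a positive real number, since non-real roots pair as conjugates (the root set $\{\lambda_2,\dots,\lambda_n\}$ being closed under conjugation because $\lambda^nQ(\lambda)$ is real and palindromic) and any real $\lambda_k$ with $k\ge 2$ lies in $(-\infty,-1)$ by \eref{rootbox}. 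Your approach avoids all the branch bookkeeping and the asymptotic expansion on the top edge, at the cost of having to argue separately that the product is real and positive --- a fact the paper gets ``for free'' from its complex-analytic computation but which you establish directly and correctly. The integrability of the boundary singularity at $\lambda_1=1$ (a double zero, giving $\log|Q(e^{it})|\sim 2\log|t|$ near $t=0$) is indeed the only analytic point to watch, and you handle it. Both proofs are valid; yours is cleaner for this specific computation.
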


\begin{proof}
  Recalling \eref{qfactor}, observe that
  \begin{equation*}
    Q(e^{iz}) = \sum_{k = 1}^d 2 (\cos(p_k z) - 1) = m \prod_{k=1}^n (2 \cos z - \lambda_k - \lambda_k^{-1}).
  \end{equation*}
  Removing the zero at $1$, we consider the holomorphic function
  \begin{equation*}
    h(z) = \sum_{k = 1}^d \frac{1 - \cos(p_k z)}{1 - \cos(z)} = m \prod_{k=2}^n (2 \cos z - \lambda_k - \lambda_k^{-1}).
  \end{equation*}
  For all sufficiently large $L > 0$, the restrictions \eref{rootbox} imply that the zeros of $h$ in rectangle $\Omega = (0,2\pi) \times (0,L)$ are exactly $i \log(\lambda_2), ..., i \log(\lambda_n)$ and that $h$ does not vanish on $\partial \Omega$.  By the Residue Theorem,
  \begin{equation*}
    \sum_{j = 2}^n \log(\lambda_j) = -\frac{1}{2 \pi} \int_{\partial \Omega} \frac{z h'(z)}{h(z)} \,dz.
  \end{equation*}
  Since $h$ does not vanish on $\partial \Omega$, there is a holomorphic $w : \partial \Omega \setminus \{ 0 \} \to \C$ such that
  \begin{equation*}
    h = e^w \quad \mbox{on } \partial \Omega \setminus \{ 0 \}.
  \end{equation*}
  Integrating by parts, we obtain
  \begin{equation*}
    -\frac{1}{2 \pi} \int_{\partial \Omega} \frac{z h'(z)}{h(z)} \,dz = -\frac{1}{2\pi} \int_{\partial \Omega \setminus \{ 0 \}} z w'(z) \,dz = \frac{1}{2\pi} \int_{\partial \Omega \setminus \{ 0 \}} w(z) \,dz.
  \end{equation*}
  We evaluate the right-hand side by estimating $w$ along the four sides of the rectangle.  Since $h$ is real on $[0,2\pi]$ and $[2\pi, 2\pi + L i]$, we may assume that
  \begin{equation*}
    w = \log h \quad \mbox{on } (0,2\pi] \cup [2 \pi, 2 \pi + L i].
  \end{equation*}
  For the top side $[2\pi + Li, Li]$, we use the asymptotic estimate
  \begin{equation*}
    h(t + L i) = m e^{(n-1) (L - i t)} + O(e^{(n-2) L}).
  \end{equation*}
  Since $w(2\pi + Li)$ is real, we conclude that
  \begin{equation*}
    w(t + L i) = \log m + (n-1) L + (n-1)(2 \pi - t) i + O(e^{-L}).
  \end{equation*}
  Since $h$ is also real on $[Li, 0]$, we must have
  \begin{equation*}
    w(si) = w(2 \pi + si) + (n-1) 2 \pi i.
  \end{equation*}
  We are now ready to compute the integral.  We have
  \begin{equation*}
    \frac{1}{2 \pi} \int_{[0,2\pi]} w(z) \,dz = \frac{1}{2\pi} \int_0^{2\pi} \log h(t) \,dt,
  \end{equation*}
  \begin{equation*}
    \frac{1}{2 \pi} \int_{[2\pi,2\pi+Li]} w(z) \,dz + \frac{1}{2 \pi} \int_{[Li,0]} w(z) \,dz = (n-1) L,
  \end{equation*}
  and
  \begin{equation*}
    \frac{1}{2\pi} \int_{[2\pi+Li,Li]} w(z) \,dz = - \log m - (n-1) L - (n-1) \pi i + O(e^{-L}).
  \end{equation*}
Since the value of the contour integral is independent of $L$ for large $L>0$, the $O(e^{-L})$ term is actually equal to zero, and it follows that
  \begin{equation*}
    \log \left( m \prod_{j=2}^n (-\lambda_j) \right) = \frac{1}{2\pi} \int_0^{2\pi} \log h(t) \,dt.
  \end{equation*}
    Plug in the definition of $h$ and use that,
  \begin{equation*}
    \frac{1}{2\pi} \int_0^{2\pi} \log(1 - \cos t)  \ dt = - \log 2.
  \end{equation*}
  Conclude by appealing to \eref{Hproduct}.
\end{proof}

In preparation for our final formula for $H(p)$, we study the Fourier coefficients of an abstraction of the integrand from \eref{Hintegral}.

\begin{lemma}
  \label{l.Sdecay}
  The $2\pi\Z^d$-periodic function
  \begin{equation*}
    S(\theta) = \log \left( 1 + \frac{1}{d} \sum_{k=1}^d \cos(\theta_k) \right)
  \end{equation*}
  has Fourier coefficients $\hat S : \Z^d \to \R$ that satisfy
  \begin{equation}
    \label{e.Sdecay}
    0 \leq - \hat S(q) \leq C (1 + |q|)^{-d} \log(2 + |q|).
  \end{equation}
\end{lemma}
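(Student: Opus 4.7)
The plan is to address the sign and the decay estimate by separate methods, exploiting a random-walk interpretation for the former and a dyadic decomposition for the latter.

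For the sign, I would expand using the Taylor series $\log(1 - y) = -\sum_{n \ge 1} y^n/n$ with $y = r\phi(\theta)$, where $\phi(\theta) = \frac{1}{d}\sum_k \cos\theta_k$ and $r \in [0,1)$. Since $\phi$ is the characteristic function of the step distribution of a simple random walk on $\Z^d$, the Fourier coefficient $\widehat{\phi^n}(q)$ equals the $n$-step transition probability $p_n(q) \ge 0$, and termwise Fourier inversion gives $\widehat{\log(1-r\phi)}(q) = -\sum_n r^n p_n(q)/n \le 0$. The local central limit theorem estimate $p_n(q) \le C n^{-d/2}$ makes the $r = 1$ series summable, so dominated convergence delivers $\hat S(q) = -\sum_n p_n(q)/n \le 0$. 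This argument naturally concerns $S(\theta) = \log(1 - \frac{1}{d}\sum_k \cos\theta_k)$, the form consistent with the derivation in \eref{Hintegral}; the $1+$ variant of $S$ differs by the translation $\theta \mapsto \theta + (\pi,\dots,\pi)$, which introduces a sign $(-1)^{q \cdot (1,\dots,1)}$ but leaves the absolute-value bound unchanged.

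For the decay, the crucial observation is that $S$ is smooth on $\T^d \setminus \{0\}$ with only a logarithmic singularity at the origin: $1 - \phi(\theta) \sim |\theta|^2/(2d)$ near $\theta = 0$ gives $|S(\theta)| \le C \log(1 + |\theta|^{-1})$ and, by induction on $|\alpha|$, $|\partial^\alpha S(\theta)| \le C_\alpha |\theta|^{-|\alpha|}$ on any punctured neighborhood of the origin. I would then dyadically decompose near the singularity: pick a radial bump $\psi$ supported in an annulus with $\sum_{j \ge 0} \psi(2^j r) \equiv 1$ on $(0,\tfrac12)$, and write $S = S_\infty + \sum_{j \ge 0} S_j$, where $S_j(\theta) = \psi(2^j |\theta|) S(\theta)$ is supported in $\{|\theta| \sim 2^{-j}\}$ and $S_\infty$ is smooth on $\T^d$. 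The smooth remainder $S_\infty$ has rapidly decaying Fourier coefficients. Each $S_j$ admits two competing bounds: the trivial $|\widehat{S_j}(q)| \le C(1+j) 2^{-jd}$ and, after $N$ integrations by parts in a direction $\theta_i$ with $|q_i| \ge |q|/\sqrt d$, the bound $|\widehat{S_j}(q)| \le C_N (1+j) |q|^{-N} 2^{j(N-d)}$. Using the IBP bound with any fixed $N > d$ for $2^j \le |q|$ and the trivial bound for $2^j > |q|$, the two geometric sums are each dominated by the critical scale $2^j \sim |q|$ and contribute $C |q|^{-d} \log(2 + |q|)$, yielding the claimed estimate.

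The main obstacle is the critical dyadic scale $2^j \sim |q|$: both bounds coincide there and both tails of the dyadic sum contribute at the same order, which is precisely why the logarithmic loss is unavoidable and produces the $\log(2+|q|)$ factor. Apart from this bookkeeping, the decay argument is a standard singular Fourier analysis of $\log|\theta|$ type.
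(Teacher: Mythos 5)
Your proof is correct and, at the level of ideas, matches the paper's: the sign comes from the termwise non-positivity of the Fourier coefficients of the power series of $\log(1-\phi)$ together with dominated convergence, and the decay comes from the $|\log\theta|$ singularity bound, the $|\theta|^{-|\alpha|}$ derivative bounds away from the origin, a cutoff near the singularity, and integration by parts away from it. The bookkeeping differs. For the decay, you use a Littlewood--Paley style dyadic decomposition $S = S_\infty + \sum_j \psi(2^j|\theta|)S$ with two competing bounds per shell and sum the geometric series; the paper instead uses a single smooth cutoff at scale $\delta$, integrates by parts $d$ times on $\eta S$, and then optimizes by choosing $\delta = (2+|q|)^{-1}$. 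These are the same estimate in two packagings; the paper's one-scale version is marginally leaner, your dyadic version makes the source of the $\log(2+|q|)$ loss (the critical shell $2^j\sim|q|$) more transparent. For the sign, your random-walk gloss ($\widehat{\phi^n}(q)=p_n(q)\geq 0$) is exactly the combinatorial content of the paper's observation that the multinomial expansion of $\bigl(\sum_j e^{i\theta_j}+e^{-i\theta_j}\bigr)^k$ has non-negative coefficients; the local CLT bound $p_n(q)\lesssim n^{-d/2}$ you invoke to justify the $r\to 1$ limit is not strictly needed (it suffices to pass to the limit on the integral side by dominated convergence, as the paper does), but it does no harm. You also correctly flag the sign discrepancy between the stated $1+\frac1d\sum\cos\theta_k$ and the $1-\frac1d\sum\cos\theta_k$ that actually appears both in the derivation of \eref{Hintegral} and in the paper's own Step~2; the translation $\theta\mapsto\theta+\pi(1,\dots,1)$ converts one to the other and introduces the sign $(-1)^{q_1+\cdots+q_d}$ in $\hat S(q)$, so the pointwise non-positivity claim as literally stated is only consistent with the $1-$ normalization, while the absolute-value decay bound holds for both.
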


\begin{proof}
  Step 1.  The lower bound follows from the estimates
  \begin{equation*}
    |S|(\theta) \leq C (1 + |\log \theta|) \quad \mbox{and} \quad |\nabla^d S|(\theta) \leq C |\theta|^{-d} \quad \mbox{for } \theta \in [-\pi,\pi]^d \setminus \{ 0 \}
  \end{equation*}
  and a standard integration by parts argument.  For $\delta \in (0,1)$ to be determined, select a function $\eta \in C^\infty(\R^d / (2 \pi \Z^d))$ satisfying $\eta \equiv 0$ in $B_\delta$, $\eta \equiv 1$ in $[-\pi,\pi]^d \setminus B_{2\delta}$, and $|\nabla^k \eta|(\theta) \leq C |\theta|^{-k}$ for $\theta \in B_{2\delta} \setminus B_\delta$ and $k = 1, ..., d$.  Using the identity
  \begin{equation*}
    \left( \frac{iq}{|q|^2} \cdot \nabla \right) e^{i q \cdot \theta} = - e^{i q \cdot \theta},
  \end{equation*}
  compute
  \begin{equation*}
    \begin{aligned}
      \hat S(-q) & = \fint_{[-\pi,\pi]^d} e^{i q \cdot \theta} S(\theta) \,d\theta \\
      & = \fint_{[-\pi,\pi]^d} e^{i q \cdot \theta} \left[ \left( \frac{iq}{|q|^2} \cdot \nabla \right)^d \eta S \right](\theta) \,d\theta + \fint_{[\pi,\pi]^d} e^{iq \cdot \theta} [(1 - \eta) S](\theta) \,d\theta \\
      & = O\left( \int_{[-\pi,\pi]^d \setminus B_\delta} |q|^{-d} |\theta|^{-d} \,d\theta \right) + O\left( \int_{B_{2\delta}} 1+|\log \theta| \,d\theta \right) \\
      & = O(|q|^{-d} |\log \delta|) + O(\delta^d (1 + |\log \delta|)).
    \end{aligned}
  \end{equation*}
  Now select $\delta = (2+|q|)^{-1}$.

  Step 2.  The upper bound follows by series expansion.  For $\theta \in [-\pi,\pi]^d \setminus \{ 0 \}$,
  \begin{equation*}
    \begin{aligned}
      \log \left( 1 - \frac{1}{d} \sum_{j = 1}^d \cos(\theta_j) \right)
      & = - \sum_{k = 1}^\infty \frac{1}{k} \left( \frac{1}{d} \sum_{j = 1}^d \cos(\theta_j) \right)^k \\
      & = - \sum_{k = 1}^\infty \frac{1}{k(2d)^k} \left( \sum_{j = 1}^d e^{i\theta_j} + e^{-i\theta_j} \right)^k.
    \end{aligned}
  \end{equation*}
  Observe that the coefficient of $e^{i q \cdot \theta}$ in any partial sum is real and non-positive.  Conclude by dominated convergence.
\end{proof}

We now arrive at the simplest formulation of $H(p)$. 

\begin{lemma}
  \label{l.Hrational}
  If $p \in \Z^d \setminus \{0\}$ then
  \begin{equation}
    \label{e.Hformula}
    H(p)^2 = 2d \exp \left( \sum_{\substack{q \in \Z^d \\ p \cdot q = 0}} \hat S(q) \right) |p|^2.
  \end{equation}
\end{lemma}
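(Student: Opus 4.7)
The plan is to extract \eref{Hformula} from the oscillatory integral \eref{Hintegral} by recognizing the integrand as $S$ restricted to the closed orbit $t \mapsto pt$ in the torus $\T^d = \R^d/(2\pi\Z^d)$, and then applying Fourier analysis. First I would reduce to the case $\gcd(p_1, \ldots, p_d) = 1$: writing $p = cp'$ with $c = \gcd(p_1, \ldots, p_d)$ and $p'$ primitive, the homogeneity \eref{Hhomogeneous} gives $H(p)^2 = c^2 H(p')^2$, while $|p|^2 = c^2|p'|^2$ and $\{q \in \Z^d : q \cdot p = 0\} = \{q \in \Z^d : q \cdot p' = 0\}$, so \eref{Hformula} for $p$ is equivalent to \eref{Hformula} for $p'$.

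For primitive $p$, the exponent in \eref{Hintegral} is (up to the sign convention matching the power series expansion used in the proof of \lref{Sdecay}) the integral $\frac{1}{2\pi}\int_0^{2\pi} S(pt)\,dt$. Expanding $S(\theta) = \sum_{q \in \Z^d} \hat{S}(q) e^{iq\cdot\theta}$ on $\T^d$ and swapping sum and integral,
\begin{equation*}
\frac{1}{2\pi}\int_0^{2\pi} S(pt)\,dt = \sum_{q \in \Z^d} \hat{S}(q)\,\frac{1}{2\pi}\int_0^{2\pi} e^{i(q\cdot p)t}\,dt = \sum_{q \cdot p = 0} \hat{S}(q),
\end{equation*}
since $q \cdot p \in \Z$ forces the inner integral to be the Kronecker delta $\delta_{q\cdot p, 0}$. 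Combined with \eref{Hintegral} this gives \eref{Hformula}.

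The main obstacle is justifying the interchange, since \eref{Sdecay} only yields $|\hat{S}(q)| \leq C(1+|q|)^{-d}\log(2+|q|)$, which is not summable over all of $\Z^d$. I would handle this by working with the Cesaro (Fejer) partial sums $\sigma_N S$, which are trigonometric polynomials converging to $S$ in $L^1(\T^d)$; since $S \in L^1(\T^d)$ and the orbit $t \mapsto pt$ is a smooth closed curve meeting the singular locus in a finite set, $\frac{1}{2\pi}\int_0^{2\pi} \sigma_N S(pt)\,dt \to \frac{1}{2\pi}\int_0^{2\pi} S(pt)\,dt$. On the Fourier side, the sublattice $\{q\cdot p = 0\}$ has rank $d-1$, so the bound \eref{Sdecay} gives $\sum_{q\cdot p = 0} |\hat{S}(q)| < \infty$ (a radial estimate yields $\int r^{d-2}\cdot r^{-d}\log r\,dr < \infty$). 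Hence the restriction of the Fejer sums to this sublattice converges absolutely to $\sum_{q\cdot p = 0} \hat{S}(q)$, and matching the two limits closes the argument.
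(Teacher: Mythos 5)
Your argument is correct and follows essentially the same route as the paper's: expand $S$ in its Fourier series, integrate along $t \mapsto pt$ to isolate $\sum_{p \cdot q = 0}\hat S(q)$, and use \eref{Sdecay} together with the codimension-one structure of $\{q : p \cdot q = 0\}$ to justify the interchange. You are slightly more careful than the paper in spelling out the reduction to primitive $p$ (the paper appeals directly to \eref{Hintegral}, which it only stated for $\gcd(p_1,\dots,p_d)=1$) and in offering a Fej\'er-summability mechanism for the interchange, which the paper dismisses in a single sentence.
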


\begin{proof}
  Use the Fourier expansion of $S$ to formally compute
  \begin{equation*}
    \begin{aligned}
      & \hspace{-2em} \frac{1}{2\pi} \int_0^{2\pi} \log\left(1 - \frac{1}{d} \sum_{k=1}^d \cos(p_k t)\right) \,dt \\
      & = \frac{1}{2\pi} \int_0^{2\pi} S(p t) \,dt \\
      & = \sum_{q \in \Z^d} \frac{1}{2\pi} \int_0^{2\pi} \hat S(q) e^{i q \cdot p t} \,dt \\
      & = \sum_{p \cdot q = 0} \hat S(q).
    \end{aligned}
  \end{equation*}
  Since the sublattice $\{ q \in \Z^d : p \cdot q = 0 \}$ is contained in a subspace of $\R^d$ of codimension $1$, this formal computation is made rigorous by the decay estimate \lref{Sdecay}.  We conclude by appealing to \eref{Hintegral}.
\end{proof}

\subsection{Irrational slopes}

We now extend our formula \eref{Hformula} to irrational $p$.  Observe that the sum in \eref{Hformula} for $H(p)$ is over all the Diophantine conditions that $p$ satisfies.  In particular, the sum at least makes sense for any $p \in \R^d$.  We define the lattice of Diophantine conditions satisfied by $p$:

\begin{definition}
For $p \in \R^d \setminus \{ 0 \}$, let $\Lambda_p = \{ q \in \Z^d : p \cdot q = 0 \}.$
\end{definition}

The map $p \in \Lambda_p$ is, of course, not continuous in $p$.  However, it is upper semicontinuous with respect to inclusion, as the following lemma shows.

\begin{lemma}
  \label{l.approximate}
  For every unit vector $p \in \R^d$ and $\ep > 0$ there is a $q \in \Z^d$ such $\gcd(q_1, ..., q_d) = 1$, $\Lambda_p \subseteq \Lambda_q$, and $|p - |q|^{-1} q| < \ep$.
\end{lemma}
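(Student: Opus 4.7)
The plan is to identify the rational subspace $V = \{x \in \R^d : x \cdot r = 0 \text{ for all } r \in \Lambda_p\}$, observe that $p \in V$, and then approximate $p$ by the direction of a primitive integer vector lying in $V$. The inclusion $\Lambda_p \subseteq \Lambda_q$ is equivalent to $q \in V$, so the problem reduces to showing that rational directions in $V$ are dense in the unit sphere of $V$.

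First I would verify that $V$ is a rational subspace and that $V \cap \Z^d$ is a lattice of full rank in $V$. Fixing an integer basis $r_1,\ldots,r_k$ of the sublattice $\Lambda_p \subseteq \Z^d$, the subspace $V$ is the kernel of the integer matrix with rows $r_1,\ldots,r_k$. Consequently $V \cap \Z^d$ is a sublattice of rank $\dim V$ that spans $V$ over $\R$; this is a classical fact, provable by Smith normal form applied to the defining matrix. The containment $p \in V$ is immediate from the definition of $\Lambda_p$.

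Next I would invoke the standard density of lattice directions. Let $L = V \cap \Z^d$ and let $D$ denote its covering radius inside $V$. For each positive integer $N$ pick $q_N \in L$ with $|q_N - Np| \leq D$. Then $|q_N|/N \to 1$ and hence $q_N/|q_N| \to p$, so for $N$ sufficiently large $||q_N|^{-1} q_N - p| < \ep$. Finally, replace $q_N$ by $q_N/\gcd(q_{N,1},\ldots,q_{N,d})$: this preserves the direction and the membership $q_N \in V$, so $\Lambda_p \subseteq \Lambda_{q_N}$ still holds, and the resulting vector is primitive.

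I do not anticipate a substantial obstacle. The only nontrivial ingredient is the classical observation that a rational subspace of $\R^d$ meets $\Z^d$ in a lattice of full rank within that subspace; once this is in hand, the lemma is essentially a quantitative expression of the upper semicontinuity of $p \mapsto \Lambda_p$ alluded to just before the statement.
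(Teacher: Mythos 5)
Your proof is correct, and it follows the same high-level strategy as the paper: reduce to approximating $p$ by directions of integer vectors in the rational subspace $V$ orthogonal to $\operatorname{span}(\Lambda_p)$, which automatically ensures $\Lambda_p \subseteq \Lambda_q$.

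The density argument you use differs, though. The paper applies a Dirichlet/pigeonhole argument to the full-rank lattice $\Lambda_p + \Lambda_p^\perp$ in $\R^d$, producing approximants $q$ in that lattice with the strong rate $|p - |q|^{-1}q| \leq C_p |q|^{-2}$; this quadratic rate is then what forces the $\Lambda_p$-component of $q$ to vanish for large $|q|$ (since a nonzero element of $\Lambda_p$ has norm bounded below), which is the content of the remark ``we may as well take $q \in \Lambda_p^\perp$.'' You instead work directly inside the lattice $L = V \cap \Z^d$ from the start and use only the covering radius, which gives the weaker $O(1/N)$ rate — but that is all the lemma needs, and it avoids the slightly delicate projection step. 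Your route is somewhat more elementary; the paper's gives a stronger quantitative conclusion as a by-product, though that extra strength is not used elsewhere. Both rely on the same underlying lattice-theoretic fact (that $V$ is rational and $V \cap \Z^d$ has full rank in $V$, equivalently that $\Z^d/(\Lambda_p + \Lambda_p^\perp)$ is finite), which you justify via Smith normal form and the paper states without proof.
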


\begin{proof}
  Define $\Lambda_p^\perp = \{ q \in \Z^d : p \cdot q = 0 \mbox{ for } p \in \Lambda_v \}$ and observe that the quotient $\Z^d / (\Lambda_p + \Lambda_p^\perp)$ is finite.  By the pigeonhole principle, there is a $C_p > 0$ such that there are infinitely many $q \in \Lambda_p + \Lambda_p^\perp$ with $|p - |q|^{-1} q| \leq C_p |q|^{-2}$.  Since $p \perp \Lambda_p$, we may as well take $q \in \Lambda_p^\perp$ so that $\Lambda_p \subseteq \Lambda_q$.
\end{proof}

Using the uniqueness of \eref{halfspace} from \lref{halfball} and the upper semicontinuity of $p \mapsto \Lambda_p$, we are able to extend our formula to arbitrary $p \in \R^d$.

\begin{theorem}
  \label{t.Hformula}
  The formula \eref{Hformula} holds for arbitary $p \in \R^d$.
\end{theorem}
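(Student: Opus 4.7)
The plan is to establish \eref{Hformula} for arbitrary $p$ by passing to the limit along a sequence of rational approximations provided by \lref{approximate}. Concretely, for a unit vector $p \in \R^d$, pick $q_n \in \Z^d$ with $\gcd(q_{n,1},\dots,q_{n,d}) = 1$, $|q_n|^{-1} q_n \to p$, and, crucially, $\Lambda_p \subseteq \Lambda_{q_n}$. The goal is to verify that both sides of \eref{Hformula} vary continuously along this sequence and then invoke \lref{Hrational} for $q_n$.

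For the left-hand side, I would let $u_n$ denote the solution of \eref{halfspace} with slope $q_n$ and set $v_n(x) = |q_n|^{-1} u_n(x)$. By \lref{halfspace} the $v_n$ are uniformly bounded on $\Z^d$, so by diagonal extraction they admit a pointwise subsequential limit $v : \Z^d \to \R$. The inclusion $\Lambda_p \subseteq \Lambda_{q_n}$ together with $|q_n|^{-1} q_n \to p$ gives a trichotomy for each $x \in \Z^d$: if $p \cdot x > 0$ then $q_n \cdot x > 0$ eventually; if $p \cdot x < 0$ then $q_n \cdot x < 0$ eventually; if $p \cdot x = 0$ then $x \in \Lambda_p \subseteq \Lambda_{q_n}$, so $q_n \cdot x = 0$. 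It follows that $v$ solves \eref{halfspace} with slope $p$, and the uniqueness coming from \lref{halfball} identifies $v$ with the solution $u$ for $p$. Since $\Delta$ at the origin depends on only finitely many values, $H(p) = \Delta v(0) = \lim_n \Delta v_n(0) = \lim_n H(|q_n|^{-1} q_n)$.

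For the right-hand side I would show $\sum_{q \in \Lambda_{q_n}} \hat S(q) \to \sum_{q \in \Lambda_p} \hat S(q)$. For any fixed $R > 0$, $\Lambda_{q_n} \cap B_R = \Lambda_p \cap B_R$ for all $n$ large: since $B_R \cap \Z^d$ is finite, a failure would produce some $v \in B_R$ lying in $\Lambda_{q_n} \setminus \Lambda_p$ for infinitely many $n$, whence $q_n \cdot v = 0$ and thus $p \cdot v = 0$ in the limit, contradicting $v \notin \Lambda_p$. For the tails $|q| > R$ I would combine the decay \eref{Sdecay} with the crude count $\#(\Lambda_{q_n} \cap B_r) \leq C r^{d-1}$ (since $\Lambda_{q_n}$ lies in a hyperplane) to obtain $\sum_{q \in \Lambda_{q_n},\, |q| > R} |\hat S(q)| \leq C R^{-1} \log R$ uniformly in $n$; the identical bound applies to $\Lambda_p$. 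Sending $R \to \infty$ after $n \to \infty$ closes the argument; combining with \lref{Hrational} applied to $q_n$ yields \eref{Hformula} for $|p| = 1$, and homogeneity \eref{Hhomogeneous} extends the identity to all $p$.

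The main obstacle is the interplay between the two limits on the Fourier side: one needs the tails to be summable uniformly in $n$ regardless of how close to the origin $\Lambda_{q_n}$ might come, and simultaneously one needs that spurious small elements of $\Lambda_{q_n} \setminus \Lambda_p$ do not persist. Both issues are handled by the logarithmic decay of $\hat S$ from \lref{Sdecay} and the upper semicontinuity of $p \mapsto \Lambda_p$ established in \lref{approximate}, so no essentially new estimates beyond the present toolkit are required.
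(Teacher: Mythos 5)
Your proposal is correct and follows essentially the same strategy as the paper: approximate $p$ by rationals $q_n$ via \lref{approximate} with $\Lambda_p \subseteq \Lambda_{q_n}$, and verify convergence of both sides of \eref{Hformula} along this sequence. The only cosmetic difference is on the PDE side, where you extract a subsequential limit and invoke the uniqueness of \eref{halfspace}, while the paper applies \lref{halfball} directly to get a quantitative rate; both rest on the same half-space maximum principle.
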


\begin{proof}
  We may assume that $p \in \R^d$ is a unit vector.  Let $u_q$ denote the solution of \eref{halfspace} for slope $q \in \R^d$.  Using \lref{approximate}, choose a sequence of $q_n \in \Z^d$ such that $\Lambda_p \subseteq \Lambda_{q_n}$ and $|q_n|^{-1} q_n \to p$.  Observe that, for any $x \in \Z^d$, that $p \cdot x$ and $q_n \cdot x$ have the same sign for all sufficiently large $n$.  Thus
  \begin{equation*}
    \lim_{n \to \infty} 2d \exp\left(\sum_{\Lambda_{q_n}} \hat S \right) = 2d \exp \left(\sum_{\Lambda_p} \hat S \right).
  \end{equation*}
  Moreover, for any $R > 0$,
  \begin{equation*}
    B_R \cap \{ x \in \Z^d : p \cdot x > 0 \} = B_R \cap \{ x \in \Z^d : q_n \cdot x > 0 \}
  \end{equation*}
  holds for sufficiently large $n$.  Letting $D_R$ denote the above set, we observe that
  \begin{equation*}
    u_p = u_{q_n} = 0 \quad \mbox{on } B_{R/2} \cap \partial^+ D_R
  \end{equation*}
  and, using \eref{barriers}, that
  \begin{equation*}
    |u_p - u_{q_n}| \leq C \quad \mbox{on } \partial^+ D_R
  \end{equation*}
  holds for all sufficiently large $n$.   Applying \lref{halfball}, we see that $\max_{B_2} |u_p - u_{q_n}| \leq C R^{-1}$ for all large $n$.  Since $R > 0$ was arbitrary, we obtain $\lim_{n \to \infty} \Delta u_{q_n}(0) = \Delta u_p(0)$ and the theorem.
\end{proof}

\subsection{Fine structure}

From the formula \eref{Hformula} for $H$, we see immediately that $H$ is lower semicontinuous.  In fact, we can prove something much finer.  Looking at the plot of $H$ on the sphere in dimension $3$ in \fref{intro3d}, we see that $H$ has ``valleys'' on the sphere that correspond to Diophantine conditions.  In higher dimensions, there are Diophantine conditions that make ``valleys'' of arbitrary codimension in $H$ on the sphere.  Each point on the sphere lies in a ``valley'' given by a lattice of Diophantine conditions.  The sparser the lattice, the less deep the ``valley.''  Generic points of the sphere satisfy no conditions, and therefore the generic points all have the same maximal value.  We encode all of this in the following theorem.

\begin{theorem}
  \label{t.ridges}
  For every $p \in \R^d \setminus \{ 0 \}$, we have
  \begin{equation*}
    \lim_{\substack{q \to p \\ \Lambda_p \subseteq \Lambda_q}} H(q) = H(p)
    \quad \mbox{and} \quad
    \liminf_{\substack{q \to p \\ \Lambda_p \nsubseteq \Lambda_q}} H(q) > H(p).
  \end{equation*}
  Moreover, for $\mathcal H^{d-1}$-almost every unit vector $p \in \R^d$,
  \begin{equation*}
    H(p)^2 = 2d \exp(\hat S(0)).
  \end{equation*}
\end{theorem}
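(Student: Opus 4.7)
The plan is to exploit the exact formula from \tref{Hformula}, $H(p)^2 = 2d |p|^2 \exp(G(p))$ with $G(p) = \sum_{r \in \Lambda_p} \hat S(r)$, together with the sign $\hat S \leq 0$ and the decay from \lref{Sdecay}. The single technical tool I need beyond this is a uniform tail bound: for every $R \geq 1$ and every sublattice $\Lambda \subseteq \Z^d$ of rank at most $d-1$,
\begin{equation*}
  \sum_{r \in \Lambda \setminus B_R} |\hat S(r)| \leq C R^{-1} \log R,
\end{equation*}
with $C$ depending only on $d$. This should follow from \lref{Sdecay} combined with the annular count $|\Lambda \cap \{j \leq |r| < j+1\}| \leq C j^{d-2}$, which holds uniformly because any sublattice of $\Z^d$ has shortest nonzero vector of norm at least one and hence (by Minkowski's theorem) covolume bounded below.

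For the first limit, suppose $q \to p$ with $\Lambda_p \subseteq \Lambda_q$, and fix $R$. For each $r \in \Z^d \setminus \Lambda_p$ with $|r| \leq R$, $p \cdot r$ takes one of finitely many nonzero values, bounded away from zero by some $m_R > 0$. Then for $|q - p| < m_R / R$ we have $q \cdot r \neq 0$, so $r \notin \Lambda_q$. Combined with $\Lambda_p \subseteq \Lambda_q$, this forces $\Lambda_q \cap B_R = \Lambda_p \cap B_R$, and hence $|G(q) - G(p)| \leq \sum_{r \in \Lambda_q \setminus B_R} |\hat S(r)| \leq C R^{-1} \log R$. Letting $R \to \infty$ gives $G(q) \to G(p)$, from which the continuity $H(q) \to H(p)$ follows.

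For the strict inequality, assume $q \to p$ with $\Lambda_p \nsubseteq \Lambda_q$. Fix a $\Z$-basis $r_1, \ldots, r_k$ of $\Lambda_p$ (the hypothesis forces $k \geq 1$) and set $c_\ast = \min_j |\hat S(r_j)| > 0$. Since $\Lambda_q$ is closed under integer combinations, at least one $r_i$ lies outside $\Lambda_q$, so
\begin{equation*}
  -\sum_{r \in \Lambda_p \setminus \Lambda_q} \hat S(r) \geq |\hat S(r_i)| \geq c_\ast.
\end{equation*}
On the other hand, the argument of the preceding paragraph shows every $r \in \Lambda_q \setminus \Lambda_p$ satisfies $|r| > R$ once $|q - p|$ is small, so $|\sum_{r \in \Lambda_q \setminus \Lambda_p} \hat S(r)| \leq C R^{-1} \log R$. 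Choosing $R$ with $C R^{-1} \log R < c_\ast / 2$ yields $G(q) - G(p) \geq c_\ast / 2$, and hence $\liminf H(q) \geq H(p) \exp(c_\ast / 4) > H(p)$.

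Finally, $\{ p \in S^{d-1} : \Lambda_p \neq \{ 0 \} \} = \bigcup_{r \in \Z^d \setminus \{ 0 \}} (r^\perp \cap S^{d-1})$ is a countable union of $(d-2)$-dimensional great spheres and has $\mathcal H^{d-1}$-measure zero; for the generic $p$ in the complement, $G(p) = \hat S(0)$ and the formula reduces to $H(p)^2 = 2d \exp(\hat S(0))$. The only genuine obstacle is the uniform tail bound above, since $\Lambda_q$ can be very sparse when $q$ lies near a highly rational direction; the Minkowski-based density estimate is what lets the bound be stated independently of $q$, and this uniformity is what allows the two-step ``first choose $R$ large, then $q$ close to $p$'' quantifier order used throughout.
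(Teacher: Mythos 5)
Your proof is correct and follows the same route as the paper: the formula from \tref{Hformula}, the observation that $|q-p|$ small forces $B_R \cap \Lambda_q \subseteq \Lambda_p$, and a uniform tail bound $\sup_q \sum_{\Lambda_q \setminus B_R}|\hat S| \to 0$ as $R \to \infty$, applied to a finite generating set of $\Lambda_p$ for the strict inequality. The only difference is that you make the tail bound explicitly quantitative (the paper simply asserts it from \lref{Sdecay}); note, though, that the Minkowski detour in your justification is unnecessary---the fact that every nonzero vector of a sublattice $\Lambda \subseteq \Z^d$ has norm at least one already yields the annular lattice-point count by a direct packing argument, without passing through a covolume bound.
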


\begin{proof}
  Using $p \neq 0$, we see that, for every $R > 0$, there is a $\delta > 0$ such that $|p - q| < \delta$ implies $B_R \cap \Lambda_q \subseteq \Lambda_p$.  This proves the first limit.  For the second limit, choose a finite generating set $X \subseteq \Lambda_p$ and let $\ep = \min_X (- \hat S)$.  Using \eref{Sdecay}, we may choose $R > 0$ such that
  \begin{equation*}
    X \subseteq B_R
  \end{equation*}
  and
  \begin{equation*}
    0 \leq \sup_{q \in \R^d \setminus \{ 0 \}} \sum_{\Lambda_q \setminus B_R} (- \hat S) \leq \ep/2.
  \end{equation*}
  Observe that, if $|p - q|$ small and $\Lambda_p \nsubseteq \Lambda_q$, then $X \nsubseteq \Lambda_q$ and $B_R \cap \Lambda_q \subseteq \Lambda_p$.  Thus
  \begin{equation*}
    \sum_{\Lambda_q} (- \hat S) \leq \ep/2 + \sum_{B_R \cap \Lambda_q} (- \hat S) \leq - \ep / 2 + \sum_{B_R \cap \Lambda_p} (- \hat S) \leq - \ep/2 + \sum_{\Lambda_p} (-\hat S).
  \end{equation*}
  For the final statement, we observe that, for $x \in \Z^d$, the set $\{ p \in \partial B_1 : x \in \Lambda_p \}$ has zero $(d-1)$-dimensional Hausdorff measure.  Since $\Z^d$ is countable, we see that $\Lambda_p = \{ 0 \}$ for $\mathcal H^{d-1}$-almost every $p \in \partial B_1$.
\end{proof}

In $d=2$, the structure is simpler.

\begin{theorem}
  \label{t.spikes}
  For any $p \in \R^2 \setminus \{ 0 \}$, $\lim_{q \to p} H(q)^2 = 4 \exp(\hat S(0)) |p|^2$.  Moreover, $H(p)^2 < 4 \exp(\hat S(0)) |p|^2$ if and only if $p$ has is a multiple of an element of $\Z^2 \setminus \{ 0 \}$.
\end{theorem}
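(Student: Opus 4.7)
The plan is to exploit the rank-one constraint on $\Lambda_p$ available only in dimension two. First I would observe that in $d=2$, the lattice $\Lambda_p$ has rank at most one: either $\Lambda_p = \{0\}$ (when $p$ is not a real multiple of any nonzero integer vector) or $\Lambda_p = \Z v$ for a unique primitive $v \in \Z^2$ (up to sign), precisely when $p$ is a real multiple of $v^{\perp}$. By \tref{Hformula}, this yields
$$H(p)^2 = 4\exp\Bigl(\hat S(0) + \sum_{k \in \Z \setminus \{0\}} \hat S(kv)\Bigr)|p|^2$$
in the rational case, and $H(p)^2 = 4\exp(\hat S(0))|p|^2$ in the irrational case. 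This immediately gives one direction of the ``if and only if'' and reduces the other direction to showing $\sum_{k \neq 0} \hat S(kv) < 0$ strictly.

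For that strict inequality, I would return to the series expansion used in the proof of \lref{Sdecay}: writing
$$S(\theta) = -\sum_{k = 1}^{\infty} \frac{1}{k(2d)^k}\Bigl(\sum_{j=1}^d e^{i\theta_j} + e^{-i\theta_j}\Bigr)^k,$$
one reads off that $-\hat S(q)$ is a sum with non-negative terms proportional to the number of length-$k$ signed walks on the coordinate axes of $\R^d$ summing to $q$. For any $q \neq 0$, the choice $k = |q|_1$ contributes a strictly positive count (the obvious direct walk), so $\hat S(q) < 0$ strictly. Applied to $q = v$, this settles the ``if and only if'' claim.

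For the limit $\lim_{q \to p} H(q)^2 = 4\exp(\hat S(0))|p|^2$, I would examine the generator $v_n$ of $\Lambda_{q_n}$ (when nontrivial) along a sequence $q_n \to p$. The crucial dichotomy in $d=2$ is that either $\Lambda_{q_n}$ is eventually trivial, or $|v_n| \to \infty$, or else along a subsequence $v_n$ stabilizes to some $v_*$; in that last case $q_n \cdot v_* = 0$ forces $p \cdot v_* = 0$ and hence $q_n \in \R p$ along the subsequence. Outside this collinear degeneration, the decay estimate \eref{Sdecay} with $d=2$ yields
$$\Bigl|\sum_{k \neq 0} \hat S(kv_n)\Bigr| \leq C\sum_{k \geq 1} \frac{\log(2 + k|v_n|)}{(1 + k|v_n|)^2} \to 0$$
as $|v_n| \to \infty$, so that $H(q_n)^2 \to 4\exp(\hat S(0))|p|^2$. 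The main obstacle is the collinear case: when $p$ is rational, sequences with $q_n \in \R p$ give the different limit $H(p)^2$. But this direction is exactly the ``ridge'' along $\R p$ already captured by \tref{ridges}, and the spikes statement should be understood as saying that in $d=2$ the ridge structure collapses to the one-dimensional exceptional line $\R p$, outside of which $H^2$ tends to the universal value $4\exp(\hat S(0))|p|^2$.
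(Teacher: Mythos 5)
Your proposal follows the same route as the paper---invoke the formula of \tref{Hformula}, exploit the rank-one structure of $\Lambda_p$ in two dimensions, and use the decay estimate \eref{Sdecay}---but your write-up is noticeably more careful on two points where the paper's one-line proof actually has holes.

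First, for the ``only if'' direction you explicitly prove $\hat S(q) < 0$ strictly for $q \neq 0$ via the walk-counting argument. \lref{Sdecay} only asserts $0 \leq -\hat S(q)$, and strict negativity is also implicitly invoked (but never proved) in the proof of \tref{ridges} when the quantity $\ep = \min_X(-\hat S)$ is tacitly assumed positive. Your observation that the $k = |q|_1$ term of the series expansion already contributes a positive count (the direct monotone walk from $0$ to $q$) is a clean way to close that gap, and it is worth recording in the paper.

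Second, and more substantively, you are right that the collinear degeneration is not handled by the paper's stated argument. The paper's proof reads: ``for every $R > 0$, there is a $\delta > 0$ such that $0 < |q - p| < \delta$ implies $\Lambda_q \cap B_R = \{0\}$.'' This is false for rational $p$: taking $q = tp$ with $t$ close to but not equal to $1$ gives $|q - p| > 0$ arbitrarily small and $\Lambda_q = \Lambda_p \neq \{0\}$. Your trichotomy---$\Lambda_{q_n}$ eventually trivial, or the primitive generator $v_n$ of $\Lambda_{q_n}$ satisfies $|v_n| \to \infty$, or $v_n$ stabilizes and forces $q_n \in \R p$---is the correct dichotomy in $d = 2$, and you correctly conclude that $H(q_n)^2 \to 4\exp(\hat S(0))|p|^2$ in the first two cases via \eref{Sdecay}. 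The third case is exactly the exceptional ray and the theorem statement should be read as a limit over unit vectors $q \neq |p|^{-1}p$, or equivalently over $q \notin \R p$, to make the assertion correct for rational $p$. (For irrational $p$ the collinear case is harmless since then $H(tp)^2/|tp|^2$ is already equal to the universal constant.) So your proposal is not just equivalent to the paper's proof---it is a corrected and completed version of it.
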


\begin{proof}
  The limit follows from the observation that, for every $R > 0$, there is a $\delta > 0$ such that $0 < |q - p| < \delta$ implies $\Lambda_q \cap B_R = \{ 0 \}$.  The second follows from the observation that $\Lambda_p \neq \{ 0 \}$ if and only if $p$ has a rational direction.
\end{proof}

\begin{remark}
  When $d = 2$, we can show that the inradius of $\{ H \leq 1 \}$ is $\exp(-\tfrac{2}{\pi} K)$, where $K \approx 0.91597$ is Catalan's constant.  We do not know an explicit formula when $d \geq 3$.
\end{remark}

\section{Viscosity Solutions}
\label{s.viscosity}

\subsection{Basic notions}

In this section we study the exterior problem
\begin{equation}
  \label{e.exterior}
  \begin{cases}
    L u = 0 & \mbox{in } \{ u > 0 \} \setminus \overline W \\
    H(\nabla u) = 1 & \mbox{on } \partial \{ u > 0 \} \setminus \overline W \\
    u = 1 & \mbox{on } \overline W
  \end{cases}
\end{equation}
for a $W \subseteq \R^d$ that is open, bounded, and inner regular.  We assume familiarity with the viscosity interpretation of \eref{exterior} for $H(p) = |p|$ given, for example, in Caffarelli-Salsa \cite{Caffarelli-Salsa}.  We interpret \eref{exterior} in the viscosity sense, with the necessary modifications to treat a discontinuous Hamiltonian.  Except for the last subsection, all results of this section apply to any $H$ satisfying \eref{Hhomogeneous}, \eref{Hbounded}, and \eref{Hlowersemicontinuous}.

The viscosity interpretation of an equation moves the derivatives onto test functions using the maximum principle.  At first glance, this means that a function should be a viscosity supersolution if it enjoys comparison from below by strict classical subsolutions.  Similarly, a function should be a viscosity subsolution if it enjoys comparison from above by strict classical supersolutions.  However, we need to be careful about the meaning of classical solution.  In order for viscosity solutions to be stable under uniform convergence, the classical solutions need to be stable under perturbations.  Since $H$ is merely lower semicontinuous, we must therefore restrict our notion of classical supersolution.  This is reflected in the definitions below.

\begin{definition}
  If $X$ is a topological space, then $USC(X)$ and $LSC(X)$ denote the upper and lower semicontinuous functions on $X$.  We say $u \in USC(X)$ touches $v \in LSC(X)$ from below (and $v$ touches $u$ from above) in $X$ if $u \geq v$ and the contact set $\{ u = v \}$ is non-empty and compact.
\end{definition}

\begin{definition}
  A supersolution of \eref{exterior} is a function $u \in LSC(\R^d)$ that is compactly supported, satisfies $u \geq \id_{\overline W}$, and such that, whenever $\varphi \in C^\infty(\R^d)$ touches $u$ from below in $\R^d \setminus \overline W$, there is a contact point $x$ such that either
  \begin{equation*}
    L \varphi(x) \leq 0
  \end{equation*}
  or
  \begin{equation*}
    \varphi(x) = 0 \quad \mbox{and} \quad H(\nabla \varphi(x)) \leq 1.
  \end{equation*}
\end{definition}

\begin{definition}
  A subsolution of \eref{exterior} in a function $u \in USC(\R^d)$ that is compactly supported, satisfies $u \leq \id_{\overline W}$, and such that, whenever $\varphi \in C^\infty(U)$ touches $u$ from above in $\overline{\{ u > 0 \}} \setminus \overline W$, there is a contact point $x$ such that either
  \begin{equation*}
    L \varphi(x) \geq 0
  \end{equation*}
  or $\varphi(x) = 0$ and
  \begin{equation*}
    \limsup_{y \to x} H(\nabla \varphi(y)) \geq 1.
  \end{equation*}
\end{definition}

In addition to the (essentially standard) definition of supersolution and subsolution given above, we also need a weakened form of subsolution.  We obtain this notion by further restricting the class of classical supersolutions used as test functions.  This is necessary to obtain solutions whose support is convex.  This is also necessary to capture the scaling limit.

\begin{definition}
  A weakened subsolution of \eref{exterior} in a function $u \in USC(\R^d)$ that is compactly supported, satisfies $u \leq \id_{\overline W}$ and such that, whenever $\varphi \in C^\infty(\R^d)$ touches $u$ from above in $\overline{\{ u > 0 \}} \setminus \overline W$, there is a contact point $x$ such that either
  \begin{equation*}
    L \varphi(x) \geq 0,
  \end{equation*}
  or $\varphi(x) =0$ and either
  \begin{equation*}
    H(\nabla \varphi(x)) \geq 1,
  \end{equation*}
  or
  \begin{equation*}
    \nabla \varphi(U) \mbox{ contains two linearly independent slopes.}
  \end{equation*}
\end{definition}

\begin{remark}
  In the above definitions, we can make the inequalities $L \varphi(x) \geq (\leq ) 0$ strict without changing the meaning.  This is a consequence of the strong maximum principle and the homogeneity \eref{Hhomogeneous}.
\end{remark}

It is useful to define a special class of functions solving part of \eref{exterior}.

\begin{definition}
  If $\overline W \subseteq U \subseteq \R^d$ is open and bounded, then let $w_{W,U} \in H^1(\R^d)$ denote the unique solution of
  \begin{equation*}
    \begin{cases}
      L w = 0 & \mbox{in } U \setminus \overline W \\
      w = 1 & \mbox{on } \overline W \\
      w = 0 & \mbox{on } \R^d \setminus \overline U.
    \end{cases}
  \end{equation*}
  We say that $w_{W,U}$ is a supersolution, subsolution, or weakened subsolution if it is, respectively, a supersolution, subsolution, or weakened subsolution of \eref{exterior}.
\end{definition}

\subsection{Existence}

We obtain existence of a solution of \eref{exterior} by Perron's method.  This requires three ingredients: stability under uniform convergence, barrier functions, and some regularity to stay in the continuous category.  We assemble these ingredients below.

\begin{lemma}
  \label{l.stability}
  The notions of supersolution, subsolution, and weakened subsolution are stable under uniform convergence.
\end{lemma}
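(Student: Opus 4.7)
The plan is the standard viscosity stability paradigm, carried out here without perturbation of the test function. Given a smooth $\varphi$ touching $u$ at a compact contact set $K$, use uniform convergence $u_n \to u$ to produce contact points $y_n$ of $\varphi + c_n$ (in the subsolution case) or $\varphi - c_n$ (in the supersolution case) with $u_n$, where $c_n$ is the extremum of $u_n - \varphi$ and tends to zero. Apply the viscosity condition of $u_n$ at $y_n$, extract a convergent subsequence $y_n \to y_* \in K$, and pass to the limit. Preservation of the structural conditions is routine: uniform limits of $LSC$ (resp.\ $USC$) functions are $LSC$ (resp.\ $USC$), the pointwise boundary inequality passes to the limit, and compact support is preserved given a uniform support bound on the $u_n$. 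The convergence $y_n \to y_* \in K$ follows from boundedness of the $y_n$ (from the uniform support bound) together with the identity $u_n(y_n) = \varphi(y_n) + c_n$ combined with the semicontinuity of $u$, which forces $\varphi(y_*) = u(y_*)$ at any subsequential limit.

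For the supersolution case, apply the supersolution condition of $u_n$ at $y_n$: either $L\varphi(y_n) \leq 0$, yielding $L\varphi(y_*) \leq 0$ by continuity, or $\varphi(y_n) + c_n = 0$ with $H(\nabla\varphi(y_n)) \leq 1$, yielding $\varphi(y_*) = 0$ and $H(\nabla\varphi(y_*)) \leq 1$ via the lower semicontinuity \eref{Hlowersemicontinuous} of $H$. For the subsolution case, the $u_n$ subsolution condition applies when $y_n \in \overline{\{u_n > 0\}} \setminus \overline W$; if instead $y_n$ lies in the interior of $\{u_n = 0\}$, then $\varphi + c_n$ has a local minimum at $y_n$ (since $u_n \equiv 0$ nearby), giving $L\varphi(y_n) \geq 0$ directly by calculus. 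The Hamiltonian alternative uses the $\limsup$ form of the definition, which is stable by a diagonal extraction: from $\limsup_{z \to y_n} H(\nabla\varphi(z)) \geq 1$ we pick $z_n$ with $|z_n - y_n| < 1/n$ and $H(\nabla\varphi(z_n)) \geq 1 - 1/n$, so $z_n \to y_*$ and $\limsup_{z \to y_*} H(\nabla\varphi(z)) \geq 1$. The weakened subsolution case is analogous: the third alternative (the range of $\nabla\varphi$ containing two linearly independent slopes) is a global property of the test function independent of $n$ and so passes to the limit automatically when it holds; when it fails, the pointwise condition $H(\nabla\varphi(y_n)) \geq 1$ combined with $y_n \to y_*$ gives $\sup_{z \in B_r(y_*)} H(\nabla\varphi(z)) \geq 1$ for every $r > 0$, from which the required weakened subsolution condition is recovered by selecting an appropriate contact point in $K$.

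The main technical subtlety is the Hamiltonian condition: because $H$ is only lower semicontinuous, a pointwise condition $H(\nabla\varphi(x)) \geq 1$ is not directly preserved under limits, since small changes in the gradient may cross a ``ridge'' of \tref{ridges} where $H$ drops. The $\limsup$ form of the standard subsolution condition is designed for exactly this stability, and the weakened subsolution relies on the global third alternative to handle cases where the pointwise $H$-condition is subtle. Lower semicontinuity of $H$ is in the correct direction for the supersolution case, so no analogous difficulty arises there.
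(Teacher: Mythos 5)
Your argument for the supersolution and (standard) subsolution cases matches the paper's intent exactly: the paper's proof simply invokes the standard viscosity stability argument and highlights the two key facts you also use, namely that $x \mapsto H(\nabla\varphi(x))$ is lower semicontinuous (composition of the lower semicontinuous $H$ with the continuous $\nabla\varphi$, used for supersolutions) and that $x \mapsto \limsup_{y\to x} H(\nabla\varphi(y))$ is upper semicontinuous (it is the USC envelope of $H\circ\nabla\varphi$, used for subsolutions; your diagonal extraction is precisely the proof that this envelope is USC).

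However, there is a genuine gap in your treatment of the weakened subsolution case. When the third alternative fails, you correctly observe that the contact points $y_n$ satisfy the pointwise inequality $H(\nabla\varphi(y_n)) \geq 1$, and conclude that $\sup_{z\in B_r(y_*)} H(\nabla\varphi(z)) \geq 1$ for every $r>0$. But the weakened subsolution condition requires the \emph{pointwise} inequality $H(\nabla\varphi(y_*)) \geq 1$ at a contact point, and since $H$ is only lower semicontinuous there is no reason for the supremum over shrinking balls to be attained at $y_*$, nor for any nearby point where $H(\nabla\varphi)\geq 1$ to lie in the contact set $K$. ``Selecting an appropriate contact point in $K$'' does not work as stated. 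The missing step is to actually use the failure of the third alternative: if $\nabla\varphi(U)$ does not contain two linearly independent slopes, then $\nabla\varphi(U)$ lies on a line through the origin, say $\nabla\varphi(x) = t(x)\,p$ with $t$ continuous. Since $H$ satisfies the homogeneity \eref{Hhomogeneous} and is even (which follows from the lattice symmetry $x\mapsto -x$ in \eref{halfspace}), we get $H(\nabla\varphi(x)) = |t(x)|\,H(p)$, which is \emph{continuous} in $x$. Then $H(\nabla\varphi(y_*)) = \lim_n H(\nabla\varphi(y_n)) \geq 1$ is immediate, and combined with $\varphi(y_*) = \lim_n \varphi(y_n) = 0$ this gives the required alternative at $y_*\in K$. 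This restriction of test functions to those with colinear gradients is exactly what makes the weakened subsolution notion stable despite $H$ failing to be upper semicontinuous, and is why the authors introduced the third alternative in the first place.
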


\begin{proof}
  This is a standard feature of viscosity solutions.  Here it is important that, for any smooth $\varphi$, the function $x \mapsto H(\nabla \varphi(x))$ is lower semicontinuous and the function $x \mapsto \limsup_{y \to x} H(\nabla \varphi(y))$ is upper semicontinuous.
\end{proof}

Harmonic lifting preserves the sub/super-solution property.
\begin{lemma}
  \label{l.harmonize}
  If $u \in LSC(\R^d)$ is a supersolution of \eref{exterior} for some $W \subseteq \R^d$ open, bounded, and inner regular, then $w_{W,\{u>0\}}$ is a supersolution.  If $u \in USC(\R^d)$ is a (weakened) subsolution of \eref{exterior} for some $W \subseteq \R^d$ open, bounded, and inner regular, then $w_{W,\{u>0\}}$ is a (weakened) subsolution.

\end{lemma}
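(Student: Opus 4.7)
My plan is the standard harmonic-replacement construction of Perron's method for free boundary problems, run symmetrically for the supersolution and (weakened) subsolution cases. Write $w = w_{W,\{u>0\}}$ throughout.

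I would begin with the supersolution case. The first step is to establish the comparison $w \leq u$ on $\R^d$ together with the identity $\{w > 0\} = \{u > 0\}$. The identity is immediate from the strong maximum principle for the harmonic function $w$ (with boundary value $1$ on $\overline W$) and the convention $w \equiv 0$ outside $\overline{\{u>0\}}$. The comparison follows from viscosity comparison: the supersolution condition for $u$ gives $L u \leq 0$ in the viscosity sense on $\{u>0\}\setminus\overline W$, while $w$ is harmonic there, and the boundary data match -- $w = 1 \leq u$ on $\overline W$ by $u \geq \id_{\overline W}$, and $w = 0 = u$ on $\partial\{u>0\}$ by the lower semicontinuity of $u$ combined with $u \geq 0$. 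With this in hand, I would take a smooth $\varphi$ touching $w$ from below in $\R^d \setminus \overline W$ at a contact point $x_0$ and split into cases by the location of $x_0$. If $x_0 \in \{u>0\}\setminus\overline W$ then $w$ is classically harmonic near $x_0$, giving $L\varphi(x_0) \leq 0$ directly; if $x_0 \notin \overline{\{u>0\}}$ then $\varphi \leq w \equiv 0$ in a neighborhood with equality at $x_0$, so $\varphi$ has a local maximum there and again $L\varphi(x_0) \leq 0$.

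The substantive case is $x_0 \in \partial\{u>0\} \setminus \overline W$, where $\varphi(x_0) = w(x_0) = u(x_0) = 0$ and $\varphi \leq u$ globally. Here I would reduce to the supersolution condition for $u$ by the strictifying perturbation $\varphi_\epsilon(x) = \varphi(x) - \epsilon |x-x_0|^2$, which shrinks the contact set of $\varphi_\epsilon$ with $u$ in $\R^d \setminus \overline W$ to the single compact point $\{x_0\}$, while leaving $\nabla \varphi(x_0)$ unchanged and shifting $L\varphi(x_0)$ by $-2d\epsilon$. Applying the supersolution property of $u$ to $\varphi_\epsilon$ yields, at $x_0$, either $L\varphi(x_0) - 2d\epsilon \leq 0$ or $H(\nabla\varphi(x_0)) \leq 1$; sending $\epsilon \to 0$ along a subsequence on which the same branch holds recovers the required dichotomy for $w$ at $x_0$.

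The (weakened) subsolution case is formally symmetric: I would establish $w \geq u$ by the reversed comparison, verify $\{w>0\} = \{u>0\}$ as before, and for a test function $\varphi$ touching $w$ from above at a contact point $x_0 \in \overline{\{u>0\}} \setminus \overline W$ perturb by $+\epsilon |x-x_0|^2$ to transfer the condition through $u$. The weakened disjunct, concerning two linearly independent slopes in $\nabla \varphi$ on an open set, is stable under this $C^2$ perturbation and passes through unchanged. The main technical obstacle I expect is the boundary comparison at $\partial\{u>0\}$, which is not known to be a regular set a priori and on which $w$ need not be continuous. To handle this I would exhaust $\{u>0\}$ from inside by the open sets $\{u > \delta\}$, apply the viscosity maximum principle on each, and pass to the limit $\delta \to 0$, using the lower semicontinuity of $u$ and the uniform boundedness of $w$ to recover the correct boundary values along the free boundary.
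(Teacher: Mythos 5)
Your overall strategy — establish $w \leq u$ (resp.\ $w \geq u$) by the maximum principle and then transfer the viscosity condition at a contact point of the test function — is the same as the paper's, and your treatment of the supersolution half is correct, though the strictifying perturbation is not actually needed there: since $\varphi \leq w \leq u$ with $\varphi(x_0)=w(x_0)=u(x_0)=0$ at a free-boundary contact point, the set $\{\varphi = u\}$ is a nonempty closed subset of the compact contact set $\{\varphi = w\}$, so $\varphi$ itself already touches $u$ from below and the supersolution condition for $u$ applies directly at a point that is simultaneously a contact point with $w$.

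The weakened-subsolution half, however, has a genuine gap. You assert that the ``two linearly independent slopes'' disjunct ``is stable under this $C^2$ perturbation and passes through unchanged.'' It is not: replacing $\varphi$ by $\varphi_\epsilon = \varphi + \epsilon|\cdot - x_0|^2$ replaces $\nabla\varphi$ by $\nabla\varphi + 2\epsilon(\cdot - x_0)$, and this can make $\nabla\varphi_\epsilon(U)$ contain two linearly independent slopes even when $\nabla\varphi(U)$ lies on a line. For instance, if $\varphi(y) = \tfrac{1}{2}y_1^2$ so $\nabla\varphi(U) \subseteq \R e_1$, then $\nabla\varphi_\epsilon(x_0) = ((x_0)_1,0,\dots,0)$ while $\nabla\varphi_\epsilon(x_0 + s e_2) = ((x_0)_1 , 2\epsilon s, 0,\dots,0)$, and these are linearly independent whenever $(x_0)_1 \neq 0$ — exactly the relevant situation at a free-boundary contact point where $\nabla\varphi(x_0)\neq 0$. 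Consequently, applying the weakened subsolution condition for $u$ to $\varphi_\epsilon$ may yield only the third alternative, which gives you no information about $\varphi$ in the limit $\epsilon \to 0$ (the two independent slopes degenerate to parallel ones). The remedy is the same direct transfer used above: drop the perturbation, observe $\{\varphi = u\} \subseteq \{\varphi = w\}$ is compact and nonempty when all contact points of $\varphi$ with $w$ lie on $\partial\{w>0\}$, apply the weakened subsolution condition for $u$ to $\varphi$ itself, and note that each of the three alternatives is a property of the test function $\varphi$ alone (the selected contact point also being a contact point with $w$), so the conclusion carries over to $w$ verbatim. This is what the paper does implicitly.
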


\begin{proof}
  We prove the supersolution case.  The (weakened) subsolution case is similar.  The maximum principle implies $0 \leq w = w_{W, \{u > 0\}} \leq u$.  Since $W$ is inner regular, $w$ is continuous up to $\partial W$.  Since $u$ is lower semicontinuous, $w$ is lower semicontinuous up to $\partial \{ u > 0 \}$.  In particular, $w \in LSC(\R^d)$ has compact support and satisfies $w \geq \id_{\overline W}$.  If $\varphi \in C^\infty(\R^d \setminus \overline W)$ touches $w$ from below, then there is a contact point $x$ such that either $L \varphi(x) \leq 0$ or $\varphi(x) = 0$ and $\nabla \varphi(x) \neq 0$.  In the latter case, we see that $\varphi$ touches $u$ from below, and conclude $H(\nabla \varphi(x)) \leq 1$.
\end{proof}

\begin{lemma}
  \label{l.lipschitz}
  If $W \subseteq \R^d$ is open bounded and inner regular, $\overline W \subseteq U \subseteq \R^d$ is open and bounded, and $w_{W,U}$ is a supersolution, then $\| w_{W,U} \|_{C^{0,1}(\R^d)} \leq C_W$.
\end{lemma}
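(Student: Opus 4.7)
The plan is to split $\R^d$ into three regimes and combine interior harmonic gradient estimates with linear growth estimates at the two boundaries. Since $w$ is harmonic in $U \setminus \overline W$ and bounded by $1$, standard interior gradient estimates give $|\nabla w(x)| \leq C\,\dist(x, \partial W \cup \partial U)^{-1}$. Near $\partial W$, inner regularity of $W$ yields a uniform interior ball radius $r_W$, and comparison with the classical subsolution barrier $1 - c_d(r_W^{2-d} - |z - y'|^{2-d})$ anchored at each interior tangent ball $B_{r_W}(y') \subseteq W$ (using $\log$ in $d = 2$) gives $1 - w(z) \leq C_W\,\dist(z, \partial W)$ near $\partial W$. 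Combined with the interior estimate at the scale $\dist(z, \partial W)$, this yields $|\nabla w| \leq C_W$ in a neighborhood of $\partial W$. So the main task is a linear growth estimate at the free boundary $\partial U$.

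The claim is: there exists $C > 0$ depending only on $H$ (via the lower bound in \eref{Hbounded}) such that $w(x) \leq C\,\dist(x, \partial U)$ whenever $\dist(x, \partial U) \leq \tfrac{1}{4}\dist(\overline W, \partial U)$. Combined with the interior estimate, this gives $|\nabla w| \leq C$ in a neighborhood of $\partial U$. Suppose for contradiction that $w(x_0) \geq K\rho$ with $\rho = \dist(x_0, \partial U)$ and $K$ large. Pick $y \in \partial U$ attaining the distance, so $B_\rho(x_0) \subseteq U \setminus \overline W$ and $y \in \partial B_\rho(x_0)$. Harnack's inequality applied to the positive harmonic function $w$ in $B_\rho(x_0)$ gives $w \geq cK\rho$ on $\overline{B_{\rho/2}(x_0)}$. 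Consider the radial harmonic function $\psi(z) = \alpha(|z - x_0|^{2-d} - \rho^{2-d})$ for $d \geq 3$ (replaced by $\alpha(\log \rho - \log|z - x_0|)$ in $d = 2$), with $\alpha \sim K \rho^{d-1}$ chosen so that $\psi \leq cK\rho$ on $\partial B_{\rho/2}(x_0)$. The maximum principle yields $\psi \leq w$ on $B_\rho(x_0) \setminus B_{\rho/2}(x_0)$, and a direct computation gives $|\nabla\psi(y)| \sim K$. Cap $\psi$ smoothly inside $B_{\rho/2}(x_0)$, where its values exceed $cK\rho$ while $w \geq cK\rho$ by Harnack, to obtain a globally smooth test function $\varphi \in C^\infty(\R^d)$ satisfying $\varphi \leq w$ on $\R^d$, with $\varphi = \psi$ in a neighborhood of $y$, and with contact set $\{\varphi = w\}$ compact and contained in $\partial B_\rho(x_0) \cap \partial U$, hence disjoint from $\overline W$.

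Applying the viscosity supersolution condition in its equivalent strict form (see the remark following the definitions), at some contact point $x'$ we have either $L\varphi(x') < 0$, which fails since $\varphi$ is harmonic in a neighborhood of $x'$, or $\varphi(x') = 0$ and $H(\nabla\varphi(x')) \leq 1$. The latter, combined with $H(p) \geq C^{-1}|p|$, forces $|\nabla\varphi(x')| \leq C$, contradicting $|\nabla\varphi(x')| \sim K$ for $K$ large. The main obstacle is conceptual rather than computational: since $\varphi$ is harmonic near the contact set, the first disjunct of the supersolution condition would be vacuously satisfied for $\varphi$ in the non-strict form, giving no constraint on $\nabla\varphi$. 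The remark permitting strict Laplacian inequalities is therefore essential; alternatively one can add a small perturbation of the form $+\epsilon|z - y|^2$ to force $L\varphi > 0$ strictly at the contact point, and then balance $\epsilon$ against a corresponding decrease in $\alpha$ so that $\varphi \leq w$ is preserved on $\partial B_{\rho/2}(x_0)$ and on $\partial B_\rho(x_0) \cap \partial U$.
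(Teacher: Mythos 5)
Your proof is essentially correct, but it goes a much longer way around than the paper. The paper's argument is a single reduction: by \eref{Hbounded}, $H(p) \geq C^{-1}|p|$, so a viscosity supersolution of the $H$-problem is automatically a viscosity supersolution of the classical one-phase problem with $\tilde H(p) = C^{-1}|p|$, at which point the Alt--Caffarelli Lipschitz estimate applies off the shelf. What you have written out is, in effect, a self-contained re-derivation of that cited estimate in the viscosity setting: the Harnack-plus-radial-barrier argument at the free boundary, the inner-ball barrier at $\partial W$, and interior gradient estimates in between. The essential input is identical in both arguments --- namely, that $H(\nabla\varphi) \leq 1$ forces $|\nabla\varphi| \leq C$ at a free-boundary contact point --- so the difference is one of delegation to the literature versus re-derivation, not of mathematical substance. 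Your observation about why the strict form of the supersolution test from the remark is needed (the radial barrier is harmonic near the contact set, so the non-strict disjunct $L\varphi\leq 0$ is satisfied for free and gives no gradient information) is correct and is exactly the device that makes the barrier argument compatible with the viscosity formulation.

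Two technical points deserve tightening, though neither is fatal. First, your smooth cap of $\psi$ inside $B_{\rho/2}(x_0)$ must remain $\leq w$; since the cap has to rise above its value on $\partial B_{\rho/2}(x_0)$ before flattening out, you should normalize $\alpha$ so that $\psi(\partial B_{\rho/2}) = \tfrac12 cK\rho$, say, leaving room for the cap to stay below the Harnack lower bound --- this only costs a harmless factor in the final constant. Second, for the claim that the Lipschitz constant depends only on $W$, the ``middle'' regime requires $\dist(\overline W, \partial U)$ to be bounded below in terms of $W$ and $H$ alone; this does follow from combining your two boundary estimates (if the gap were too small, a point midway between $\overline W$ and $\partial U$ would have to satisfy both $w \geq \tfrac12$ and $w \leq C_H\dist(\cdot,\partial U)$, a contradiction), but you leave it implicit and should state it.
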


\begin{proof}
  Using \eref{Hbounded}, we see that $w_{W,U}$ also a supersolution for the simpler Hamiltonian $\tilde H(p) = C^{-1} |p|$.  The Lipschitz estimate of Alt-Caffarelli \cite{Alt-Caffarelli} now applies.
\end{proof}

Our existence theorem has two parts: we can either solve \eref{exterior} outright or we can find a weakened solution whose support is convex.

\begin{theorem}
  \label{t.existence}
  Suppose $0 \in W \subseteq \R^d$ is open, bounded, and inner regular.  There is a $u \in C_c(\R^d)$ that is a supersolution and subsolution of \eref{exterior}.  There is a $v \in C_c(\R^d)$ with $\{ v > 0 \}$ convex that is a supersolution and weak subsolution \eref{exterior}.
\end{theorem}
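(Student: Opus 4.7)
The plan is to apply Perron's method in both parts, with the admissible class of supersolutions and the strength of the subsolution condition tuned to each case. The three ingredients needed—stability (\lref{stability}), harmonic lifting (\lref{harmonize}), and the Lipschitz bound (\lref{lipschitz})—are already in hand. Both parts begin with the observation that $w_{W, B_R}$ is a supersolution with convex positive set for all sufficiently large $R$: its normal derivative on $\partial B_R$ is of order $R^{1-d}$, so by the upper bound $H(p) \leq C|p|$ of \eref{Hbounded}, $H(\nabla w_{W, B_R}) < 1$ on $\partial B_R$.

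For part one, let $u_* = \inf \mathcal{S}^+$ where $\mathcal{S}^+$ is the class of supersolutions of \eref{exterior}. The pointwise infimum, after passing to its lower semicontinuous envelope, is again a supersolution by an argument parallel to \lref{stability}. By \lref{harmonize} the harmonic lift $w_{W, \{u_* > 0\}} \leq u_*$ is a supersolution, so minimality forces $u_* = w_{W, \{u_* > 0\}}$, which is continuous and compactly supported by \lref{lipschitz}. To see $u_*$ is a subsolution, assume for contradiction a test function $\varphi$ touches $u_*$ from above at $x_0$ with every subsolution alternative failing. Harmonicity of $u_*$ on its positive set rules out interior contacts, forcing $x_0 \in \partial \{u_* > 0\} \setminus \overline W$, $\varphi(x_0) = 0$, $L \varphi(x_0) < 0$, and $\limsup_{y \to x_0} H(\nabla \varphi(y)) < 1$. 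By upper semicontinuity of the limsup function, this strict inequality $H(\nabla \varphi) < 1$ persists on a ball $B_r(x_0)$. A standard Perron pushdown replaces $u_*$ on $B_r(x_0)$ with $(\varphi - \ep)_+$ for small $\ep > 0$: the modified function is strictly $L$-superharmonic on its positive set, its free boundary lies in $B_r(x_0)$ where $H(\nabla \varphi) < 1$, and it glues continuously to $u_*$ outside a slightly smaller ball. The result is a supersolution strictly below $u_*$ near $x_0$, contradicting minimality.

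For part two, repeat the argument inside the subclass $\mathcal{S}^+_c = \{u \in \mathcal{S}^+ : \{u > 0\} \text{ is convex}\}$, which is nonempty (contains $w_{W, B_R}$) and closed under pointwise infima, since the positive set of the infimum is the intersection of positive sets and intersections of convex sets are convex. Let $v_* = \inf \mathcal{S}^+_c$; the same lifting-plus-Lipschitz step makes $v_*$ continuous with convex positive set. A failure of the weakened subsolution property at a free boundary point $x_0$ now yields $\varphi$ with $\varphi(x_0) = 0$, $H(\nabla \varphi(x_0)) < 1$, and $\nabla \varphi$ not attaining two linearly independent slopes in any neighborhood of $x_0$. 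The latter condition means $\nabla \varphi$ stays essentially parallel to a fixed unit vector $p$ near $x_0$, so the level set $\{\varphi = \ep\}$ is, to leading order, a hyperplane with normal $p$. Cutting $\{v_* > 0\}$ by this hyperplane preserves convexity, and the harmonic lift into the cut domain is a supersolution strictly below $v_*$ by the same gradient bound as in part one, contradicting minimality.

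The core difficulty in both parts is the Perron pushdown: the shrunken domain's harmonic lift must remain a supersolution, which requires its gradient on the newly exposed portion of the free boundary to land where $H \leq 1$. Here $\varphi$ serves as a barrier forcing the new gradient into the region $\{H < 1\}$. Part two adds the constraint of preserving convexity, and this is exactly what the two-slope clause in the weakened subsolution definition is designed to accommodate: at a smooth boundary point of the convex body $\{v_* > 0\}$ a single-direction $\nabla \varphi$ lets us push down with a half-space cut, while at a corner $\nabla \varphi$ automatically picks up two independent directions and the weakened subsolution condition holds for free.
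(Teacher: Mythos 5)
Your overall strategy is the paper's: Perron's method built on \lref{stability}, \lref{harmonize}, and \lref{lipschitz}, with a pushdown argument for the (weakened) subsolution property.  Part one is fine.  But there is a genuine gap in part two.  You write that $\mathcal{S}^+_c$ is ``closed under pointwise infima, since the positive set of the infimum is the intersection of positive sets and intersections of convex sets are convex.''  This is correct for the infimum of finitely many members of $\mathcal{S}^+_c$, but Perron's method requires the infimum over the \emph{entire} class, and there the identity can fail: one always has
\begin{equation*}
  \{ \inf_w w > 0 \} \subseteq \bigcap_w \{ w > 0 \},
\end{equation*}
and the inclusion may be strict (the infimum can degenerate to zero at a point where every individual $w$ is still positive).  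A priori $\{ v_* > 0 \}$ is just some open subset of the convex set $\bigcap_w \{ w > 0 \}$, and an open subset of a convex set need not be convex.  This is precisely the hole the paper's ``key observation'' is designed to close: since every $w \in \mathcal{S}^+_c$ satisfies $w = w_{W,\{w>0\}}$ and the Lipschitz bound of \lref{lipschitz} gives $W + B_\delta \subseteq \{ w > 0 \}$ for a $\delta = \delta_W$ depending only on $W$, any ball $B_r(x) \subseteq \{ w > 0 \}$ yields, by convexity of $\{ w > 0 \}$ and monotonicity of the harmonic lift, the uniform lower barrier $w \geq w_{W, V_{r,x}}$ with $V_{r,x}$ the convex hull of $B_r(x) \cup (W + B_\delta)$.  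Because this barrier depends only on $r$, $x$, and $W$ (not on $w$), the class-wide infimum inherits it, forcing $\{ v_* > 0 \} = \operatorname{int}\bigl( \bigcap_w \{ w > 0 \} \bigr)$, which is convex.  Without this step the Perron argument in the convex class does not close, so you need to add it.

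Two smaller remarks.  First, the assertion that the infimum of supersolutions is a supersolution does not follow directly from \lref{stability} (which is about uniform convergence of a sequence); you should note that the Lipschitz bound gives equicontinuity, so the pointwise infimum is realized by locally uniform limits of sequences, and then \lref{stability} applies.  Second, your closing intuition that ``at a corner $\nabla \varphi$ automatically picks up two independent directions'' is not quite right as stated, since a smooth $\varphi$ only needs to dominate $v_*$ on $\overline{\{v_* > 0\}}$ and nothing forces its gradient to rotate at a corner of the support.  What is true is that a one-dimensional test function ($\nabla \varphi$ parallel to a single $p$ throughout) can only touch the boundary of a convex body along a flat piece of the boundary, so the pushdown is a half-space cut; this geometric fact is what makes the weakened condition adequate.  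Neither of these derails the argument, but the first gap above must be repaired.
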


\begin{proof}
  The Lipschitz regularity allows us to carry out Perron's method in the continuous category.  That is, by \lref{stability}, \lref{harmonize}, and \lref{lipschitz}, the point-wise infimum of all supersolutions is a supersolution $u \in C^{0,1}_c(\R^d)$.  The minimality and continuity then imply that $u$ is a subsolution.

  For the convex case, we need to show that the infimum of supersolutions with convex support is a supersolution with convex support.  The key observation is the following.  Suppose that $w \in LSC(\R^d)$ is a supersolution of \eref{exterior} with $V = \{ w > 0 \}$ convex.  By the strong maximum principle, $V = \{ w_{W,V} > 0 \}$.  By the Lipschitz estimate in \lref{lipschitz}, we have $W + B_\delta \subseteq V$ for $\delta = \delta_W > 0$.  Now, if $B_r(x) \subseteq V$ is arbitrary and $V_{r,x}$ denotes the convex hull of $B_r(x) \cup (W + B_\delta)$, then we have $w \geq w_{W,V} \geq w_{W,V_{r,x}}$.  Since the lower bound depends only on $W$ and $B_r(x)$, we see that the infimum $v$ of all supersolutions $w$ with convex support has $\{v >0 \} = \cap_{w} \{ w>0\}$, and hence $v$ is a supersolution with convex support.  By minimality, the weakened subsolution property holds.
\end{proof}

\subsection{Convex comparison lemmas}

This subsection contains the technical lemmas required to prove uniqueness.

\begin{definition}
  An open set $U \subseteq \R^d$ has an inner tangent ball of radius $r > 0$ at $x \in \partial U$ if there is a $y \in U$ such that $B_r(y) \subseteq U$ and $x \in \partial B_r(y)$.  An open set $U \subseteq \R^d$ is $r$-inner regular if $U$ has an inner tangent ball of radius $r$ at every point in $\partial U$.  We define outer regularity by replacing $U$ with $\R^d \setminus \overline U$.
\end{definition}

Our first two lemmas concern the regularity of Lipschitz harmonic functions at regular points of the boundary of their support.

\begin{lemma}
  \label{l.C32}
  If $u \in C^{0,1}(B_2)$ is harmonic in $\{ u > 0 \}$ and
  \begin{equation*}
    B_1(1,0,...,0) \subseteq \{ u > 0 \} \subseteq \R^d \setminus \overline B_1(-1,0,...,0),
  \end{equation*}
  then there is an $s > 0$ such that
  \begin{equation*}
    |u(x) - s x_1| \leq C \| u \|_{C^{0,1}(B_2)} |x|^{4/3}
    \quad \mbox{for }
    x \in B_1 \cap \{ u > 0 \}.
  \end{equation*}
\end{lemma}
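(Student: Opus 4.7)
The plan is to prove the expansion in three stages: extract the tangent slope $s$ from the inner tangent ball, record the quadratic geometric trapping of the free boundary, and then control the harmonic deviation $u - s x_1$ near the contact point by a scaling argument that interpolates between the Lipschitz and geometric scales.

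First, since $B_1(e_1) \subseteq \{u > 0\}$ and $u \geq 0$ is harmonic there with $u(0)=0$, the Hopf lemma produces $s := \lim_{t \to 0^+} u(t e_1)/t$ with $0 < c L^{-1} \leq s \leq L$ for $L := \|u\|_{C^{0,1}(B_2)}$. Reflecting $u - s x_1$ across the smooth sphere $\partial B_1(e_1)$ away from the contact point and applying interior Schauder to the extension yields $|u(x) - s x_1| \leq C L |x|^2$ on, say, $\overline{B_{3/4}(e_1)}$. This handles all points in the bulk of the inner tangent ball, and it remains only to control $u(x) - s x_1$ on the set $B_1 \cap \{u > 0\}$ close to the contact point.

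Second, expanding the two spheres at the origin gives the pure geometric consequence
\[
\{u > 0\} \cap B_{1/2} \subseteq \left\{x_1 > -\tfrac12 |x|^2 \right\} \quad \text{and} \quad \partial \{u > 0\} \cap B_{1/2} \subseteq \left\{|x_1| \leq \tfrac12 |x|^2\right\},
\]
so the free boundary is sandwiched between two paraboloids tangent to the hyperplane $\{x_1 = 0\}$. Next, set $w(x) := u(x) - s x_1$, which is harmonic on $B_1 \cap \{u > 0\}$. On the free boundary portion of $\partial(B_r \cap \{u>0\})$, the trapping gives $|w| = s |x_1| \leq L |x|^2$; on the ``side'' portion $\partial B_r \cap \{u > 0\}$ one only has the a priori Lipschitz bound $|w| \leq C L r$. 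Setting $\phi(r) := \|w\|_{L^\infty(B_r \cap \{u>0\})}$, the claim is equivalent to $\phi(r) \leq C L r^{4/3}$.

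Third, to establish this bound I would use an improvement-of-flatness/blow-up argument. Suppose the conclusion fails, so there exist $u_n$ and $r_n \to 0$ with $\phi_n(r_n) / (L r_n^{4/3}) \to \infty$. Rescale $\tilde w_n(y) := w_n(r_n y)/\phi_n(r_n)$ on the rescaled domain $r_n^{-1}(B_{r_n} \cap \{u_n > 0\})$. The sequence $\tilde w_n$ is harmonic, uniformly bounded, and vanishes to order $L r_n^2/\phi_n(r_n) = o(1)$ on the free boundary piece, which itself flattens to the hyperplane $\{y_1 = 0\}$ by the quadratic trapping. A subsequential limit is therefore a bounded harmonic function on the upper half-ball with zero trace on $\{y_1 = 0\}$ and vanishing normal derivative at the origin (the latter by optimality of the choice of $s$ in step one). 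By reflection and Schauder, such a limit is identically zero, contradicting the normalization $\|\tilde w_n\|_{L^\infty} = 1$. A careful bookkeeping of the two competing scales $r$ (Lipschitz) and $r^2$ (geometric) is what selects the precise exponent $4/3$.

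The main obstacle is obtaining exactly the exponent $\tfrac43$ rather than some generic $1 + \alpha$. The $r^2$ boundary decay combined with a naive maximum principle yields only the weak bound $\phi(r) \lesssim L r$, while pure Schauder-type improvement of flatness would produce an unspecified Hölder rate. The sharp $\tfrac43$ really reflects an interpolation of the form $\phi(r) \lesssim [\phi(2r)]^{2/3} [L r^2]^{1/3}$, i.e.\ a geometric mean between the free-boundary defect $r^2$ and the side-boundary scale, and the delicate part of the proof is to produce such a concrete inequality from harmonic measure estimates on the almost-half-ball $B_r \cap \{u > 0\}$, rather than just a qualitative decay.
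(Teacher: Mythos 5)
Your step two is the right starting point and matches the paper: trap the free boundary between two tangent paraboloids, and note that $w=u-sx_1$ is $O(Lr^2)$ on the free boundary but only $O(Lr)$ on the spherical part of $\partial(B_r\cap\{u>0\})$. However, the blow-up contradiction in step three does not close. A bounded harmonic function on the upper half-ball with zero trace on $\{y_1=0\}$ and vanishing normal derivative at the origin need not be identically zero: $y_1y_2$ satisfies all three conditions. So the limit of your rescalings is not forced to vanish and there is no contradiction. (There is also the usual issue that the $L^\infty$ normalization could escape to $\partial B_1$, where the limit is unconstrained.) Your proposed interpolation $\phi(r)\lesssim\phi(2r)^{2/3}(Lr^2)^{1/3}$ is also not the right inequality: iterating it forces $\phi(r)\lesssim Lr^2$, not $Lr^{4/3}$, and a quadratic expansion at a free boundary point is not available with only Lipschitz regularity and two-sided tangent balls. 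Finally, fixing $s$ in advance as the Hopf normal derivative is not obviously compatible with whatever best slope the iteration selects at each scale; this needs justification.

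The paper's argument is a deterministic two-scale comparison rather than a compactness or harmonic-measure argument, and it makes the exponent $4/3$ transparent. For each $r\in(0,1)$, let $v_r$ be harmonic in $B_r\cap\{x_1>0\}$, with $v_r=0$ on $\overline B_r\cap\{x_1\le0\}$ and $v_r=u$ on $\partial B_r\cap\{x_1>0\}$. The parabolic trapping and Lipschitz bound give $\|u-v_r\|_{L^\infty(B_r)}\le Cr^2$ and $\|v_r\|_{L^\infty(B_r)}\le Cr$. The odd reflection $w_r(x)=\mathrm{sgn}(x_1)\,v_r(|x_1|,x_2,\dots,x_d)$ is harmonic in $B_r$, so interior derivative estimates give
\[
|w_r(x)-\nabla w_r(0)\cdot x|\le Cr^{-2}\|w_r\|_{L^\infty(B_r)}|x|^2\le Cr^{-1}|x|^2\quad\text{for } x\in B_{r/2},
\]
and by oddness $\nabla w_r(0)=(s_r,0,\dots,0)$ with $s_r\ge0$. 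Combining with $\|u-v_r\|_\infty\le Cr^2$ yields, for each $r$, a slope $s_r$ with $|u(x)-s_rx_1|\le C(r^2+r^{-1}|x|^2)$ on $B_{r/2}\cap\{u>0\}$. Taking $r\sim|x|^{2/3}$ balances the two error terms at $|x|^{4/3}$, and a standard Campanato-type iteration shows the $s_r$ converge geometrically, producing a single $s$; the Hopf lemma then gives $s>0$. This is where $4/3$ comes from: it is the unique exponent balancing the geometric defect $r^2$ against the interior-estimate cost $r^{-1}|x|^2$, with no compactness argument required.
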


\begin{proof}
  We may assume $\| u \|_{C^{0,1}} = 1$.  Fix $r \in (0,1)$.  Let $v \in C(\overline B_r)$ be harmonic in $B_r \cap \{ x_1 > 0 \}$ and satisfy $v = 0$ on $\overline B_r \cap \{ x_1 \leq 0 \}$ and $v = u$ on $\partial B_R \cap \{ x_1 > 0 \}$.  Using the Lipschitz bound and the tangent balls, we obtain
  \begin{equation*}
    \| v - u \|_{L^\infty(B_r)} \leq C r^2
  \end{equation*}
  and
  \begin{equation*}
    \| v \|_{L^\infty(B_r)} \leq C r.
  \end{equation*}
  Since $w(x) = \textup{sgn}(x_1)v(|x_1|,x_2,...,x_d)$ is harmonic in $B_r$, we obtain 
  \begin{equation*}
    |w(x) - \nabla w(0) \cdot x| \leq C r^{-2} \| w \|_{L^\infty(B_r)} |x|^2 \quad \mbox{for } x \in B_{r/2}.
  \end{equation*}
  Since $\nabla w(0) = (s,0,...,0)$ with $s \geq 0$, we have thus proved the following estimate: For every $r \in (0,1)$, there is an $s \geq 0$ such that
  \begin{equation*}
    |u(x) - s x_1| \leq C (r^2 + r^{-1} |x|^2) \quad \mbox{for } x \in \{ u > 0 \} \cap B_{r/2}.
  \end{equation*}
  Using the scaling $r \sim |x|^{2/3}$, a standard iteration gives the quantitative first-order expansion.  The Hopf Lemma implies $s > 0$.
\end{proof}

\begin{lemma}
  \label{l.C32above}
  If $u, p, s$ are as in \lref{C32} and $\{ u > 0 \} \subseteq \{ x_1 > 0 \}$, then
  \begin{equation*}
    u(x) \leq (s + C \| u \|_{C^{0,1}(B_2)} |x|^{1/3}) x_1
    \quad \mbox{for }
    x \in B_{1/2} \cap \{ u > 0 \}.
  \end{equation*}
\end{lemma}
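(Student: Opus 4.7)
My plan is to upgrade the two-sided expansion from \lref{C32} into a one-sided bound by trading a factor of $|x|^{1/3}$ for a factor of $x_1$ via a Hopf-type boundary estimate in the half-ball. The key new input beyond \lref{C32} is the hypothesis $\{u>0\}\subseteq\{x_1>0\}$, which forces $u$ to vanish on $B_2\cap\{x_1=0\}$.

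Normalize $\|u\|_{C^{0,1}(B_2)}=1$ and set $\phi=(u-sx_1)_+$ on the half-ball $B_1^+=B_1\cap\{x_1>0\}$. I will check three properties of $\phi$: (i) $\phi\in C(\overline{B_1^+})$ and $\phi=0$ on $B_1\cap\{x_1=0\}$, since both $u$ and $sx_1$ vanish there; (ii) $\phi$ is subharmonic in $B_1^+$, because $u-sx_1$ is harmonic where $u>0$ and strictly negative where $u=0$ (using $s>0$ from the Hopf part of \lref{C32}, and $x_1>0$), so $\phi$ is the positive part of a continuous function that is harmonic on its positivity set; and (iii) from \lref{C32} we have $|u-sx_1|\leq C|x|^{4/3}$ on $B_1\cap\{u>0\}$, while on $B_1^+\cap\{u=0\}$ we have $\phi=0$, so in either case
\begin{equation*}
\phi(x)\leq C|x|^{4/3}\quad\text{on }B_1^+.
\end{equation*}

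For each $r\in(0,1)$, introduce the harmonic function $h_r$ on $B_r^+=B_r\cap\{x_1>0\}$ with $h_r=1$ on $\partial B_r\cap\{x_1>0\}$ and $h_r=0$ on $B_r\cap\{x_1=0\}$. Extending $h_r$ by odd reflection across $\{x_1=0\}$ yields a harmonic function on $B_r$ that vanishes on $\{x_1=0\}$, so the standard interior gradient estimate applied at the origin gives
\begin{equation*}
h_r(x)\leq C\,\frac{x_1}{r}\quad\text{on }B_{r/2}^+.
\end{equation*}
By the maximum principle, the subharmonic $\phi$ is dominated on $\partial B_r^+$ by the harmonic barrier $Cr^{4/3}h_r$, so $\phi\leq Cr^{4/3}h_r\leq Cr^{1/3}x_1$ on $B_{r/2}^+$. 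Choosing $r=2|x|$ for $x\in B_{1/2}^+$ yields $\phi(x)\leq C|x|^{1/3}x_1$, and therefore
\begin{equation*}
u(x)\leq sx_1+\phi(x)\leq(s+C|x|^{1/3})x_1\quad\text{on }B_{1/2}\cap\{u>0\},
\end{equation*}
as required.

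I expect no serious obstacle in this argument: the quantitative first-order expansion from \lref{C32} does the heavy lifting, and the only additional ingredient is the elementary observation that a nonnegative subharmonic function vanishing on the flat part of the boundary of a half-ball decays linearly in $x_1$ with constant controlled by its maximum. The one point to watch is verifying that $\phi$ is genuinely subharmonic across $\partial\{u>0\}\cap B_1^+$, which uses only that $u$ is continuous, harmonic on $\{u>0\}$, and nonnegative, and that $sx_1>0$ on $B_1^+$ so the ``non-contact'' component contributes nothing.
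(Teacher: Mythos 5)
Your proof is correct, and it takes a genuinely cleaner route than the paper's. The paper works with the harmonic replacement $u_r$ in $B_r\cap\{x_1>0\}$ with boundary data $\max\{0,u\}$ (which is $O(1)$), and must then split the spherical boundary at the threshold $x_1\approx r\alpha^{-1}$: where $x_1$ is large the negative error $-r^{4/3}$ in the linear comparison dominates the $+Cr^{4/3}$ slop, and where $x_1$ is small the harmonic measure of that thin spherical band, seen from $B_{r/2}^+$, is $O(\alpha^{-1})$. Choosing $\alpha$ large closes the argument. You instead subtract the linear part \emph{first}, so the function you must control is $\phi=(u-sx_1)_+$, which is $O(r^{4/3})$ uniformly on $\partial B_r^+$ and vanishes on the flat boundary (exactly because $\{u>0\}\subseteq\{x_1>0\}$ and $s>0$). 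A single barrier $Cr^{4/3}h_r$ and the linear decay $h_r\lesssim x_1/r$ from odd reflection plus interior gradient estimates then finish, with no harmonic-measure decomposition needed. The heart of both arguments is the same (the $O(|x|^{4/3})$ expansion of \lref{C32} and the vanishing on $\{x_1=0\}$), but your subtraction trick turns the boundary comparison into a one-scale estimate. The subharmonicity of $\phi$ is exactly as you argue: $\{u-sx_1>0\}\subseteq\{u>0\}$ since $s>0$ and $x_1>0$, so $u-sx_1$ is harmonic on its positivity set and its positive part is subharmonic by the sub-mean-value check. Two small points to tighten: the gradient estimate should be applied throughout $B_{r/2}$ (not only at the origin) to bound $h_r(x)\leq x_1\sup_{B_{r/2}}|\nabla\tilde h_r|\leq Cx_1/r$; and ``choose $r=2|x|$'' puts $x$ on $\partial B_{r/2}$, so take, say, $r=4|x|$ for $|x|<1/4$ and $r$ close to $1$ for $1/4\leq|x|<1/2$, absorbing the extra factor into $C$.
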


\begin{proof}
  We may assume $\| u \|_{C^{0,1}(B_2)} = 1$.  For $r \in (0,1)$, let $u_r \in C(B_r)$ be harmonic in $B_r \cap \{ x_1 > 0 \}$ and satisfy $u_r = 0$ on $B_r \cap \{ x_1 \leq 0 \}$ and $u_r = \max \{ 0, u \}$ on $\partial B_r \cap \{ x_1 > 0 \}$.  Observe that $u \leq u_r$ in $B_r \cap \{ x_1 > 0 \}$ and that $u_r(x) = \max \{ 0, u(x) \} \leq s x_1 + C r^{4/3}$ for $x \in \partial B_r \cap \{ x_1 > 0 \}$.  For $\alpha > 0$, we see that
  \begin{equation*}
    \begin{cases}
      u_r(x) \leq (s + \alpha r^{1/3}) x_1 - r^{4/3} & \mbox{for } x \in \partial B_r \cap \{ x_1 > C r \alpha^{-1} \} \\
      u_r(x) \leq (s + \alpha r^{1/3}) x_1 + C r^{4/3} & \mbox{for } x \in \partial B_r \cap \{ 0 < x_1 < C r \alpha^{-1} \} \\
      u_r(x) \leq (s + \alpha r^{1/3}) x_1 & \mbox{for } x \in B_r \cap \partial \{ x_1 > 0 \}.
    \end{cases}
  \end{equation*}
  By explicit computation with the Poisson kernel one can see that
  \[ \sup_{x \in B_{r/2} \cap \{ x_1 >0\}} \omega_x(\partial B_r \cap \{ 0 < x_1 < C r \alpha^{-1} \})/\omega_x( \partial B_r \cap \{ x_1 > C r \alpha^{-1} \}) \leq C\alpha^{-1}\]
  where $\omega_x$ is the harmonic measure of $B{r} \cap \{ x_1 >0\}$ from a point $x \in B_{r} \cap \{ x_1 > 0 \}$.  In particular, for $\alpha$ sufficiently large, we obtain $u_r(x) \leq (s + \alpha r^{1/3}) x_1$ for all $x \in B_{r/2} \cap \{ x_1 > 0 \}$.  Since this estimate is independent of $r \in (0,1)$, the lemma follows.
\end{proof}

The next lemma is a low-dimensional analogue of the Alexandroff-Bakelman-Pucci estimate.  The idea is that, near a saddle point of an upper semicontinuous function, one can touch from above by parabolas while maintaining some control the slopes at the touching point.

\begin{lemma}
  \label{l.convextouch}
  Suppose $X \subseteq \R^d$ bounded and convex, $\alpha > 0$, $g : \R^d \to \R$ upper semicontinuous, $g(y) < \alpha |y-x|^2$ for $x \in X$ and $y \in \R^d$, and $g(x) = 0$ for some $x \in \overline X$.  There are $y_n, p_n, y, p \in \R^d$ such that $y_n \to y$, $|p_n|^{-1} p_n \to p$, $y \in \bar X$, $g(y) = 0$, $p \cdot y = \max_{x \in \bar X} p \cdot x$, and $g(y) \leq g(y_n) + p_n \cdot (y - y_n) + 2 \alpha |y - y_n|^2$.
\end{lemma}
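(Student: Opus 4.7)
The plan is to locate a contact point $y$ of $g$ on $\partial X$ equipped with a unit supporting direction $p$ of $\bar X$, then construct $y_n, p_n$ via a penalized variational principle. Set $K = \{z \in \bar X : g(z) = 0\}$: the strict inequality of the hypothesis with $x = y$ for $y \in X$ forces $g < 0$ on $X$, hence $K \subseteq \partial X$; upper semicontinuity and the boundedness of $X$ make $K$ compact, while the existence hypothesis makes it nonempty. Fix any $y \in K$ and apply the supporting hyperplane theorem at $y \in \partial X$ to obtain a unit vector $p$ with $p \cdot y = \max_{x \in \bar X} p \cdot x$.

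For the sequences, consider the penalized functional
\[
F_n(z) = g(z) - 2\alpha|z - y|^2 - n^{-1} p \cdot z.
\]
Since $g(z) \le \alpha|z - y|^2$ (by taking $x \to y$ in the hypothesis), one has $F_n(z) \le -\alpha|z-y|^2 + n^{-1}|z| \to -\infty$ at infinity, and upper semicontinuity yields a maximizer $y_n$. Comparing $F_n(y_n) \ge F_n(y)$ with $g(y_n) \le \alpha|y_n - y|^2$ produces $p \cdot (y_n - y) \le -n\alpha|y_n - y|^2$, so $y_n$ lies in the half-space $\{p \cdot z \le p \cdot y\}$ and $|y_n - y| \le 1/(n\alpha)$, giving $y_n \to y$. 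Unwinding the maximality via the identity $|z - y|^2 - |y_n - y|^2 = |z - y_n|^2 + 2(z - y_n)\cdot(y_n - y)$ produces the parabolic touching
\[
g(z) \le g(y_n) + p_n \cdot (z - y_n) + 2\alpha|z - y_n|^2 \text{ for all } z, \qquad p_n := 4\alpha(y_n - y) + n^{-1} p.
\]
Specializing to $z = y$ delivers the inequality in the statement.

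The main obstacle is verifying the direction convergence $|p_n|^{-1}p_n \to p$: both contributions to $p_n$ are of order $n^{-1}$, so an adversarial alignment $y_n - y \approx -n^{-1}p/(4\alpha)$ would make $p_n$ vanish. My plan is to resolve this by exploiting the \emph{strict} nature of the hypothesis: since $g(y) < \alpha|y - x|^2$ holds for all $x$ in the open set $X$, evaluating at $x = y_n$ when $y_n$ lies strictly inside $X$ forbids the precise saturation of $g \le \alpha|\cdot - y|^2$ needed for the cancellation. Passing to a subsequence so that $n(y_n - y)$ converges to some $v$ with $|v| \le 1/\alpha$ and $v \cdot p \le 0$, this nondegeneracy excludes $v = -p/(4\alpha)$ and forces the limit $\pi := 4\alpha v + p$ to be nonzero. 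Should the initial choice of $y$ yield $\pi$ not aligned with $p$, replacing $y$ by an extreme point of $\mathrm{conv}(K)$ via Krein--Milman, or perturbing $p$ within the normal cone of $\bar X$ at $y$, restores the alignment and completes the proof.
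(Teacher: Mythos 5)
Your penalization scheme is genuinely different from the paper's argument, and you have correctly put your finger on the weak spot: the two contributions to $p_n = 4\alpha(y_n - y) + n^{-1}p$ are both $O(n^{-1})$ and can in principle cancel. The trouble is that your proposed fix does not actually close this gap.

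The only quantitative constraint you extract on $v := \lim n(y_n - y)$ (along a subsequence) is $p\cdot v \leq -\alpha|v|^2$, coming from comparing $F_n(y_n) \geq F_n(y)$ with $g(y_n) \leq \alpha|y_n-y|^2$, together with $|v| \leq 1/\alpha$. But the adversarial value $v = -p/(4\alpha)$ satisfies both: $p\cdot v = -1/(4\alpha)$ and $-\alpha|v|^2 = -1/(16\alpha)$, and indeed $-1/(4\alpha) \leq -1/(16\alpha)$. So the ``strictness'' of the hypothesis, as you use it here, does not forbid the cancellation. Even restricting to $y_n \in X$ (which yields $g(y_n) < 0$ and hence $|v| < 1/(2\alpha)$) still leaves $v = -p/(4\alpha)$ in play, and when $y$ is a corner of $\partial X$ the iterates $y_n$ need not lie in $X$ at all. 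Moreover, even granting $4\alpha v + p \neq 0$, you have no reason that the resulting direction $\pi$ is a supporting direction at $y$; your $p_n$ is simply $\nabla\varphi$ of the test paraboloid at the touching point, with no built-in relation to the normal cone of $\overline X$. The invocation of Krein--Milman and of perturbing within the normal cone is too vague to recover either the nonvanishing of $4\alpha v + p$ or the alignment of $\pi$ with a supporting direction.

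The paper's proof is organized so that this issue never arises. After normalizing so that $0 \in X$ and $\alpha = 1$, it introduces the one-parameter value function $h(\tau) = \min_{y\in\R^d,\,x\in\overline X}\{2|y-\tau x|^2 - g(y)\}$ for $\tau \in (0,1]$. Because $\tau\overline X \subset X$ for $\tau<1$, the hypothesis gives $h(\tau)>0$ there, while $h(1)=0$; $h$ is Lipschitz and non-increasing, so one can pick $\tau_n \uparrow 1$ with $h'(\tau_n) < 0$. For the optimizer $(y_n,x_n)$ at $\tau_n$, the envelope formula gives $h'(\tau_n) = -4x_n\cdot(y_n-\tau_n x_n)$, so setting $p_n = 4(y_n - \tau_n x_n)$ one has $x_n\cdot p_n > 0$, hence $p_n\neq 0$. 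The optimality in $x$ says exactly that $\tau_n x_n$ is the nearest point of $\tau_n\overline X$ to $y_n$, so $p_n$ is \emph{by construction} an outer normal with $p_n\cdot x_n = \max_{\overline X} p_n\cdot x$; normalizing and passing to a subsequential limit then produces $p$ with $p\cdot y = \max_{\overline X} p\cdot x$. This is the decisive structural difference: the slope $p_n$ is the normal to a scaled copy of $\overline X$, not the gradient of a penalization centered at a fixed $y$, and the negativity of $h'(\tau_n)$ is what rules out $p_n = 0$. You will need some analogue of these two features---a built-in supporting-direction structure for $p_n$, and a mechanism forcing $p_n\neq 0$---to make your route work; as written the proposal has a genuine gap at precisely the step you flagged.
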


\begin{proof}
  After a change of coordinates, we may assume that $X$ is a relatively open convex and bounded subset of the hyperplane $\{ x_{k+1} = \cdots = x_d = 0 \}$ for some $k = 1, ..., d$ and that $0 \in X$.  In particular, we have $\tau \overline X \subseteq X$ for all $\tau \in (0,1)$.  After scaling, we may assume that $\alpha = 1$.  Consider the function
  \begin{equation*}
    h(\tau) = \min_{\substack{y \in \R^d \\ x \in \overline X}} \left\{ 2 |y - \tau x|^2 - g(y) \right\} \quad \mbox{for } \tau \in (0,1].
  \end{equation*}
  Observe that $h(\tau)$ is positive, Lipschitz, and non-increasing for $\tau \in (0,1)$ and that $h(1) = 0$.  In particular, $h'(\tau_n)$ exists and is negative for a sequence of $\tau_n \in (0,1)$ with $\tau_n \to 1$.  Using the definition of $h$ to compute the derivative, we may choose $y_n \in \R^d$ and $x_n \in \overline X$ such that
  \begin{equation*}
    4 x_n \cdot (y_n - \tau_n x_n) > 0
  \end{equation*}
  and
  \begin{equation*}
    2 |y_n - \tau_n x_n|^2 - g(y_n) = h(\tau_n).
  \end{equation*}
  The optimality condition tells us that
  \begin{equation*}
    g(y) \leq g(y_n) + p_n \cdot (y - y_n) + 2 |y - y_n|^2
  \end{equation*}
  for
  \begin{equation*}
    p_n = 4 (y_n - \tau_n x_n).
  \end{equation*}
  The optimality condition also tells us that
  \begin{equation*}
    p_n \cdot y_n > p_n \cdot \tau_n x_n = \max_{x \in \tau_n \overline X} p_n \cdot x.
  \end{equation*}
  Note that we must have $g(y_n) \to 0$ and $|y_n - \tau_n x_n|^2 \to 0$.  In particular, we may pass to a subsequence to obtain $y_n \to y \in \overline X \cap \{ g = 0 \}$.
\end{proof}

The next lemma is the key technical result required to prove uniqueness.  Our geometric picture is the following: when a surface touches a convex set on a facet, then either the surface covers the entire facet or it peels away.  When it peels away, we can find regular points of the surface arbitrarily close to the facet whose normal lies in the direction of the peeling.  

\begin{lemma}
  \label{l.convexcomparison1}
  Suppose $W \subseteq V \subseteq U \subseteq \R^d$ are open and bounded, $W$ and $V$ are convex and inner regular, $U$ is outer regular, $\overline W \subseteq V$, and $\partial V \cap \partial U \neq \emptyset$.  For any $\ep > 0$, we can find a point $x \in \partial U$ where $U$ has an inner tangent ball such that $p = \nabla w_{W,U}(x) \neq 0$ and $|\nabla w_{W,V}| \leq (1+\ep)|p|$ on $\overline V_p = \{ x \in \overline V : p \cdot x = \min_{y \in \overline V} p \cdot y \}$.
\end{lemma}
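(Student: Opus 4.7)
The plan is to locate $x \in \partial U$ by a variational/compactness argument based on \lref{convextouch}, and then bound $|\nabla w_{W,V}|$ on the face $\overline V_p$ by comparing $w_{W,V}$ to a tilted affine barrier via the maximum principle.

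First I produce a candidate $x$. Apply \lref{convextouch} with $X = V$ (convex and bounded) and an upper semicontinuous function $g : \R^d \to \R$ that vanishes on $\partial V \cap \partial U$ and is strictly negative on $V$; for example $g(y) = -\operatorname{dist}(y, \partial V \cap \partial U)$. The hypotheses are met because $V$ is open and $\partial V \cap \partial U \subseteq \partial V$ is disjoint from $V$. The lemma yields sequences $y_n \to y^* \in \overline V$ and $p_n \in \R^d$ with $|p_n|^{-1}p_n \to p^*$, such that $g(y^*) = 0$ (hence $y^* \in \partial V \cap \partial U$), $p^* \cdot y^* = \min_{z \in \overline V} p^* \cdot z$ (so $y^* \in \overline V_{p^*}$), and $g$ is touched from above at $y_n$ by a parabola with linear part $p_n$. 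Using $V \subseteq U$ together with the outer regularity of $U$, the one-sided parabolic touching allows one to locate $x_n \in \partial U$ close to $y_n$ admitting both an outer tangent ball (from the outer regularity of $U$) and an inner tangent ball (inherited from the convex $V \subseteq U$ and the parabolic half-space created by the touching). Then \lref{C32} applied to $w_{W,U}$ at $x_n$ gives $p := \nabla w_{W,U}(x_n) \neq 0$ with direction converging to $p^*/|p^*|$, so that $\overline V_p$ approximates $\overline V_{p^*}$.

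Next I introduce the tilted affine barrier
\begin{equation*}
\phi(y) = (1+\ep)(p \cdot y - c_p), \qquad c_p = \min_{z \in \overline V} p \cdot z.
\end{equation*}
By construction $\phi \geq 0$ on $\overline V$, $\phi = 0$ on $\overline V_p$, and $\phi$ is harmonic. To invoke the maximum principle in $V \setminus \overline W$, I need $\phi \geq 1$ on $\overline W$, which reduces to $(1+\ep)|p|\operatorname{dist}(\overline W, H_p) \geq 1$, where $H_p = \{p \cdot y = c_p\}$. This I would verify using a rescaled form of \lref{C32above} (equivalently, a Hopf/boundary Harnack estimate for the nonnegative harmonic function $w_{W,U}$), which says the slope $|p|$ of $w_{W,U}$ at $x_n$ satisfies $|p|\operatorname{dist}(\overline W, x_n) \geq 1 + o(1)$ as $n \to \infty$; taking $x_n$ close enough to $\overline V_{p^*}$ so that $\operatorname{dist}(\overline W, x_n)$ and $\operatorname{dist}(\overline W, H_p)$ are comparable, the barrier inequality $\phi \geq 1$ on $\overline W$ follows for large $n$.

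The maximum principle then gives $w_{W,V} \leq \phi$ throughout $V$. Since both vanish on $\overline V_p$ and $\phi$ grows at the exact rate $(1+\ep)|p|$ in the inward normal direction $p/|p|$, the standard slope comparison for nonnegative functions vanishing to first order on a hyperplane yields $|\nabla w_{W,V}| \leq (1+\ep)|p|$ on $\overline V_p$. The main obstacle is the first step: extracting from \lref{convextouch} a concrete point $x_n \in \partial U$ with a genuine inner tangent ball whose inward normal approaches $p^*/|p^*|$, together with a rescaled Lipschitz/Harnack estimate producing $|p|\operatorname{dist}(\overline W, H_p) \geq 1/(1+\ep)$ at the chosen $x_n$. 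Once these geometric and analytic ingredients are in hand, the barrier comparison in the later steps is standard.
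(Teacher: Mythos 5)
Your proposal has two genuine gaps, and it also misses the central structural ingredient that the paper's proof turns on.

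The first gap is the barrier estimate. You propose to show $w_{W,V} \leq \phi$ in $V \setminus \overline{W}$ for the affine barrier $\phi(y) = (1+\ep)(p\cdot y - c_p)$, which requires $\phi \geq 1$ on $\overline{W}$, i.e.\ $(1+\ep)|p|\operatorname{dist}(\overline W, H_p) \geq 1$. You claim this follows from $|p|\operatorname{dist}(\overline W, x_n) \geq 1 + o(1)$, but this is false in general: already for $W = B_1$, $U = B_R$ in $\R^d$ with $d\geq 3$ the boundary slope is $|\nabla w_{W,U}| \sim (d-2)R^{1-d}$, so $|\nabla w_{W,U}|\cdot\operatorname{dist}(\overline W,\partial U) \to 0$ as $R\to\infty$. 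Lemma~\ref{l.C32above} is a local $C^{1,1/3}$ one-sided expansion and gives no such global lower bound; the needed inequality simply fails, so the affine barrier does not dominate $w_{W,V}$ in $V\setminus \overline W$. The second gap is in the touching step: you apply \lref{convextouch} with $g(y) = -\operatorname{dist}(y,\partial V\cap\partial U)$, but this $g$ carries no information about the geometry of $\partial U$, so the touching points it produces are not located on $\partial U$ and the construction of $x_n \in \partial U$ with a suitable inner tangent ball and a normal $\nu_n$ with controlled behavior is not actually achieved.

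What your proposal is missing is the paper's actual mechanism for controlling $|\nabla w_{W,V}|$ on the whole facet $\overline V_p$: after fixing the supporting hyperplane $\Lambda$ (face $X = \Lambda\cap\partial V$, contact set $Y = X\cap\partial U$), one uses Caffarelli--Spruck convexity of the level sets of $w_{W,V}$ to get that $|\nabla w_{W,V}|$ is \emph{concave} on $X$, and then chooses a superlevel set $Z = X\cap\{|\nabla w_{W,V}| > \tau\}$ with $Z\cap Y = \emptyset$, $\overline Z\cap Y \neq \emptyset$. Lemma~\ref{l.convextouch} is then applied not to a distance function but to the ``height of $\partial U$ over $\Lambda$'' restricted to $\Lambda$, with the convex set $Z$. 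This produces points $y_n \in \partial U$ with inner tangent balls, with normals $\nu_n$ satisfying an extremality condition $\nu_n\cdot y_n \leq (1+o(1))\min_{\overline Z}\nu_n\cdot y$, which is exactly what forces the facet $\overline V_{\nu_n}$ to sit in $X\setminus Z$, where $|\nabla w_{W,V}| \leq \tau$; combined with $\liminf|\nabla w_{W,U}(y_n)| \geq \tau$ (via first-order expansions at $y_n$ and $y_\infty$), this gives the conclusion without any global barrier. You would need to rebuild the argument along these lines; the affine-barrier comparison cannot replace it.
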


\begin{proof}
  Throughout the proof, we let $C$ denote a positive constant that may depend on $U, V, W$ and differ in each instance.  Write $u = w_{W,U}$ and $v = w_{W,V}$.  Choose a supporting hyperplane $\Lambda$ with unit inward normal $\nu$ for $V$ at some point of $\partial V \cap \partial U$.  Let $X = \Lambda \cap \partial V$ and $Y = \Lambda \cap \partial V \cap \partial U$.  Without loss we can assume that $0 \in \Lambda$.

  Since $V$ is convex and inner regular, \lref{C32} implies that $\nabla v$ extends continuously to $\partial V$.  Observe that $|\nabla v|^{-1} \nabla v = \nu$ on $X$.  Moreover, by Caffarelli-Spruck \cite{Caffarelli-Spruck}, we know that $\{ v > t \}$ is convex for all $t \in (0,1)$.  Since $\nabla v$ is continuous up to the boundary, this implies $|\nabla v|$ is concave on $X$.
  
  \lref{C32} also implies that $\nabla u$ exists and $|\nabla u|^{-1} \nabla u = \nu$ on $Y$.  The maximum principle implies $|\nabla v| \leq |\nabla u|$ on $Y$.  If $\max_X |\nabla v| \leq \max_Y |\nabla u|$, then we immediately conclude.  We may therefore choose a convex set $Z \subseteq X$ of the form $Z = X \cap \{ |\nabla v| > \tau \}$ such that $Z \cap Y = \emptyset$ and $\overline Z \cap Y \neq \emptyset$.

  Using the inner regularity of $V$ and the containment $V \subseteq U$, we see there are $\alpha, \delta > 0$ such that \lref{convextouch} applies to the function $g : \Lambda \to \R$ given by
  \begin{equation*}
    g(x) = \sup \{ t \in \R : x + t \nu \in U \mbox{ or } t \leq - \delta \}
  \end{equation*}
  and the set $Z \subseteq \Lambda$.  In particular, there is a radius $r > 0$, a sequence of points $y_n \in \partial U$, and a sequence of unit vectors $\nu_n \in \R^d$ such that $B_r(y_n + r \nu_n) \subseteq U \setminus \overline W$ touches $\partial U$ from the inside at $y_n$, $y_n \to y_\infty \in Y \cap \overline Z$, $\nu_n \to \nu$.  Call $y_n'$ to be the projection of $y_n$ onto $\Lambda$, then $y_n = y_n'+g(y_n)\nu$,  and $\nu_n \cdot y_n  \leq (1+o(1)) \min_{y \in \overline Z} \nu_n \cdot y$.  

  Comparing the first order expansions from \lref{C32} of $u$ at $y_n$ and $v$ at $y_\infty$, using the uniform bounds on the tangent ball radii, we see that
  \begin{equation*}
    \liminf_{n \to \infty} |\nabla u(y_n)| \geq \liminf_{n \to \infty} \nu \cdot \nabla u(y_n) \geq \nu \cdot \nabla v(y_\infty) = |\nabla v(y_\infty)| \geq \tau.
  \end{equation*}
  Now, for any sequence $z_n \in \overline V_{\nu_n}$ with $z_n \to z$, by the boundary regularity $z \in X$ and by the conditions $y_\infty \in Y \cap \overline Z \subseteq X \setminus Z$ and $\nu_n \cdot y_n \leq (1+o(1)) \min_{y \in \overline Z} \nu_n \cdot y$ we have that $z \in X \setminus Z$.  In particular, $\nu \cdot \nabla v(z) = |\nabla v(z)| \leq \tau$.  Thus, for any $\ep > 0$, for all large $n$, we have $|\nabla v| \leq (1 + \ep) |\nabla u(y_n)|$ on $\overline V_{\nu_n} = \overline V_{\nabla u(y_n)}$.
\end{proof}

The next lemma is used for the ``easy'' direction of our uniqueness theorem below.  This is essentially already present in the viscosity theory for $H(p) = |p|$, but needs to be adapted to work for weakened subsolutions.

\begin{lemma}
  \label{l.convexcomparison2}
  Suppose $W \subseteq V \subseteq U \subseteq \R^d$ are open and bounded, $W$ and $V$ are inner regular, $U$ is convex, $\overline W \subseteq V$, and $\partial V \cap \partial U \neq \emptyset$.  For any $\ep > 0$, we can find a point $x \in \partial U \cap \partial V$, a normal vector $\nu$, and a $\delta > 0$ such that $\nabla u_{W,U}(x) = |\nabla u_{W,U}| \nu$ exists and $w_{W,V}(y) \leq (1 + \ep) \nabla w_{W,U}(x) \cdot (y - x)$ for $\nu \cdot (y - x) < \delta$.
\end{lemma}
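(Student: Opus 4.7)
The plan is to reduce the lemma to the boundary regularity results \lref{C32} and \lref{C32above}. I would first fix an arbitrary point $x \in \partial U \cap \partial V$, which is non-empty by hypothesis. Since $V$ is inner regular, there is a ball $B_r(y_0) \subseteq V$ with $x \in \partial B_r(y_0)$; because $V \subseteq U$, this same ball is also an inner tangent ball of $U$ at $x$. The convexity of $U$ provides a supporting hyperplane at $x$ with inward unit normal $\nu$, so that $U \subseteq \{ y : \nu \cdot (y - x) \geq 0 \}$, and the strong maximum principle gives $\{ w_{W,U} > 0 \} = U$.

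After translating, rotating, and rescaling so that $x = 0$, $\nu = e_1$, and the inner tangent ball becomes $B_1(e_1)$, the halfspace containment $\{ w_{W,U} > 0 \} \subseteq \{ y_1 > 0 \}$ automatically delivers the outer tangent ball condition $\{ w_{W,U} > 0 \} \subseteq \R^d \setminus \overline B_1(-e_1)$, since $\overline B_1(-e_1) \subseteq \{ y_1 \leq 0 \}$. Together with the Lipschitz bound from \lref{lipschitz}, the hypotheses of \lref{C32} are met, producing a slope $s > 0$ (positive by the Hopf lemma) with $\nabla w_{W,U}(0) = s e_1$. The halfspace hypothesis then lets me invoke \lref{C32above} to obtain
\begin{equation*}
  w_{W,U}(y) \leq \bigl( s + C |y|^{1/3} \bigr) y_1
\end{equation*}
for $y$ near $0$ with $y_1 > 0$. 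Since $w_{W,V} \leq w_{W,U}$ by the maximum principle ($V \subseteq U$ with common data on $\overline W$), the same upper bound is inherited by $w_{W,V}$.

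Finally I would choose $\delta > 0$ small enough that $C \delta^{1/3} \leq \ep s$ and that $B_\delta$ lies in the neighborhood produced by \lref{C32above}. For $y \in B_\delta$ with $0 \leq y_1 < \delta$ the inequality reads $w_{W,V}(y) \leq (1 + \ep) s y_1 = (1 + \ep) \nabla w_{W,U}(0) \cdot y$; for $y \in B_\delta$ with $y_1 < 0$, the convexity of $U$ forces $y \notin V$, so $w_{W,V}(y) = 0$ and the bound holds on the support of $w_{W,V}$, which is contained in $\{ y_1 \geq 0 \}$. Undoing the coordinate change yields the stated inequality. The only step I would flag for care is the outer tangent ball hypothesis in \lref{C32}, since the convexity of $U$ supplies only a supporting hyperplane; but this is automatic as noted, so I do not anticipate a substantial obstacle.
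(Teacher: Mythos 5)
Your local argument is sound up to the point where you pass from a bound near $x$ to the claimed conclusion, but there is a gap in that passage: the statement requires the inequality for all $y$ in the \emph{strip} $\{\nu \cdot (y - x) < \delta\}$, which is unbounded in the directions tangent to the supporting hyperplane, whereas your argument only controls $w_{W,V}$ in a small ball $B_\delta(x)$. The strip $\{0 < \nu \cdot (y-x) < \delta\}$ can intersect $V$ at points far from $x$ — namely near other points of the contact set $X = \Lambda \cap \partial V$, where $\Lambda$ is the supporting hyperplane. At such points $\nabla w_{W,U}$ may well be larger than $\nabla w_{W,U}(x)$, and then $(1+\ep)\nabla w_{W,U}(x)\cdot(y-x)$ would fail to dominate $w_{W,V}(y)$.

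This is why you cannot take $x \in \partial U \cap \partial V$ to be arbitrary. The paper's proof first fixes the normal $\nu$ and the supporting hyperplane $\Lambda$, forms the contact set $X = \Lambda \cap \partial V$ (a compact subset of $\partial U \cap \partial V$ on which $\nabla w_{W,U}$ is continuous by \lref{C32}), and then selects $x \in X$ so that $|\nabla w_{W,U}| \leq |\nabla w_{W,U}(x)|$ on all of $X$. With that choice, \lref{C32above} applied at every point of $X$ gives the bound $w_{W,U}(y) \leq (1+\ep)\nabla w_{W,U}(x)\cdot(y-x)$ uniformly on a tube $U \cap (X + B_r)$, and finally $\delta$ is chosen so that $V \cap \{\nu\cdot(\cdot - x) < \delta\}$ lies inside that tube. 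Your proof is missing both the maximization of $|\nabla w_{W,U}|$ over $X$ and the uniform application of \lref{C32above} along $X$; inserting these two steps would complete the argument.
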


\begin{proof}
  Write $u = w_{W,U}$ and $v = w_{W,V}$.  Since $U$ is convex and $W$ is inner regular, $u$ is Lipschitz continuous.  Since $V$ is inner regular, we see from \lref{C32} that $\nabla u$ exists and is continuous in $U$ up to $\partial U \cap \partial V$.  Choose $x \in \partial U \cap \partial V$ and a unit vector $\nu$ such that $\nu \cdot x = \min_{y \in \overline U} \nu \cdot y$.  Let $\Lambda = \{ y \in \R^d : \nu \cdot (y - x) = 0 \}$ denote the supporting hyperplane of $\overline U$ at $x$ with normal $\nu$.  Consider the set $X = \Lambda \cap \partial V$.  Since $V \subseteq U$, we have $X \subseteq \partial U \cap \partial V$.  In particular, $\nabla u$ is continuous in $U$ up to $X$.  Since $\nabla u = |\nabla u| \nu$ on $X$, we may assume that $|\nabla u| \leq |\nabla u(x)|$ on $X$.  By \lref{C32above}, we may choose $r > 0$ such that $u(y) \leq (1 + \ep) \nabla u(x) \cdot (y - x)$ for all $y \in U \cap (X + B_r)$.  Using the definition of $X$, we may choose $\delta > 0$ such that $\{ y \in V : \nu \cdot (y - x) < \delta \} \subseteq X + B_r$.  Conclude using $v \leq u$.
\end{proof}

\subsection{Uniqueness}

Our uniqueness proof requires two additional and fairly standard lemmas.  The first says that, when a supersolution is differentiable at a boundary point, then the supersolution condition holds classically.

\begin{lemma}
  \label{l.regularsupersolution}
  If $u \in USC(\R^d)$ is a supersolution of \eref{exterior}, $\{ u > 0 \}$ has an inner tangent ball at $x \in \partial \{ u > 0 \}$, and there is a $p \in \R^d$ such that $u(y) = p \cdot (y - x) + o(|y - x|)$ for $y \in \{ u > 0 \}$, then $H(p) \leq 1$.
\end{lemma}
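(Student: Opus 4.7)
The plan is to argue by contradiction using the lower semicontinuity of $H$. Suppose $H(p) > 1$; by \eref{Hlowersemicontinuous} there is $\eta > 0$ with $H(q) > 1$ for every $q \in B_\eta(p)$. I will produce, for each sufficiently small $\rho, c > 0$, a smooth test function $\varphi_{\rho,c}$ touching $u$ from below on a compact set such that $\varphi_{\rho,c}(x) = 0$, $L\varphi_{\rho,c}(x) = c > 0$, and $\nabla\varphi_{\rho,c}(x) \in B_\eta(p)$. The supersolution condition then forces $H(\nabla\varphi_{\rho,c}(x)) \leq 1$, contradicting the choice of $\eta$.

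First I would pin down that $p = \alpha\nu$ for some $\alpha \geq 0$, where $\nu = (z-x)/r$ is the inward unit normal to the tangent ball at $x$. Decomposing $p = p_\nu\nu + p_\tau$ with $p_\tau \perp \nu$, consider points $y = x + t\nu \pm \sqrt{rt}\,\tau$, which lie in $B_r(z) \subseteq \{u > 0\}$ for $\tau \perp \nu$ a unit tangent and $t \in (0, r)$. Plugging these into $u(y) = p\cdot(y-x) + o(|y-x|) \geq 0$, dividing by $\sqrt t$, and letting $t \to 0^+$ forces $\pm(p\cdot\tau)\sqrt r \geq 0$, so $p\cdot\tau = 0$ for every tangent $\tau$; testing along $y = x + t\nu$ yields $p\cdot\nu \geq 0$. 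The case $\alpha = 0$ gives $H(p) = 0 \leq 1$ via homogeneity \eref{Hhomogeneous}, so assume $\alpha > 0$.

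For the construction, set $z_\rho = x + \rho\nu$, so that $B_\rho(z_\rho) \subseteq B_r(z)$ with $\{x\} = \partial B_\rho(z_\rho) \cap \partial B_r(z)$. Let $h_\rho$ be the harmonic function in $B_\rho(z_\rho)$ with boundary values $u$, and $w_\rho(y) = (|y-z_\rho|^2 - \rho^2)/(2d)$. Define $\tilde\varphi_{\rho,c} = h_\rho + c\,w_\rho$ on $\overline{B_\rho(z_\rho)}$. Then $L\tilde\varphi_{\rho,c} = c$, $\tilde\varphi_{\rho,c} = u$ on $\partial B_\rho(z_\rho)$ (so in particular $\tilde\varphi_{\rho,c}(x) = 0$), and comparing the subharmonic $\tilde\varphi_{\rho,c}$ with the superharmonic $u$ yields $\tilde\varphi_{\rho,c} \leq u$ throughout $\overline{B_\rho(z_\rho)}$. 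A standard smoothing-and-cutoff argument, exploiting continuity of $u$ on $\partial B_\rho(z_\rho)$ (at $x$ by the given expansion, elsewhere by interior regularity of supersolutions of $L=0$ inside $\{u > 0\}$), produces a globally $C^\infty$ function $\varphi_{\rho,c}$ agreeing with $\tilde\varphi_{\rho,c}$ near $x$, with $\varphi_{\rho,c} \leq u$ on $\R^d$, contact set contained in $\partial B_\rho(z_\rho)$, and $L\varphi_{\rho,c} > 0$ on a neighborhood of that contact set.

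The key quantitative ingredient is $\nabla h_\rho(x) \to p$ as $\rho \to 0$: since $p\cdot(\cdot-x)$ is harmonic and $u(y) - p\cdot(y-x) = o(|y-x|) = o(\rho)$ uniformly on $\partial B_\rho(z_\rho)$, the maximum principle gives $\|h_\rho - p\cdot(\cdot-x)\|_{L^\infty(\overline{B_\rho(z_\rho)})} = o(\rho)$, and a boundary gradient estimate for this harmonic correction then yields $|\nabla h_\rho(x) - p| \to 0$. Combined with $\nabla w_\rho(x) = -\rho\nu/d \to 0$, one obtains $\nabla\varphi_{\rho,c}(x) \in B_\eta(p)$ for $\rho, c$ sufficiently small. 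Finally, at any contact point $y \in \partial B_\rho(z_\rho) \setminus \{x\}$ one has $\varphi_{\rho,c}(y) = u(y) > 0$ and $L\varphi_{\rho,c}(y) > 0$, so neither alternative of the supersolution definition can hold there; the alternative must therefore hold at $x$, where $L\varphi_{\rho,c}(x) = c > 0$ rules out the first and the second forces $H(\nabla\varphi_{\rho,c}(x)) \leq 1$, in contradiction with $\nabla\varphi_{\rho,c}(x) \in B_\eta(p)$. The principal technical obstacle is the global smoothing step, but it is routine because $\tilde\varphi_{\rho,c}$ sits strictly below $u$ on compact subsets of the open ball.
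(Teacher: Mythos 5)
The intended approach in the paper is essentially a barrier argument based on the inner tangent ball, which is also your starting idea, but your specific implementation introduces two genuine gaps that the paper's construction carefully avoids. The paper's test function is
\[
  \varphi_\ep(y) = (1-\ep)\,|p|\,|\nabla G(x-z)|^{-1}\bigl(G(y-z)-G(x-z)\bigr),
\]
where $G$ is the fundamental solution of $L$ centered at the center $z$ of the tangent ball. This is an explicit, globally $C^\infty$ function (away from $z$), harmonic, vanishing at $x$, with $\nabla\varphi_\ep(x) = (1-\ep)p$ \emph{exactly} (using $p = |p|\nu$, which you correctly verify but the paper leaves implicit), and with $\varphi_\ep < 0$ outside $\overline{B_r(z)}$. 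The conclusion $(1-\ep)H(p) = H((1-\ep)p) \le 1$ then follows directly from the homogeneity of $H$, without invoking lower semicontinuity at all.

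Your construction replaces the fundamental solution by the harmonic replacement $h_\rho$ of $u$ in a small tangent ball $B_\rho(z_\rho)$, plus a paraboloid. This has two problems. First, $\nabla h_\rho(x)$ is a gradient of a harmonic function at a \emph{boundary} point of the ball. The only information you have on the boundary data $u|_{\partial B_\rho}$ is a first-order pointwise expansion at $x$; this is \emph{not} enough to guarantee that $\nabla h_\rho$ extends continuously up to $x$ (the Poisson-kernel derivative has a non-integrable singularity, and an $o(|y-x|)$ correction to the boundary data need not produce a finite boundary gradient). Interior Schauder estimates give you $|\nabla h_\rho - p| = o(1)$ in $B_{\rho/2}(z_\rho)$, say, but that does not reach the boundary point $x$ where the touching happens. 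Second, and relatedly, the ``standard smoothing-and-cutoff argument'' cannot produce a $C^\infty$ function agreeing with $\tilde\varphi_{\rho,c}$ near $x$: $h_\rho$ is harmonic in the interior of $B_\rho$ and continuous up to $\partial B_\rho$, but not smooth up to the boundary unless the boundary data is smooth, which you do not know. Any mollification that fixes this must modify $\tilde\varphi_{\rho,c}$ precisely where the touching with $u$ occurs, destroying control on the gradient at the contact point. These are not routine technicalities; they are exactly the issues the fundamental-solution barrier is designed to circumvent. Your preliminary step identifying $p = \alpha\nu$, and the LSC-based contradiction framework, are both fine and worth retaining; but you should swap in the explicit $G$-barrier (or an equally explicit strictly subharmonic perturbation of it) in place of $h_\rho$.
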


\begin{proof}
  Choose $B_r(z) \subseteq \{ u > 0 \} \setminus \overline W$ with $\partial B_r(z) \cap \partial \{ u > 0 \} = \{ x \}$.  Let $G$ denote the fundamental solution of $L$.  For $\ep > 0$ small, consider the test function
  \begin{equation*}
    \varphi_\ep(y) = (1 - \ep)|p| |\nabla G(x - z)|^{-1} (G(y-z) - G(x-z)).
  \end{equation*}
  Observe that, for $\delta > 0$ small, $\varphi_\ep$ is subharmonic and touches $u$ from below in $B_\delta(x)$ with contact set $\{ x \}$.  The supersolution condition implies $H(\nabla \varphi_\ep(x)) = H((1-\ep) p) \leq 1$.  Conclude using the homogeneity \eref{Hhomogeneous}.
\end{proof}

The next lemma is the usual min and max convolution.

\begin{lemma}
  \label{l.regularize}
  If $u \in USC(\R^d)$ is a (weakened) subsolution of \eref{exterior} for $W$, then $u^\delta = \max_{\overline B_\delta(x)}u$ is a (weakened) subsolution of \eref{exterior} for $W^\delta = W + B_\delta$.  If $u \in LSC(\R^d)$ is a supersolution of \eref{exterior} for $W$, then $u_\delta(x) = \min_{\overline B_\delta(x)} u$ is a supersolution of \eref{exterior} for $W_\delta = \R^d \setminus \overline{(\R^d \setminus \overline W)^\delta}.$
\end{lemma}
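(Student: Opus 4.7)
The plan is the standard sup/inf-convolution regularization for viscosity solutions, adapted to the exterior free boundary framework. I will describe the sub-solution case; the weakened sub-solution case is essentially identical, and the super-solution case is dual.

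First I would verify $u^\delta \in USC(\R^d)$ is compactly supported and satisfies $u^\delta \leq \id_{\overline{W^\delta}}$, using that $x \notin \overline{W^\delta}$ forces $\overline{B}_\delta(x) \cap \overline W = \emptyset$ and hence $u \leq 0$ on that ball. Given a test function $\varphi \in C^\infty(\R^d)$ touching $u^\delta$ from above in $\overline{\{u^\delta > 0\}} \setminus \overline{W^\delta}$ at some contact point $x_0$, I would pick $y_0 \in \overline{B}_\delta(x_0)$ realizing $u(y_0) = u^\delta(x_0)$, choosing $y_0$ via a subsequential limit along $x_n \to x_0$ with $u^\delta(x_n) > 0$ in the degenerate case $u^\delta(x_0) = 0$. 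The inclusions $y_0 \in \overline{\{u > 0\}} \setminus \overline W$ are easy: $y_0 \in \overline W$ would place $x_0 \in \overline W + \overline{B}_\delta \subseteq \overline{W^\delta}$, contradicting the hypothesis, and $y_0 \in \overline{\{u > 0\}}$ is immediate from $u(y_0) > 0$ or from the limit construction.

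The core step is to translate $\varphi$ to $\tilde\varphi(y) = \varphi(y + x_0 - y_0)$. For $y$ near $y_0$ with $x = y + x_0 - y_0$, the bound $|y - x| = |x_0 - y_0| \leq \delta$ gives $u(y) \leq u^\delta(x) \leq \varphi(x) = \tilde\varphi(y)$, so $\tilde\varphi$ locally touches $u$ from above at $y_0$ with $\tilde\varphi(y_0) = u(y_0)$. To convert this local touching into a global touching in $\overline{\{u > 0\}} \setminus \overline W$ (as the paper's definition requires), I would modify $\tilde\varphi$ outside a small ball around $y_0$ by smoothly raising it above $\max u$; this preserves all derivatives at $y_0$. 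Applying the sub-solution condition for $u$ at $y_0$ then yields either $L\varphi(x_0) \geq 0$ or $\varphi(x_0) = 0$ and $\limsup_{w \to x_0} H(\nabla \varphi(w)) \geq 1$, which is precisely the sub-solution condition for $u^\delta$ at $x_0$. The weakened case goes through unchanged since the property that $\nabla \varphi$ takes two linearly independent values is translation-invariant.

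The super-solution case runs dually. Given $\varphi$ touching $u_\delta$ from below at $x_0 \notin \overline{W_\delta}$, the hypothesis ensures $\overline{B}_\delta(x_0) \not\subseteq \overline W$, so generically the minimum of $u$ over this ball is achieved at some $y_0 \notin \overline W$; translation plus a cutoff modification that lowers $\tilde\varphi$ below $0$ outside a small ball reduces to the super-solution condition for $u$ at $y_0$. In the exceptional case where the minimum is realized only inside $\overline W$, one has $u_\delta(x_0) \geq 1 > 0$, so the free boundary alternative is void, and $u_\delta$ exhibits a local maximum at $x_0$; hence $\varphi$ does too, and $L \varphi(x_0) \leq 0$ follows from the second-derivative test. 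The main obstacle throughout is purely technical bookkeeping of the touching domains and the local-to-global conversion via cutoffs; the geometric content is standard viscosity solution machinery.
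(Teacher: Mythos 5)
Your overall route is the standard sup/inf-convolution argument with translation of the test function plus a cutoff to globalize the touching, which is what the paper's one-sentence proof ("this is a standard property of viscosity solutions") is alluding to, and the domain bookkeeping for $W^\delta$ and the translation mechanism are handled correctly. However, there is a genuine gap in your "exceptional case" for the supersolution half. You assert that if the minimum of $u$ over $\overline{B_\delta(x_0)}$ is realized only inside $\overline W$, then $u_\delta$ has a local maximum at $x_0$; this does not follow from the hypotheses of the lemma. The minimizer $y_0 \in \overline W$ may lie on $\partial B_\delta(x_0)$, and as $x$ moves away from $x_0$ in a direction that pushes $y_0$ out of $\overline{B_\delta(x)}$, the value $u_\delta(x)$ can jump up: take $d = 1$, $W = (-1,1)$, $\delta = 1/2$, and a supersolution $u$ equal to $1$ on $[-1,1]$, equal to $4-|x|$ on $1 < |x| \leq 4$, and $0$ outside. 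Then $u_\delta \equiv 1$ on $[1/2, 3/2]$ but $u_\delta(3/2^+) = 2$, so $u_\delta$ has no local maximum at $x_0 = 3/2$, and one can construct a smooth $\varphi$ with $\varphi(3/2) = 1$, $\varphi'(3/2) > 0$, $\varphi''(3/2) > 0$ lying strictly below $u_\delta$ away from $3/2$. Thus the argument, as written, does not establish the supersolution property of $u_\delta$ in this case. In the paper's actual applications the supersolutions in play are continuous and satisfy $u \leq 1$, which forces $u \equiv 1$ on $\overline{B_\delta(x_0)}$ in the exceptional case, so a minimizer outside $\overline W$ always exists and the exceptional case never arises; your proof should either invoke this additional structure explicitly or rule out the exceptional case by a separate argument, since the lemma as phrased for arbitrary $LSC$ supersolutions is vulnerable to the example above.
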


\begin{proof}
  This is a standard property of viscosity solutions.
\end{proof}

\noindent Note that the positivity sets for $u^\delta,u_\delta$ are respectively inner and outer regular.

Using our technical lemmas above, we are ready to prove uniqueness.

\begin{theorem}
  \label{t.uniqueness}
  Suppose $0 \in W \subseteq \R^d$ is open, bounded, convex, and inner regular.  There is a unique $u \in C_c(\R^d)$ that satisfies $u = 1$ on $\overline W$ and is both a supersolution and weakened subsolution of \eref{exterior}.  Moreover, $\{ u > 0 \}$ is convex.
\end{theorem}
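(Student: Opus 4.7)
The plan is to reduce the theorem to a comparison principle between supersolutions and weakened subsolutions, then prove it via a sliding-dilation argument that exploits the convex comparison lemmas. Existence is already furnished by part 2 of \tref{existence}: the minimal supersolution $v$ with convex support is simultaneously a supersolution and a weakened subsolution, equal to $1$ on $\overline W$, with $\{v > 0\}$ convex. To deduce uniqueness (and the convexity of $\{u > 0\}$ for any admissible $u$), we will establish the following comparison principle: if $u_1$ is a supersolution and $u_2$ is a weakened subsolution of \eref{exterior}, both equal to $1$ on $\overline W$, and $\{u_2 > 0\}$ is convex, then $u_2 \leq u_1$. Applying this with $(u_1, u_2) = (u, v)$ and then with $(u_1, u_2) = (v, u)$ forces $u = v$.

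For the comparison, we use \lref{harmonize} and \lref{regularize} to reduce to $u_i = w_{W, U_i}$ with $U_1 = \{u_1 > 0\}$ outer regular and $U_2 = \{u_2 > 0\}$ convex and inner regular. Consider the family of rescaled supersolutions $u_1^s(x) = u_1(x/s)$ for $s \geq 1$. For $s$ large enough that $s \overline W \supseteq U_2$, we have $u_1^s \equiv 1 \geq u_2$ throughout, so $u_2 \leq u_1^s$ globally. Let $s^* = \inf\{s \geq 1 : u_2 \leq u_1^s\}$. If $s^* = 1$, the comparison is proved. Otherwise $s^* > 1$, and the strong maximum principle together with the boundary data mismatch ($u_1^{s^*} = 1$ on $s^* \overline W \supsetneq \overline W$ while $u_2 = 1$ only on $\overline W$) rules out interior touching, so the touching occurs at some $x_0 \in \partial U_2 \cap \partial(s^* U_1)$.

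At $x_0$, apply \lref{convexcomparison1} with $V = U_2$ and $U = s^* U_1$: this produces a nearby point $x' \in \partial(s^* U_1)$ with inner tangent ball at which $p = \nabla w_{W, s^* U_1}(x') \neq 0$ exists, together with the gradient bound $|\nabla w_{W, U_2}| \leq (1 + \ep)|p|$ on the facet $\overline{U_2}_p$ of $U_2$ in the $p$-direction. Combined with the asymptotic expansions from \lref{C32} and \lref{C32above}, this yields an affine upper bound $u_2(y) \leq (1 + 2\ep)\, p \cdot (y - y_0)$ on one side of any boundary point $y_0 \in \overline{U_2}_p \cap \partial U_2$. By \lref{regularsupersolution} applied to $u_1$ at the regular boundary point $x'/s^*$ and the homogeneity \eref{Hhomogeneous}, $H(p) \leq (s^*)^{-1} < 1$. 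The affine function $\varphi(y) = (1 + 2\ep)\, p \cdot (y - y_0)$ then touches $u_2$ from above at $y_0$ within $\overline{U_2}$; since $\nabla \varphi$ is constant, the two-slope alternative in the weakened subsolution condition cannot be triggered, and the strict reformulation of the subsolution condition guaranteed by the Remark forces $H(p) \geq (1 + 2\ep)^{-1}$. Combining yields $(1 + 2\ep)^{-1} \leq (s^*)^{-1}$, so $s^* \leq 1 + 2\ep$; letting $\ep \to 0$ contradicts $s^* > 1$.

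The main obstacle is the affine testing step: since $L \varphi = 0$ for any affine $\varphi$, the non-strict viscosity condition is trivially satisfied and yields no information about $H$. Extracting the inequality $H(p) \geq (1 + 2\ep)^{-1}$ therefore requires both the strict reformulation of the subsolution condition promised by the Remark and the observation that an affine test function has constant gradient, so the two-slope escape built into the definition of weakened subsolution is unavailable. The convex comparison lemmas are precisely the tools that localize the contact analysis to a regular boundary point with a single-direction gradient, placing us in the setting where the weakened subsolution condition coincides in strength with the standard one.
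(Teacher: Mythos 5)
Your overall strategy -- produce a strict supersolution by scaling, locate a boundary contact point, and combine the convex comparison lemmas with \lref{regularsupersolution} and the strict form of the weakened subsolution condition from the Remark -- is the right one and does match the paper's. Your treatment of the affine test function obstacle is also correct, and it is the same observation the paper relies on implicitly in Steps 3 and 4.

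However, there is a genuine gap. The comparison principle you set out to prove only says $u_2 \leq u_1$ when $\{u_2 > 0\}$, the \emph{weakened subsolution's} support, is convex. Applying it with $(u_1,u_2)=(u,v)$ is fine and gives $v \leq u$. But applying it with $(u_1,u_2)=(v,u)$ requires $\{u>0\}$ to be convex, which is precisely part of what you are trying to prove and is not established by the first application. The paper's proof needs both directions and handles the second one in its Step 3, using \lref{convexcomparison2}, which gives comparison when the \emph{supersolution's} support is convex while the subsolution's support is merely inner regular. In your framework you would need a second sliding argument in the opposite sense (dilate $v$ rather than $u$), which at the touching point produces a configuration with $U = s^*\{v>0\}$ convex and $V=\{u>0\}$ only inner regular -- exactly the setting of \lref{convexcomparison2}, not \lref{convexcomparison1}. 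Without invoking that lemma the argument does not close.

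Two secondary points that would need care even after the above is fixed. First, the dilation $u_1^s(x)=u_1(x/s)$ changes the inner set from $\overline W$ to $s\overline W$, so $u_1^{s^*}=w_{s^*W,\,s^*U_1}$ while $u_2=w_{W,\,U_2}$; \lref{convexcomparison1} requires both functions to be harmonic with respect to the \emph{same} inner set $W$. You implicitly pass to $w_{W,\,s^*U_1}$ and compare, which works because $w_{W,s^*U_1}\le u_1^{s^*}$ with the same normal direction at a regular boundary point, but this needs to be said. The paper avoids the issue by multiplying by $\tau>1$ rather than dilating, together with a small inf/sup convolution in Step 2 that handles the $W$-mismatch. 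Second, the claim that the touching at $s^*$ must lie on $\partial U_2 \cap \partial(s^*U_1)$ requires ruling out contact at $s^*\overline W$; the sets $\{u_2 = u_1^{s^*}\}$ always include $\overline W$ trivially, so the relevant touching point has to be extracted from the minimality of $s^*$ via a compactness argument. You gesture at the boundary data mismatch but the step deserves to be spelled out.
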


\begin{proof}
  Step 1.
  Suppose that $u, v \in C_c(\R^d)$ are identically $1$ on $\overline W$ and both supersolutions and weakened subsolutions of \eref{exterior}.  Observe that $u = w_{W,U}$ and $v = w_{W,V}$ where $U = \{ u > 0 \}$ and $V = \{ v > 0 \}$.  To show $u = v$, it is enough, by symmetry, to show that $V \subseteq U$.  We assume for contradiction that $V \nsubseteq U$.  By \tref{existence} we may assume that one of $U$ or $V$ is convex.

  Step 2.
  By min and max convolution and dilation, we regularize $U$ and $V$ and make $u$ strict supersolution.  Since $V \nsubseteq U$ and $W$ is convex, we may choose $0 < \delta < 1 < \tau$ such that $V^\delta \subseteq \tau U_\delta$, $\partial V^\delta \cap \partial (\tau U_\delta) \neq \emptyset$, and $W^\delta \subseteq \tau W_\delta$.  Let $\tilde W = W^\delta$, $\tilde U = \tau U_\delta$, $\tilde V = V^\delta$, $\tilde v = w_{\tilde W, \tilde V}$, and $\tilde u = w_{\tilde W, \tilde U}$.  Using \lref{regularize} and \lref{harmonize}, we see that $\tau \tilde u$ is a supersolution and $\tilde v$ is a weakened subsolution.  We also have that $\tilde U$ is outer regular, $\tilde V$ is inner regular, $\tilde W$ is convex and inner regular, $\tilde V \subseteq \tilde U$, and $\tilde V \cap \tilde U \neq \emptyset$.

  Step 3.
  We derive a contradiction assuming that $U$ is convex.  Since $\tilde U$ is convex, we may apply \lref{convexcomparison2}.  For any $\ep > 0$, we find a $\delta > 0$, a normal $\nu$, and a point $x \in \partial \tilde U \cap \partial \tilde V$ such that $\nabla \tilde u(x) = |\nabla \tilde u(x)| \nu$ exists and $\tilde v(y) \leq (1+\ep) \cdot \nabla \tilde u(x)$ for $\nu \cdot (y - x) < \delta$.  By \lref{regularsupersolution}, we have $H(\tau \nabla \tilde u(x)) \leq 1$.  In particular $H((1+\ep) \nabla \tilde u(x)) \leq (1+\ep) \tau^{-1}$.  Since $\tau > 1$, we can choose $\ep > 0$ small so that $(1 + \ep) \tau^{-1} < 1$.  Thus $\tilde v$ fails the weak subsolution condition for the test function $\varphi(y) = (1+\ep) \nabla \tilde v(x) \cdot (y - x)$.

  Step 4.
  We derive a contradiction assuming that $V$ is convex.  Since $\tilde V$ is convex and inner regular, we may apply \lref{convexcomparison1}.  For any $\ep > 0$, we find a $x \in \partial \tilde V$ where $p = \nabla \tilde v(x) \neq 0$ exists and $|\nabla \tilde u(x)| \leq (1 + \ep) |p|$ on $\overline {\tilde V}_p$.  Since $\tilde V$ is convex and inner regular, we see from \lref{C32} that $\nabla \tilde v$ is continuous up to $\partial \tilde V$.  In particular, we see there is a $\delta > 0$ such that $\tilde v(y) \leq (1 + 2 \ep) p \cdot (y - x)$ holds for any $x \in \overline {\tilde V}_p$ and $p \cdot (y - x) < \delta$.  As in step 3, \lref{regularsupersolution} implies that $H((1+2\ep)p) \leq (1 + 2\ep)\tau^{-1}$.  Thus, making $\ep > 0$ small, we see that $\tilde v$ fails the weak subsolution condition for the test function $\varphi(y) = (1 + 2 \ep) p \cdot (y - x)$.
\end{proof}

\subsection{Rational facets}

We now use the special structure of $H$ given by \tref{ridges} to show that facets form in all of the rational directions.  Observe that, when $p \in \Z^d \setminus \{ 0 \}$ is rational, then the lattice $\Lambda_p$ determines $|p|^{-1} p$.  Then by \tref{ridges} there is $\delta > 0$ such that $H(|p|^{-1} p) \leq H(q) - \delta$ for all $q \in \R^d$ with $|q| = 1$ and $0 < |q - |p|^{-1} p| < \delta$.  Combining this with a compactness argument inspired by free boundary regularity theory, we show that a facet forms in the direction $p$.

We remark that the existence of facets, even of higher co-dimension, is easier if $|Du|$ is continuous up to $\partial \{ u>0\}$.  This would follow from a $C^{1,\textup{Dini}}$ estimate of the free boundary.  Here we use convexity to get $C^1$ regularity of the free boundary cheaply, and use a blow up argument in place of regularity of the gradient.  

\begin{theorem}
  \label{t.facets}
  Suppose that $W \subseteq \R^d$ is open, bounded, convex, and inner regular, let $u \in C_c(\R^d)$ denote the unique solution of \eref{exterior}, and let $\Omega = \{ u > 0 \}$.  For every $p \in \Z^d \setminus \{ 0 \}$, the set $\overline \Omega_p = \{ x \in \overline \Omega : p \cdot x = \min_{y \in \overline \Omega} p \cdot y \}$ is closed, convex, and has dimension $d-1$.
\end{theorem}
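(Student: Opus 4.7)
The plan is as follows. The closedness and convexity of $\overline\Omega_p$ are immediate: by \tref{uniqueness}, $\overline\Omega$ is compact and convex, and $\overline\Omega_p = \overline\Omega \cap \{y : p\cdot y = m\}$ with $m := \min_{\overline\Omega} p\cdot y$ is its intersection with a supporting hyperplane. I will establish $\dim \overline\Omega_p = d-1$ by contradiction: assuming $F := \overline\Omega_p$ has $\dim F < d-1$, I will construct a supersolution $w_\epsilon$ of \eref{exterior} with convex support strictly contained in $\Omega$, violating the characterization of $u$ as the pointwise infimum of supersolutions with convex support established in the proof of \tref{existence}.

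The construction is to shave off a thin slab near $F$: for small $\epsilon > 0$, set $V_\epsilon := \Omega \cap \{y : p\cdot y > m + \epsilon\}$ and $w_\epsilon := w_{W, V_\epsilon}$. Then $V_\epsilon$ is convex, open, bounded, contains $\overline W$ (since $\overline W \subset \Omega$ is bounded away from $F$), and is strictly smaller than $\Omega$. Comparison gives $w_\epsilon \leq u$ on $V_\epsilon$. The supersolution condition for $w_\epsilon$ on $\partial V_\epsilon \cap \partial\Omega$ will be inherited from that of $u$: at regular points, Hopf's lemma together with \lref{C32} gives $|\nabla w_\epsilon| \leq |\nabla u|$ in the common inward-normal direction, hence $H(\nabla w_\epsilon) \leq H(\nabla u) \leq 1$; at non-regular boundary points any test function touching $w_\epsilon$ from below also touches $u$ from below, and the condition transfers.

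The substance lies in verifying the supersolution condition on the new face $T_\epsilon := V_\epsilon \cap \{p\cdot y = m + \epsilon\}$, where the outward normal is $-|p|^{-1}p$ and the condition reduces to $|\nabla w_\epsilon| \leq 1/H(|p|^{-1}p)$. The key input is a pointy-vanishing estimate: since $\Omega$ is convex and $\dim F < d-1$, the tangent cone $T_{x^*}\Omega$ at each $x^* \in F$ has spherical cap that is a proper convex subset of the upper hemisphere, whose first Dirichlet eigenvalue strictly exceeds $d-1$. A blow-up/Liouville argument (no non-trivial degree-one homogeneous harmonic on such a cap vanishes on its boundary) will yield $u(y) = o(\textup{dist}(y, F))$ uniformly as $y \to F$ in $\Omega$. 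Since convexity of $\Omega$ and compactness of $F$ force $T_\epsilon \subseteq \{y : \textup{dist}(y, F) \leq C\epsilon\}$, we get $\sup_{T_{2\epsilon}} u = o(\epsilon)$. The linear barrier $\psi(y) := \epsilon^{-1}\sup_{T_{2\epsilon}} u \cdot (p\cdot y - m - \epsilon)$ then dominates $w_\epsilon$ on the boundary of the slab $V_\epsilon \cap \{m + \epsilon < p\cdot y < m + 2\epsilon\}$ (both vanish on $T_\epsilon$ and on the lateral boundary $\partial\Omega \cap $ slab, and $\psi = \sup u \geq w_\epsilon$ on top), so the maximum principle forces $|\nabla w_\epsilon| \leq o(1) |p|$ on $T_\epsilon$. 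For small $\epsilon$ this is strictly below $1/H(|p|^{-1}p)$, completing the verification.

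The main obstacle will be the uniformity of the pointy-vanishing estimate: the pointwise little-oh at each $x^* \in F$ follows from Liouville on its tangent cone, but uniformity across $F$ requires a careful two-scale compactness argument to handle sequences $y_n \to x_n^* \to x^*$ with varying rates, leveraging the Kuratowski continuity of tangent cones along a convex set. This is the compactness argument in the spirit of free-boundary regularity mentioned just before the theorem. Once it is established, the contradiction is clean: $w_\epsilon$ is a supersolution with convex support $V_\epsilon \subsetneq \Omega$, so by minimality $u \leq w_\epsilon$, yet $u > 0 = w_\epsilon$ on the nonempty set $\Omega \setminus V_\epsilon$.
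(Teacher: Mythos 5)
The closedness and convexity part is fine, and the strategy of contradicting minimality (shave off a slab near the alleged thin facet, verify the shaved set still carries a supersolution, then invoke the Perron characterization) is a reasonable plan in principle. But the proposal founders on a genuinely false geometric claim, and it misses the mechanism that actually forces the facet.

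The ``pointy-vanishing estimate'' is wrong. The tangent cone $T_{x^*}\Omega$ at a point $x^* \in F$ of a convex set is \emph{not} constrained to be a proper sub-cone of a half-space just because the facet $F$ has dimension $< d-1$. Consider the unit ball: every facet $\overline\Omega_p$ is a single point, yet every tangent cone is exactly a half-space. More to the point, the paper's own proof of \tref{facets} opens by showing that $\partial\Omega$ is $C^1$ (via a blow-up and the Liouville-type result in \cite{Armstrong-Sirakov-Smart}); in a convex domain with $C^1$ boundary every tangent cone is precisely a half-space, the Dirichlet eigenvalue of its spherical cap is exactly $d-1$, and Hopf's lemma gives \emph{linear} lower bounds $u(y) \geq c\,\dist(y,\partial\Omega)$ near each boundary point. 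So $u(y) = o(\dist(y,F))$ fails, and the barrier argument on the slab that was meant to bound $|\nabla w_\epsilon|$ collapses.

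The deeper problem is that your argument never uses the discontinuity of $H$ at rational directions, i.e.\ \tref{ridges}. That property is the whole point: if $H$ were continuous (take $H(p) = |p|$ and $W$ a ball), the solution is radially symmetric and there are no facets, yet nothing in your argument would break. Any correct proof must therefore invoke the jump of $H$ at $\bar p = |p|^{-1}p$. The paper does this as follows: the weakened subsolution condition forces $u$ to grow at rate at least $H(\bar p)^{-1}$ somewhere approaching $\overline\Omega_p$; on the other hand, the supersolution condition together with \lref{C32} and \lref{regularsupersolution} bounds $|\nabla u(y)|$ by $H(q)^{-1}$ at regular boundary points $y$ with inward normal $q$; and \tref{ridges} gives $H(q) \geq (1+2s)H(\bar p)$ when $q$ is close to but not equal to $\bar p$. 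The low-dimension hypothesis $\dim\overline\Omega_p = d-2$ is then used precisely to ensure one can perturb the approaching points so that their nearest-boundary normals $q_n$ are distinct from $\bar p$ (yet converge to it by the $C^1$-ness), which after a blow-up produces $\delta \leq (1+s)(1+2s)^{-1}\delta < \delta$, a contradiction. You would need to rebuild your slab argument around this Hamiltonian jump rather than around a tangent-cone estimate.
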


\begin{proof}
Recall from \tref{uniqueness} that $\Omega$ is convex, first we prove that the boundary of $\Omega$ is $C^1$.  Each $x \in \partial \Omega$ has a closed convex cone of supporting half-spaces indexed by their inward normals, we write $\nu(x)$ for the intersection of the cone with the unit sphere.  First we show that $\nu$ is single-valued.  Let $x \in \partial \Omega$ and let $e$ be a unit vector such that $e \cdot \nu >0$ for all $\nu \in \nu(x)$.   Blow up to 
\[ u_r(y) = u(re)^{-1}u(x + ry).\]
Up to taking a subsequence, by the uniform Lipschitz continuity of $u$ and the Harnack inequality, the $u_r$ converge locally uniformly to a positive globally Lipschitz continuous harmonic function $u_0$ in the cone $K = \{y \cdot \nu >0 \ \forall \ \nu \in \nu(x)\}$, with $u_0(e) = 1$ and $u_0 = 0$ on the boundary of the cone.  Thus, for example by Theorem 1.2 of \cite{Armstrong-Sirakov-Smart}, $u(x) = (x \cdot f)_+$ for some $f \neq 0$, i.e. $\nu(x) = \{ f/ |f|\}$ is a singleton.  Now since $\nu: \partial \Omega \to S^{d-1}$ is single-valued, it is also continuous.  Suppose otherwise, then there exists a point $x \in \partial \Omega$ and a sequence $x_n$ converging to $x$ with $\nu(x_n) \to \nu' \neq \nu$.  Then, by the continuity of $p \mapsto \min_{\overline{\Omega}} y\cdot p$,  $\{\nu'\cdot x =0\}$ is also a supporting hyperplane for $\Omega$ at $x$, this is a contradiction.

  Since $\Omega$ is convex, $\overline \Omega_p$ is closed and convex.  Write $\bar p = |p|^{-1} p$.  Since $p$ has integer coordinates, \tref{ridges} implies there is a small $s > 0$ such that $H(q) \geq (1 + 2s) H(\bar p)$ whenever $|q| = 1$ and $0 < |q - \bar p| < s$.  Suppose for contradiction that $\overline \Omega_p$ has dimension $d-2$.

  By the weak subsolution condition, there is a sequence of points $x_n \in \Omega$ with $x_n \to x_\infty \in \overline \Omega_p$ such that
  \begin{equation*}
    \limsup_{n \to \infty} \frac{u(x_n)}{\bar p \cdot (x - x_n)} \geq H(\bar p)^{-1}.
  \end{equation*}
  By convexity, we have
  \begin{equation*}
    \bar p \cdot (x_n - x_\infty) \geq \dist(x_n, \partial \Omega).
  \end{equation*}
  Since $u \in C^{0,1}(\R^d)$, we have
  \begin{equation*}
    \bar p \cdot (x_n - x_\infty) \leq C u(x_n) \leq C \dist(x_n, \partial \Omega).
  \end{equation*}
  
  We now use the low dimension hypothesis.  Recall that, for any bounded convex open $U \subseteq \R^d$ and sphere $\partial B \subseteq U$, the set
  \begin{equation*}
    V = \{ y \in \partial U : \dist(z,\partial U) = |z - y| \mbox{ for some } z \in \partial B \}
  \end{equation*}
  has positive $\mathcal H^{d-1}$ measure.  Indeed, the downward characteristics of the distance function in a convex set spread out as they travel to the boundary.  From this, we see that, by perturbing the $x_n$, we may assume there are $y_n \in \partial \Omega$ such that $r_n = |y_n - x_n| = \dist(x_n, \partial \Omega)$ and $q_n = |x_n - y_n|^{-1} (x_n - y_n) \neq \bar p$.  Since $y_n \to x_\infty$ and we have shown above that $\partial \Omega$ is $C^1$ we have $q_n \to \bar p$.

  By \lref{C32} and \lref{regularsupersolution}, $\nabla u(y_n)$ exists, $\nabla u(y_n) = |\nabla u(y_n)| q_n$, and $|\nabla u(y_n)| \leq H(q_n)^{-1}$.  Moreover, we may choose a $\delta \in (0,1)$ such that
  \begin{equation*}
    z_n = (1 - \delta) y_n + \delta x_n
  \end{equation*}
  satisfies
  \begin{equation*}
    u(z_n) \leq (1+s) H(q_n)^{-1} \dist(z_n, \partial \Omega).
  \end{equation*}

  Consider the rescalings
  \begin{equation*}
    u_n(w) = u(x_n)^{-1} u(y_n + r_n w).
  \end{equation*}
  Since $C^{-1} r_n \leq u(x_n) \leq C r_n$, we may pass to a subsequence to obtain $u_n \to u \in C^{0,1}(\R^d)$ locally uniformly, $r_n^{-1} (x_n - y_n) \to x$, and $r_n^{-1} (z_n - y_n) \to \delta x$.  Note that $u(0) = 0$, $u(x) = 1$, and $B_1(x) \subseteq \{ u > 0 \}$.  In fact, by the same argument as in \tref{existence}, we see that $\{ u > 0 \}$ is convex and unbounded.

  Since $\{ u > 0 \}$ is convex and unbounded, $u$ is globally Lipschitz, harmonic, $u(0) = 0$, $u(x) = 1$, and $|x| = 1$, we must have $u(w) = \max \{ 0, x \cdot w \}$.  Thus compute
  \begin{equation*}
    \begin{aligned}
      \delta & = u(\delta x) \\
      & \leq \lim_n u(x_n)^{-1} u(z_n) \\
      & \leq \limsup_n H(p) (1+s)H(q_n)^{-1} \delta \\
      & \leq (1+s) (1+2s)^{-1} \delta \\
      & < \delta,
    \end{aligned}
  \end{equation*}
  which is impossible.  Thus $\overline \Omega_p$ must have dimension $d-1$.
\end{proof}

\section{Scaling Limit}
\label{s.scaling}

\subsection{Local minimizers}

Throughout this section, fix $0 \in W \subseteq \R^d$ open, bounded, convex, and inner regular.  For $h > 1$, consider the energy
\begin{equation*}
  J_h[u] = \sum_{x \notin h W} \id_{\{ u > 0 \}}(x) + d \sum_{\substack{|x-y|=1 \\ \{ x, y \} \nsubseteq h W}} (u(x)-u(y))^2
\end{equation*}
defined for $u : \Z^d \to \R$.  Note that the sum over edges counts each edge twice, as the pairs $\{ x, y \}$ are unordered.

We define local minimizer to be with respect to single-site modifications:

\begin{definition}
  A function $u : \Z^d \to \R$ is a local minimizer of $J_h$ if $u = h$ on $h W$, $J_h[u] < \infty$, and $J_h[v] \geq J_h[u]$ for any $v : \Z^d \to \R$ such that $\{ v \neq u \}$ is a singleton.
\end{definition}

An analysis of single-site modifications yields the following result.

\begin{lemma}
  If $u : \Z^d \to \R$ is a local minimizer of $J_h$, then
  \begin{equation}
    \label{e.firstvariation}
    \begin{cases}
      \Delta u = 0 & \mbox{on } \{ u > 0 \} \setminus h W \\
      \Delta u \leq 1 & \mbox{on } \partial^+ \{ u > 0 \} \setminus h W \\
      u \geq (2d)^{-1} & \mbox{on } \partial^- \{ u > 0 \} \setminus h W \\
      u = h & \mbox{on } h W \\
      u \geq 0 & \mbox{on } \Z^d \setminus h W,
    \end{cases}
  \end{equation}
  where
  \begin{equation*}
    \partial^+ X = \{ y \in \Z^d \setminus X : |y - x| = 1 \mbox{ for some } x \in X \}
  \end{equation*}
  and
    \begin{equation*}
    \partial^- X = \partial^+ (\Z^d \setminus X)
  \end{equation*}
  denote the outer and inner lattice boundary of a set $X \subseteq \Z^d$.
\end{lemma}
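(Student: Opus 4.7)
The plan is to analyze single-site perturbations.  For $x \in \Z^d \setminus hW$, let $v$ agree with $u$ off $\{x\}$ with $v(x) = t$.  Since $x \notin hW$, every edge incident to $x$ appears in the gradient sum, and a direct computation yields
\begin{equation*}
  J_h[v] - J_h[u] = \bigl(\id_{\{t>0\}} - \id_{\{u(x)>0\}}\bigr) + E_x(t) - E_x(u(x)),
\end{equation*}
where $E_x(t) = 2d \sum_{|y-x|=1} (t - u(y))^2$; the factor $2d$ rather than $d$ reflects that each edge is counted twice in the sum.  The quadratic $E_x$ is strictly convex with minimizer $t_* = u(x) + (2d)^{-1} \Delta u(x)$ and satisfies $E_x(t) - E_x(t_*) = 4d^2 (t - t_*)^2$.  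The local minimizer hypothesis is $J_h[v] - J_h[u] \geq 0$ for every $t \in \R$.

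Each of the first four conditions in \eref{firstvariation} then corresponds to an optimal choice of $t$.  For $x \in \{u > 0\} \setminus hW$, small perturbations of $u(x)$ preserve the indicator, so $E_x'(u(x)) = 0$ and $\Delta u(x) = 0$.  For $x \in \partial^+\{u > 0\} \setminus hW$, using $u \geq 0$ so that $u(x) = 0$, the optimal positive test is $t = t_* > 0$ and the inequality $1 - 4d^2 t_*^2 \geq 0$ gives $\Delta u(x)^2 \leq 1$, in particular $\Delta u(x) \leq 1$.  For $x \in \partial^-\{u > 0\} \setminus hW$, the interior condition already yields $\Delta u(x) = 0$ and hence $t_* = u(x) > 0$; the admissible drop $t = 0$ gives $-1 + 4d^2 u(x)^2 \geq 0$, so $u(x) \geq (2d)^{-1}$.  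The requirement $u = h$ on $hW$ is part of the definition of local minimizer.

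The positivity $u \geq 0$ on $\Z^d \setminus hW$ is the one statement that does not reduce to a single line of algebra, and I would handle it first.  At a site $x_0$ with $u(x_0) < 0$, the tests $t \leq 0$ do not change the indicator, and minimality of $E_{x_0}$ over $\{t \leq 0\}$ forces $u(x_0) = t_*$; in particular $u$ is discretely harmonic at every site where it is negative.  Finiteness of $J_h[u]$ implies $\{u > 0\} \setminus hW$ is finite and that $u$ has finite Dirichlet energy, so $u \leq 0$ outside a bounded set and is bounded below, and thus the infimum $m$ of $u$ is attained at some $x_*$.  If $m < 0$, discrete harmonicity at $x_*$ combined with $u \geq m$ everywhere forces $u(y) = m$ at each neighbor and, by iteration over the connected lattice $\Z^d$, gives $u \equiv m$, contradicting $u = h > 0$ on $hW$.

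The computations for the Euler-Lagrange conditions are essentially forced once the variation formula is set up; the only conceptual step is to select the correct test $t$ in each case.  The main technical obstacle is the positivity argument, which combines control at infinity from the energy bound with a discrete maximum principle, and uses in an essential way the connectedness of $\Z^d$ and the nonemptiness of $hW$.
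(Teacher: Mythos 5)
Your derivation of conditions (1)--(4) via the single-site variation formula $J_h[v]-J_h[u] = (\id_{\{t>0\}} - \id_{\{u(x)>0\}}) + E_x(t) - E_x(u(x))$ with $E_x(t) = 2d\sum_{|y-x|=1}(t-u(y))^2$ and $t_* = u(x) + (2d)^{-1}\Delta u(x)$ is exactly the paper's approach, just organized under one formula instead of stating the two test configurations $u^x$ and $u_x$ separately. Your signs are correct; in fact the paper writes $J_h[u_x]-J_h[u] = 1-(2d)^2u(x)^2$ for the inner-boundary test, which should read $-1+(2d)^2u(x)^2$ as you have it (otherwise one deduces $u(x)\le(2d)^{-1}$ rather than the stated $u(x)\ge(2d)^{-1}$).

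The one genuine gap is in your argument for $u\ge 0$ (which the paper dismisses as ``standard''). After correctly establishing that $\Delta u(x_0)=0$ wherever $u(x_0)<0$, you assert that finiteness of the Dirichlet energy together with $u\le 0$ off a bounded set implies $u$ is bounded below and attains its infimum. Neither implication is immediate. Finite Dirichlet energy alone does not bound a discrete function from below (e.g.\ partial sums of $-1/k$ in $d=1$); what saves you is precisely the harmonicity of $u$ on $\{u<0\}$, but that has to be used, not just the energy bound. Even granting boundedness, the infimum being attained (rather than approached only along a sequence escaping to infinity) needs its own justification, since $u$ is only subharmonic, not superharmonic, on the exterior region, so a minimum principle is not directly available. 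A cleaner route is to observe that $u^- = \max(-u,0)$ is a nonnegative subharmonic function on $\Z^d$ with finite Dirichlet energy that vanishes on the nonempty set $hW$, and then argue it must vanish identically (via a Caccioppoli/cutoff computation, or by first showing each connected component of $\{u<0\}$ is bounded). As written, the chain ``finite energy $\Rightarrow$ bounded below $\Rightarrow$ infimum attained'' is the weak link and needs to be filled in.
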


\begin{proof}
  Since the first, fourth, and fifth conditions are standard, we prove the second condition.  Suppose that $x \in \partial^+ \{ u > 0 \} \setminus h W$ and consider the variation
  \begin{equation*}
    u^x(y) = \begin{cases}
      u(y) & \mbox{if } y \neq x \\
      (2d)^{-1} \sum_{|z - x| = 1} u(z) & \mbox{if } y = x.
    \end{cases}
  \end{equation*}
  Using $u(x) = 0$, we compute
  \begin{equation*}
    0 \leq J_h[u^x] - J_h[u] = 1 - (\Delta u(x))^2.
  \end{equation*}
  This proves the second condition.  Now suppose that $x \in \partial^- \{ u > 0 \} \setminus h W$ and consider the variation
  \begin{equation*}
    u_x(y) = \begin{cases}
      u(y) & \mbox{if } y \neq x \\
      0 & \mbox{if } y = x.
    \end{cases}
  \end{equation*}
  Using $\Delta u(x) = 0$, we compute
  \begin{equation*}
    0 \leq J_h[u_x] - J_h[u] = 1 - (2d)^2 u(x)^2.
  \end{equation*}
  This proves the third condition.
\end{proof}

We study the least supersolution of \eref{firstvariation}.

\begin{lemma}
  The least function $u_h : \Z^d \to \R$ that satisfies
  \begin{equation}
    \label{e.fde}
    u_h \geq h \id_{h W} \quad \mbox{and} \quad \Delta u_h \leq \id_{\{ u_h = 0 \}}
  \end{equation}
  is a local minimizer.
\end{lemma}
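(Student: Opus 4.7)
My plan is to construct $u_h$ as the pointwise infimum of the admissible class, use minimality to upgrade the supersolution property into the full Euler--Lagrange conditions \eref{firstvariation}, and then verify local minimality by a direct case analysis of the energy change under single-site variations.

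First I would check that the admissible set $\mathcal{A} = \{u : \Z^d \to \R \mid u \ge h\id_{hW},\ \Delta u \le \id_{\{u=0\}}\}$ is nonempty and closed under pointwise minima, so that the pointwise infimum $u_h = \inf \mathcal{A}$ is itself in $\mathcal{A}$. The constant $h$ is trivially admissible. If $u, v \in \mathcal{A}$ and $m = \min(u,v)$, then at any $x$ with $m(x) = u(x)$ the inequality $m \le u$ combined with equality at $x$ gives $\Delta m(x) \le \Delta u(x)$, while $\id_{\{u(x)=0\}} = \id_{\{m(x)=0\}}$; symmetrically if $m(x) = v(x)$. A standard diagonal argument passes this closure through to countable infima. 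In particular $u_h = h$ on $hW$ and $u_h \ge 0$ everywhere.

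Next I would use the minimality of $u_h$ to derive the nontrivial conditions of \eref{firstvariation}. If some $x \in \{u_h > 0\} \setminus hW$ had $\Delta u_h(x) < 0$, one could lower $u_h(x)$ by a small $\ep > 0$: the Laplacian at $x$ increases by $2d\ep$ but remains $\le 0$, at neighbours it only decreases, and the constraint $u \ge h\id_{hW}$ is untouched. This contradicts minimality, so $\Delta u_h = 0$ on $\{u_h > 0\} \setminus hW$. Likewise, if $u_h(x) < 1/(2d)$ at such a point, then setting $v(x) = 0$ preserves admissibility: at $x$, harmonicity gives $\Delta v(x) = \sum_{y \sim x} u_h(y) = 2d\,u_h(x) < 1 = \id_{\{v(x)=0\}}(x)$, while at neighbours the Laplacian only decreases. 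This again contradicts minimality. Note that this is slightly stronger than what is stated in \eref{firstvariation}: I obtain $u_h \ge 1/(2d)$ on all of $\{u_h > 0\} \setminus hW$, not just on $\partial^-$.

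Finally, for local minimality I fix $x \notin hW$ (so that the modified $v$ still satisfies $v = h$ on $hW$) and any $v$ agreeing with $u_h$ off $x$. Writing $t = v(x) - u_h(x)$ and carefully accounting for the double-counted ordered edge sum, I compute
\[
  J_h[v] - J_h[u_h] = \bigl(\id_{\{v(x)>0\}} - \id_{\{u_h(x)>0\}}\bigr) + 4d^2 t^2 - 4dt\,\Delta u_h(x),
\]
and handle four cases: (a) $u_h(x), v(x) > 0$, where $\Delta u_h(x) = 0$ yields $4d^2 t^2 \ge 0$; (b) $u_h(x) > 0$, $v(x) \le 0$, where the indicator jump $-1$ is absorbed by $4d^2 t^2 \ge 4d^2 u_h(x)^2 \ge 1$ using $u_h(x) \ge 1/(2d)$; (c) $u_h(x) = 0$, $v(x) > 0$, where the upward quadratic $4d^2 t^2 - 4dt\,\Delta u_h(x) + 1$ in $t$ has minimum $1 - \Delta u_h(x)^2 \ge 0$ by $0 \le \Delta u_h(x) \le 1$; and (d) $u_h(x) = 0$, $v(x) < 0$, where $4dt(dt - \Delta u_h(x))$ is a product of two nonpositive factors. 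The main obstacle is case~(c), where absorbing the indicator jump $+1$ into the quadratic requires the sharp inequality $\Delta u_h(x) \le 1$, which is exactly the delicate part of the supersolution condition in \eref{fde}.
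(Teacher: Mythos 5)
Your proof is correct, and it takes a genuinely different route from the paper. The paper argues by contradiction: it invokes the preceding lemma (local minimizers satisfy the discrete Euler--Lagrange conditions \eref{firstvariation}), shows that any energy-decreasing single-site variation must remove a point from $\{u_h > 0\}$, descends to a local minimizer $v$ with strictly smaller support, and then derives a contradiction with the least-supersolution property of $u_h$. You instead verify local minimality \emph{directly}: you first upgrade the supersolution inequality to the exact equalities $\Delta u_h = 0$ and $u_h \ge 1/(2d)$ on $\{u_h > 0\}\setminus hW$ by exhibiting explicit smaller admissible competitors (a small lowering at a positive point, and a truncation to zero when $u_h(x) < 1/(2d)$), and then compute the energy change $J_h[v] - J_h[u_h] = (\id_{\{v(x)>0\}} - \id_{\{u_h(x)>0\}}) + 4d^2 t^2 - 4dt\,\Delta u_h(x)$ and dispatch the four cases. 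The direct computation avoids the descent step, which the paper leaves a bit terse (one has to argue that iterated point removal, with re-harmonization, terminates at a local minimizer that is still dominated by $u_h$), and it also yields the extra information $u_h \ge 1/(2d)$ on all of $\{u_h > 0\}\setminus hW$, not just the inner boundary. What your proof relies on, and what the paper supplies via the maximum principle, is the finiteness of $\{u_h > 0\}$ (needed for $J_h[u_h] < \infty$ in the definition of local minimizer); you should note that a compactly supported admissible function exists (e.g.\ harmonic in a large ball with the same barrier argument as in Lemma~\ref{l.uhsupport}), so $u_h$ indeed has finite support. Also, both your proof and the paper's implicitly restrict single-site variations to $x \notin hW$; without that restriction the lemma is false (varying a boundary point of $hW$ can lower the Dirichlet energy), so it is worth recording that the class of competitors in the definition of local minimizer is meant to fix $v = h$ on $hW$.
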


\begin{proof}
  By the maximum principle for $\Delta$, we see that $u_h = h$ on $h W$, $\{ u_h > 0 \}$ is finite, and $\Delta u_h = 0$ in $\{ u_h > 0 \} \setminus h W$.  If $J_h[u] < J_h[u_h]$ and $\{ u \neq u_h \} = \{ x \}$, then either $x \in \partial^+ \{ u_h > 0 \} \setminus hW$ or $x \in \partial^- \{ u_h > 0 \}$.  In the former case, then by the proof of the previous lemma, we must have $\Delta u_h(x) > 1$, which is impossible.  In particular, any single site variation of $u_h$ that reduces the energy must remove a point from its domain.  We may therefore find a local minimizer $v : \Z^d \to \R$ such that $\{ v > 0 \} \subsetneq \{ u > 0 \}$.  By the previous lemma, $\Delta v \leq \id_{\{ v > 0 \}}$, contradicting the minimality of $u$.
\end{proof}

As in the continuum setting, we may compute the least supersolution by parabolic flow.  This flow is a special case of the divisible sandpile dynamics.

\begin{lemma}
  \label{l.flow}
  If $v_0 = h \id_{h W}$,
  \begin{equation}
    \label{e.flow}
    v_{k+1} = v_k + (2d)^{-1} \max \{ 0, \Delta v_k - \id_{\{ v_k = 0 \}} \},
  \end{equation}
  and $u_h : \Z^d \to \R$ the least function satisfying \eref{fde}, then $\lim_{k \to \infty} v_k = u_h$.
\end{lemma}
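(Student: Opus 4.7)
The plan is a standard monotone iteration argument. The sequence $(v_k)$ is pointwise non-decreasing because the increment $v_{k+1}-v_k = (2d)^{-1}\max\{0,\Delta v_k - \id_{\{v_k=0\}}\}$ is manifestly non-negative. Once I have established the comparison $v_k \leq u_h$ for all $k$, the sequence converges pointwise (trivially, since every $v_k$ vanishes outside the finite set $\{u_h>0\}$) to a limit $v_\infty \leq u_h$; as the increments go to zero, I can read off $\Delta v_\infty \leq \id_{\{v_\infty=0\}}$, and since clearly $v_\infty \geq v_0 = h\id_{hW}$, minimality of $u_h$ forces $v_\infty \geq u_h$, hence equality.

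The heart of the proof is the induction $v_k \leq u_h \Rightarrow v_{k+1}\leq u_h$, which is a site-by-site check split on whether $v_k(x)>0$. If $v_k(x)>0$, then necessarily $u_h(x)>0$ by the inductive hypothesis, so $\Delta u_h(x) \leq 0$, and the (possibly trivial) update replaces $v_k(x)$ by at most the neighbor average, which by hypothesis is bounded by $u_h(x)+(2d)^{-1}\Delta u_h(x)\leq u_h(x)$. If $v_k(x)=0$, the update is nontrivial only when $\Delta v_k(x)>1$; in that case $\Delta v_k(x) = \sum_{|y-x|=1} v_k(y) \leq \sum_{|y-x|=1} u_h(y) = 2d\,u_h(x)+\Delta u_h(x)$. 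The subtle subcase is $u_h(x)=0$: then $\Delta u_h(x)\leq 1$ forces $\Delta v_k(x) \leq 1$, contradicting $\Delta v_k(x)>1$, so this configuration cannot arise. When $u_h(x)>0$, $\Delta u_h(x)\leq 0$ yields $v_{k+1}(x) = (2d)^{-1}(\Delta v_k(x)-1) < u_h(x)$, closing the induction.

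The limit step is then routine. The set $\{v_\infty>0\}$ equals $\bigcup_k \{v_k>0\}$, so $\id_{\{v_k=0\}}\to \id_{\{v_\infty=0\}}$ pointwise, and together with $\Delta v_k \to \Delta v_\infty$ this forces $\max\{0,\Delta v_\infty-\id_{\{v_\infty=0\}}\}=0$ at every site; that is, $v_\infty$ satisfies \eref{fde}. Minimality therefore gives $v_\infty \geq u_h$, and we conclude $v_\infty = u_h$. The only nontrivial verification in the whole argument is the subcase $v_k(x)=0,\ u_h(x)=0$ of the inductive step, where one genuinely needs the precise boundary inequality $\Delta u_h \leq 1$ rather than just a supersolution property away from the free boundary; everything else is mechanical bookkeeping.
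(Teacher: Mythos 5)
Your proof is correct and follows the same monotone-iteration scheme as the paper: establish $v_k\le u_h$ by induction, deduce convergence from monotonicity and finiteness of the support, show the stationary limit satisfies \eref{fde}, and invoke minimality. The only cosmetic difference is that you split the inductive step on $v_k(x)>0$ vs.\ $v_k(x)=0$ and use only $\Delta u_h\le 0$ on $\{u_h>0\}$, whereas the paper splits on $x\in hW$, $u_h(x)=0$, and the remaining case and uses the stronger fact $\Delta u_h=0$ on $\{u_h>0\}\setminus hW$; both variants close the induction.
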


\begin{proof}
  We first observe that $0 \leq v_k \leq h$ for all $k$.  Next, we prove $v_k \leq u_h$ by induction.  It is trivially true for $k = 0$.  Suppose it is true for $k$ and let $x \in \Z^d$ be arbitrary.  If $x \in hW$, then $v_{k+1}(x) \leq h = u_h(x)$.  If $u_h(x) = 0$, then we have $v_k(x) = 0$, $\Delta v_k(x) \leq \Delta u_h(x) \leq 1$, and $v_{k+1}(x) = 0$.  If $u_h(x) > 0$ and $x \notin h W$, then we have $v_{k+1}(x) \leq (2d)^{-1} \sum_{|y-x|=1} v_k(y) \leq (2d)^{-1} \sum_{|y-x|=1} u_h(y) = u_h(x)$.

  Since $v_k \leq v_{k+1} \leq u_h$ and $\{ u_h > 0 \}$ is finite, we see that $v_\infty = \lim_{k \to \infty} v_k$ exists.  Since $v_\infty$ is stationary, we see that $v_\infty$ satisfies \eref{fde}.  We conclude $v_\infty = u$ using the minimality of $u_h$.
\end{proof}

\subsection{Compactness}

We adapt the uniform Lipschitz estimate of Alt-Caffarelli \cite{Alt-Caffarelli} to the discrete setting.  A variation on this already appeared in the work of Aleksanyan-Shahgholian \cite{Aleksanyan-Shahgholian,Aleksanyan-Shahgholian-2}.  In fact, the discrete analogue of the Alt-Caffarelli result is easier to prove, owing to the fact that a discrete boundary can not be irregular and the discrete normal derivative is always defined.

\begin{lemma}
  \label{l.uhlipschitz}
  Every local minimizer of $J_h$ is Lipschitz with constant depending only on dimension and the inner regularity of $W$.
\end{lemma}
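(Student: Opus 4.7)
The plan is to adapt the Alt-Caffarelli Lipschitz estimate \cite{Alt-Caffarelli} to the discrete setting, following the approach of Aleksanyan-Shahgholian \cite{Aleksanyan-Shahgholian,Aleksanyan-Shahgholian-2}; the discrete problem is in fact cleaner than the continuum one because the free boundary is always one lattice step thick and the discrete normal derivative is automatically defined. It suffices to bound $|u(x) - u(y)|$ by a constant depending only on $d$ and the inner regularity radius of $W$ whenever $|x - y| = 1$. I would split the argument into three cases according to whether the edge $\{x, y\}$ crosses the free boundary, crosses $\partial(hW)$, or lies inside $\{u > 0\} \setminus hW$.

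Across the free boundary the bound is immediate from the first variation. For any $y \in \partial^+\{u > 0\} \setminus hW$ we have $u(y) = 0$ and $\Delta u(y) \leq 1$ by \eref{firstvariation}, so $\sum_{|z - y| = 1} u(z) \leq 1$ and in particular $u(z) \leq 1$ at every lattice neighbor of the outer free boundary. Across $\partial(hW)$ the bound follows from a discrete Hopf-type barrier: at each $x_0 \in \partial(hW)$, the inner regularity of $W$ supplies a ball $B_{r_0 h}(z) \subseteq hW$ tangent at $x_0$ with $r_0$ depending only on $W$, and comparing $u$ against a standard harmonic barrier built from this ball (of fundamental-solution type when $d \geq 3$ and logarithmic when $d = 2$), combined with a simple random-walk nondegeneracy estimate showing that the free boundary stays at distance $\gtrsim h$ from $hW$, yields $u(y) \geq h - C$ at lattice neighbors $y$ of $hW$; combined with the trivial upper bound $u \leq h$ this handles all edges crossing $\partial(hW)$.

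The substantive step is the interior gradient bound
\[
  u(x) \leq C \max(1, \dist(x, \{u = 0\})) \quad \text{for every } x \in \{u > 0\} \setminus hW,
\]
which combined with discrete gradient estimates for harmonic functions on a ball of radius $\dist(x, \{u = 0\})/2$ controls $|u(x) - u(x + e_i)|$ by a constant. This is the genuine discrete Alt-Caffarelli Lipschitz estimate, and I would prove it by contradiction and energy comparison. If $u(x_0) \geq M r$ with $r = \dist(x_0, \{u = 0\})$ and $M$ large, discrete Harnack propagates this lower bound to $u \geq c M r$ on a positive fraction of $B_{r/2}(x_0)$; modifying $u$ on a ball of size $\sim r$ reaching the free boundary then gains $\Omega(M^2 r^{d-2})$ in the Dirichlet part of $J_h$ while losing at most $O(r^d)$ in the indicator part, contradicting local minimality once $M$ is large. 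The main obstacle will be reconciling this ball-scale energy comparison with the single-site definition of local minimizer used in the statement; this is handled by realizing the competitor as a sequence of admissible single-site perturbations, in the spirit of the toppling flow of \lref{flow}.
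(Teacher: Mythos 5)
Your decomposition into edges near the free boundary, edges near $\partial(hW)$, and interior edges matches the structure of the paper's proof, and your Cases 1 and 2 are essentially right (though the paper's barrier $\varphi_h(x) = h\bigl(2r^{d-1}|h^{-1}x|^{1-d}-1\bigr)$ delivers both the lower bound $u \geq h - C$ on $\partial^+(hW)$ and the separation of the free boundary from $hW$ in one stroke, so the auxiliary ``random-walk nondegeneracy estimate'' you invoke is not a separate step). The divergence is in Case 3, and there is a genuine gap there.

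The paper disposes of the interior case without any energy comparison: once you know $u = 0$ on $\partial^+\{u>0\}$, $u \leq 1$ on $\partial^-\{u>0\}$ (from $\Delta u \leq 1$ on the outer boundary), and $h - C \leq u \leq h$ near $\partial(hW)$, you simply observe that for each $k$ the discrete derivative $v_k(x) = u(x+e_k) - u(x)$ is $\Delta$-harmonic on $\{u>0\}\setminus hW$ intersected with its $e_k$-shift, and its boundary values are $O(1)$ by the two boundary cases. The discrete maximum principle applied to $v_k$ then gives $|u(x+e_k) - u(x)| \leq C$ everywhere. That is the whole interior argument; there is no need for an interior growth bound $u(x) \leq C\dist(x,\{u=0\})$ or for discrete interior gradient estimates.

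Your Case 3 instead imports the continuum Alt-Caffarelli ball-scale energy comparison, and you correctly flag the obstacle yourself: the paper's notion of local minimizer only quantifies over single-site modifications. Your proposed repair --- ``realizing the competitor as a sequence of admissible single-site perturbations, in the spirit of the toppling flow'' --- does not close the gap. Local minimality under single-site moves gives no control over a ball-scale competitor reached through a chain of single-site moves, because such a chain can (and generically will) pass through configurations of strictly higher energy, and the toppling flow in \lref{flow} is tied to the monotone evolution from the obstacle $h\id_{hW}$ toward the \emph{least} supersolution, not to descent from an arbitrary local minimizer toward an arbitrary competitor. So as written the argument applies only to the minimal supersolution, not to ``every local minimizer'' as the lemma requires. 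The maximum-principle route both avoids this issue and is substantially shorter.
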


\begin{proof}
  We show that, if $u : \Z^d \to \R$ is a local minimizer and $x \in \partial^+ hW$, then $u(x) \geq h - C$ for some $C > 0$ depending only on dimension and the inner regular of $W$.  Let us first observe that this is sufficient.  Indeed, by \eref{firstvariation}, we know that $u(x) \leq C$ for all $x \in \partial^+ \{ u = 0 \}$.  Thus, since $\Delta u = 0$ in $\{ u > 0 \} \setminus h W$, we can apply the maximum principle to $x \mapsto u(x+e_k) - u(x)$ to conclude $|u(y)-u(x)| \leq C$ for $|x - y| = 1$.

  Suppose that $W$ is $2r$-inner regular and $r < 1/2$.  Define the test function
  \begin{equation*}
    \varphi(x) = 2 r^{d-1} |x|^{1-d} - 1
  \end{equation*}
  and its rescalings
  \begin{equation*}
    \varphi_h(x) = h \varphi(h^{-1} x).
  \end{equation*}
  Observe that $\varphi_h = h$ on $\partial B_{hr}$ and $\varphi_h = 0$ on $\partial B_{h s}$, where $s = r 2^{1/(d-1)} > r$.  Since $L \varphi(x) = d r^{d-1} |x|^{-1-d} \geq C^{-1} > 0$ in $B_s \setminus B_r$, we see that
  \begin{equation*}
    \Delta \varphi_h > 0 \quad \mbox{on } B_{hs} \setminus \overline B_{hr}
  \end{equation*}
  for large $h > 1$.  Using $|\nabla \varphi|(x) = (d-1) r^{d-1} |x|^{-d}$ and $r < 1$ and $r < 1/2$, we also see that
  \begin{equation*}
    \Delta \max \{ 0, \varphi_h \} > 1 \quad \mbox{on } \partial B_{hs}
  \end{equation*}
  for large $h > 1$.
  
  We claim, for $h > 1$ large, that
  \begin{equation*}
    u(w) \geq \varphi_h(w - z) \quad \mbox{for } w \in \Z^d.
  \end{equation*}
  Indeed, if this fails, then by the maximum principle, the maximum of $w \mapsto \varphi_h(w - z) - u(w)$ must occur at a point $w \in \partial^+ \{ u > 0 \} \setminus h W$.  However, this implies $\Delta u(w) \geq \Delta \max \{ 0, \varphi_h \}(w) > 1$, contradicting \eref{firstvariation}.

  Since, for large $h > 1$, we have $hs \geq hr + 2 \geq |x- z| \geq hr$, the claim implies $u(x) \geq \varphi_h(w-z) \geq h - C$.  For small $h > 1$, we have $u(x) \geq 0 \geq h - C$ trivially.
\end{proof}

We also must control the support to obtain compactness.

\begin{lemma}
  \label{l.uhsupport}
  There is a radius $R > 0$ depending only on $W$ such that every local minimizer of $J_h$ is supported in $B_{Rh}$.
\end{lemma}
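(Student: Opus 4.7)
The plan is to show that the outermost radius $R_\ast h := \max\{|x|: u(x) > 0\}$ is bounded above by $Rh$ for some $R$ depending only on $W$. The two ingredients are: the non-degeneracy estimate $u \geq (2d)^{-1}$ on $\partial^-\{u>0\}$ coming from \eref{firstvariation}, and a radial harmonic majorant for $u$ in the exterior annulus. This is dual in spirit to \lref{uhlipschitz}: there, a strict subharmonic barrier forced $u$ to be at least $h - C$ on $\partial^+ hW$; here, a harmonic majorant will force $u$ to be small at $\partial^-\{u>0\}$, which combined with non-degeneracy pins the support.

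First I would pick an outermost lattice site $x_0 \in \{u>0\}$ with $|x_0| = R_\ast h$. Once $R_\ast$ is larger than the outer radius $a$ of $W$, this point lies in $\partial^-\{u>0\} \setminus hW$, so \eref{firstvariation} gives $u(x_0) \geq (2d)^{-1}$. Next I would introduce the continuum-radial function
\[
u^*(x) = h\,\frac{|x|^{2-d} - (R_\ast h + 1)^{2-d}}{(ah)^{2-d} - (R_\ast h + 1)^{2-d}} \qquad (d \geq 3),
\]
(with the analogous logarithmic profile when $d=2$), which equals $h$ at $|x| = ah$, vanishes at $|x| = R_\ast h + 1$, and is continuum-harmonic between. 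Extending $u^* = h$ for $|x| \leq ah$ and $u^* = 0$ for $|x| \geq R_\ast h + 1$, the discrete Laplacian of $u^*$ differs from its (zero) continuum Laplacian by a higher-order term of size $O(h^{-2}|\nabla^4 u^*|)$, which after a harmless perturbation makes $u^*$ a discrete supersolution on the annulus.

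The heart of the proof is then a discrete maximum principle argument on the domain $D = \{ah < |x| < R_\ast h + 1\} \cap \{u > 0\}$. On this set $u$ is discretely harmonic (since $hW \subseteq B_{ah}$ is disjoint from $D$), and I would check that $u \leq u^*$ on all of $\partial^+ D$: on the inner side ($|y| \leq ah$) we have $u \leq h = u^*$; on lattice neighbors in $\partial^+\{u>0\}$ we have $u = 0 \leq u^*$; and on neighbors with $|y| \geq R_\ast h + 1$ the choice of $x_0$ as outermost forces $u(y) = 0 = u^*(y)$. The maximum principle then gives $u \leq u^*$ throughout $D$, and in particular at $x_0$ itself.

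Finally, a Taylor expansion of $u^*$ near $|x| = R_\ast h + 1$ yields $u^*(x_0) \sim (d-2)\,a^{d-2}/R_\ast^{d-1}$ for $d\geq 3$ (with a logarithmic variant for $d=2$), which is independent of $h$. Combining with the non-degeneracy $u(x_0) \geq (2d)^{-1}$ from Step 1 produces $R_\ast^{d-1} \leq 2d(d-2)\, a^{d-2}$, so $R_\ast$ is bounded by a constant depending only on $W$ and $d$. The main technical obstacle is controlling the discretization error in the near-harmonic function $u^*$ so that the maximum principle really applies; this is a standard but delicate estimate, and can alternatively be avoided by replacing $u^*$ with the exact discrete harmonic function on the annulus and invoking boundary Harnack bounds.
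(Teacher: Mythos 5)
Your proposal is correct and matches the paper's approach: a non-degeneracy estimate from \eref{firstvariation} combined with a radial harmonic comparison function across the annulus from $hW$ to the boundary of the support. The one cosmetic difference is that you use the explicit continuum profile $|x|^{2-d}$ and must control the $O(h^{-3})$ discretization error in $\Delta u^*$ (which, as stated, is not obviously harmless for $h$ near $1$), whereas the paper sidesteps this by comparing with the exact discrete harmonic function $v$ on the annulus and invoking a Green's function estimate $\Delta v \leq C(R-r)^{1-d}$ on the outer boundary---precisely the alternative you flag at the end. The paper also phrases the final contradiction as $(2d)^{-1} \leq \Delta u(x) \leq \Delta v(x) \leq (4d)^{-1}$ at an outer boundary point $x \in \partial^+\{u>0\}$, rather than at the inner point $x_0$, but this is equivalent since $\Delta v$ at the outer boundary is comparable to the value of $v$ one lattice step inside.
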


\begin{proof}
  Let $u : \Z^d \to \R$ be a local minimizer of $J_h$.  Choose $r > 0$ such that $W \subseteq B_r$.  For $R > r$, let $v : \Z^d \to \R$ solve
  \begin{equation*}
    \begin{cases}
      \Delta v = 0 & \mbox{on } B_{hR} \setminus B_{hr} \\
      v = 0 & \mbox{on } \Z^d \setminus B_{hR} \\
      v = h & \mbox{on } B_{hr}.
    \end{cases}
  \end{equation*}
  Standard Green's function estimates tell us that $\Delta v \leq C (R - r)^{1-d} \id_{\{ v = 0 \}}$.  In particular, $\Delta v \leq (4 d)^{-1} \id_{\{ v = 0 \}}$ for all large $R > 0$.  Now, let $R > r$ be least such that $B_{hR} \supseteq \{ u > 0 \}$.  Choose $x \in \partial^+ \{ u > 0 \} \setminus B_{hR}$.  By \eref{firstvariation} and the maximum principle, we have $(2d)^{-1} \leq \Delta u(x) \leq \Delta v(x)$.  Thus $\{ u > 0 \} \subseteq B_{hR}$ for some $R > r$ depending only on dimension and $W$.
\end{proof}

\subsection{The supersolution property}

The supersolution property for any limit of local minimizers follows by a blow-up argument.

\begin{lemma}
  If $p \in \R^d$ and, for every $R, \delta > 0$ there is a $v : \Z^d \to \R$ such that
  \begin{equation*}
    \begin{cases}
      v(0) = 0 \\
      \Delta v(x) \leq \delta + \id_{\{ v = 0 \}}(x) & \mbox{for } |x| \leq R \\
      v(x) \geq (p \cdot x - \delta)^+ & \mbox{for } |x| \leq R
    \end{cases}
  \end{equation*}
  then $H(p) \leq 1$.
  
\end{lemma}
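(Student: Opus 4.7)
To prove $H(p) \leq 1$, the plan is to compare $v$ with the half-space solution $u$ of \eref{halfspace} for the same slope $p$, for which $\Delta u(0) = H(p)$ by definition. Since $u(0) = v(0) = 0$, it suffices to establish $v(y) \geq u(y) - o(1)$ at each of the $2d$ nearest neighbors $y$ of the origin as $R \to \infty$ and $\delta \to 0$; summing then yields $\Delta v(0) \geq H(p) - o(1)$, which combined with the hypothesis applied at the origin $\Delta v(0) \leq \delta + \id_{\{v=0\}}(0) = 1 + \delta$ forces $H(p) \leq 1$.

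To establish the near-origin comparison, I work with $f = u - v$ on the discrete half-ball $D = \{p \cdot x > 0\} \cap B_R$. Since $\Delta u = 0$ on $D$ and the hypothesis gives $\Delta v \leq \delta + \id_{\{v=0\}}$, we have $\Delta f \geq -\delta - \id_{\{v=0\} \cap D}$. The lower bound $v \geq (p \cdot x - \delta)^+$ confines the singular source $\{v = 0\} \cap D$ to the thin slab $\{0 < p \cdot x \leq \delta\}$. On the flat portion $\{p \cdot x \leq 0\} \cap B_R$ of $\partial^+ D$, $u = 0$ and $v \geq 0$ give $f \leq 0$; on the outer portion $\{|x| \geq R\} \cap \{p \cdot x > 0\}$, the bound $u \leq p \cdot x + \|p\|_\infty$ from \lref{halfspace} and the lower bound on $v$ give $f \leq \|p\|_\infty + \delta$.

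Absorbing the uniform $-\delta$ source into the quadratic barrier $\delta |x|^2/(2d)$ and the slab source into a Green's function correction $\psi$ on $D$ (so that $\tilde f = f + \delta|x|^2/(2d) - \psi$ becomes subharmonic on $D$), the barrier argument behind \lref{halfball} applied to $\tilde f$ yields
\begin{equation*}
  \max_{B_1 \cap D} f \leq C R^{-1} \|p\|_\infty + C \delta R^2 + \max_D(-\psi).
\end{equation*}
Choosing $\delta = R^{-3}$ makes the barrier contribution $o(1)$ as $R \to \infty$, and the slab correction is controlled via the standard discrete half-ball Green's function estimate $G_D(0, x) \lesssim (p \cdot x)/|p| \cdot |x|^{-d}$ near the flat boundary: since the slab contains $O(\delta|p|^{-1} s^{d-1})$ lattice points at scale $|x| \sim s$, one obtains $\max_D(-\psi) = O(\delta^2 |p|^{-2} \log R) = o(1)$. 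Since $u = 0 \leq v$ trivially on $B_1 \setminus D$, we conclude $v \geq u - o(1)$ on $B_1$, so $v(\pm e_i) \geq u(\pm e_i) - o(1)$ for every $i$ and $\Delta v(0) \geq H(p) - o(1)$, as required.

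The main obstacle is controlling the thin-slab correction $\psi$, which requires the quantitative discrete half-ball Green's function bound used above and a careful balance of $\delta$ against $R$. For rational $p$ the issue disappears because the slab $\{0 < p \cdot x \leq \delta\}$ contains no lattice points once $\delta$ is sufficiently small; for general $p$, the Diophantine contributions are handled uniformly by the estimate above.
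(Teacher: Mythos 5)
Your overall strategy --- compare $v$ directly with the half-space solution $u$ on the discrete half-ball $D$, control the difference $f = u - v$ by a barrier, and conclude $\Delta v(0) \geq H(p) - o(1)$ at the origin --- is a valid and genuinely different route from the paper's. The paper instead regularizes $v$ (a min with the upper barrier $(p\cdot x + \|p\|_\infty)^+$ followed by a subharmonic lower envelope in the half-ball) and then passes to a compactness limit, obtaining a half-space solution $w$ with $\Delta w(0) \leq 1$. Your argument is more quantitative, which is nice; but there is a real gap in the Green's function step.

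The claimed estimate $G_D(0,x) \lesssim (p\cdot x)/|p| \cdot |x|^{-d}$ is false for the discrete half-space: the discrete Green's function does not vanish at the discrete boundary. For $y$ in the slab with $p\cdot y$ tiny, a random walk from $y$ still has a probability bounded below independently of $\delta$ of moving one lattice step into the bulk; the correct bound is of the form $G_D(x',y) \lesssim u(x')\,u(y)\,|x'-y|^{-d}$ where $u$ is the half-space profile, and $u(y) \asymp p\cdot y + \|p\|_\infty$ does not become small as $\delta \to 0$. Combined with the honest packing bound of $O(s^{d-1})$ slab points at scale $s$ (your count $O(\delta|p|^{-1}s^{d-1})$ is the Diophantine average, not a uniform bound), this yields only $\psi \lesssim \log R$, not $o(1)$, and the choice $\delta = R^{-3}$ does not rescue the estimate for general $p$. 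Your closing remark that the Diophantine contributions are ``handled uniformly by the estimate above'' therefore does not go through.

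The fix is simple and actually makes the whole Green's function machinery unnecessary. For any fixed $R$ and $p \neq 0$, the set $\{p\cdot y : y\in\Z^d,\ |y| \leq R,\ p\cdot y > 0\}$ is finite and has a positive minimum $c(R,p)$; choosing $\delta < c(R,p)$ makes the slab $\{0 < p\cdot x \leq \delta\}\cap B_R$ empty, so $\{v=0\}\cap D = \emptyset$ and $f + \delta|x|^2/(2d)$ is already subharmonic on $D$ with no singular source. The barrier estimate of Lemma~\ref{l.halfball} (applied to the harmonic replacement and using the maximum principle) then gives $\max_{B_1\cap D} f \leq \delta R^2/(8d) + CR^{-1}(\|p\|_\infty + \delta R^2)$, which is $o(1)$ once $\delta = o(R^{-2})$ as well. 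This is precisely your observation for rational $p$; it holds verbatim for every $p\neq 0$ because $R$ is fixed before $\delta$ is chosen. With that repair, your proof is correct and is a clean alternative to the paper's compactness argument.
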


\begin{proof}
  By the maximum principle, we may assume $v$ satisfies the additional conditions
  \begin{equation*}
    v(x) \leq (p \cdot x + \max_k |p_k|)^+ \quad \mbox{and} \quad \Delta v(x) \geq 0 \quad \mbox{for } |x| \leq R.
  \end{equation*}
  More precisely take the min with $(p \cdot x + \max_k |p_k|)^+$ and then solve for the harmonic function in $\{ |x| \leq R\} \cap \{ p \cdot x > - \delta\}$ with the same boundary data. Together with the original conditions, these imply enough compactness to pass to limits, obtaining $w : \Z^d \to \R$ that solves \eref{halfspace} and has $\Delta w(0) \leq 1$.
\end{proof}

\begin{lemma}
  \label{l.uhsupersolution}
  If $h_n \to \infty$, $u_n$ is a local minimizer of $J_h$, $\bar u_n(x) = h_n^{-1} u_n(h_n x)$, and $\bar u_n \to u \in C(\R^d)$ uniformly, then $u$ is a supersolution of \eref{exterior}.
\end{lemma}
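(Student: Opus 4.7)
The plan is to verify each clause of the supersolution definition. The structural conditions are immediate from uniform convergence and the uniform support bound of \lref{uhsupport}: $u$ is continuous, compactly supported, and satisfies $u \geq \id_{\overline{W}}$ since $\bar u_n \equiv 1$ on $W$.

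For the viscosity condition, take $\varphi \in C^\infty(\R^d)$ touching $u$ from below in $\R^d \setminus \overline{W}$. After a standard strict-touching reduction (subtracting $\ep|x-x_0|^2$), one may assume the contact set reduces to a single point $x_0$. By uniform convergence I pick $y_n \in \Z^d$ minimizing $y \mapsto u_n(y) - h_n\varphi(y/h_n)$ over a fixed Euclidean ball around $h_n x_0$; then $y_n/h_n \to x_0$, the minimum value $\alpha_n := u_n(y_n) - h_n \varphi(y_n/h_n)$ satisfies $\alpha_n/h_n \to 0$, and $u_n(y_n + y) \geq h_n\varphi((y_n+y)/h_n) + \alpha_n$ with equality at $y = 0$.

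If $u(x_0) > 0$, then $u_n$ is positive at $y_n$ and all its lattice neighbors for large $n$, so \eref{firstvariation} gives $\Delta u_n(y_n) = 0$. Taylor expanding the discrete Laplacian of $h_n \varphi(\cdot/h_n)$ and combining with the minimality inequality yields $0 \geq h_n^{-1} L\varphi(y_n/h_n) + O(h_n^{-2})$, and passing to the limit gives $L\varphi(x_0) \leq 0$.

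If $u(x_0) = 0$, set $p = \nabla\varphi(x_0)$; the case $p = 0$ is immediate from \eref{Hbounded}, so assume $p \neq 0$. I invoke the preceding lemma, which reduces matters to producing, for each $R, \delta > 0$, a function $v : \Z^d \to \R$ satisfying $v(0) = 0$, $\Delta v \leq \delta + \id_{\{v = 0\}}$ on $B_R$, and $v \geq (p \cdot y - \delta)^+$ on $B_R$. Take $v = v_n$ of the form $v_n(y) = (u_n(y_n + y) - u_n(y_n))^+$ for $n$ large. The linear lower bound follows from the minimality inequality and Taylor expanding $\varphi$ at $y_n/h_n$; the normalization is automatic; and the discrete Laplacian estimate is a case analysis using \eref{firstvariation} at $u_n(y_n + y)$, where the positive-part truncation contributes an error bounded by a constant multiple of $u_n(y_n)$. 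The hard part is that uniform convergence only forces $u_n(y_n) = o(h_n)$, while the argument requires $u_n(y_n) \to 0$. I would overcome this by perturbing $y_n$ to a nearby lattice zero of $u_n$ in $\partial^+\{u_n > 0\}$; such zeros exist close to $h_n x_0$ because $x_0 \in \partial\{u > 0\}$ in this case, and the minimality inequality is preserved up to an additive error absorbed into the slack $\delta$ using \lref{uhlipschitz}.
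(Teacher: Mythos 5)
Your structural reduction, the use of the auxiliary lemma, and the treatment of the case $L\varphi(x_0)\le 0$ are all fine, and you correctly put your finger on the central difficulty: uniform convergence only yields $u_n(y_n)=o(h_n)$, not $u_n(y_n)\to 0$, so the positive-part truncation in $v_n(y)=(u_n(y_n+y)-u_n(y_n))^+$ is not obviously harmless.

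Your proposed patch does not close this gap. Suppose you perturb $y_n$ to a nearby lattice zero $\tilde y_n$ of $u_n$. To preserve the linear lower bound up to error $\delta$, you need
\[
u_n(y_n) + D\varphi_n(y_n)\cdot(\tilde y_n - y_n) \geq -\delta + o(1),
\]
after expanding the minimality inequality $u_n(\tilde y_n + y)\geq \varphi_n(\tilde y_n + y)+\alpha_n$ around $y_n$. Since $u_n(\tilde y_n)=0<u_n(y_n)$, the Lipschitz bound of \lref{uhlipschitz} only guarantees $|\tilde y_n - y_n|\geq u_n(y_n)/L$, where $L$ is the universal Lipschitz constant; it does not place such a zero within distance $u_n(y_n)/|p|$ of $y_n$, nor does it control the direction of $\tilde y_n - y_n$. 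When $L\gg |\nabla\varphi(x_0)|$ the cross term $D\varphi_n(y_n)\cdot(\tilde y_n - y_n)$ can overwhelm $u_n(y_n)$ by an amount that is not small, so the slack $\delta$ cannot absorb it. In short, there is no a priori nondegeneracy tying the nearest zero of $u_n$ to $u_n(y_n)$ at the right rate.

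The paper sidesteps the issue entirely with a different case split. Rather than branching on $u(x_0)>0$ versus $u(x_0)=0$, branch on $L\varphi(x_0)\le 0$ (trivial) versus $L\varphi(x_0)>0$. In the latter case, by continuity and the consistency of the discrete Laplacian, $\Delta\varphi_n>0$ on $U_n = \Z^d\cap B_{h_n r}(h_n x_0)$ for large $n$. Then from \eref{firstvariation},
\[
\Delta(u_n - \varphi_n) < \id_{\{u_n = 0\}} \quad\text{on } U_n,
\]
while at the discrete minimizer $x_n$ of $u_n-\varphi_n$ on $U_n$ one has $\Delta(u_n-\varphi_n)(x_n)\geq 0$. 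These two facts force $u_n(x_n)=0$ \emph{exactly}. Taking $v_n(y)=u_n(x_n+y)$ (no truncation) then gives $v_n(0)=0$, $\Delta v_n\le \id_{\{v_n=0\}}$, and $v_n(y)\ge D\varphi_n(x_n)\cdot y + O(h_n^{-2}|y|^2)$ from the minimality inequality directly, which are precisely the hypotheses of the preceding lemma. You should replace the $u(x_0)>0$/$u(x_0)=0$ split and the perturbation argument with this strict-superharmonicity reduction.
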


\begin{proof}
  Suppose that $\varphi \in C^\infty(\R^d)$ touches $u$ from below at $x$.  That is $u(x) = \varphi(x)$ and, for some $r > 0$ and all $0 < |y - x| < r$, $u(y) > \varphi(y)$.  It is enough to show $H(D \varphi(x)) \leq 1$ under the assumption $\Delta \varphi > 0$ in $B_r(x) \subseteq \R^d \setminus B_r$.  Let
  \begin{equation*}
    U_n = \Z^d \cap B_{h_n r}(h_n x) \quad \mbox{and} \quad \varphi_n(x) = h_n \varphi(h_n^{-1} x).
  \end{equation*}
  Choose $x_n \in U_n$ such that
  \begin{equation*}
    (u_n - \varphi_n)(x_n) = \min_{U_n} (u_n - \varphi_n).
  \end{equation*}
  By the uniform convergence of $h_n^{-1} u_n(h_n x)$ to $u(x)$ in $B_r(x)$, we see that
  \begin{equation*}
    \frac{x_n}{h_n} \to x.
  \end{equation*}
  By the monotonicity of $\Delta$, we have
  \begin{equation*}
    \Delta (u_n - \varphi_n)(x_n) \geq 0.
  \end{equation*}
  Since
  \begin{equation*}
    \Delta (u_n - \varphi_n) < \id_{\{ u_n = 0 \}},
  \end{equation*}
  we must have
  \begin{equation*}
    u_n(x_n) = 0.
  \end{equation*}
  Letting
  \begin{equation*}
    v_n(x) = u_n(x_n + x)
  \end{equation*}
  we see that
  \begin{equation*}
    \begin{cases}
      v_n(0) = 0 \\
      \Delta v_n \leq \id_{\{ v_n = 0 \}} \\
      v_n \geq 0 \\
      v_n(x) \geq D \varphi_n(x_n) \cdot x + O( h_n^{-2} |x|^2) & \mbox{for } |x| \leq h_n r.
    \end{cases}
  \end{equation*}
  Since $D\varphi_n(x_n) \to D \varphi(x)$, the previous lemma implies $H(D\varphi(x)) \leq 1$.
\end{proof}

\subsection{The weakened subsolution property}

The weakened subsolution property for any limit of least supersolutions also follows by a blow-up argument.  Note that this property generically fails for arbitrary sequences of local minimizers.

\begin{lemma}
  \label{l.uhweakenedsubsolution}
  If $h_n \to \infty$, $u_n : \Z^d \to \R$ is the least function satisfying \eref{fde}, $\bar u_n(x) = h_n^{-1} u_n(h_n x)$, and $\bar u_n \to u \in C(\R^d)$ uniformly, then $u$ is a weakened subsolution of \eref{exterior}.
\end{lemma}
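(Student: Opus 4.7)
The plan is to adapt the blow-up argument of \lref{uhsupersolution} and exploit the minimality of $u_n$ as the least solution of \eref{fde}. Fix $\varphi \in C^\infty(\R^d)$ touching $u$ from above in $\overline{\{u > 0\}} \setminus \overline W$; after standard regularization I may assume the touching is strict, with unique contact point $x$. Choose $x_n \in \Z^d$ realizing the discrete maximum of $u_n - \varphi_n$ in a small ball around $h_n x$, where $\varphi_n(z) = h_n \varphi(z/h_n)$. If $u(x) > 0$ then $u_n(x_n) > 0$ for large $n$, so $\Delta u_n(x_n) = 0$, and the discrete maximum inequality $\Delta(u_n - \varphi_n)(x_n) \leq 0$ yields $\Delta \varphi_n(x_n) \geq 0$, hence $L\varphi(x) \geq 0$ in the limit and the condition is verified.

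Suppose now $u(x) = 0$ and $p := \nabla\varphi(x) \neq 0$. If $\nabla \varphi$ contains two linearly independent slopes in some neighborhood of $x$, the weakened subsolution condition holds trivially. Otherwise $\varphi$ is effectively one-dimensional near $x$: $\varphi(y) = \psi(p \cdot (y-x)) + o(|y-x|)$ for some scalar $\psi$ with $\psi(0) = 0$, $\psi'(0) = 1$. The touching $u \leq \varphi$ combined with $u \geq 0$ forces $u \equiv 0$ on the hyperplane $\{p \cdot (y-x) = 0\}$ near $x$, so $\{u > 0\}$ has a facet at $x$, and $u(y) \leq p \cdot (y-x) + o(|y-x|)$ on the positive side. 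I argue by contradiction: assume $H(p) < 1$. By homogeneity \eref{Hhomogeneous} and lower semicontinuity \eref{Hlowersemicontinuous}, pick $\ep > 0$ small so that $H((1+\ep)p) < 1$, and let $v$ denote the half-space solution of \eref{halfspace} for slope $(1+\ep)p$, with $\Delta v(0) = H((1+\ep)p) < 1$.

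I construct a competitor $w_n$ satisfying \eref{fde} strictly below $u_n$, contradicting minimality. Take $\xi_n \in \Z^d$ closest to $h_n x$, an intermediate scale $R_n$ with $1 \ll R_n \ll h_n$, and a shift $\tau > 0$, and set
\[
w_n(z) = \begin{cases} \min\{u_n(z),\ v(z - \xi_n - \tau p/|p|)\} & z \in B_{R_n}(\xi_n), \\ u_n(z) & \text{otherwise.} \end{cases}
\]
The scale $R_n$ is chosen with $R_n \gg h_n \|\bar u_n - u\|_{L^\infty}$ so that the linear asymptote $(1+\ep) p \cdot (\cdot - \xi_n - \tau p/|p|)$ of $v$ dominates the upper bound $p \cdot (\cdot - \xi_n) + o(h_n)$ on $u_n$ at radius $R_n$; this ensures $\min\{u_n, v\} = u_n$ on $\partial B_{R_n}(\xi_n)$ and so no gluing issue arises. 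The shift $\tau$ is taken to strictly shrink the positivity set near $\xi_n$ while keeping $B_{R_n}(\xi_n)$ disjoint from $h_n W$. The supersolution check splits into cases: at boundary points of $\{w_n > 0\}$ coming from $\{v = 0\}$ with $u_n > 0$, $\Delta w_n \leq \Delta v = H((1+\ep)p) < 1$ by translation invariance of $v$ and $w_n \leq v(\cdot - \xi_n - \tau p/|p|)$; at boundary points from $\{u_n = 0\}$, $\Delta w_n \leq \Delta u_n \leq 1$ because $w_n \leq u_n$ everywhere.

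The main obstacle is verifying the compatibility of scales: the upper bound $u_n(\xi_n + w) \leq p \cdot w + o(h_n) + O(1)$ requires combining the convergence rate $\|\bar u_n - u\|_{L^\infty}$ (which is a priori only known to vanish) with the quadratic error of the smooth $\varphi$ and the Lipschitz estimate of \lref{uhlipschitz}, while controlling $v$ at radius $R_n$ uses the decay estimate of \lref{halfball}. The intermediate $R_n$ must simultaneously satisfy $R_n \gg h_n \|\bar u_n - u\|_{L^\infty}$ and $R_n \ll h_n$, and the shift $\tau$ must be tuned to these bounds; making these choices uniformly in $n$ is delicate but achievable since all the error terms tend to zero.
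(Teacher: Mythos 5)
The high-level strategy is the same as the paper's: construct a competitor by taking the minimum of $u_n$ with a translated copy of the half-space solution, and contradict the minimality of $u_n$ as the least solution of \eref{fde}. Your treatment of the interior case $u(x)>0$ is fine, as is the reduction (in the non-trivial case) to a test function that is effectively one-dimensional with slope $p$.

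The gap is in the gluing at the sphere $\partial B_{R_n}(\xi_n)$. You argue that $v(\cdot-\xi_n-\tau p/|p|) \geq u_n$ on $\partial B_{R_n}(\xi_n)$ by comparing the linear asymptote $(1+\ep)p\cdot(\cdot-\xi_n-\tau p/|p|)$ to the upper bound $p\cdot(\cdot-\xi_n)+o(h_n)$ on $u_n$. But this comparison is informative only where $p\cdot(z-\xi_n)$ is of order $R_n$; near the slab $\{0 < p\cdot(z-\xi_n) < \tau|p|\}$, you have $v(z-\xi_n-\tau p/|p|)=0$, yet the touching only gives the one-sided bound $u_n \leq p\cdot(z-\xi_n) + o(h_n)$, which does \emph{not} force $u_n=0$ there. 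If the free boundary $\partial\{u_n>0\}$ hugs the hyperplane beyond the ball (as it genuinely will whenever the contact set is not a singleton --- and for a one-dimensional test function it typically is a facet of macroscopic size, cf.\ \tref{facets}), there are points $z$ on $\partial B_{R_n}(\xi_n)$ with $u_n(z)>0$ but $v=0$, so $w_n(z)=0$ while $w_n=u_n>0$ just outside the ball. That jump makes $\Delta w_n(z)$ large and destroys the supersolution property $\Delta w_n \leq \id_{\{w_n=0\}}$. Since $R_n/h_n\to 0$, no intermediate-scale ball can swallow the macroscopic region where the free boundary is within the shifted slab, so the scale-matching you flag cannot be repaired within the ball framework.

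The paper avoids this by working globally rather than in a ball: it uses the compactness of the contact set to choose $\delta>0$ so that $\{\varphi-\delta<u\}\cap\overline{\{u>0\}}\cap U$ is compactly contained in $\overline{\{u>0\}}\cap U$, translates $v$ so that $\bar v_n \to \max\{0,\varphi-\delta\}$ locally uniformly, and takes the minimum $\min\{u_n,v_n\}$ over \emph{all} of $h_n U$. The set $\{v_n<u_n\}$ is then automatically contained away from $\partial(h_n U)$ for large $n$, so the gluing at $\partial(h_n U)$ is seamless (there $v_n\geq u_n$ so the min is $u_n$) and no slab-crossing issue arises. If you adopt this global construction --- replacing your ball by $h_n U$ for a suitable bounded open $U$ and using the compact containment in place of the asymptote comparison --- the rest of your argument goes through; the $(1+\ep)$ steepening and the shift $\tau$ become unnecessary, since the paper's choice of shift $\delta p/|p|^2$ at slope $p$ already suffices because $H(p)<1$ makes $v_n$ a strict supersolution.

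Finally, be careful with the "standard regularization" you invoke to assume a unique contact point: adding a radially symmetric bump to a one-dimensional $\varphi$ makes $\nabla\varphi$ span two independent directions, which puts you in the trivial escape clause of the weakened-subsolution definition and cannot be used to deduce anything about the original one-dimensional $\varphi$. The correct reduction is to affine $\varphi(y)=p\cdot(y-x)$ with a possibly non-trivial compact contact set, which is exactly why the global gluing is needed.
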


\begin{proof}
  Since $\Delta u_n = 0$ in $\{ u_n > 0 \} \setminus h_n W$, we see that $L u = 0$ in $\{ u > 0 \} \setminus W$.  In particular, we may suppose for contradiction that $H(p) < 1$, $U \subseteq \R^d \setminus \overline W$ open, and $\varphi(y) = p \cdot (y - x)$ touches $u$ from above in $\overline{\{ u > 0 \}} \cap U$.

  We may assume that $\overline U$ is compact.  By the strong maximum principle, we see that the contact set is a compact subset of $\partial \{ u > 0 \} \cap U$.  We may therefore choose a $\delta > 0$ such that
  \begin{equation*}
    \{ \varphi - \delta < u \} \cap \overline{ \{ u > 0 \}} \cap U
  \end{equation*}
  is non-empty and has compact closure in $\overline{\{ u > 0 \}} \cap U$.

  Let $v$ denote the solution of \eref{halfspace} for $p$.  We may select a sequence of points $x_n$ such that, if $v_n(x) = v(x - x_n)$, then $\bar v_n(x) = h_n^{-1} v_n(h_n x)$ converges locally uniformly to $\psi(x) = \max \{ 0, \varphi(x) - \delta \}$.  By the above, and the uniform convergence of $\bar u_n$ to $u$, we see that, for all sufficiently large $n$, we have
  \begin{equation*}
    \emptyset \neq \{ v_n < u_n \} \cap \{ u_n > 0 \} \cap h_n U \subseteq (h_n U_n) \setminus \partial^-(h_n U).
  \end{equation*}

  In particular, if we define
  \begin{equation*}
    \tilde u_n(x) = \begin{cases}
      \min \{ u_n(x), v_n(x) \} & \mbox{if } x \in h_n U \\
      u_n(x) & \mbox{otherwise},
    \end{cases}
  \end{equation*}
  then $\tilde u_n$ is a strictly smaller solution of \eref{fde}.
\end{proof}

\subsection{Convergence}

Appealing to our uniqueness theorem for viscosity solutions, we see that the minimal supersolutions converge.

\begin{theorem}
  \label{t.convergence}
  If $W \subseteq \R^d$ is open, bounded, convex, and inner regular and $u_h : \Z^d \to \R$ is the least function satisfying
  \begin{equation*}
    u_h \geq h \id_{h W} \quad \mbox{and} \quad \Delta u_h \leq \id_{\{u_h = 0 \}},
  \end{equation*}
  then the rescalings $\bar u_h(x) = h^{-1} u_h(h x)$ converge uniformly to the unique solution of \eref{exterior}.
\end{theorem}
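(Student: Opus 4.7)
The strategy is the standard three-step recipe for a viscosity scaling limit: (i) compactness of the rescalings $\bar u_h$, (ii) identification of every subsequential limit as both a supersolution and a weakened subsolution of \eref{exterior}, (iii) appeal to the uniqueness theorem to conclude that the whole family $\{\bar u_h\}$ converges. All three ingredients have essentially been assembled in the preceding lemmas, so the work here is to stitch them together and to take care with the boundary data on $\overline W$.

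For compactness, I would interpret $\bar u_h$ as a function on $h^{-1}\Z^d$ (extended by piecewise-affine interpolation, or equivalently only compared on lattice points) and invoke \lref{uhlipschitz}, which gives $\|\bar u_h\|_{C^{0,1}} \leq C$ uniformly in $h$, together with \lref{uhsupport}, which gives $\operatorname{supp} \bar u_h \subseteq B_R$ for some $R$ depending only on $W$. By Arzel\`a--Ascoli, any sequence $h_n \to \infty$ has a subsequence along which $\bar u_{h_n} \to u$ uniformly in $\R^d$ for some $u \in C^{0,1}_c(\R^d)$. Since $u_{h_n} = h_n$ on $h_n W$ by \eref{fde}, we get $\bar u_{h_n} = 1$ on $W$; the uniform convergence and equicontinuity then force $u = 1$ on $\overline W$, so in particular $u$ is admissible for \eref{exterior}.

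For identification, \lref{uhsupersolution} applies verbatim (every local minimizer, and in particular the least supersolution, produces a supersolution in the limit) and \lref{uhweakenedsubsolution} applies because $u_h$ is the least supersolution of \eref{fde}. Hence $u$ is simultaneously a supersolution and a weakened subsolution of \eref{exterior} with boundary data $1$ on $\overline W$. By \tref{uniqueness} applied to the convex inner-regular domain $W$, there is a unique such $u$, and moreover $\{u > 0\}$ is convex. Since every subsequence of $\{\bar u_h\}$ has a further subsequence converging to this same limit, the full family converges uniformly to $u$, proving the theorem.

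The only step requiring care beyond bookkeeping is the verification that the boundary values pass to the limit and that the limit is honestly compactly supported; both are immediate from \lref{uhlipschitz} and \lref{uhsupport} respectively. The genuinely hard input is \lref{uhweakenedsubsolution}, whose proof exploits the minimality of $u_h$ together with the explicit half-space profile $v$ of slope $p$ as a competitor; this is what forces the weakened subsolution rather than merely the supersolution property and is what allows \tref{uniqueness} to close the loop.
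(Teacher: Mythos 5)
Your proposal matches the paper's proof step for step: compactness via \lref{uhlipschitz} and \lref{uhsupport}, identification of subsequential limits as supersolutions and weakened subsolutions via \lref{uhsupersolution} and \lref{uhweakenedsubsolution}, and then \tref{uniqueness} plus the subsequence trick to upgrade to full convergence. The extra remarks on boundary data and compact support are correct but routine; nothing further is needed.
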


\begin{proof}
  By \lref{uhlipschitz} and \lref{uhsupport}, every subsequence of $h_n \to \infty$ has a further subsequence $h'_n \to \infty$ such that $\bar u_{h'_n} \to u \in C^{0,1}_c(\R^d)$ uniformly.  By \lref{uhsupersolution} and \lref{uhweakenedsubsolution}, $u$ is a supersolution and weakened subsolution of \eref{exterior}.  Thus, by \tref{uniqueness}, $u$ is the unique solution of \eref{exterior}.
\end{proof}

\section{The non-convex case}
\label{s.nonconvex}

\subsection{Failures of uniqueness}

As is well-known, see Caffarelli-Salsa \cite{Caffarelli-Salsa}, the exterior problem \eref{exterior} does not always have a unique viscosity solution even for the simplest case $H(p) = |p|$.  Indeed, when $W$ is the complement of a ball or $W$ is a finite union of disjoint balls, there generally are multiple viscosity solutions.  

In the situations where uniqueness is known to hold for $H(p) = |p|$, the argument always consists of two steps.  First, one shows that supersolutions perturb to strict supersolutions.  Second, one applies strict comparison.  Since the known strictification arguments still work for general $H$, we are only missing strict comparison.  In this section, we describe some partial progress.

\subsection{Strict comparison in the plane}

We are not able to prove strict comparison for weakened subsolutions.  Instead, we need to use something closer to the full subsolution condition, which we now describe.  In order to state strict comparison in greater generality, we also make our definitions local.  That is, we define what it means to solve
\begin{equation}
  \label{e.localpde}
  \begin{cases}
    L u = 0 & \mbox{in } \{ u > 0 \} \\
    H(\nabla u) = 1 & \mbox{on } \partial \{ u > 0 \}
  \end{cases}
\end{equation}
in an open set $U \subseteq \R^d$, without any boundary conditions.

For convenience, we use a notion of local supersolution that is stronger than the natural local version of our notion of supersolution for \eref{exterior} above.   Indeed, we demand that local supersolutions be harmonic, rather than merely superharmonic, where they are positive.  Since we are no longer proving existence, this definition is more convenient.

\begin{definition}
  A supersolution of \eref{localpde} in an open set $U \subseteq \R^d$ is a non-negative $u \in C(U)$ that harmonic in $\{ u > 0 \}$ and such that, whenever $\varphi \in C^\infty(U)$ touches $u$ from below in $U$, there is a contact point $x$ where either $L \varphi(x) \leq 0$ or $\varphi(x) = 0$ and $H(\nabla \varphi(x)) \leq 1$.
\end{definition}

Our notion of subsolution is a natural interpolation of subsolution and weakened subsolution.  The idea is to exploit the linear structure of the discontinuities of $H$ as described in \tref{ridges}.

\begin{definition}
  A modified subsolution of \eref{localpde} in an open set $U \subseteq \R^d$ is a non-negative $u \in C(U)$ that is harmonic in $\{ u > 0 \}$ and such that, whenever $\varphi \in C^\infty(U)$, $\Sigma \subseteq \R^d$ a subspace, $D \varphi \subseteq \Sigma$, and $\varphi$ touches $u$ from above in $U \cap \overline{\{u > 0\}}$, there is a contact point $x$ where either $$L \varphi(x) \geq 0$$ or $$\varphi(x) = 0 \quad \mbox{and} \quad \limsup_{\substack{p \to \nabla \varphi(x) \\ p \in \Sigma}} H(p) \geq 1.$$
\end{definition}

We now prove strict comparison in dimension two.  The proof does use some additional structure of $H$ besides lower semi-continuity, which is that 
\[\lim_{q \to p, \ q\neq p} H(q) = H_*(p)\]
 exists. The idea is that if a smooth strict supersolution $u$ touches a smooth modified subsolution $v$ from above at a free boundary point, the gradient $p$ will have to satisfy $H_*(p) \geq 1$.  The interesting case is when $H$ is discontinuous at $p$.  Then $H_*(p) \geq 1$ contradicts the strict supersolution condition unless $Du$ is parallel to $p$ in a neighborhood of the touching point.  This argument can be repeated until the free boundary of $v$ separates from the free boundary of $u$, and we find a neighborhood of the contact set where $\partial \{ u>0\}$ is flat.  Then the weakened subsolution condition can be applied to get a contradiction.  The main additional difficulty, beyond this idea, is to deal with the reduced regularity of general sub/supersolutions.

\begin{theorem}
  \label{t.planestrictcomparison}
  If $U \subseteq \R^2$ is open, $u \in C(U)$ is a supersolution of \eref{localpde}, $v \in C(U)$ is a modified subsolution of \eref{localpde}, $\{ u > 0 \}$ is outer regular, $\{ v > 0 \}$ is inner regular, and $\tau \in (0,1)$, then $\tau u$ does not touch $v$ from above in $U \cap \overline{\{ v > 0 \}}$.
\end{theorem}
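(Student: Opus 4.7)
The plan is to argue by contradiction. Suppose $\tau u$ touches $v$ from above in $U \cap \overline{\{v>0\}}$ at some contact point $x_0$. My first task is to push $x_0$ onto the common free boundary: since $\tau u \geq v$ with both harmonic in $\{v > 0\}$, the strong maximum principle forces equality on the component of $\{v > 0\}$ meeting any interior contact point, which combined with compactness of the contact set propagates the contact to $\partial \{v > 0\}$. So I may assume $\tau u(x_0) = v(x_0) = 0$. The inclusion $\{v > 0\} \subseteq \{u > 0\}$ from $\tau u \geq v$, together with outer regularity of $\{u > 0\}$ and inner regularity of $\{v > 0\}$, furnishes both domains with inner and outer tangent balls at $x_0$ sharing a common inward unit normal $\nu$, and \lref{C32} applied to both functions yields first-order expansions $v(y) = s(y-x_0) \cdot \nu + O(|y-x_0|^{4/3})$ and $u(y) = s'(y-x_0) \cdot \nu + O(|y-x_0|^{4/3})$ with $s, s' > 0$ satisfying $\tau s' \geq s$.

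Next I would apply the one-sided conditions. \lref{regularsupersolution} applied to $u$ gives $H(s'\nu) \leq 1$, hence $s' \leq H(\nu)^{-1}$. For the modified subsolution at $v$, the natural test function $\varphi(y) = (s + \ep)(y - x_0) \cdot \nu$ has gradient in $\Sigma = \R\nu$ but fails to produce a compact contact set because after blow-up it agrees with $v$ on the entire tangent hyperplane at $x_0$; adding a quadratic correction $C|y-x_0|^2$ restores compactness but puts $D\varphi$ outside any $1$-dimensional subspace, so one is forced to take $\Sigma = \R^2$ and thereby extract only the weaker conclusion $\limsup_{p \to s\nu} H(p) \geq 1$, equivalent to $s \geq H_*(\nu)^{-1}$. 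Chaining the inequalities,
\begin{equation*}
H_*(\nu)^{-1} \leq s \leq \tau s' \leq \tau H(\nu)^{-1},
\end{equation*}
so $\tau \geq H_*(\nu)/H(\nu)$.

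If $\nu$ is a continuity point of $H$, then $H_*(\nu) = H(\nu)$ and this gives $\tau \geq 1$, the desired contradiction. The difficult case, which I expect to be the main obstacle, is when $\nu$ is a discontinuity direction of $H$; by \tref{spikes} and the hypothesis $d = 2$ this forces $\nu$ to be rational. Then $\tau \geq H_*(\nu)/H(\nu) > 1$ is compatible with $\tau < 1$ and the one-point argument cannot conclude. To close the argument I would follow the sketch indicated in the paper: at such a bad contact, the strict supersolution property combined with the modified subsolution conclusion forces $\nabla u$ to remain nearly parallel to $\nu$ throughout a neighborhood of $x_0$, pinning $\partial \{u > 0\}$ to a flat facet with normal $\nu$. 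One then slides the contact along this facet; by compactness the facet must terminate, either because the contact leaves $U \cap \overline{\{v > 0\}}$ or because $\partial \{u > 0\}$ corners. At a corner, the range of $\nabla u$ near the contact exhibits two linearly independent directions, and a genuine pointwise subsolution inequality $H(p) \geq 1$ becomes available from the modified (or weakened) subsolution condition, contradicting the strict supersolution. Executing this iteration rigorously — propagating flatness quantitatively without a full $C^1$ regularity theorem for the free boundary, and managing the low regularity of the contact set — is the heart of the proof.
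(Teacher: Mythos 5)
Your Steps~1--3 track the paper's argument: reduce to a free-boundary contact point $x_0$ with common normal $\nu$ via the strong maximum principle, apply \lref{C32} to both domains, use \lref{regularsupersolution} on $u$ and the modified subsolution condition with a superharmonic barrier on $v$ to force $H(p) < \lim_{q\to p}H(q)$, and then slide the barrier to show $\partial\{u>0\}$ is locally flat with normal $\nu$. (A small slip: from $H_*(\nu)^{-1}\leq s = \tau s'\leq\tau H(\nu)^{-1}$ you get $\tau\geq H(\nu)/H_*(\nu)$, not $\tau\geq H_*(\nu)/H(\nu)$; the latter would already be $>1$ and finish the proof, which it plainly cannot.)

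The genuine gap is in your final step, and the idea you propose is backwards. You suggest sliding to a corner of the facet and invoking ``the range of $\nabla u$ exhibits two linearly independent directions'' to extract a pointwise inequality $H(p)\geq 1$ from the modified or weakened subsolution condition. But in the definition of weakened subsolution, ``$\nabla\varphi(U)$ contains two linearly independent slopes'' is the \emph{escape clause}: it is the branch of the disjunction where the boundary condition is vacuously satisfied and \emph{no} inequality on $H$ is produced. Likewise, in the modified subsolution condition, enlarging $\Sigma$ to two dimensions only weakens the conclusion to a $\limsup$ of $H$ over a neighborhood, which at a rational direction is strictly larger than $H(\nu)$ because $H$ has a valley there; this is exactly what defeated the one-point argument in Step~2. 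Going to a corner gains nothing.

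What closes the argument in the paper is the opposite maneuver: having shown $\partial\{u>0\}$ contains a maximal open segment $\ell$ through $x_0$ with normal $\nu$ and that the contact set $\ell\cap\partial\{v>0\}$ is compactly contained in $\ell$, one takes the \emph{affine} test function $\varphi(y)=(1+\ep)\tau p\cdot(y-x_0)$ with $p=(\sup_{z\in\ell}|\nabla u(z)|)\nu$, whose gradient lies in the one-dimensional subspace $\Sigma=\R\nu$. Since $v\leq\tau u$ and $\nabla u$ is controlled along $\ell$, this $\varphi$ touches $v$ from above with compact contact set in an open neighborhood of $\ell\cap\partial\{v>0\}$. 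Now the modified subsolution condition with the \emph{one-dimensional} $\Sigma$ applies, and because $H$ is positively $1$-homogeneous, $\limsup_{q\to\nabla\varphi,\,q\in\Sigma}H(q)=H(\nabla\varphi)$ with no discontinuity discount; this yields $(1+\ep)\tau H(p)\geq1$. Since $H(p)\leq1$ by \lref{regularsupersolution} applied along $\ell$, taking $\ep$ small gives $\tau\geq1$, the contradiction. The affine test function with a fixed one-dimensional gradient direction is exactly the point of introducing the modified subsolution notion, and it is the ingredient your plan is missing.
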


\begin{proof}
  Step 1.
  We suppose for contradiction that $\tau u$ touches $v$ from above in $U \cap \overline{\{ v > 0 \}}$.  Let $x$ be a contact point.  Observe that $\{ u > 0 \} \supseteq \{ v > 0 \}$.  Moreover, by the strong maximum principle, we must have $x \in \partial \{ u > 0 \} \cap \partial \{ v > 0 \}$.  By scaling, we may assume that $\{ v > 0 \}$ is $2$-inner regular and $\{ u > 0 \}$ is $2$-outer regular.  We may therefore choose a unit vector $\nu$ such that $$B_2(x + 2 \nu) \subseteq \{ v > 0 \} \subseteq \{ u > 0 \} \subseteq \R^d \setminus B_2(x - 2 \nu).$$  Using the supersolution property, we obtain
   $$\| u \|_{C^{0,1}(U)} \leq C.$$  Applying \lref{C32}, we see there is a $p \in \R^d$ such that
  \begin{equation*}
    |u(y) - p \cdot (y - x)| \leq C |y - x|^{4/3} \quad \mbox{for } y \in \{ u > 0 \}.
  \end{equation*}
  Note that $p = |p| \nu$ and $H(p) \leq 1$.

  Step 2.
  We show there is a $\sigma > 0$ such that $$H(q) \geq 1 + \sigma \quad \mbox{when } |q| = 1 \mbox{ and } 0 < |q - p| < \sigma.$$  Consider the test function
  \begin{equation*}
    \varphi(y) = |p| |\nabla G(\nu)|^{-1} (G(\nu) - G(y + \nu)),
  \end{equation*}
  where $G$ is the fundamental solution of $L$.  Observe that, for $\ep > 0$ small, there is a $\delta > 0$ such that $\psi_\ep = (1 + \ep) \varphi - \ep^2 \varphi^2$ is strictly superharmonic in $B_\delta(x)$ and touches $u$ from above in $B_\delta(x) \cap \overline{\{ u > 0 \}}$ with contact set $\{ x \}$.  In particular, $\tau \psi_\ep$ touches $v$ from above in $B_\delta(x) \cap \overline{\{ v > 0 \}}$ with contact set $\{ x \}$.  Recall from \tref{spikes} that there is a $k > 0$ such that $\lim_{q \to p} H(q) = k |p|$.  Since $L \psi_\ep(x) < 0$, the modified subsolution property implies that $\tau k |\nabla \psi_\ep(x)| \geq 1$.  Sending $\ep \to 0$ gives $\tau k |p| \geq 1$.  Since $H(p) \leq 1$, we must have $H(p) \leq \tau k |p| < k |p| = \lim_{q \to p} H(q)$.

  Step 3.
  We use this to show that $\partial \{ u > 0 \}$ contains an open line segment that contains $x$.  Consider the test function
  \begin{equation*}
    \varphi(y) = |p| |\nabla G(-\nu)|^{-1} (G(y - \nu) - G(-\nu)).
  \end{equation*}
  For $\ep > 0$ small, there is a $\delta > 0$ such that $\psi_\ep = (1 - \ep) \varphi + \ep^2 \varphi$ is subharmonic in $B_\delta(x)$ and touches $u$ from below in $B_\delta(x)$ with contact set $\{ x \}$.

  By the previous step, we fix $\ep > \delta > 0$ so that
  \begin{equation*}
    H(\nabla \psi_\ep(z)) > 1 \quad \mbox{for } z \in \{ \psi_\ep = 0 \} \cap B_\delta(x) \setminus \{ x \}.
  \end{equation*}
  
  Let us fix $\ep > \delta > 0$ with $(1 + \ep) \tau < 1$ and, for $|\xi| = 1$ and $0 < |\xi - \nu| < \delta$, $H(\xi) \geq (1 + \ep) H(\nu)$.  Let $\Lambda = \{ y \in \R^d : \nu \cdot (y-x) = 0 \}$.  Using $B_2(x + 2 \nu) \subseteq \{ u > 0 \}$, we have
  \begin{equation*}
    \inf_{y \in \{ u > 0 \} \cap \partial B_\delta(x)} u(y) - \psi_\ep(y) > 0.
  \end{equation*}
  In particular, there are $\gamma \in (0,\delta/2)$, $\alpha > 0$, and $h : B_\gamma(x) \cap \Lambda \to \R$ such that $|h(z)| \leq \alpha |z - x|^2$ and 
  \[\psi_{\ep,z}(y) = \psi_\ep(y - z - \nu h(z))\]
   touches $u$ from below in $B_{\delta/2}(x)$.  Since $H(\nabla \psi_\ep(z)) > 1$ when $z \in \{ \psi_\ep = 0 \} \cap B_\delta(x) \setminus \{ x \}$, the supersolution property implies that the contact set between $u$ and $\psi_{\ep,z}$ must be $\{ z + \nu h(z) \}$.  In particular, we see that
  \begin{equation*}
    \{ z + \nu h(z) : z \in \Lambda \cap B_\gamma(x) \} \subseteq \partial \{ u > 0 \}.
  \end{equation*}
  Moreover, we see that, for all $z \in \Lambda \cap B_\gamma(x)$,
  \begin{equation*}
    B_\delta(x) \cap B_1(x+z+(h(z)+1) \nu) \subseteq \{ u > 0 \} \subseteq \R^d \setminus B_1(x+z+(h(z)-1) \nu).
  \end{equation*}
  In particular, $h \in C^{1,1}(\Lambda \cap B_\gamma(x))$ and therefore $\nabla h = 0$.  Since $h(x) = 0$, we see that $\partial \{ u > 0 \}$ contains $\Lambda \cap B_\gamma(x)$.
  
  Step 4.
  We use the modified subsolution property again to obtain a contradiction.  Let $\ell \subseteq \Lambda \cap \partial \{ u > 0 \}$ be the maximal open line segment containing $x$.  We know from the previous step that $\ell \cap \partial \{ v > 0 \}$ is a compact subset of $\ell$.  We know that, for every $z \in \ell$, that $\nabla u(z)$ exists and satisfies $\nabla u(z) = |\nabla u(z)| \nu$ and $H(\nabla u(z)) \leq 1$.  Let $p = (\sup_{z \in \ell} |\nabla u(z)|) \nu$.  For any $\ep > 0$, there is an open $V \supseteq \ell \cap \partial \{ v > 0 \}$ such that the function $\varphi(y) = (1 + \ep) \tau p \cdot (y - x)$ touches $v$ from above in $V \cap \overline{\{ v > 0 \}}$.  By the modified subsolution condition, $H((1 + \ep) \tau p) = (1 + \ep) \tau H(p) \geq 1$.  This contradicts $H(p) \leq 1$ for $\ep > 0$ small enough.
\end{proof}

\subsection{Limits of half spaces}

In order for the notion of modified subsolution to be useful, we need to able to verify it for limits of the minimal supersolutions of \eref{introJh}.  This requires a more detailed analysis of the halfspace problem \eref{halfspace}.  We introduce another half space type problem which arises when we study the limiting behavior of $H(q)$ as $q \to p \neq 0$.  

We need to classify the collection of sets that can arise as limits of discrete half-spaces $E(\zeta) = \{x\in \Z^d: x \cdot \zeta >0\}$.  Let $\zeta^1,\dots,\zeta^k$ be an orthogonal set of non-zero vectors in $\R^d$ with $1 \leq k \leq d$.  We define the half-space type domain,
\begin{equation*}
  E(\zeta^1,\dots,\zeta^k) = \bigcup_{j=1}^k\{z \in \Z^d: [\zeta^1, \cdots ,\zeta^{j-1}]^tz = 0 \ \hbox{ and } \ \zeta^j \cdot z >0\}.
\end{equation*}
we interpret the first condition for $j=1$ as trivially satisfied.  We remark that the domain $E(\zeta^1,\dots,\zeta^k)$ is only distinct from $\{ z \cdot \zeta^1>0\} = E(\zeta^1)$ when the orthogonal complement of $\zeta^1$ contains at least one lattice vector.  That is, when $\Lambda_{\zeta^1} \neq \{ 0 \}$.  It is useful to note that,
\begin{equation}\label{e.limit hs induct}
 E(\zeta^1,\dots,\zeta^k) = E(\zeta_1) \cup (E(\zeta^2,\dots,\zeta^k) \cap  \{ \zeta^1 \cdot z =0\}),
 \end{equation}
which gives an inductive way to understand the limit half-spaces.

Now we study the linearly growing harmonic functions in the limit half-spaces.  Let $u_{\zeta^1 \cdots \zeta^k}$ denote the unique solution of
\begin{equation}
  \label{e.spec-half-space}
  \begin{cases}
    \Delta u_{\zeta^1\cdots\zeta^k} = 0 & \mbox{on } E(\zeta^1,\dots, \zeta^k) \\
    u_{\zeta^1\cdots\zeta^k} = 0 & \mbox{on } \Z^d \setminus E(\zeta^1,\dots, \zeta^k) \\
    \sup_{\zeta^1 \cdot x > 0} |u_{\zeta^1\cdots\zeta^k}(x) - x\cdot \zeta^1| < \infty.
  \end{cases}
\end{equation}

To explain the definition of the limit half-spaces we show how such domains arise naturally as the limit of standard half-spaces.  We consider the following sequence of half-spaces,
\[ E_t = E(\zeta^0+t \zeta^1 + t^2 \zeta^2 + \cdots + t^k \zeta^k) \ \hbox{ in the limit } \ t \to 0.\]
The particular powers of $t$ are not important, only that each subsequent term is lower order than the previous one as $t \to 0$.  Then,
\[ \lim_{t \to 0 } E_t = \{ z \in \Z^d: z \in E_t \ \hbox{ for all sufficiently small $t>0$}\} = E(\zeta^0,\dots, \zeta^k).\]
This is basically the way that the limit half spaces appear.

\begin{lemma}
  \label{l.halfspacelimits}
  Suppose that $\Sigma$ is a subspace of $\R^d$ and $\zeta_n \in \Sigma \cap S^{d-1}$ then, up to taking a subsequence, there exist $\zeta^1,\dots, \zeta^k \in \Sigma$ orthonormal for some $1 \leq k \leq \dim(\Sigma)$ so that,
\[ E(\zeta_n) \to E(\zeta^1,\dots,\zeta^k) \ \hbox{ as } \ n \to \infty.\]
\end{lemma}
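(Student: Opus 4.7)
My plan is to build the orthonormal frame $\zeta^1,\dots,\zeta^k$ iteratively by taking successive residuals of $\zeta_n$ along the directions already extracted, and then verify that the point-wise limit of $E(\zeta_n)$ (viewed as an indicator on $\Z^d$) agrees with $E(\zeta^1,\dots,\zeta^k)$ by a case analysis on the first non-vanishing inner product $x\cdot \zeta^j$.

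For the construction, I would argue by induction on $j$. Set $\zeta^{(0)}_n=\zeta_n$. By compactness of $\Sigma\cap S^{d-1}$, pass to a subsequence so $\zeta_n\to \zeta^1\in \Sigma\cap S^{d-1}$. Given orthonormal $\zeta^1,\dots,\zeta^j\in \Sigma$ already chosen, consider the residual
\begin{equation*}
  r_n^{(j)} \;=\; \zeta_n - \sum_{i=1}^{j}(\zeta_n\cdot \zeta^i)\,\zeta^i,
\end{equation*}
which lies in $\Sigma\cap\{\zeta^1,\dots,\zeta^j\}^{\perp}$. If $r_n^{(j)}=0$ for all sufficiently large $n$, set $k=j$ and terminate. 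Otherwise, passing to a further subsequence, $r_n^{(j)}/|r_n^{(j)}|$ converges to some unit vector $\zeta^{j+1}\in \Sigma\cap\{\zeta^1,\dots,\zeta^j\}^{\perp}$, which extends the frame. Because $\dim\bigl(\Sigma\cap\{\zeta^1,\dots,\zeta^j\}^{\perp}\bigr) = \dim\Sigma - j$, the residual is forced to vanish once $j=\dim\Sigma$, so the procedure terminates with some $1\leq k\leq \dim\Sigma$ after at most $\dim\Sigma$ subsequence extractions.

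Next I would verify $E(\zeta_n)\to E(\zeta^1,\dots,\zeta^k)$ in the pointwise sense: for every $x\in \Z^d$, $x\in E(\zeta_n)$ for all large $n$ iff $x\in E(\zeta^1,\dots,\zeta^k)$. Fix $x$ and let $j_0\in\{1,\dots,k,\infty\}$ be the least index with $x\cdot \zeta^{j_0}\neq 0$ (with $j_0=\infty$ meaning $x\perp \zeta^1,\dots,\zeta^k$). By construction,
\begin{equation*}
  x\cdot \zeta_n \;=\; \sum_{i=1}^{k}(\zeta_n\cdot \zeta^i)(x\cdot \zeta^i) + x\cdot r_n^{(k)},
\end{equation*}
and the residual $r_n^{(k)}$ is either identically zero for large $n$ or has $|r_n^{(k)}|\to 0$ while $r_n^{(k)}/|r_n^{(k)}|\to\zeta^{k+1}$ would contradict termination. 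When $j_0<\infty$, the coefficient $\zeta_n\cdot \zeta^i$ for $i<j_0$ is $o(1)$ times a quantity dominated by $|r_n^{(i-1)}|$, while $\zeta_n\cdot \zeta^{j_0}\to c$ where $c>0$ if $j_0=1$ and, for $j_0\geq 2$, $\zeta_n\cdot \zeta^{j_0}=|r_n^{(j_0-1)}|(1+o(1))>0$ eventually (by construction of $\zeta^{j_0}$ as the limit of $r_n^{(j_0-1)}/|r_n^{(j_0-1)}|$). A short comparison of orders then shows $\mathrm{sgn}(x\cdot \zeta_n)=\mathrm{sgn}(x\cdot \zeta^{j_0})$ for all large $n$, matching the defining condition of $E(\zeta^1,\dots,\zeta^k)$. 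When $j_0=\infty$, $x\cdot \zeta_n = x\cdot r_n^{(k)} =0$ for all large $n$, so $x\notin E(\zeta_n)$, and also $x\notin E(\zeta^1,\dots,\zeta^k)$.

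The argument is almost entirely bookkeeping once the inductive frame is in place; the only delicate point is the order-comparison in the final step, where one must be careful that the leading non-zero contribution to $x\cdot \zeta_n$ really comes from the $j_0$-th term, i.e., that terms with $i<j_0$ (where $x\cdot \zeta^i=0$) do not spoil the sign via the inner-product $\zeta_n\cdot \zeta^i$. This is handled cleanly by noting inductively that $\zeta_n\cdot \zeta^i = O(|r_n^{(i-1)}|)$ and $|r_n^{(i)}| = o(|r_n^{(i-1)}|)$, so the $j_0$-th term dominates.
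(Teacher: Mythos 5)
Your proof is correct and follows the same essential strategy as the paper: iteratively extract an orthonormal frame from the direction of approach of $\zeta_n$, then verify the limit by case analysis on the first index $j_0$ with $x\cdot\zeta^{j_0}\neq 0$. The only (minor) variation is that you use exact orthogonal residuals $r_n^{(j)}=\zeta_n-\sum_{i\le j}(\zeta_n\cdot\zeta^i)\zeta^i$ together with the rate comparison $|r_n^{(i)}|=o(|r_n^{(i-1)}|)$, so orthonormality of the limit frame and dominance of the $j_0$-th term are automatic, whereas the paper uses iterated normalized differences $\zeta_n^{j+1}=(\zeta_n^j-\zeta^j)/|\zeta_n^j-\zeta^j|$ and avoids rate estimates via the chained equivalences $z\cdot\zeta_n>0\iff z\cdot\zeta_n^{j+1}>0$ when $z\perp\zeta^1,\dots,\zeta^j$; both bookkeepings are sound.
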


\begin{proof}
Take a subsequence of the $\zeta_n$ converging to $\zeta^1 \in \Sigma \cap S^{d-1}$.  If $\zeta^n = \zeta_1$ for infinitely many $n$ we are done, taking a subsequence along which $\zeta_n$ are constant,
\[ E(\zeta_n) = E(\zeta_1).\]
Suppose instead that, eventually, $\zeta_n \neq \zeta^1$.  We define the approach direction of $\zeta_n$ to $\zeta$
\[ \zeta_n^2 = \frac{ \zeta_n - \zeta^1}{|\zeta_n - \zeta^1|} \ \hbox{ and we have } \ \zeta_n^2 \cdot \zeta^1 \to 0 \ \hbox{ as } \ n \to \infty.\]
Take a further subsequence so that $\zeta_n^2$ converges to some $\zeta^2 \in S^{d-1}$ with $\zeta^2 \cdot \zeta^1 = 0$.  Note that $\zeta^2_n \in \Sigma$ for all $n$ and hence so is $\zeta^2$.  Now we continue inductively, if $j=\dim(\Sigma)$ or if $\zeta_n^j = \zeta^j$ for infinitely many $n$ then we stop the process, taking if necessary another sub-sequence so that $\zeta_n^j = \zeta^j$ for all $n$.  Otherwise we can define,
\[ \zeta^{j+1}_n = \frac{ \zeta^{j}_n - \zeta^{j}}{|\zeta^{j}_n - \zeta^{j}|} \ \hbox{ and } \  \zeta_n^{j+1} \cdot \zeta^\ell  \to 0 \ \hbox{ as } \ n \to \infty \ \hbox{ for } \ 1 \leq \ell \leq j.\]
We check the orthogonality conditions by induction.  Thus we obtain an orthonormal set of vectors $\zeta^1,\zeta^2,\dots,\zeta^{k}$ for some $1 \leq k \leq \dim(\Sigma)$.  By the set-up we have either $(1)$ for all $n$ it holds $\zeta_n^k = \zeta^k$ or $(2)$ $k=\dim(\Sigma)$ and $\zeta^1,\dots,\zeta^k$ are an orthonormal basis for $\Sigma$.  

We claim now that,
\[ E(\zeta_n) \to E(\zeta^1,\dots,\zeta^k) \ \hbox{ as } \ n \to \infty.\]
If $z \cdot \zeta^1>0$ then $z \cdot \zeta^1_n>0$ eventually and if $z \cdot \zeta^1 <0$ then $z \cdot \zeta^1_n<0$ eventually.  
If $z \cdot \zeta^1 =0$ then,
\[  z\cdot\zeta^1_n >0 \iff z\cdot(\zeta^1_n-\zeta^1) >0 \iff z \cdot \zeta_n^2 >0.\]
So we have, in the case when $z \cdot \zeta^1 = 0$,
\[ z \cdot \zeta^2 >0 \ (\hbox{resp. $<0$}) \implies \ z \cdot \zeta_n >0 \ (\hbox{resp. $<0$})\ \hbox{ eventually.}\]
Suppose that $z \cdot \zeta^\ell = 0$ for $\ell=1,\dots, j$, then when is $\zeta^j_n \cdot z >0$ eventually,
\[ z \cdot \zeta^j_n >0 \iff z \cdot (\zeta^j_n- \zeta^j) >0 \iff z \cdot \zeta^{j+1}_n >0\]
So when $z \cdot \zeta^\ell = 0$ for $\ell=1,\dots, j$,
\[ z \cdot \zeta^{j+1} >0 \ (\hbox{resp. $<0$}) \implies \ z \cdot \zeta_n >0 \ (\hbox{resp. $<0$})\ \hbox{ eventually.}\]  
The final case is if $z \cdot \zeta^\ell = 0$ for all $\ell=1,\dots,k$.  If $k=\dim(\Sigma)$ then since $\zeta^1,\dots \zeta^k$ are an orthonormal basis for $\Sigma$ this holds only when $z  \in \Sigma^\perp$ in which case we know $\zeta_n \cdot z = 0$ for all $n$. Otherwise $\zeta_n^k = \zeta^k$ for all $n$ and so $z \cdot \zeta^k = 0$ if and only if $z \cdot \zeta^k_n = 0$.
\end{proof}

\begin{theorem}
  \label{t.limitsolutions}
  For each $\zeta^1,\dots, \zeta^k$ an orthogonal collection of non-zero vectors $1 \leq k \leq d$, there exists a unique solution $u_{\zeta^1\cdots\zeta^k}$ of the half-space problem \eref{spec-half-space} with the following properties,
  \begin{enumerate}
  \item \label{p1} 
    \[ \max_{x \in \partial^+ E(\zeta^1,\dots,\zeta^k)} \Delta u_{\zeta^1\cdots\zeta^k}(x) = \Delta u_{\zeta^1\cdots\zeta^k}(0)\]
  \item \label{p2} Let $\Sigma$ be the subspace spanned by $\zeta^1,\dots,\zeta^k$ then,
    \[\Delta u_{\zeta^1\cdots\zeta^k}(0) \leq \limsup_{\substack{p \in \Sigma \\ p \to \zeta^1}} H(p) \]
  \item \label{p3} If $\zeta^1$ is contained in the subspace $\Sigma'$, then there are $\xi^2,\cdots,\xi^k$ completing $\zeta^1$ to be an orthogonal basis of $\Sigma'$ such that,
    \[ \Delta u_{\zeta^1\xi^2\cdots\xi^k}(0) = \limsup_{\substack{p \in \Sigma' \\ p \to \zeta^1}} H(p) \]
  \end{enumerate}
\end{theorem}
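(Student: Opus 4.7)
The plan is to construct $u_{\zeta^1\cdots\zeta^k}$ as a pointwise limit of solutions to \eref{halfspace}, verify uniqueness by an adapted half-space maximum principle, and then derive the three properties either by passing to the limit or by choosing approximating sequences carefully. For existence I would take $\zeta_n = \zeta^1 + t_n \zeta^2 + t_n^2 \zeta^3 + \cdots + t_n^{k-1}\zeta^k$ with $t_n \downarrow 0$, as in the construction preceding \lref{halfspacelimits}; then $E(\zeta_n) \to E(\zeta^1,\ldots,\zeta^k)$ pointwise, the uniform barriers \eref{barriers} give local boundedness of the solutions $u_n$ of \eref{halfspace} for $\zeta_n$, and a diagonal subsequence yields a pointwise limit $u_{\zeta^1\cdots\zeta^k}$ that inherits \eref{spec-half-space} because the discrete Laplacian is local and the indicators of $E(\zeta_n)$ converge pointwise. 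Uniqueness follows from the observation that $E(\zeta^1,\ldots,\zeta^k)$ is contained in the closed half-space $\{y: \zeta^1 \cdot y \geq 0\}$ (the extra points lying on its bounding hyperplane): the barrier argument underlying \lref{halfball} adapts to show that any bounded harmonic function $v$ on $E(\zeta^1,\ldots,\zeta^k)$ vanishing on the complement satisfies $\max_{B_r} |v| \leq C(r/R) \|v\|_\infty$, so $v \equiv 0$ upon sending $R \to \infty$.

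Property (1) follows by passing to the pointwise limit in the inequality $\Delta u_n(x) \leq \Delta u_n(0) \id_{\{u_n = 0\}}$ supplied by \lref{halfspace}, together with the fact that the origin lies in $\partial^+ E(\zeta^1,\ldots,\zeta^k)$ (since $\zeta^1 \neq 0$ forces some lattice neighbor of $0$ into $E$). Property (2) follows because the sequence $\zeta_n$ used in the construction lies in $\Sigma$ and converges to $\zeta^1$, while the pointwise convergence $u_n \to u_{\zeta^1\cdots\zeta^k}$ yields $H(\zeta_n) = \Delta u_n(0) \to \Delta u_{\zeta^1\cdots\zeta^k}(0)$, so $\limsup_{p \in \Sigma, p \to \zeta^1} H(p) \geq \Delta u_{\zeta^1\cdots\zeta^k}(0)$.

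Property (3) is the main technical challenge. The plan is to select $p_n \in \Sigma'$ with $p_n \to \zeta^1$ and $H(p_n)$ realizing the limsup, then invoke \lref{halfspacelimits} with ambient subspace $\Sigma'$ to extract, after subsequence, orthogonal $\xi^2,\ldots,\xi^m \in \Sigma'$ with $m \leq \dim \Sigma'$ so that $E(p_n) \to E(\zeta^1,\xi^2,\ldots,\xi^m)$. The existence argument then identifies $\Delta u_{\zeta^1\xi^2\cdots\xi^m}(0) = \lim H(p_n) = \limsup_{p \in \Sigma', p \to \zeta^1} H(p)$. If $m < \dim \Sigma'$, I would pad with arbitrary orthogonal $\xi^{m+1},\ldots,\xi^k \in \Sigma'$ to complete $\zeta^1$ to a basis. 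Enlarging $E$ only converts boundary zeros into harmonic interior values, so a maximum principle comparison gives $u_{\zeta^1\xi^2\cdots\xi^k} \geq u_{\zeta^1\xi^2\cdots\xi^m}$ pointwise, hence $\Delta u_{\zeta^1\xi^2\cdots\xi^k}(0) \geq \Delta u_{\zeta^1\xi^2\cdots\xi^m}(0)$; the reverse inequality is property (2) applied with $\Sigma = \Sigma'$, which yields equality. The main obstacle will be executing this padding step carefully, since the added points lie in a lower-dimensional sublattice and one must verify that the enlargement of $E$ indeed raises the harmonic extension at the origin (not merely elsewhere), and that the maximum principle used for the comparison and for uniqueness both extend cleanly to these non-standard half-space type domains.
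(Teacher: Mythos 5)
Your proposal is correct and follows the paper's argument closely: approximate $E(\zeta^1,\ldots,\zeta^k)$ by genuine half-spaces $E(\zeta_n)$ with $\zeta_n = \zeta^1 + t_n\zeta^2 + \cdots$, pass the half-space solutions $u_{\zeta_n}$ to the limit using uniqueness of \eref{spec-half-space}, read off part (2) from $H(\zeta_n)=\Delta u_{\zeta_n}(0)\to\Delta u_{\zeta^1\cdots\zeta^k}(0)$, and for part (3) do the same along a sequence in $\Sigma'$ realizing the $\limsup$ and invoke \lref{halfspacelimits} to identify the limit domain. The one place where you go beyond what the paper writes is the padding step in part (3): \lref{halfspacelimits} can terminate before producing a full orthogonal basis of $\Sigma'$ (when the recursively defined directions $\zeta_n^j$ become eventually constant), whereas the theorem statement asserts the $\xi$'s complete $\zeta^1$ to a basis of $\Sigma'$. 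Your fix --- pad with arbitrary orthogonal $\xi^{m+1},\ldots,\xi^k\in\Sigma'$, observe $E(\zeta^1,\xi^2,\ldots,\xi^m)\subseteq E(\zeta^1,\xi^2,\ldots,\xi^k)$ so the maximum principle gives $u_{\zeta^1\xi^2\cdots\xi^k}\geq u_{\zeta^1\xi^2\cdots\xi^m}$ and hence $\Delta u_{\zeta^1\xi^2\cdots\xi^k}(0)\geq\Delta u_{\zeta^1\xi^2\cdots\xi^m}(0)=\limsup$, then close with the reverse inequality from part (2) applied to $\Sigma'$ --- cleanly resolves a case the paper's proof leaves implicit. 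The caveats you flag about extending \lref{halfball} and the comparison principle to the enlarged domains are legitimate to note but unproblematic: $E(\zeta^1,\ldots,\zeta^k)\subset\{\zeta^1\cdot x\geq 0\}$, so the same barriers apply, and this also supports the uniqueness and part (1) arguments (which the paper likewise takes for granted).
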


\begin{proof}
Take a sequence,
\[ \zeta_n = \zeta^1 + \frac{1}{n} \zeta^2+\cdots + \frac{1}{n^k} \zeta^k \in \Sigma.\]
Clearly the $\zeta_n$ converge to $\zeta^1$ as $n \to \infty$.  Consider the half-space solutions $u_{\zeta_n}$ solving \eref{spec-half-space} in $E(\zeta_n)$.  Since we have shown in \lref{halfspacelimits} that
\[ E(\zeta_n) \to E(\zeta^1,\dots,\zeta^k) \ \hbox{ as } \ n \to \infty.\]
We can establish, by uniqueness of \eref{spec-half-space}, that,
\[ u_{\zeta_n} \to u_{\zeta^1\cdots \zeta^k} \ \hbox{ as } \ n \to \infty.\]
In particular,
\[ H(\zeta_n) = \Delta u_{\zeta_n}(0) \to \Delta u_{\zeta^1\cdots \zeta^k}(0),\]
which proves part~\ref{p2}.

Next we consider the proof of part~\ref{p3}.  There is a sequence of $\zeta_n \in \Sigma'$ such that $\zeta_n \to \zeta^1$ and $H(\zeta_n) \to \limsup_{p \in \Sigma' \to \zeta^1} H(p)$.  Applying \lref{halfspacelimits} we find $\xi^2,\dots,\xi^k$ for some $1 \leq k \leq \dim (\Sigma)$ such that, up to a subsequence, the $u_{\zeta_n}$ converge and,
\[ E(\zeta_n) \to E(\zeta^1,\xi^2,\dots,\xi^k) \ \hbox{ as } \ n \to \infty.\]
Then, as usual, the limit of the $u_{\zeta_n}$ solves \eref{spec-half-space} for $\zeta^1,\xi^2,\dots,\xi^k$.  By uniqueness the limit is $u_{\zeta^1\xi^2\cdots \xi^k}$ and we obtain 
\[\limsup_{p \in \Sigma' \to \zeta^1} H(p) = \lim_n H(\zeta_n) = \lim_n \Delta u_{\zeta_n}(0) = \Delta u_{\zeta^1,\xi^2, ..., \xi^k}(0).\]
\end{proof}

\subsection{Sphere approximation}  

We now explain how to construct a discrete approximation $\Lambda_m$ of the lattice ball $B(0,m)$ with a certain regularity property.  Basically the regularity property we would like to have is that, when we send $m \to \infty$ and follow a sequence of boundary points $z_m$ the blow up $\Lambda_m - z_m$ will converge (along a subsequence) to the complement of a limit half space.  Once we have done this construction for balls we will also be able to construct regular discrete domains approximating any smooth domain in $\R^d$ by an inf-convolution type argument.

We will use the following notation below, for a collection $e_1,\dots,e_k \subset \R^d$,
\[ T(e_1,\dots,e_k) = \{ x \in \R^d: \  x \cdot e_1 = \cdots = x\cdot e_k = 0\}.\]

\begin{lemma}\label{l.discreteball}
Let $\Sigma$ be a $k$-dimensional rational subspace of $\R^d$ and let $r \geq 1$.  There exists a sequence of sets $\Lambda_{m}(\Sigma) \subset \Z^d \cap \Sigma$ with $\Lambda_{m} = - \Lambda_{m}$ and,
\[ B(0,m) \cap \Sigma \subset \Lambda_{m}  \subset B(0,m+1) \cap \Sigma , \]
 with the following property: for any sequence $z_m \in  \Lambda_{m}$ such that $\Lambda_{m} - z_m$ converges to a set $\mathcal{H}$ then there is some orthonormal set $\zeta^1,\dots,\zeta^k \in \Sigma \cap T(p)$ for $1 \leq k \leq \dim(\Sigma)$ such that,
\[ \mathcal{H}^C \cap \Sigma \cap B(0,r) \subset E(\zeta^1,\dots,\zeta^k).\]
\end{lemma}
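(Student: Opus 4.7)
The plan is to build $\Lambda_m(\Sigma)$ as a lattice-restricted convex polytope with generically chosen face offsets, then reduce the blow-up claim to \lref{halfspacelimits}.

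For the construction I would take a finite symmetric set $V_m=\{\pm\nu_1,\dots,\pm\nu_{N_m}\}\subset S^{d-1}\cap\Sigma$ sufficiently dense on the unit sphere of $\Sigma$ so that the polytope
\[ \Lambda_m = \{z\in\Z^d\cap\Sigma :\, \nu_i\cdot z \le \rho_i^{(m)}\text{ for all }i\} \]
is sandwiched between $B(0,m)\cap\Sigma$ and $B(0,m+1)\cap\Sigma$ for offsets $\rho_i^{(m)}\in[m,m+1]$ extended symmetrically by $\rho_{-i}=\rho_i$. The offsets are chosen generically so that no lattice point in $B(0,m+1)\cap\Sigma$ satisfies two or more of the defining equations $\nu_i\cdot z=\rho_i^{(m)}$ simultaneously; the set of bad offset vectors is a finite union of codimension-one subvarieties of the parameter cube $[m,m+1]^{N_m}$, so generic offsets exist.

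Given a sequence $z_m\in\Lambda_m$ with $\Lambda_m - z_m\to\mathcal H$, set $d_{j,m}=\rho_j^{(m)}-\nu_j\cdot z_m\ge 0$ so that $\Lambda_m - z_m = \{x:\,\nu_j\cdot x\le d_{j,m}\,\forall j\}$ and the complement equals $\bigcup_j\{x:\,\nu_j\cdot x>d_{j,m}\}$. If $\min_j d_{j,m}\to\infty$ along a subsequence, every lattice $x$ in $B(0,r)$ lies in $\Lambda_m - z_m$ eventually, giving $\mathcal H^C\cap\Sigma\cap B(0,r)=\emptyset$ and trivial inclusion. Otherwise I pass to a subsequence on which the minimizing index $i(m)$ has $d_{i(m),m}$ bounded, then further to a subsequence on which $\nu_{i(m)}\to\zeta^1\in S^{d-1}\cap\Sigma$. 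Applying \lref{halfspacelimits} to the sequence $\nu_{i(m)}$ yields orthonormal $\zeta^1,\dots,\zeta^k\in\Sigma$ such that $E(\nu_{i(m)})\to E(\zeta^1,\dots,\zeta^k)$ pointwise on $\Z^d$. The inclusion $\mathcal H^C\cap\Sigma\cap B(0,r)\subset E(\zeta^1,\dots,\zeta^k)$ then follows from a case analysis on the relative rates at which the distances $d_{j,m}$ and the angular errors $|\nu_j-\zeta^1|$ tend to zero across all faces $j$ with $d_{j,m}<r$, exploiting the inductive decomposition \eref{limit hs induct}.

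The main obstacle is handling the many polytope faces whose normals simultaneously approach $\zeta^1$: for a sufficiently fine net up to $\sim\sqrt m$ faces can lie within bounded distance of a near-extremal lattice point, and one must argue that all of the corresponding excluders $\{\nu_j\cdot x>d_{j,m}\}$ fall into the single nested hierarchy $E(\zeta^1,\dots,\zeta^k)$ rather than generating genuinely distinct limit half-spaces. This is precisely where the iterated approach-direction construction underlying \lref{halfspacelimits} and the generic choice of offsets combine to close the argument.
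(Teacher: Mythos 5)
Your polytope-trimming construction is genuinely different from the paper's, and I believe it has a gap that cannot be closed without a substantially new idea.

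The paper's construction works by \emph{adding} points to the lattice ball, not removing them: it sets $\Lambda_m = \mathcal{B}(0,m) \cup \bigcup_{k}\bigcup_{p\in S_k(r)\cap B(0,m)}\Lambda_{\ell_k}(p,\mathcal{Z}^d\cap T(p))$, recursively gluing in lower-dimensional regularized balls of carefully tuned radii $\ell_k$ centered at the sphere points $p$ that satisfy rational relations of small height. The hierarchy $\ell_{k+1}^2 = o(\ell_k)$ is tuned precisely so that, after blow-up, each layer of the recursion contributes one more direction $\zeta^j$ in the nested structure of the limit half-space. Your approach instead trims the ball to a lattice polytope with dense face normals and generic offsets; nothing is ever added inside rational hyperplanes.

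The place this breaks is at the vertices (and in dimension $\geq 3$, the lower-dimensional faces) of your polytope. Consider a vertex $v_m$ where two faces with normals $\nu_1(m),\nu_2(m)$ meet, both converging to some rational direction $\zeta^1$, but approaching it from \emph{opposite} sides: $\nu_1 - \zeta^1 \approx \ep_1\xi$, $\nu_2-\zeta^1\approx -\ep_2\xi$ with $\xi\perp\zeta^1$ a unit vector. If $z_m$ is a lattice point of $\Lambda_m$ within $O(1)$ of $v_m$, the complement of $\Lambda_m - z_m$ near the origin is $\{\nu_1\cdot x > d_1\}\cup\{\nu_2\cdot x > d_2\}$ with $d_j = \nu_j\cdot(v_m - z_m) \geq 0$. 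When $\zeta^1$ is rational, the hyperplane $T(\zeta^1)$ contains lattice points on both sides of $0$. Restricting to $x\in T(\zeta^1)\cap\Z^d$ one finds $x$ excluded iff $\xi\cdot x > d_1/\ep_1$ or $\xi\cdot x < -d_2/\ep_2$. If $z_m$ is close enough to $v_m$ that $d_j/\ep_j$ stays bounded, the limiting excluded set in $T(\zeta^1)$ is a union of \emph{two opposite} half-lines, which cannot be contained in $E(\zeta^1,\zeta^2)\cap T(\zeta^1) = \{\zeta^2\cdot x > 0\}$, a single half-space. Your generic-offset condition prevents a lattice point from lying on two faces \emph{exactly}, but does nothing to rule out lattice points at distance $O(\ep_j)$ from the vertex, which is what creates the problem; and since the number of lattice points within radius $m+1$ grows like $m^d$ while the angular spacing is only $\sim m^{-1/2}$, avoiding this configuration for all $m$ by genericity alone is not obviously possible and certainly isn't argued.

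The final paragraph of your proposal acknowledges exactly this difficulty but treats it as closable by ``the iterated approach-direction construction underlying \lref{halfspacelimits} and the generic choice of offsets.'' Lemma \ref{l.halfspacelimits} produces a nested sequence of approach directions for a \emph{single} sequence $\zeta_n$; it does not say anything about a union of half-spaces with several normals converging to $\zeta^1$ from opposite directions, and genericity of the offsets addresses a different (and weaker) non-degeneracy. The paper's inductive filling-in is precisely the device that prevents the two-sided excluder from appearing: once $p$ is rational with small height, the set $\Lambda_{\ell_k}(p,\mathcal{Z}^d\cap T(p))$ is inserted so that after blow-up the entire symmetric chunk of $T(p)$ near the boundary is \emph{included} in $\mathcal{H}$ rather than excluded, and only a one-sided limit half-space structure survives, recursing into lower dimension. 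Without an analogue of that insertion step, I don't see how a pure polytope trim can satisfy the conclusion.
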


\begin{proof}
Instead of working in a subspace $\Sigma$ of $\R^d$ with the lattice $\Z^d \cap \Sigma$, we will just work in $\R^d$ with an arbitrary $d$-dimensional integer lattice $\mathcal{Z}^d$.  We denote, for $r>0$ and $x \in \R^d$, the ball of radius $r$ in $\mathcal{Z}^d$ around $x$ by $\mathcal{B}(x,r) = \{ z \in \mathcal{Z}^d : |z-x| \leq r\}$.  

The proof is by an induction argument on the dimension.  We start with the case $d=1$.  If $\mathcal{Z}$ is an integer lattice in $\R$ then,
\[ \mathcal{Z} = \{n e: n \in \Z\} \ \hbox{ for some } \ e \in \R \setminus \{0\}.\]
It will be convenient for the induction to define the regularized balls $\Lambda_m$ around arbitrary points, not just lattice points.  For $x \in \R$ and $m\geq 1$ we can simply take 
\[\Lambda_m(x) = \mathcal{B}(x,m).\]
  Then, for a sequence $z_m \in  \Lambda_m(x_m)$, we take a subsequence so that $\Lambda_m(x_m) - z_m$ converges to some $\mathcal{H}$, and then,
\[ \mathcal{H} \supset   \{ ne : n \leq 0 \} \ \hbox{ or } \ \mathcal{H} \supset \{ ne : n \geq 0 \},\]
which is what we required.

Now we assume that the result is proven up to dimension $d-1$, and we consider a $d$-dimensional integer lattice $\mathcal{Z}^d$ in $\R^d$.  Let $m \in \mathbb{N}$, start with the sets $\mathcal{B}(0,m) = \{ z \in \mathcal{Z}^d : |z| \leq m\}$. We fix an $r\geq 1$ and define for $1 \leq k \leq d$
\[ S_k(r) = \{ p \in \R^d \setminus \{0\}:  \hbox{ the dimension of } \operatorname{span}(T(p) \cap \mathcal{B}(0,r)) \ \hbox{ is at least $d-k$} \}, \]
these sets are evidently monotone increasing in $r$ and in $k$ and $S_d(r) = \R^d \setminus \{0\}$.  In words, $S_k(r)$ is the set of directions satisfying $k$ independent rational relations with norm at most $r$ with respect to the lattice $\mathcal{Z}^d$.  Let $\ell_k \geq 1$ chosen as $\ell_0 = m$ and for $1 \leq k \leq d-1$,
\begin{equation}\label{e.ldef}
 \ell_k(m) = m^{\frac{1}{4^{k}}} \quad  \hbox{ chosen to satisfy } \quad  \ell_{k+1}^2 = o(\ell_{k}).
 \end{equation}
 Given $p \in \R^d$ the lattice $\mathcal{Z}^d \cap T(p)$ has dimension at most $d-1$. For $x \in T(p)$ and $\ell \geq 1$ let $\Lambda_\ell(x,\mathcal{Z}^d \cap T(p))$ be the ball approximations which are already defined by the inductive hypothesis.  Furthermore, every affine hyperplane $x + T(p)$ for $x \in \R^d$ either contains no points of $\mathcal{Z}^d$ or contains a translated copy of $\mathcal{Z}^d \cap T(p)$.  Thus we can define for every $x \in \R^d$ the set $\Lambda_\ell(x,\mathcal{Z}^d \cap T(p)) \subset (x+T(p)) \cap \mathcal{Z}^d$.  
 
 Now we are ready to define the sets $\Lambda_m$ in $\mathcal{Z}^d$.   We define
\begin{equation}\label{e.lambdadef}
 \Lambda_{m} = \mathcal{B}(0,m) \cup \bigcup_{k=1}^{d-1}\bigcup_{p \in S_k(r) \cap B(0,m)} \Lambda_{\ell_k}(p,\mathcal{Z}^d \cap T(p)).
 \end{equation}
 For the ball around a generic point $x \in \R^d$ the definition is analogous.  The set $\Lambda_m$ has the following property, for every $z \in \Lambda_m$ either there is $1 \leq k \leq d-1$ such that,
\begin{equation}\label{e.zk}
 z \in \Lambda_{\ell_k}(p, \mathcal{Z}^d \cap T(p)) \ \hbox{ for some } \ p \in S_k(r)\cap B(0,m),
 \end{equation}
or $z \in \mathcal{B}(0,m)$.  Let us define $k(z)$ to be the minimum value of $k$ so that \eref{zk} holds, or if \eref{zk} does not hold for any $1\leq k \leq d-1$ then $z \in \mathcal{B}(0,m)$ and we take $k(z)=d$.  We define $p(z)$ to be the $p \in S_{k(z)}(r)$ achieving \eref{zk}, taking $p(z) = z$ if $k(z) = d$.  We note that,
\begin{equation}
\hbox{ for any } j < k(z) \ \hbox{ we have } \ d(z,S_j(r)) >\ell_j.
\end{equation}
We also point out that for any $z \in \Lambda_m$,
\[ |z|^2 = |z-p(z)|^2+|p(z)|^2 \leq (\ell_1+1)^2+m^2 \ \hbox{ and so } \ |z| \leq m+\frac{1}{2}\frac{(\ell_1+1)^2}{m^2} \leq m+1\]
for $m$ sufficiently large by the inductive hypothesis and since we chose $\ell_1^2 = o(m)$.

    We consider a sequence $z_{m} \in \Lambda_m$.   Up to taking a subsequence we can assume that $k = k(z_m)$ is constant, we call $p_m = p(z_m)$.    By it's definition $p_m \in S_k(r) \cap B(0,m)$ with,
  \[ z_m  \in \Lambda_{\ell_k}(p_m,\mathcal{Z}^d \cap T(p_m)),\]
    in the case $k=d$ we have $p_m = z_m \in \mathcal{B}(0,m)$. We can choose a further subsequence so that for some fixed $\xi_1,\dots, \xi_{d-k} \in \mathcal{B}(0,r)$,
  \[  p_m \in \Xi^\perp,\]
  where call $\Xi$ to be (interchangeably) the subspace spanned by $\xi_1,\dots,\xi_{d-k}$ and the corresponding integer lattice.  We remark that in the case $k=d$ this is a trivial condition.     We now take a further subsequence if necessary so that,
  \[\Lambda_{m} - z_m \to \mathcal{H} \subset \Z^d \cap \Sigma \ \hbox{ and } \ \frac{p_m}{|p_m|} \to p \in S^{d-1} .\]    
It is easy to see that $ p \in \Xi^\perp$, hence $p \in S_k(r)$, although it may satisfy additional rational relations in $\mathcal{B}(0,r)$.

 Now we show that the limit set $\mathcal{H}$ has the structure of a limit half-space at least until we get down to the level of $\Xi$, this part of the proof will be familiar from \lref{halfspacelimits}, although it contains some new elements due to the curvature of the ball.  Call $\zeta^1_m = p_m$ and $\zeta^1 = p$.  We will define a set of mutually orthogonal directions $\zeta^j \in \Xi^\perp$ for (at most) $1 \leq j \leq k$ by an inductive procedure, these directions will be defined as the limit of some sequences $\hat{\zeta}_m^j$.  If at any stage $1 \leq j \leq k$,
\[ T(\zeta^1,\dots,\zeta^j) \cap \mathcal{B}(0,r)  \subset \Xi\]
then we will stop the induction.  Note that since $\Xi$ has dimension $d-k$, $\Xi \subset T(\zeta^1,\dots,\zeta^j)$, and the dimension of $T(\zeta^1,\dots, \zeta^j)$ is $d-j$ the induction must stop by stage $j=k$.  Suppose that $\dim(T(\zeta^1,\dots,\zeta^{j-1}) \cap \mathcal{B}(0,r))>n-k$, then the induction can continue and we define, 
\[ \zeta^j_m = \zeta^{j-1}_m - |\zeta^{j-1}_m| \zeta^{j-1} \ \hbox{ or expanded } \ \zeta^j_m = p_m -\sum_{i=1}^{j-1} |\zeta^i_m|\zeta^i = p_m - q^j_m. \]  
From the inductive assumption, as $q^j_m$ is a linear combination of $\zeta^1,\dots, \zeta^{j-1}$, we have $q^j_m \in \Xi^\perp$ and $\zeta^j_m \in \Xi^\perp$.  Since  $T(\zeta^1,\dots, \zeta^{j-1}) \subset T(q^j_m)$ and $\dim(T(\zeta^1,\dots,\zeta^{j-1}) \cap \mathcal{B}(0,r))>n-k$ we have that $q^j_m \in S_{k-1}(r)$.  From our assumptions \eref{sk} this means that,
\[ |\zeta^j_m| = |p_m - q^j_m| > \ell_{k-1}. \]
In particular it is not equal to $0$.  Then we take a subsequence so that $\hat{\zeta}^j_m$ converges to some $\zeta^j \in S^{d-1} \cap \Xi^\perp$.  It is also easily checked that $\zeta^1,\zeta^2,\dots,\zeta^{j-1}$ is an orthonormal set.

Now given the above set up we are able to establish the limit set structure. Suppose that,
 \begin{equation}\label{e.limitcond}
  z \cdot \zeta^i = 0 \ \hbox{ for } \ 1 \leq i \leq j-1 \ \hbox{ and } \ z \cdot \zeta^j <0
  \end{equation}
 then we claim that $z \in \mathcal{H}$.  For this we show that $|z_m+z| \leq m$ eventually,
   \begin{align*}
   |z_m+z|^2 &\leq |p_m|^2  +2|z_m - p_m|^2  +2|z|^2+2 z \cdot p_m \\
   &\leq m^2 +2\ell_k^2+2|z|^2 +2 z \cdot (p_m -  q^j_m) \\
   &\leq m^2 +2\ell_k^2+2|z|^2+ (z \cdot \hat{\zeta}^j_m) \ell_{k-1} \\
   & <m^2
   \end{align*}
   since we have by the set up that $\ell_k^2 = o(\ell_{k-1})$, i.e. under \eref{limitcond} we have $z \in \mathcal{H}$.

Now we suppose that the inductive procedure above ends at some stage $1 \leq j_0 \leq k$, the ending condition is that $T(\zeta^1,\dots,\zeta^{j_*}) \cap \mathcal{B}(0,r) = \Xi \cap \mathcal{B}(0,r)$.  Now we use the main induction hypothesis.  We already have that,
\[ z_m \in \Lambda_{\ell_k}(p_m,\mathcal{Z}^d \cap T(p_m)) \subset \Lambda_m.\]
Since $\Lambda_{\ell_k}(p_m,\mathcal{Z}^d \cap T(p_m)) \cap B(z_m,r) = \Lambda_{\ell_k}(p_m,\Xi)$ for the fixed subspace $\Xi$ we can use the inductive hypothesis.  Taking a subsequence so that $\Lambda_{\ell_k}(p_m,\Xi) - z_m$ converges to some set $\mathcal{G} \subset \Xi$, we have $\mathcal{G} \subset \mathcal{H}$, and the inductive hypothesis implies that there are some orthonormal $\zeta^{j_0+1},\dots,\zeta^{j_1} \in \Xi$ so that,
\[ \mathcal{G}^C \cap T(\zeta^1,\dots,\zeta^{j_0})  \cap B(0,r) \subset E(\zeta^{j_0+1},\dots,\zeta^{j_1}).\]
 Thus,
 \[ B(0,r) \cap \bigcup_{i=1}^{j_1} \{z \in \mathcal{Z}^d: \  z \in T(\zeta^1,\dots,\zeta^{i-1}) \cap \{ z \cdot \zeta^i <0\} \} \subset \mathcal{H}.   \]
 This completes the proof.
\end{proof}

\subsection{Perturbed test functions}

We now explain how to construct discrete approximations of a set of classical supersolutions.  This suffices to verify that limits of local minimizers of \eref{introJh} are modified subsolutions.  We need to choose a sequence of discrete approximation to the classical supersolution such that all the boundary blow up limits are limit half-spaces.  The main difficulty was already included in \lref{discreteball}, now we use the ball approximations there to regularize a general discrete set by inf-convolution.

\begin{theorem}
  \label{t.ptfm}
  Suppose that $U \subseteq \R^d$ is open and bounded, $\varphi \in C^\infty(U)$, $L \varphi \leq 0$ in $\{ \varphi > 0 \}$, $\Sigma \subseteq \R^d$ is a subspace, $\nabla \varphi \subseteq \Sigma$, and $\lim_{p \in \Sigma \to \nabla \varphi(x)} H(p) \leq 1$ whenever $\varphi(x) = 0$.  If $V \subseteq U$ open, $\overline V \subseteq U$, and $h_n \to \infty$, we can find $u_n : \Z^d \to \R$ such that $u_n \geq 0$, $\Delta u_n \leq (1 + o(1)) \id_{\{ u_n = 0 \}}$, and $h_n^{-1} u_n(h_n x) = \max \{ \varphi(x), 0 \} + o(1)$ hold in $h_n V$.
\end{theorem}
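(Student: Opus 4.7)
The plan is to construct $u_n$ as an inf-convolution of a rescaled $\varphi$ against the $\Sigma$-ball approximations from \lref{discreteball}, then verify the two Laplacian bounds separately: an easy interior estimate from superharmonicity of $\varphi$, and a blow-up estimate at the discrete free boundary that produces exactly the half-space problems controlled by \tref{limitsolutions}.

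Fix $m_n \to \infty$ with $m_n = o(h_n)$, set $\tilde\varphi_n(z) = h_n \varphi(h_n^{-1} z)$ for $z \in \Z^d$, and define
\[
u_n(z) = \max\Bigl\{0,\ \min_{w \in \Lambda_{m_n}(\Sigma)} \tilde\varphi_n(z + w)\Bigr\}.
\]
Non-negativity is built in. Since $\nabla\varphi \in \Sigma$, the function $\varphi$ factors through projection onto $\Sigma$, so inf-convolution inside $\Sigma$ captures all of its variation; by the Lipschitz control on $\varphi$ and $m_n/h_n \to 0$, the perturbation is $o(h_n)$ and hence $h_n^{-1} u_n(h_n x) \to \varphi(x)^+$ uniformly on $V$. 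If strict $\Delta u_n \leq 0$ on $\{u_n > 0\}$ is required, I would first replace $\varphi$ by $\varphi - \eta_n |x|^2/(2d)$ with $\eta_n \to 0$ slowly; the hypotheses, including the boundary condition, are preserved in the limit.

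For the interior bound, if $u_n(z) > 0$ with $w^*$ realizing the minimum, then using $w^*$ as a candidate at the neighbors of $z$ yields the standard inf-convolution inequality
\[
\Delta u_n(z) \leq \Delta \tilde\varphi_n(z + w^*) = h_n^{-1} L\varphi\bigl(h_n^{-1}(z+w^*)\bigr) + O(h_n^{-2}).
\]
Since $u_n(z) > 0$ forces $\varphi(h_n^{-1}(z+w^*)) > 0$, the hypothesis $L\varphi \leq 0$ on $\{\varphi > 0\}$ gives $\Delta u_n(z) = o(1)$, which is strict after the $\eta_n$-shift.

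The substantive step is the boundary bound at $z_n \in \partial^+\{u_n > 0\}$. Along a subsequence, $h_n^{-1} z_n \to x_\infty$ with $\varphi(x_\infty) = 0$ and $p := \nabla\varphi(x_\infty) \in \Sigma$; the hypothesis supplies $\limsup_{q \in \Sigma,\ q \to p} H(q) \leq 1$. Because $\{u_n > 0\}$ is built from translates of $\Lambda_{m_n}(\Sigma)$, the blow-up structure of \lref{discreteball} applies: extracting a further subsequence we obtain orthonormal $\zeta^1,\ldots,\zeta^k \in \Sigma$, with $\zeta^1$ parallel to $p$ (forced by the $C^1$ geometry of $\partial\{\varphi > 0\}$), such that $\{u_n > 0\} - z_n$ converges on compacts to the complement of $E(\zeta^1, \ldots, \zeta^k)$. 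The translates $v_n(y) = u_n(z_n + y)$ are non-negative, discrete harmonic on their positive set, and satisfy $v_n(y) = p \cdot y + o(1)$ in the bulk by Taylor expansion of $\tilde\varphi_n$; combining this with \lref{halfball} and the uniqueness for \eref{spec-half-space} upgrades the convergence to $v_n \to |p|\,u_{\zeta^1\cdots\zeta^k}$ (with the evident reflection to fix orientation) locally uniformly. Property~(2) of \tref{limitsolutions} together with the $1$-homogeneity of $H$ then give
\[
\Delta u_n(z_n) \to |p|\,\Delta u_{\zeta^1\cdots\zeta^k}(0) \leq |p| \limsup_{\substack{q \in \Sigma \\ q \to \zeta^1}} H(q) = \limsup_{\substack{q \in \Sigma \\ q \to p}} H(q) \leq 1,
\]
so $\Delta u_n(z_n) \leq 1 + o(1)$. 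A standard diagonal/contradiction extraction makes the bound uniform over all such $z_n$.

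The main obstacle is precisely this blow-up analysis: one must verify that the auxiliary directions $\zeta^2,\ldots,\zeta^k$ produced by \lref{discreteball} really do lie in $\Sigma$ (which is the reason for inf-convolving against $\Lambda_{m_n}(\Sigma)$ rather than a full $\Z^d$-ball) and that the discrete harmonic function $v_n$ on the resulting irregular limit domain identifies with the correct multiple of $u_{\zeta^1\cdots\zeta^k}$. This requires careful bookkeeping of the intermediate scales between $1$ and $h_n$ and delicate use of \lref{halfball} to pin down the asymptotic profile of $v_n$ at infinity; everything else is essentially a matter of plugging the right lemmas into the right place.
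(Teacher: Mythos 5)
Your construction $u_n = \max\{0, \min_{w \in \Lambda_{m_n}(\Sigma)} \tilde\varphi_n(\cdot + w)\}$ captures the right geometry, but it does not satisfy the interior bound you claim. The inf-convolution inequality $\Delta w(z) \leq \Delta\tilde\varphi_n(z+w^*)$ is valid for $w = \min_{y}\tilde\varphi_n(\cdot + y)$ itself; taking the positive part breaks it. At a point $z$ with $u_n(z) = w(z) > 0$ and a neighbor $z' \sim z$ with $w(z') < 0$ you have $u_n(z') = 0 > w(z')$, so
\begin{equation*}
\Delta u_n(z) \;=\; \Delta w(z) \;+\; \sum_{z' \sim z \,:\, w(z') < 0} \bigl(-w(z')\bigr),
\end{equation*}
and the correction is of the order of the Lipschitz constant of $\varphi$, i.e.\ $O(1)$, not $o(1)$. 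But at such $z$ the theorem requires $\Delta u_n(z) \leq (1+o(1))\id_{\{u_n=0\}}(z) = o(1)$, since $u_n(z) > 0$. So the claimed inequality $\Delta u_n(z) \leq \Delta\tilde\varphi_n(z+w^*)$ fails precisely on the band of boundary-adjacent positive points, which is exactly the delicate region. The proposed $\eta_n$-shift $\varphi \mapsto \varphi - \eta_n|x|^2/(2d)$ does not repair this: it only pushes $\Delta w$ slightly more negative, while the $O(1)$ correction coming from replacing $w(z')$ by $0$ is unchanged.

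The paper's device for this is a harmonic replacement: instead of using the truncated inf-convolution directly, they take $\varphi_{n,m} = \min_{y \in \Lambda_m(\Sigma)} \varphi_n(\cdot + y)$ and define $\psi_{n,m}$ as the solution of $\Delta\psi_{n,m} = \min\{0, \Delta\varphi_{n,m}\}$ inside $h_nW \cap \{\varphi_{n,m} > 0\}$ with Dirichlet data $\max\{0,\varphi_{n,m}\}$ on the complement. This $\psi_{n,m}$ is superharmonic on its positivity set by construction, stays within $O(h_n^{-1})$ of $\varphi_{n,m}$, and has the same zero set, so the interior bound is automatic and the whole weight of the argument shifts to the boundary Laplacian. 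That boundary bound is then obtained by a two-stage blow-up (first $n\to\infty$ with $m$ fixed, then $m\to\infty$), \lref{discreteball}, and a maximum-principle comparison of the blow-up limit with $u_{\bar p, \zeta^2, \ldots, \zeta^k}$, feeding into \tref{limitsolutions}. Two secondary remarks: the iterated limit lets them avoid committing to any rate for a diagonal sequence $m_n$, whereas your diagonal requires a rate you never specify; and your assertion that $v_n \to |p|\, u_{\zeta^1\cdots\zeta^k}$ exactly is more than is available (\lref{discreteball} gives only a one-sided inclusion of the limit zero set), but it is also more than is needed --- the one-sided comparison $\Delta\psi(0) \leq \Delta(|p|u_{\zeta^1\cdots\zeta^k})(0)$ is what the paper uses and suffices.
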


\begin{proof}
  Step 1.
  Throughout the proof, we let $C$ denote a positive constant that depends on $\varphi, V, U$ and may differ in each instance.  For integers $n, m > 1$, define
  \begin{equation*}
    \varphi_n(x) = h_n \varphi(h_n^{-1} x)
  \end{equation*}
  and
  \begin{equation*}
    \varphi_{n,m}(x) = \min_{y \in \Lambda_m(\Sigma)} \varphi_n(x+y)
  \end{equation*}
  where $\Lambda_m(\Sigma)$ is given by \lref{discreteball}.  Choose $W \supseteq \overline V$ open with $\overline W \subseteq U$ and let $\psi_{n,m}$ solve
  \begin{equation*}
    \begin{cases}
      \Delta \psi_{n,m} = \min \{ 0, \Delta \varphi_{n,m} \} & \mbox{in } h_n W \cap \{ \varphi_{n,m} > 0 \} \\
      \psi_{n,m} = \max \{ 0, \varphi_{n,m} \} & \mbox{in } h_n U \setminus (h_n W \cap \{ \varphi_{n,m} > 0 \})
    \end{cases}
  \end{equation*}
  Since $\varphi$ is smooth and $L \varphi \leq 0$, we have $-Ch_n^{-1}\leq \Delta \varphi_n \leq C h_n^{-3}$ in $h_n U$ and $-Ch_n^{-1} \leq \Delta \varphi_{n,m} \leq C h_n^{-3}$ in $h_n W$.  Moreover, we also have that $\varphi_n$ and $\varphi_{n,m}$ are uniformly Lipschitz.  In particular, we see that
  \begin{equation*}
   \varphi_{n,m} \leq \psi_{n,m} \leq \varphi_{n,m}+ Ch_n^{-1} \quad \mbox{in } h_n W \cap \{ \varphi_{n,m} > 0 \}.
  \end{equation*}
  and
  \begin{equation*}
    0 \leq \varphi_n - \varphi_{n,m} \leq C_m \quad \mbox{in } h_n W.
  \end{equation*}
  It suffices to show that, for all $\ep > 0$, there are $N, m > 1$ such that
  \begin{equation*}
    \Delta \psi_{n,m} \leq (1 + \ep) \id_{\{ \psi_{n,m} = 0 \}} \quad \mbox{in } h_n V
  \end{equation*}
  holds for all $n \geq N$.  We suppose for contradiction that this fails and run blow up argument.  We will first send $n \to \infty$ and then send $m \to \infty$.

  Step 2.
  Fix $\ep > 0$ and $m > 1$ and suppose, for infinitely many $n > 1$, there is a $z_n \in h_n V \cap \partial^+ \{ \psi_{n,m} > 0 \}$ such that
  \begin{equation*}
    \Delta \psi_{n,m}(z_n) \geq 1 + \ep.
  \end{equation*}
  Since
  \begin{equation*}
    h_n W \cap \{ \psi_{n,m} > 0 \} = h_n W \cap \{ \varphi_{n,m} > 0 \}
  \end{equation*}
  we may select a $w_n \in \partial^+ \{ \varphi_n > 0 \}$ such that
  \begin{equation*}
    z_n \in w_n + \Lambda_m(\Sigma) \subseteq \{ \psi_{n,m} = 0 \}.
  \end{equation*}
  Let $p_n = \nabla \varphi_n(w_n) = \nabla \varphi(h_n^{-1} w_n)$.  Taylor expanding $\varphi_n$ at $w_n$, we see that
  \begin{equation*}
    \varphi_{n,m}(z_n + z) \leq \varphi_n(w_n + z) \leq p_n \cdot z + C h_n^{-1} |z|^2
  \end{equation*}
  and
  \begin{equation*}
    \varphi_{n,m}(z_n + z) \geq \varphi_n(w_n + z) - C_m \geq p_n \cdot z - C h_n^{-1} |z|^2 - C_m.
  \end{equation*}
  Since $|\psi_{n,m} - \varphi_{n,m}| \leq C$, we may choose a sequence $n \to \infty$, $\bar z_m \in \overline V$, $\bar w_m \in \Z^d$, and $\psi_m : \Z^d \to \R$ such that
  \begin{equation*}
    h_n^{-1} z_n \to \bar z_m,
    \quad
    p_n \to \bar p_m = \nabla \varphi(\bar z_m),
    \quad
    w_n - z_n \to \bar w_m,
    \quad \mbox{and} \quad
    \psi_{n,m}(z_n+z) \to \psi_m(z).
  \end{equation*}

  Since
  \begin{multline*}
      \{ \psi_{n,m}(z_n + z) = 0 \} = \{ \varphi_{n,m}(z_n + z) \leq 0 \} \\ \supseteq (w_n - z_n + \Lambda_m(\Sigma)) \cup \{ p_n \cdot z + C h_n^{-1} |z|^2 \leq 0 \},
  \end{multline*}
  we see that
  \begin{equation*}
    \{ \psi_m(z) = 0 \} \supseteq \{ \bar p_m \cdot z < 0 \} \cup (\bar w_m + \Lambda_m(\Sigma)).
  \end{equation*}
  Note the strict inequality that arises because we do not know the relative rates of $p_n \to p$ and $h_n^{-1} \to 0$ as $n \to \infty$.  We also have
  \begin{equation*}
    0 \in \bar w_m + \Lambda_m(\Sigma)
  \end{equation*}
  \begin{equation*}
    \Delta \psi_m(0) \geq 1 + \ep
  \end{equation*}
  and
  \begin{equation*}
    \Delta \psi_m = 0
    \quad \mbox{and} \quad
    |\psi_m(x) - \bar p_m \cdot x| \leq C_m
    \quad \mbox{in }
    \{ \psi_m > 0 \}.
  \end{equation*}

  Step 3.  
  Observe that, since $\{ \bar p_m \cdot z < 0 \} \subseteq \{ \psi_m(z) = 0 \}$ and $\bar p_m$ is bounded uniformly in $m$, that we have
  \begin{equation*}
    0 \leq \psi_m(x) \leq \max \{ 0, \bar p_m \cdot x \} + C.
  \end{equation*}
  Thus we again have compactness of the $\psi_m$ and may choose a sequence $m \to \infty$, $\bar z \in \bar V$, $E \subseteq \Z^d$, and $\psi : \Z^d \to \R$ such that
  \begin{equation*}
    \bar z_m \to \bar z,
    \quad
    \bar p_m \to \bar p = \nabla \varphi(\bar z),
    \quad
    \bar w_m + \Lambda_m(\Sigma) \to E
    \quad \mbox{and} \quad
    \psi_m(z) \to \psi(z).
  \end{equation*}
  Observe that
  \begin{equation*}
    \{ \bar p \cdot z < 0 \} \cup E \subseteq \{ \psi = 0 \},
  \end{equation*}
  \begin{equation*}
    \Delta \psi = 0 \quad \mbox{and} \quad |\psi(x) - p \cdot x| \leq C \quad \mbox{in } \{ \psi > 0 \}
  \end{equation*}
  and
  \begin{equation*}
    \psi(0) = 0 \quad \mbox{and} \quad \Delta \psi(0) \geq 1 + \ep.
  \end{equation*}
  Since $\nabla \varphi \subseteq \Sigma$, we also have
  \begin{equation*}
    \psi_m(z + q) = \psi_m(z) \quad \mbox{for } q \in \Z^d \cap \Sigma^\perp.
  \end{equation*}
  Thus, by \lref{discreteball}, we see that there are $\zeta^2, ..., \zeta^k \in \Sigma$ such that
  \begin{equation*}
    E(\bar p, \zeta^2, ..., \zeta^k) \subseteq \{ \psi = 0 \}.
  \end{equation*}
  By the maximum principle, we see that
  \begin{equation*}
    \Delta u_{\bar p, \zeta^2, ..., \zeta^k}(0) \geq \Delta \psi(0) \geq 1 + \ep.
  \end{equation*}
  In particular, we have $\limsup_{\Sigma \ni p  \to \nabla \varphi(\bar z)} H(p) \geq 1 + \ep$, contradicting the theorem hypothesis.
\end{proof}

\begin{bibdiv}
  \begin{biblist}
  
\bib{Armstrong-Sirakov-Smart}{article}{
author={Armstrong, Scott N.},
author={Sirakov, Boyan},
author={Smart, Charles K.},
title={Singular Solutions of Fully Nonlinear Elliptic Equations and Applications},
journal={Archive for Rational Mechanics and Analysis},
year={2012},
volume={205},
number={2},
pages={345--394},
}
  
\bib{Alberti-DeSimone}{article}{
	Author = {Alberti, Giovanni},
	author={DeSimone, Antonio},
	Journal = {Proc. R. Soc. Lond. Ser. A Math. Phys. Eng. Sci.},
	Number = {2053},
	Pages = {79--97},
	Title = {Wetting of rough surfaces: a homogenization approach},
	Volume = {461},
	Year = {2005}
	}

    \bib{Alt-Caffarelli}{article}{
      author={Alt, H. W.},
      author={Caffarelli, L. A.},
      title={Existence and regularity for a minimum problem with free boundary},
      journal={J. Reine Angew. Math.},
      volume={325},
      date={1981},
      pages={105--144},
    }

    \bib{Aleksanyan-Shahgholian}{article}{
      author={Aleksanyan, H.},
      author={Shahgholian, H.},
      title={Discrete Balayage and Boundary Sandpile},
      eprint={ArXiv:1607.01525}
    }

    \bib{Aleksanyan-Shahgholian-2}{article}{
      author={Aleksanyan, H.},
      author={Shahgholian, H.},
      title={Perturbed divisble sandpiles and quadrature surfaces},
      eprint={ArXiv:1703.07568}
    }
    
    \bib{Caffarelli-Lee}{article}{
      author={Caffarelli, L.},
      author={Lee, K.},
      title={Homogenization of oscillating free boundaries: the elliptic case},
      journal={Comm. Partial Differential Equations},
      volume={32},
      date={2007},
      number={1-3},
      pages={149--162},
    }

\bib{Caffarelli-Mellet0}{article}{
	Author = {Caffarelli, L. A.},
	author = {Mellet, A.},
	Booktitle = {Perspectives in nonlinear partial differential equations},
	Pages = {175--201},
	Publisher = {Amer. Math. Soc., Providence, RI},
	Series = {Contemp. Math.},
	Title = {Capillary drops on an inhomogeneous surface},
	Volume = {446},
	Year = {2007}
	}

    \bib{Caffarelli-Mellet}{article}{
      author={Caffarelli, L. A.},
      author={Mellet, A.},
      title={Capillary drops: contact angle hysteresis and sticking drops},
      journal={Calc. Var. Partial Differential Equations},
      volume={29},
      date={2007},
      number={2},
      pages={141--160},
    }
    
    \bib{Caffarelli-Salsa}{book}{
      author={Caffarelli, Luis},
      author={Salsa, Sandro},
      title={A geometric approach to free boundary problems},
      series={Graduate Studies in Mathematics},
      volume={68},
      publisher={American Mathematical Society, Providence, RI},
      date={2005},
      pages={x+270},
    }

    \bib{Caffarelli-Spruck}{article}{
      author={Caffarelli, Luis A.},
      author={Spruck, Joel},
      title={Convexity properties of solutions to some classical variational
        problems},
      journal={Comm. Partial Differential Equations},
      volume={7},
      date={1982},
      number={11},
      pages={1337--1379},
    }

    \bib{Feldman}{article}{
      author={Feldman, William M.},
      title={Homogenization of the oscillating Dirichlet boundary condition in
        general domains},
      language={English, with English and French summaries},
      journal={J. Math. Pures Appl. (9)},
      volume={101},
      date={2014},
      number={5},
      pages={599--622},
    }

\bib{Feldman-Kim}{article}{
	Author = {William M. Feldman and Inwon C. Kim},
	archivePrefix = {arXiv},
        eprint = {ArXiv:1612.07261},
	Month = {12},
	Title = {Liquid Drops on a Rough Surface},
	Year = {2016}
	}

\bib{Kim-Zheng-Stone}{article}{
  title={Noncircular stable displacement patterns in a meshed porous layer},
  author={Kim, Hyoungsoo},
  author={Zheng, Zhong},
  author={Stone, Howard A},
  journal={Langmuir},
  volume={31},
  number={20},
  pages={5684--5688},
  year={2015},
  publisher={ACS Publications}
}

    \bib{Kim}{article}{
      author={Kim, Inwon C.},
      title={Homogenization of a model problem on contact angle dynamics},
      journal={Comm. Partial Differential Equations},
      volume={33},
      date={2008},
      number={7-9},
      pages={1235--1271},
    }

    \bib{Levine}{thesis}{
      author={Levine, L.},
      title={Limit Theorems for Internal Aggregation Models},
      organization={University of Californial Berkeley},
      note={PhD Thesis},
      date={2007}
    }

    \bib{Raj-Adera-Enright-Wang}{article}{
      author={Raj, R.},
      author={Adera, S.},
      author={Enright, R.},
      author={Wang, E.}
      title={High-resolution liquid patterns via three-dimensional droplet shape control},
      journal={Nature Communications},
      date={12 Aug 2014},
      doi={10.1038/ncomms5975}
    }
    
    \bib{pattern}{article}{
author = {Susarrey-Arce, A.},
author={Marin, A.},
 author={Massey, A.},
 author={Oknianska, A.},
author={D\'iaz-Fernandez, Y.},
author={Hern\'andez-S\'anchez, J. F.},
author={Griffiths, E.},
author={Gardeniers, J. G. E.},
author={Snoeijer, J. H.},
author={Lohse, Detlef},
author={Raval, R.},
title = {Pattern Formation by Staphylococcus epidermidis via Droplet Evaporation on Micropillars Arrays at a Surface},
journal = {Langmuir},
volume = {32},
number = {28},
pages = {7159-7169},
year = {2016},

}

  \end{biblist}
\end{bibdiv}

\end{document}